\numberwithin{equation}{section}
\theoremstyle{plain}
\newtheorem{theorem}{Theorem}[section]
\newtheorem{proposition}[theorem]{Proposition}
\newtheorem{lemma}[theorem]{Lemma}
\newtheorem{corollary}[theorem]{Corollary}
\newtheorem{definition}{Definition}[section]
\theoremstyle{definition}
\newenvironment{remark}{\pushQED{\qed} \remarkbase}{\popQED\endremarkbase}
\newcommand{\mA}{\mathcal{A}}
\newcommand{\mB}{\mathcal{B}}
\newcommand{\mC}{\mathcal{C}}
\newcommand{\mE}{\mathcal{E}}
\newcommand{\mF}{\mathcal{F}}
\newcommand{\mG}{\mathcal{G}}
\newcommand{\mH}{\mathcal{H}}
\newcommand{\mL}{\mathcal{L}}
\newcommand{\mM}{\mathcal{M}}
\newcommand{\mN}{\mathcal{N}}
\newcommand{\mS}{\mathcal{S}}
\newcommand{\mR}{\mathcal{R}}
\newcommand{\mT}{\mathcal{T}}
\renewcommand{\a}{\alpha}
\renewcommand{\b}{\beta}
\newcommand{\g}{\gamma}
\renewcommand{\d}{\delta}
\newcommand{\e}{\varepsilon}
\newcommand{\ph}{\varphi}
\newcommand{\vphi}{\varphi}
\newcommand{\lm}{\lambda}
\newcommand{\Om}{\Omega}
\newcommand{\om}{\omega}
\newcommand{\p}{\pi}
\newcommand{\s}{\sigma}
\renewcommand{\t}{\tau}
\renewcommand{\th}{\vartheta}
\renewcommand{\t}{\tau }
\newcommand{\teta}{\theta}
\newcommand{\be}{\begin{equation}}
\newcommand{\ee}{\end{equation}}
\newcommand{\ii}{{\mathrm i} }
\newcommand{\gr}{\nabla}
\newcommand{\la}{\langle}
\newcommand{\ra}{\rangle}
\newcommand{\wtilde}{\widetilde}
\newcommand{\R}{\mathbb R}
\newcommand{\C}{\mathbb C}
\newcommand{\Z}{\mathbb Z}
\newcommand{\N}{\mathbb N}
\newcommand{\T}{\mathbb T}
\newcommand{\inv}{^{-1}}
\newcommand{\pa}{\partial}
\newcommand{\fracchi}{{\mathfrak{I}}}
\newcommand{\Lipg}{{\mathrm{Lip}(\g)}}
\newcommand{\lip}{\mathrm{lip}}
\begin{document}

\title{\textbf{KAM for autonomous quasi-linear \\
perturbations of KdV}} 

\date{}
\author{Pietro Baldi, Massimiliano Berti, Riccardo Montalto}

\maketitle

\noindent
{\bf Abstract.}
We prove the existence and stability of Cantor families of quasi-periodic, small amplitude
solutions of {\it quasi-linear} (i.e.  {\it strongly nonlinear}) autonomous Hamiltonian perturbations of KdV.

\smallskip

\noindent
{\it Keywords:} KdV, KAM for PDEs, quasi-linear PDEs, 
Nash-Moser theory, quasi-periodic solutions. 

\smallskip

\noindent
{\it MSC 2010:} 37K55, 35Q53.

\tableofcontents

\section{Introduction and main results}

In this paper we prove the existence and  stability of Cantor families of  quasi-periodic solutions of  Hamiltonian 
{\it quasi-linear} (also called  ``{\it strongly nonlinear}'', e.g. in \cite{k1}) perturbations of the KdV equation 
\begin{equation}\label{kdv quadratica}
u_t + u_{xxx} - 6 u u_x + {\mathcal N}_4 (x, u, u_x, u_{xx}, u_{xxx}) = 0\,, 
\end{equation}
under periodic boundary conditions $ x \in \T := \R / 2 \p \Z$, 
where 
\begin{equation}\label{qlpert}
{\mathcal N}_4 (x, u, u_x, u_{xx}, u_{xxx}) := - \partial_x \big[ (\partial_u f)(x, u,u_x)  - \partial_{x} ((\partial_{u_x} f)(x, u,u_x)) \big]   
\end{equation}
is the most general quasi-linear Hamiltonian (local) nonlinearity. 
Note that $ {\cal N}_4 $ contains as many derivatives as the linear part $ \pa_{xxx} $.
The equation \eqref{kdv quadratica}  is the Hamiltonian PDE
$ u_t = \partial_x  \nabla H(u)  $
where $ \nabla H $ denotes the $L^2(\T_x)$  gradient of the  Hamiltonian 
\begin{equation} \label{Ham in intro}
H (u) = \int_\T  \frac{u_x^2}{2} + u^3 + f(x, u,u_x) \, dx 
\end{equation}
on the real phase space   
\begin{equation}\label{def phase space}
H^1_0 (\T_x) := \Big\{ u(x ) \in H^1 (\T, \R) \ : \ \int_{\T} u(x) dx = 0  \Big\} \, . 
\end{equation}
We assume  that  the ``Hamiltonian density" 
$ f \in C^q (\T \times \R \times \R; \R ) $  
for some $ q $ large enough, and that 
\begin{equation}\label{order5}
f = f_5(u,u_x) + f_{\geq 6}(x,u,u_x) \,,
\end{equation}
where $ f_5 (u,  u_x)  $ denotes the homogeneous component of $ f $ of degree 5  and 
$ f_{\geq 6} $ collects all the higher order terms. 
By \eqref{order5} the nonlinearity $ {\mathcal N}_4 $ vanishes of order $ 4 $ at $ u = 0 $ and 
 \eqref{kdv quadratica}
may be seen, close to the origin, as  a ``small" perturbation of the KdV equation
\begin{equation}\label{KdVmKdV} 
u_t + u_{xxx} - 6 u u_x = 0 \, , 
\end{equation}
which is completely integrable. 
Actually, the KdV equation \eqref{KdVmKdV} may be described by global analytic action-angle variables, see \cite{KaP} and the references therein.

A natural question  is  to know whether the  periodic, quasi-periodic or almost periodic solutions of \eqref{KdVmKdV}
persist under small perturbations.  This is the content of KAM theory. 

\smallskip

The first KAM results for PDEs 
have been obtained 
for   $1$-d  semilinear Schr\"odinger and wave equations 
by Kuksin \cite{Ku}, Wayne \cite{W1}, 
Craig-Wayne \cite{CW}, P\"oschel \cite{Po2}, see \cite{C}, \cite{k1} and references therein. 
For PDEs in higher space dimension the theory has been more recently extended 
by Bourgain \cite{B5}, Eliasson-Kuksin \cite{EK}, and Berti-Bolle
\cite{BB13JEMS}, Geng-Xu-You \cite{GXY}, Procesi-Procesi \cite{PP}-\cite{PP1}, Wang \cite{Wang}. 

\smallskip
For {\it unbounded} perturbations the first KAM results 
have been proved by Kuksin \cite{K2} and Kappeler-P\"oschel \cite{KaP}
for KdV (see also Bourgain \cite{B96}), and more recently by  Liu-Yuan \cite{LY}, Zhang-Gao-Yuan \cite{ZGY} for derivative NLS, and by Berti-Biasco-Procesi \cite{BBiP1}-\cite{BBiP2} for derivative NLW.
For a recent survey of known results for 
KdV, we refer to \cite{K13}. 

The KAM theorems in \cite{K2}, \cite{KaP} prove the persistence 
of the finite-gap solutions of the integrable KdV 
\eqref{KdVmKdV} under semilinear 
Hamiltonian perturbations $ \e \partial_{x} (\partial_u f) (x, u) $, 
namely when the density  $ f $ is independent of
$ u_x $, so that \eqref{qlpert} is a differential operator of order $ 1 $  (note that in \cite{k1} such 
nonlinearities are called ``quasi-linear" and \eqref{qlpert} ``strongly nonlinear"). 
The key point is that the frequencies of KdV grow as $ \sim j^3 $ and
the difference $ |j^3  - i^3| \geq (j^2 + i^2)/2 $, $i \neq j $, so that  KdV gains (outside the diagonal) two 
derivatives.
This approach also works  for Hamiltonian pseudo-differential perturbations of order $ 2 $ (in space),
using the improved Kuksin's lemma in \cite{LY}. 
However it does {\it not} work for a general quasi-linear 
 perturbation as in \eqref{qlpert},
which is a nonlinear differential operator of the {\it same} order (i.e.\ 3) as the constant coefficient linear operator $ \partial_{xxx}$.
Such a strongly nonlinear perturbation term makes the KAM question quite delicate 
because of the possible phenomenon of formation of singularities in finite time, see 
Lax \cite{Lax}, Klainerman-Majda \cite{KM} for quasi-linear wave equations,
see also section 1.4 of \cite{k1}. 
For example, Kappeler-P\"oschel \cite{KaP} (Remark 3, page 19) wrote: 
``{\it It would be interesting to obtain perturbation results which also include terms of higher order, at least in the region where the KdV approximation is valid. However, results of this type are still out of reach, if true at all}''.

\smallskip

In this paper we give the first positive answer to this problem, proving the existence of 
small amplitude, linearly stable, quasi-periodic solutions of \eqref{kdv quadratica}, see Theorem \ref{thm:KdV}.  
Note that  \eqref{kdv quadratica} does not depend on external parameters.
Moreover the KdV equation \eqref{kdv quadratica} is a {\it completely resonant} PDE, 
namely the linearized equation at the origin is the linear Airy equation $ u_t + u_{xxx} = 0 $, 
which possesses only the $ 2 \pi $-periodic in time solutions
\begin{equation}\label{Airyper}
u(t,x) = {\mathop \sum}_{j \in \Z \setminus \{0\} } u_j {\rm e}^{\ii j^3 t} e^{\ii jx } \, .
\end{equation}
Thus the existence of quasi-periodic solutions of \eqref{kdv quadratica} is a purely nonlinear phenomenon 
(the diophantine frequencies in \eqref{solution u} are $ O(|\xi|) $-close to integers with $ \xi \to 0 $)
and a perturbation theory is more difficult. 

The solutions that we find are localized in Fourier space close to  finitely many {``tangential sites''}  
\begin{equation}  \label{tang sites}
S^+ := \{ \bar\jmath_1, \ldots, \bar\jmath_\nu \}\,, \quad 
S := S^+ \cup (- S^+) = \{ \pm j : j \in S^+ \}\,, 
\quad {\bar \jmath}_i \in \N \setminus \{0\}  \, , \quad \forall i =1, \ldots, \nu \,.
\end{equation}
The set $ S $ is required to be even because the solutions $ u $ of  \eqref{kdv quadratica} have to be real valued. 
Moreover, we also assume the following explicit hypotheses on $ S $: 
\begin{itemize}
\item {\sc  $ ({\mathtt S}1) $} 
$j_1 + j_2 + j_3 \neq 0$ for all $j_1, j_2, j_3 \in S$. 
\item {\sc  $ ({\mathtt S}2) $}  $ \nexists j_1, \ldots, j_4 \in S $ such that   
$ j_1 + j_2 + j_3 + j_4 \neq 0 $, $ j_1^3 + j_2^3 + j_3^3 + j_4^3 - (j_1 + j_2 + j_3 + j_4)^3 = 0 $. 
\end{itemize}
\begin{theorem}  \label{thm:KdV}
Given $ \nu \in \N $,  
let $ f  \in C^q $ (with $ q := q(\nu) $ large enough) satisfy \eqref{order5}.
Then, for all the  tangential sites $ S $ as in \eqref{tang sites} satisfying
$ ({\mathtt S}1) $-$ ({\mathtt S}2) $,   
the KdV equation \eqref{kdv quadratica} possesses small amplitude quasi-periodic solutions 
with diophantine frequency vector $\om := \om(\xi) = (\om_j)_{j \in S^+} \in \R^\nu$, 
of the form 
\begin{equation}\label{solution u}
u(t,x) = {\mathop\sum}_{j \in S^+} 2 \sqrt{\xi_j} \, \cos( \om_j t + j x) + o( \sqrt{|\xi|} ), \quad 
\om_j := j^3 - 6 \xi_j j^{-1} \,,
\end{equation}
for a ``Cantor-like" set of small amplitudes $ \xi \in \R^\nu_+ $ with density $ 1 $ at $ \xi = 0 $.
The term $ o( \sqrt{|\xi|} ) $ is small in some $ H^s $-Sobolev norm, $ s < q $. 
These quasi-periodic solutions are {\sl linearly stable}.  
\end{theorem}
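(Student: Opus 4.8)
The plan is to recast \eqref{kdv quadratica} as a search for zeros of a nonlinear functional on a scale of Sobolev spaces of functions on the torus $\T^{\nu+1}$, and to solve the resulting equation by a Nash--Moser iteration in which the hard analytic core is the inversion, with tame estimates, of the linearized operator at an approximate quasi-periodic solution. First I would introduce the frequency-amplitude map: seek solutions with frequency vector $\om=\om(\xi)$ close to the integer vector $(\bar\jmath_1^3,\dots,\bar\jmath_\nu^3)$, write $u(t,x)=v(\om t,x)$ with $v:\T^\nu\times\T\to\R$, and look for $v$ in a space of functions whose Fourier support is concentrated near the tangential sites $S$, splitting $v = v_{\mathrm{tang}} + z$ into a finite-dimensional torus part (parametrized by the amplitudes $\xi$ and some phases) and a ``normal'' component $z$. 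Hypotheses $({\mathtt S}1)$--$({\mathtt S}2)$ should be exactly what is needed to construct, via a few steps of Birkhoff normal form / weak KAM normal form, an approximate solution to high order in $\sqrt{|\xi|}$ with the nondegenerate twist $\om_j = j^3 - 6\xi_j j^{-1}$ appearing in \eqref{solution u}, so that the frequency-to-action map is a diffeomorphism and one can invert it to use $\om$ (equivalently $\xi$) as a parameter.

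Next I would set up the Nash--Moser scheme. At each step one must invert the linearized operator $\mathcal L$ obtained by linearizing \eqref{kdv quadratica} at an approximate solution, restricted to the normal subspace, on the Cantor set of parameters where suitable first- and second-order Melnikov nonresonance conditions hold. The key difficulty, and the genuinely new point compared with the semilinear KdV results of Kuksin \cite{K2} and Kappeler--P\"oschel \cite{KaP}, is that $\mathcal L$ is a quasi-linear operator of order $3$: it has the form $\om\cdot\partial_\varphi + (1+a_3(\varphi,x))\partial_{xxx} + a_2\partial_{xx} + a_1\partial_x + a_0$ plus a smoothing remainder, with the top coefficient $a_3$ genuinely $x$- and $\varphi$-dependent and only small (not zero). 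To handle this I would, following the pseudo-differential / para-differential regularization strategy, first reduce $\mathcal L$ to constant coefficients in the highest order terms by a sequence of changes of variables: a $\varphi$-dependent reparametrization of the space variable $x\mapsto x+\beta(\varphi,x)$ to normalize the third-order coefficient, then a multiplication operator to remove the first-order term, followed by conjugations by pseudo-differential operators of decreasing order to push the remainder down to a regularizing operator of order $-M$ for $M$ as large as needed. After this reduction $\mathcal L$ becomes $\om\cdot\partial_\varphi + \mathcal D + \mathcal R$ with $\mathcal D$ a Fourier multiplier (constant-coefficient, diagonal) and $\mathcal R$ arbitrarily regularizing, at which point a KAM-type reducibility (or a multiscale/Green's function) argument diagonalizes it completely and yields the inverse with tame estimates, provided the Melnikov conditions hold.

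The remaining steps are then comparatively standard: (i) a measure-theoretic argument showing that the Cantor set of ``good'' parameters $\xi$ — those satisfying the diophantine and Melnikov conditions with the perturbed frequencies — has full density at $\xi=0$, using the nondegeneracy of $\xi\mapsto\om(\xi)$ coming from the twist term and the separation $|j^3-i^3|\gtrsim j^2+i^2$ of the unperturbed frequencies; (ii) the convergence of the Nash--Moser iteration in an analytic or large-finite-$C^q$ category, giving a solution $v_\infty$ on the limit Cantor set; and (iii) translating back to obtain $u$ of the form \eqref{solution u} with the error $o(\sqrt{|\xi|})$ small in $H^s$. Finally, linear stability follows from the reducibility performed along the iteration: at the solution the linearized operator is conjugate to a diagonal, time-independent operator with purely imaginary spectrum, so the Sobolev norms of solutions of the linearized equation are constant in time (or grow at most sub-polynomially, hence here are bounded). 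I expect the main obstacle to be carrying the regularization of the quasi-linear operator $\mathcal L$ all the way to a smoothing remainder while keeping all estimates tame and uniform in the parameters — in particular controlling how the changes of variables act on the Cantor set and ensuring the small divisors lost in each conjugation step are recovered — since the invertibility with loss of derivatives must be compatible with the convergence of the outer Nash--Moser scheme.
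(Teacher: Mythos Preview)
Your overall architecture is right, but there is a genuine gap at the reducibility step, concerning the size of the diophantine constant versus the size of the remainder after regularization. Since the linearized KdV at the origin is the Airy equation with integer frequencies $j^3$, the equation \eqref{kdv quadratica} is \emph{completely resonant}: the bifurcating frequency vector $\om(\xi)$ is $O(|\xi|)$-close (i.e.\ $O(\e^2)$-close in the paper's scaling) to the integer vector $\bar\om$, so the diophantine constant must satisfy $\gamma=o(\e^2)$ --- the paper takes $\gamma=\e^{2+a}$, see \eqref{omdio}. After your pseudo-differential regularization the bounded remainder $\mR$ is still of size $O(\e)$, because the dominant variable-coefficient terms in $\mL_\om$ come from linearizing $6uu_x$ at the $O(\e)$ approximate solution (the terms $\e\mB_1$, $\e^2\mB_2$ in \eqref{def cal B1 B2}). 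The KAM-reducibility smallness condition is $|\mR|\,\gamma^{-1}\ll 1$, which here reads $\e\cdot\e^{-2-a}\to\infty$: the scheme diverges. This is a size issue, not a regularity one, so pushing the order of $\mR$ down to $-M$ does not help.

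The paper resolves this by inserting, between the regularization and the KAM reducibility, two \emph{linear Birkhoff normal form} steps (Sections \ref{BNF:step1}--\ref{BNF:step2}) which remove the $O(\e)$ and $O(\e^2)$ contributions explicitly. This works precisely because the fourth-order Birkhoff normal form of KdV is completely integrable: the homological equations have denominators bounded below by integers, not small divisors (Corollary \ref{B1 zero KdV}, Lemma \ref{pezzo epsilon 2 A}), so no factor $\gamma^{-1}$ is lost. Only then is the remainder of size $O(\e^{7-2b}\gamma^{-1})$, for which condition \eqref{R6resto} holds. Two related points you also underplay: the nonlinear Birkhoff map must be ``weak'' in the sense of Proposition \ref{prop:weak BNF} (identity plus a finite-rank map), since a full BNF map as in \cite{KaP} is only $I+O(\partial_x^{-1})$ and would generate new second-order terms in $\mN_4$ that the regularization cannot absorb; and the space-diffeomorphism in your first reduction step, when restricted to $H_S^\bot$, is not symplectic --- the paper replaces $\Pi_S^\bot\mA\Pi_S^\bot$ by the time-$1$ flow of the projected transport equation \eqref{problemi di cauchy}, which is symplectic and differs from it only by a finite-rank correction (Lemma \ref{modifica simplettica cambio di variabile}).
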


This result is deduced from Theorem \ref{main theorem}. Let us make some comments. 
\begin{enumerate}
\item 
The set of tangential sites $ S $  satisfying  $({\mathtt S}1)$-$({\mathtt S}2) $ 
can be  iteratively constructed in an explicit way, see the end of section \ref{sec:NM}. 
After fixing  $  \{\bar\jmath_1, \ldots, \bar\jmath_n \} $, 
in the choice of $\bar\jmath_{n+1}$ there are only finitely many forbidden values,
while all the other infinitely many values are good choices for $\bar\jmath_{n+1}$. 
In this precise sense the set $ S $ is ``generic''.  
\item 
The linear stability of the quasi-periodic solutions is discussed after \eqref{sistema lineare dopo}.
In a suitable set of symplectic coordinates $ (\psi, \eta, w) $, $ \psi \in \T^\nu $,  near the invariant torus,
 the linearized equations at the quasi-periodic solutions assume the form 
\eqref{sistema lineare dopo}, \eqref{vjmuj}. Actually there is a complete KAM normal form near the 
invariant torus (remark \ref{rem:KAM normal form}), see also \cite{BB13}. 
\item 
A similar result holds for perturbed (focusing/defocusing) mKdV equations
\be\label{mKdV}
u_t + u_{xxx} \pm \partial_x u^3 + {\mathcal N}_4 (x, u, u_x, u_{xx}, u_{xxx})  = 0 
\ee
for  tangential sites $ S $  which satisfy 
$ \frac{2}{2\nu - 1} \sum_{i = 1}^\nu \bar \jmath_i^{\,2} \notin \Z $.
If the density $  f(u, u_x) $ is independent on $ x $, the result holds for {\it all} the choices of the tangential sites.  
The KdV equation \eqref{kdv quadratica} is more difficult  than 
 \eqref{mKdV} because the nonlinearity is quadratic and not cubic.

An important point is that the fourth order Birkhoff normal form of KdV and mKdV is completely integrable. 
The present strategy of proof --- that we describe in detail below --- is a rather general 
approach for constructing small amplitude quasi-periodic solutions of quasi-linear perturbed KdV equations. For example it could be applied to generalized KdV equations with 
leading nonlinearity $ u^p $, $ p \geq 4 $, by using 
the normal form techniques of Procesi-Procesi \cite{PP1}-\cite{PP}. 
A further interesting open question concerns perturbations of the finite gap solutions of KdV.
\end{enumerate}

Let us describe the strategy of proof of Theorem \ref{thm:KdV}, which involves many different arguments. 
\\[2mm]
{\it Weak Birkhoff normal form.} 
Once the finite set of tangential sites $ S $ has been fixed, 
the first step is to perform  
a ``weak" Birkhoff normal form (weak BNF), 
whose goal is to find an invariant manifold of solutions of the third order approximate KdV equation \eqref{kdv quadratica}, 
on which the dynamics is completely integrable,  see section \ref{sec:WBNF}.  Since the  KdV  nonlinearity is quadratic, two steps of weak BNF are required. 
The present Birkhoff map is close to the  identity up to  {\it finite dimensional} operators, see Proposition \ref{prop:weak BNF}. 
The key advantage is that it modifies  $ {\mathcal N}_4 $ very mildly, only up to finite dimensional operators 
(see for example Lemma \ref{lemma astratto potente}), and thus the spectral analysis of the linearized equations 
(that we shall perform in section \ref{operatore linearizzato sui siti normali}) is essentially the
same as if we were in the original coordinates.

The weak normal form \eqref{widetilde cal H} does not remove (or normalize) the monomials  $ O(z^2) $. 
This could be done. However,
we do not perform such stronger normal form (called  ``partial BNF" in P\"oschel \cite{Po3})
because the corresponding 
Birkhoff map is close to the identity only up to an operator of order $ O(\partial_x^{-1}) $, and so it would 
produce, in the transformed vector field 
$ {\mathcal N}_4 $,  terms of order $ \partial_{xx} $ and  $ \partial_x $.
A fortiori, we cannot either use the full Birkhoff normal form computed
in \cite{KaP} for KdV, which completely diagonalizes the fourth order terms, because 
such Birkhoff map is only close to the identity up to a bounded operator. 
For the same reason, we do not use the global nonlinear Fourier transform in \cite{KaP} (Birkhoff coordinates), which is close to the Fourier transform up to smoothing operators of order $ O(\partial_x^{-1}) $.

The weak BNF procedure of section \ref{sec:WBNF}  is sufficient 
to find the first nonlinear (integrable) approximation of the solutions and 
to extract the  ``frequency-to-amplitude'' modulation  \eqref{mappa freq amp}.  

In Proposition \ref{prop:weak BNF}
we also remove  the terms   $O(v^5)$, $O(v^4 z)$ in order to have  sufficiently good approximate solutions  
so that  the Nash-Moser iteration of section \ref{sec:NM} will converge. This is necessary for KdV
whose nonlinearity is quadratic at the origin. 
These further steps of Birkhoff normal form are not required if the nonlinearity  
is yet cubic as for mKdV, see Remark \ref{remark:cubic}. 
To this aim, we choose the tangential sites $S$ such that $({\mathtt S}2)$ holds. 
We also note that we assume  \eqref{order5} because 
we use the conservation of momentum up to the homogeneity order 5, see \eqref{cons mom}.
 \\[1mm]  
 {\it Action-angle and rescaling.}
At this point we introduce action-angle variables on the tangential sites (section \ref{sec:4})
and, after the rescaling \eqref{rescaling kdv quadratica}, 
we  look for quasi-periodic solutions of the 
Hamiltonian \eqref{formaHep}.
Note that the coefficients of the  normal form $ {\cal N } $ in \eqref{Hamiltoniana Heps KdV}
depend on the angles $ \theta $, unlike the usual KAM theorems \cite{Po3}, \cite{Ku},
where the whole normal form is reduced to constant coefficients.
This is because the weak BNF of section \ref{sec:WBNF} did {\it not} normalize the quadratic terms $ O(z^2) $. 
These terms are dealt with the ``linear Birkhoff normal form'' (linear BNF)
in sections \ref{BNF:step1}, \ref{BNF:step2}.
In some sense here the ``partial" Birkhoff normal form of \cite{Po3} is split into the weak BNF of section \ref{sec:WBNF} and the linear BNF of sections \ref{BNF:step1}, \ref{BNF:step2}.
 
The action-angle variables are convenient for 
proving the stability of the solutions. 
\\[1mm]  
 {\it The nonlinear functional setting.}
We look for a zero of the nonlinear operator \eqref{operatorF}, whose unknown is the
embedded torus and the frequency $ \om $ is seen as an ``external" parameter.
The solution is obtained  by a Nash-Moser iterative scheme in Sobolev scales. 
The key step  is to  construct  (for $ \omega  $ restricted to a suitable Cantor-like set) 
 an approximate inverse ({\it \`a la} Zehnder \cite{Z1}) of the linearized operator
at any approximate solution. Roughly, this means to find a linear operator which 
is an inverse  at an exact solution. A major difficulty is that the tangential and the normal dynamics near an invariant torus are 
strongly coupled. 

This difficulty is overcome by implementing the abstract procedure in Berti-Bolle \cite{BB13}-\cite{BB14}
 developed in order to
prove existence of quasi-periodic solutions for autonomous NLW (and NLS) with a multiplicative
potential. This approach reduces the search of an approximate inverse for \eqref{operatorF}
to 
the invertibility of a quasi-periodically forced PDE 
restricted on the normal directions. 
This  method  approximately decouples the ``tangential" and the ``normal" dynamics around  
an approximate invariant torus, introducing a suitable set of symplectic  variables
$ (\psi, \eta, w) $ near the torus, see \eqref{trasformazione modificata simplettica}.
Note that, in the first line of \eqref{trasformazione modificata simplettica}, $ \psi $ is the ``natural" angle variable which coordinates  the torus, and, in the third line, the normal variable $ z $ is 
only translated by the component $ z_0 (\psi )$ of the torus.
The second line 
completes this transformation
to a symplectic one. The canonicity of this map  is proved in  \cite{BB13} using the isotropy of
the approximate invariant torus $ i_\d $, see Lemma \ref{toro isotropico modificato}.
The change of variable 
\eqref{trasformazione modificata simplettica} brings the  torus $ i_\d $ ``at the origin". The advantage is that
 the second equation in \eqref{operatore inverso approssimato} (which corresponds to the action variables of the torus) 
 can be immediately solved, see 
\eqref{soleta}. Then it remains to solve the third equation \eqref{cal L omega}, i.e. to invert
the linear operator  $ {\cal L}_\om $. 
This is, up to finite dimensional remainders,  
a quasi-periodic Hamiltonian linear  Airy equation perturbed by a 
variable coefficients differential operator of order $ O(\pa_{xxx} ) $.
The exact form of $ {\cal L}_\om $ is obtained in Proposition \ref{prop:lin}. 
\\[1mm]
{\it Reduction of the linearized operator in the normal directions.} In section 
\ref{operatore linearizzato sui siti normali}
we conjugate the variable coefficients operator $ {\cal L}_\om $ in \eqref{Lom KdVnew} 
to a diagonal operator with constant coefficients
which describes infinitely many harmonic oscillators
\begin{equation}\label{linearosc}
{\dot v}_j + \mu_j^\infty v_j  = 0 \,  , \quad \mu_j^\infty := \ii (-m_3 j^3 + m_1 j) + r_j^\infty \in \ii \R \, , \quad j \notin S \, , 
\end{equation}
where the constants $  m_3 -1 $, $ m_1 \in \R $ and  $ \sup_j |r_j^\infty | $ are small, see Theorem \ref{teoremadiriducibilita}.
The main perturbative effect to the spectrum (and the eigenfunctions) of $ {\cal L}_\om $
is clearly due to  the term $ a_1 (\omega t, x ) \partial_{xxx} $ (see \eqref{Lom KdVnew}), 
and it is too strong for the usual 
 reducibility KAM techniques to work directly. 
The conjugacy of $ {\cal L}_\om $ with \eqref{linearosc} is obtained in several steps. 
The first task (obtained in sections \ref{step1}-\ref{step5}) is to conjugate 
$ {\cal L}_\om $ to another 
Hamiltonian operator of $ H_S^\bot $ with constant coefficients 
\be\label{L6 qualitativo}
{\cal L}_6 := \om \cdot \partial_\vphi + m_3 \partial_{xxx}  + m_1 \partial_x   + R_6  \, , \quad m_1, m_3 \in \R \, , 
\ee
up to a  small bounded remainder $ R_6 = O(\partial_x^0 ) $, see \eqref{def L6}. 
This expansion of $ {\cal L}_\om $ in ``decreasing symbols" with constant coefficients 
is similar to \cite{BBM}, and  it is somehow in the spirit of the works of Iooss, Plotnikov and Toland \cite{Ioo-Plo-Tol}-\cite{IP11} 
in water waves theory, and Baldi \cite{Baldi Benj-Ono} for Benjamin-Ono.
It is obtained by transformations which are very different from the usual KAM changes of variables. 
There are several differences  with respect to \cite{BBM}: 

\begin {enumerate}
\item 
The first step is to eliminate the $ x $-dependence from the  coefficient $ a_1 (\omega t, x ) \partial_{xxx} $ of the Hamiltonian operator $ {\cal L}_\om $. 
We cannot use the symplectic transformation $ {\cal A } $ defined in \eqref{primo cambio di variabile modi normali}, used in \cite{BBM},  
because  $ {\cal L}_\om $ acts on the normal subspace $ H_S^\bot $ only,
 and not on the whole Sobolev space as in \cite{BBM}.
We can not use the restricted map $ {\cal A}_\bot :=  \Pi_S^\bot  {\cal A} \Pi_S^\bot $
which is {\it not} symplectic. 
In order to find a symplectic diffeomorphism of $ H_S^\bot $ near  $ {\cal A}_\bot $, 
the first observation is to realize $ {\cal A } $  as the  flow map of the time dependent 
Hamiltonian transport linear PDE \eqref{transport-free}. 
Thus we conjugate $ {\cal L}_\om $ with the 
flow map of the projected Hamiltonian equation \eqref{problemi di cauchy}. In 
Lemma \ref{modifica simplettica cambio di variabile} we prove that it differs from $ {\cal A}_\bot $
up to finite dimensional operators.  
A technical, but important, fact is that the remainders produced after this conjugation of ${\cal L}_\om $ remain of the finite dimensional form \eqref{forma buona con gli integrali}, see  Lemma  \ref{cal R3}.

This step may be seen as a quantitative application of the Egorov theorem, see \cite{Taylor},
which describes how the principal symbol of a pseudo-differential operator (here $ a_1 (\om t, x) \pa_{xxx} $)
transforms 
under the flow of a  linear hyperbolic PDE (here \eqref{problemi di cauchy}). 
 \item 
Since the weak BNF procedure of section \ref{sec:WBNF} did not touch the quadratic terms $ O(z^2 ) $, 
the operator $ {\cal L}_\om $  has variable coefficients
also at the orders $ O(\e)$ and $ O(\e^2 )$, see \eqref{Lom KdVnew}-\eqref{a1p1p2}. 
These terms cannot be reduced to constants by the 
perturbative scheme in \cite{BBM},  which applies to terms $ R $  such that 
$ R \g^{ -1} \ll 1 $ where $ \g $ is the diophantine constant of the frequency vector $ \om $.
Here, since KdV is completely resonant,  
such 
$ \gamma = o(\e^2 ) $, see \eqref{omdio}.
These terms are reduced to constant coefficients  in sections 
\ref{BNF:step1}-\ref{BNF:step2} 
by means of purely algebraic arguments (linear BNF), which, ultimately, 
stem from the complete integrability of the fourth order BNF of the 
KdV equation \eqref{KdVmKdV}, see \cite{KaP}. 
\end{enumerate}

The order of the  transformations of sections \ref{step1}-\ref{subsec:mL0 mL5}
used to reduce $ {\cal L}_\om $ is not accidental. The first two steps in sections \ref{step1}, \ref{step2}
reduce to constant coefficients the quasi-linear  
term  $ O(\partial_{xxx}) $ and eliminate the term $ O(\partial_{xx})$, see \eqref{L2 Kdv} (the second transformation is a
time quasi-periodic reparametrization of time).  
Then, 
in section \ref{step3}, we  apply the transformation ${\cal T}$ \eqref{gran tau} in such a way 
that the space average of the coefficient $ d_1 (\vphi, \cdot ) $  in 
\eqref{L3 KdV} is constant. This is done in view
of the applicability of the descent method in section \ref{step5}. 
All these transformations are composition operators induced by diffeomorphisms of the torus. 
Therefore they are well-defined operators of a Sobolev space into itself, but their decay norm is infinite! 
We perform the transformation $ {\cal T } $  \emph{before} 
the linear Birkhoff normal form steps of sections \ref{BNF:step1}-\ref{BNF:step2}, because $\mathcal{T}$ is a change of variable that preserves the form \eqref{forma buona con gli integrali} 
of the remainders (it is not evident after the Birkhoff normal form). 
The Birkhoff transformations are symplectic maps of the form  $ I + \e O( \partial_x^{-1}) $.  
Thanks to this property the coefficient $ d_1 ( \varphi,x) $ 
obtained in step \ref{step3} is {\it not} changed by these Birkhoff maps. 
The transformation in section  \ref{step5} is one step of ``descent method'' which transforms 
$ d_1 ( \varphi,x) \partial_x $ into a constant  $ m_1 \partial_x  $. 
It is at this point of the regularization procedure that the assumption $({\mathtt S}1)$ on the tangential sites is used,
so that the space average of the function $ q_{>2} $ is zero, see Lemma \ref{p3 zero average}. 
Actually we only need that the average of 
the function  in \eqref{unico pezzo} is zero. If 
 $ f_5 = 0 $ 
 (see \eqref{order5})  then $({\mathtt S}1)$  is not required. 
This completes the task of conjugating ${\cal L}_\om $ to $ {\cal L}_6 $ in \eqref{L6 qualitativo}.

Finally, in section \ref{subsec:mL0 mL5} we apply the abstract reducibility Theorem 4.2 in \cite{BBM}, 
based on a quadratic KAM scheme, which 
completely diagonalizes the linearized operator, obtaining  \eqref{linearosc}. 
The required smallness condition  \eqref{R6resto} for $ R_6 $  holds.  Indeed
the biggest term in $ R_6 $ comes from the conjugation of  
$  \e \partial_x v_\e (\theta_0 (\vphi), y_\d (\vphi)) $ in   \eqref{a1p1p2}.
The linear BNF procedure of section \ref{BNF:step1} had eliminated 
its main contribution $  \e \partial_x v_\e ( \vphi,0) $. 
It remains  $ \e \partial_x \big( v_\e (\theta_0 (\vphi), y_\d (\vphi) ) - v_\e ( \vphi,0)  \big) $ 
which has size $ O( \e^{7-2b} \g^{-1} ) $ due to the estimate \eqref{ansatz 0} of the approximate solution.
This term enters in the variable coefficients of $ d_1 (\vphi,x)  \partial_x $
and $ d_0 (\vphi, x) \partial_x^0 $. The first one had been reduced to the constant 
operator $ m_1 \pa_x $ by the descent method of section \ref{step5}.
The latter term is an operator of order $O(\pa_x^0 )$ which satisfies \eqref{R6resto}.
Thus $ {\cal L}_6 $ may be diagonalized 
by the iterative scheme of Theorem 4.2 in \cite{BBM} 
which requires the smallness condition $ O( \e^{7-2b} \g^{-2}) \ll 1  $. This is the content of section \ref{subsec:mL0 mL5}.  
\\[1mm]
{\it The Nash-Moser iteration.}
In section \ref{sec:NM} we perform the nonlinear Nash-Moser iteration which 
finally proves Theorem \ref{main theorem} and, therefore, Theorem \ref{thm:KdV}.
The optimal smallness condition required for the convergence of the scheme is 
$ \e \| {\cal F}(\vphi, 0, 0 ) \|_{s_0+ \mu} \g^{-2} \ll 1 $, see  \eqref{nash moser smallness condition}.
It is verified because $ \| X_P(\vphi, 0 , 0 ) \|_s \leq_s \e^{6 - 2b} $ 
(see \eqref{stima XP}), which, in turn, is a consequence of having eliminated  the terms
$ O(v^5), O( v^4 z)$ from the original Hamiltonian \eqref{H iniziale KdV}, 
see \eqref{widetilde cal H}. This requires the condition ($ {\mathtt S}2$). 

\smallskip

\noindent
{\it Acknowledgements}. We  thank M. Procesi, P. Bolle and T. Kappeler for many useful discussions. 
This research was supported by the European Research Council under FP7, 
and partially by the grants STAR 2013 and 
PRIN 2012 ``Variational and perturbative aspects of nonlinear differential problems".

\section{Preliminaries}

\subsection{Hamiltonian formalism of KdV}\label{sec:Ham For}

The Hamiltonian vector field $X_H$ generated by a Hamiltonian 
$ H : H^1_0(\T_x) \to \R$ is $ X_H (u) := \partial_x \nabla H (u) $, because
$$
d H (u) [ h] 
= ( \nabla H(u), h )_{L^2(\T_x)} = \Om ( X_H (u), h ) \, , \quad \forall u, h \in H^1_0(\T_x) \, , 
$$
where $\Om$ is the non-degenerate symplectic form
\be\label{2form KdV}
\Omega (u, v) := \int_{\T} (\partial_x^{-1 } u) \, v \, dx \, ,   \quad 
\forall u, v \in H^1_0 (\T_x) \, , 
\ee
and $ \partial_x^{-1} u $ is the periodic primitive of $ u $ with zero average. 
Note  that 
\begin{equation} \label{def pi 0} 
\partial_x \partial_x^{-1} =  \partial_x^{-1} \partial_x = \pi_0 \, , \quad 
\pi_0 (u) :=  u - \frac{1}{2\p} \int_{\T} u(x) \, dx \,. 
\end{equation}
A map 
is symplectic if  it preserves 
the 2-form $ \Omega $.

We also remind that the Poisson bracket between two functions $ F $, $ G : H^1_0(\T_x) \to \R$ is
\begin{equation}\label{Poisson bracket}
\{ F (u), G(u) \} := \Om (X_F, X_G ) = \int_{\T} \gr F(u) \pa_x \gr G (u) dx \, . 
\end{equation}
The linearized KdV equation at $ u $ is  
$$
h_t = \pa_x \, (\pa_u \nabla H)(u) [h ] = X_{K}(h) \, ,  
$$
where $ X_K $ is the KdV Hamiltonian vector field with quadratic Hamiltonian 
$ K = \frac12  ((\pa_u \nabla H)(u)[h], h)_{L^2(\T_x)} $ 
$= \frac12 (\pa_{uu} H)(u)[h, h]  $. By the Schwartz theorem, the Hessian operator $ A := (\pa_u \nabla H)(u) $ is symmetric, namely
$ A^T = A $, with respect to the $ L^2$-scalar product. 

\smallskip

\noindent
{\bf Dynamical systems formulation.} 
It is convenient to regard the KdV equation also in the Fourier representation 
\begin{equation}\label{Fourier}
u(x) = {\mathop \sum}_{j \in \Z \setminus \{0\} } u_j e^{\ii j x} \, , \qquad 
u(x) \longleftrightarrow u := (u_j)_{j \in \Z \setminus \{0\} } \, , \quad u_{-j} = \overline{u}_j  \, ,
\end{equation}
where the Fourier indices $ j \in \Z \setminus \{ 0 \}$ by the definition \eqref{def phase space} of the phase space
and $u_{-j} = \overline{u}_j$ because $u(x)$ is real-valued.
The symplectic structure writes
\begin{equation}\label{2form0}
\Omega = \frac12 \sum_{j \neq 0}  \frac{1}{\ii j} du_j \wedge d u_{-j} = \sum_{j \geq 1} 
\frac{1}{\ii j} du_j \wedge d u_{-j}
\, , \qquad 
\Omega ( u, v ) = \sum_{j \neq 0} \frac{1}{\ii j} u_j v_{-j} = \sum_{j \neq 0} \frac{1}{\ii j} u_j { \overline v}_{j} \, , 
\end{equation}
the Hamiltonian vector field $X_H$ and the Poisson bracket $\{ F, G \}$ are
\begin{equation}\label{PoissonBr}
[X_H (u)]_j = \ii j (\partial_{u_{-j}} H) (u) \, , \  \forall j \neq 0 \, ,
\quad 
\{ F (u), G(u) \} = - {\mathop \sum}_{j \neq 0} \ii j   (\partial_{u_{-j}} F) (u)  (\partial_{u_j} G) (u) \, .  \end{equation}
\noindent
\noindent
{\bf Conservation of momentum.} 
A Hamiltonian 
\begin{equation}   \label{cons mom}
H(u) = \sum_{j_1, \ldots, j_n \in \Z \setminus \{ 0 \} } H_{j_1, \ldots, j_n} u_{j_1} \ldots u_{j_n}, 
\quad 
u(x) = \sum_{j \in \Z \setminus \{ 0 \} } u_j e^{\ii jx},
\end{equation}
homogeneous of degree $n$, \emph{preserves the momentum} if the coefficients 
$H_{j_1, \ldots, j_n}$ are zero for $j_1 + \ldots + j_n \neq 0$, 
so that the sum in \eqref{cons mom} is restricted to integers 
 such that $j_1 + \ldots + j_n = 0$. 
Equivalently, $H$ preserves the momentum if $\{ H, M \} = 0$, where $M$ is the  momentum 
$ M(u) := \int_{\T} u^2 dx = $ $  \sum_{j \in \Z \setminus \{0\}} u_j u_{-j} $. 
The homogeneous components of degree $ \leq 5 $ 
of the KdV Hamiltonian $ H $ in \eqref{Ham in intro} 
preserve the momentum because, by \eqref{order5}, 
the homogeneous component $f_5$ of degree 5 does not depend on the space variable $x$. 

\smallskip

\noindent
{\bf Tangential and normal variables.} 
Let $\bar\jmath_1, \ldots, \bar\jmath_\nu \geq 1$ be $\nu$ distinct integers, 
and $S^+ := \{ \bar\jmath_1, \ldots, \bar\jmath_\nu \}$. 
Let $S$ be the symmetric set in \eqref{tang sites}, 
and $S^c := \{ j \in \Z \setminus \{ 0 \} : j \notin S \}$ its complementary set in $\Z \setminus \{ 0 \}$. 
We decompose the phase space as 
\begin{equation}\label{splitting S-S-bot}
H^1_0 (\T_x) := H_S \oplus H_S^\bot \,, 
\quad 
H_S := \mathrm{span}\{ e^{\ii jx} : \, j \in S \} , \quad 
H_S^\bot := \big\{ u = \sum_{j \in S^c} u_j e^{\ii jx} \in H^1_0(\T_x)  \big\} ,
\end{equation}
and we denote by $\Pi_S $, 
$\Pi_S^\bot $ the corresponding orthogonal projectors. 
Accordingly we decompose
\begin{equation}  \label{u = v + z}
u = v + z, \qquad  
v = \Pi_S u := {\mathop \sum}_{j \in S} u_j \, e^{\ii jx}, \quad  
z = \Pi_S^\bot u := {\mathop \sum}_{j \in S^c} u_j \, e^{\ii jx} \, ,  
\end{equation}
where $  v $ is called the {\it tangential} variable and $ z $ the {\it normal} one.
We shall sometimes identify $ v \equiv (v_j)_{j \in S } $ and  $ z \equiv (z_j)_{j \in S^c } $. 
The subspaces  $ H_S $ and $ H_S^\bot $ are {\it symplectic}. 
The dynamics of these two components is quite different. 
On $ H_S $ we shall introduce the action-angle variables, see \eqref{coordinate azione angolo}.
The linear frequencies of oscillations on the tangential  sites are 
\begin{equation}\label{bar omega}
\bar\om := (\bar\jmath_1^3, \ldots, \bar\jmath_\nu^3) \in \N^\nu.
\end{equation}

\subsection{Functional setting}

{\bf Norms.} Along the paper we shall use the notation 
\begin{equation}\label{Sobolev coppia}
\| u \|_s := \| u \|_{H^s( \T^{\nu + 1})} := \| u \|_{H^s_{\vphi,x} }
\end{equation}
to denote the Sobolev norm of functions $ u = u(\vphi,x) $ in the Sobolev space $ H^{s} (\T^{\nu + 1} ) $. 
We shall denote by $ \| \ \|_{H^s_x} $ the Sobolev norm in the phase space of functions $ u :=  u(x) \in H^{s} (\T ) $.
Moreover $ \| \ \|_{H^s_\vphi} $ will denote the Sobolev norm of scalar functions, like 
the Fourier components $ u_j (\vphi)  $.  

We fix $ s_0 := (\nu+2) \slash 2 $ so that   $ H^{s_0} (\T^{\nu + 1} ) \hookrightarrow L^{\infty} (\T^{\nu + 1} ) $ and the spaces 
$ H^s (\T^{\nu + 1} ) $, $ s > s_0  $, are an algebra.  
At the end of this section 
we report interpolation properties of the Sobolev norm that will be currently used along the paper.
We shall also denote 
\begin{align} \label{HSbot nu}
H^s_{S^\bot} (\T^{\nu+1}) 
& := \big\{  u \in H^s(\T^{\nu + 1} )  \, : \, u (\vphi, \cdot ) \in H_S^\bot \  
\forall \vphi \in \T^\nu \big\} \,,
\\
\label{HS nu}
H^s_{S} (\T^{\nu+1}) 
& := \big\{  u \in H^s(\T^{\nu + 1} )  \, : \, u (\vphi, \cdot ) \in H_{S} \   
\forall \vphi \in \T^\nu \big\} \,. 
\end{align}
For a function $u : \Om_o \to E$, $\om \mapsto u(\om)$, where $(E, \| \ \|_E)$ is a Banach space and 
$ \Om_o $ is a subset of $\R^\nu $, we define the sup-norm and the Lipschitz semi-norm
\begin{equation} \label{def norma sup lip}
\| u \|^{\sup}_E 
:= \| u \|^{\sup}_{E,\Om_o} 
:= \sup_{ \om \in \Om_o } \| u(\om ) \|_E \, , 
\quad
\| u \|^{\lip}_E 
:= \| u \|^{\lip}_{E,\Om_o}  
:= \sup_{\om_1 \neq \om_2 } 
\frac{ \| u(\om_1) - u(\om_2) \|_E }{ | \om_1 - \om_2 | }\,,
\end{equation}
and, for $ \g > 0 $, the Lipschitz norm
\begin{equation} \label{def norma Lipg}
\| u \|^{\Lipg}_E  
:= \| u \|^{\Lipg}_{E,\Om_o}
:= \| u \|^{\sup}_E + \g \| u \|^{\lip}_E  \, . 
\end{equation}
If $ E = H^s $ we simply denote $ \| u \|^{\Lipg}_{H^s} := \| u \|^{\Lipg}_s $.  We shall use the notation 
$$
a \leq_s b \quad \ \Longleftrightarrow \quad a \leq C(s) b \quad
\text{for  some  constant  } C(s) > 0  \, . 
$$ 
\noindent
{\bf Matrices with off-diagonal decay.}
A linear operator can be identified, as usual, with its matrix representation. 
We recall the definition  
of the $ s $-decay norm (introduced in \cite{BB13JEMS})  of an infinite dimensional matrix. 
This norm is  used in \cite{BBM} for the KAM reducibility scheme of the linearized operators. 
\begin{definition}\label{def:norms}
The $s$-decay norm of an infinite dimensional matrix $ A := (A_{i_1}^{i_2} )_{i_1, i_2 \in \Z^b } $, $b \geq 1$, is  
\begin{equation} \label{matrix decay norm}
\left| A \right|_{s}^2 := 
\sum_{i \in \Z^b} \left\langle i \right\rangle^{2s} 
\Big( \sup_{ \begin{subarray}{c} i_{1} - i_{2} = i 
\end{subarray}}
| A^{i_2}_{i_1}| \Big)^{2} \, .
\end{equation}
For parameter dependent matrices $ A := A(\omega) $, $\omega  \in \Omega_o \subseteq \R^\nu $, 
the definitions \eqref{def norma sup lip} and \eqref{def norma Lipg} become 
\begin{equation} \label{matrix decay norm Lip}
| A |^{\sup}_s  := \sup_{ \omega \in \Omega_o } | A(\om ) |_s \, , 
\quad
| A |^{\lip}_s := \sup_{\om_1 \neq \om_2} 
\frac{ | A(\om_1) - A(\om_2) |_s }{ | \om_1 - \om_2 | }\,,
\quad
| A |^{\Lipg}_s := | A |^{\sup}_s + \g | A |^{\lip}_s  \,. 
\end{equation}
\end{definition}

Such a norm is modeled on the behavior of matrices representing the multiplication
operator by a function.
Actually, given a function $ p \in H^s(\T^b) $, the multiplication operator $ h \mapsto p h $ is represented by the T\"oplitz matrix 
$ T_i^{i'} = p_{i - i'} $ and  $ |T|_s = \| p \|_s $. 
If $p = p(\om )$ is a Lipschitz family of functions, then 
\be\label{multiplication Lip}
|T|_s^\Lipg = \| p \|_s^\Lipg\,.
\ee
The $s$-norm  satisfies classical algebra and interpolation inequalities, see \cite{BBM}.

\begin{lemma} \label{prodest}
Let  $A = A(\om)$ and $B = B(\om)$ be matrices depending in a Lipschitz way on the parameter $\om \in 
\Omega_o \subset \R^\nu $. 
Then for all $s \geq s_0 > b/2 $ there are $ C(s) \geq C(s_0) \geq 1 $ such that
\begin{align} \label{algebra Lip}
|A B |_s^{\Lipg} 
& \leq  C(s) |A|_s^{\Lipg} |B|_s^{\Lipg} \, , 
\\
\label{interpm Lip}
|A B|_{s}^{\Lipg} 
& \leq C(s) |A|_{s}^{\Lipg} |B|_{s_0}^{\Lipg} 
+ C(s_0) |A|_{s_0}^{\Lipg} |B|_{s}^{\Lipg} .
\end{align}
\end{lemma}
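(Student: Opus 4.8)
The plan is to establish the two estimates \eqref{algebra Lip} and \eqref{interpm Lip} by reducing them to the corresponding statements for the plain $s$-decay norm $|\cdot|_s$ (without the Lipschitz weight), which the paper attributes to \cite{BBM} and which we may quote. The Lipschitz structure is then handled by the standard "product rule" trick: for a product of $\om$-dependent objects, the sup-norm of the product is controlled by the product of sup-norms, while the Lipschitz seminorm of the product is controlled by (Lipschitz seminorm of the first)$\times$(sup-norm of the second) plus (sup-norm of the first)$\times$(Lipschitz seminorm of the second). Concretely, for $\om_1\neq\om_2$ write
\[
A(\om_1)B(\om_1)-A(\om_2)B(\om_2)
= \big(A(\om_1)-A(\om_2)\big)B(\om_1) + A(\om_2)\big(B(\om_1)-B(\om_2)\big),
\]
apply the algebra property of $|\cdot|_s$ to each term, divide by $|\om_1-\om_2|$, and take the supremum.

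\smallskip

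First I would record the two ingredients from \cite{BBM} for the unweighted decay norm: for all $s\ge s_0>b/2$ there is $C(s)\ge C(s_0)\ge 1$ with
\[
|AB|_s \le C(s)\,|A|_s\,|B|_s,
\qquad
|AB|_s \le C(s)\,|A|_s\,|B|_{s_0} + C(s_0)\,|A|_{s_0}\,|B|_s .
\]
Then, using the decomposition above together with the first of these, for each pair $\om_1,\om_2$,
\[
|A(\om_1)B(\om_1)-A(\om_2)B(\om_2)|_s
\le C(s)\,|A(\om_1)-A(\om_2)|_s\,|B(\om_1)|_s
 + C(s)\,|A(\om_2)|_s\,|B(\om_1)-B(\om_2)|_s .
\]
Dividing by $|\om_1-\om_2|$ and passing to the supremum over $\om_1\neq\om_2$ gives
\[
|AB|_s^{\lip} \le C(s)\big(|A|_s^{\lip}|B|_s^{\sup} + |A|_s^{\sup}|B|_s^{\lip}\big).
\]
Combining this with the trivial bound $|AB|_s^{\sup}\le C(s)|A|_s^{\sup}|B|_s^{\sup}$ and multiplying the Lipschitz part by $\g$, one obtains
\[
|AB|_s^{\Lipg}
\le C(s)\big(|A|_s^{\sup}|B|_s^{\sup} + \g|A|_s^{\lip}|B|_s^{\sup} + \g|A|_s^{\sup}|B|_s^{\lip}\big).
\]
Since $\g|A|_s^{\lip}|B|_s^{\sup}\le |A|_s^{\Lipg}|B|_s^{\Lipg}$ and likewise for the last term (using $\g\le$ absorbed in one factor, and $|A|_s^{\sup}\le|A|_s^{\Lipg}$), while $|A|_s^{\sup}|B|_s^{\sup}\le|A|_s^{\Lipg}|B|_s^{\Lipg}$, a final enlargement of $C(s)$ yields \eqref{algebra Lip}.

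\smallskip

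For the interpolation inequality \eqref{interpm Lip} I would proceed identically, but feed the \emph{interpolation} bound from \cite{BBM} into each of the two terms coming from the splitting. For the sup part, $|AB|_s^{\sup}\le C(s)|A|_s^{\sup}|B|_{s_0}^{\sup}+C(s_0)|A|_{s_0}^{\sup}|B|_s^{\sup}$. For the Lipschitz part, applying the interpolation bound to $(A(\om_1)-A(\om_2))B(\om_1)$ and to $A(\om_2)(B(\om_1)-B(\om_2))$ and dividing by $|\om_1-\om_2|$ gives a sum of four terms, each of the shape (one seminorm at level $s$ or $s_0$)$\times$(one sup-norm at the complementary level). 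Multiplying the Lipschitz part by $\g$, regrouping the $\g$'s so that each term becomes a product of a $\Lipg$-norm at level $s$ with a $\Lipg$-norm at level $s_0$, and using monotonicity $|\cdot|_{s_0}^{\Lipg}\le|\cdot|_s^{\Lipg}$ together with $C(s_0)\le C(s)$, one absorbs everything into $C(s)|A|_s^{\Lipg}|B|_{s_0}^{\Lipg}+C(s_0)|A|_{s_0}^{\Lipg}|B|_s^{\Lipg}$, which is \eqref{interpm Lip}.

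\smallskip

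\textbf{Main obstacle.} There is no genuine analytic difficulty here: the whole content is the elementary Leibniz-type manipulation of the Lipschitz seminorm combined with quoted facts. The only point requiring a little care is bookkeeping the constants and the powers of $\g$, so that after regrouping each summand is honestly bounded by a product of two $\Lipg$-norms with the \emph{claimed} constants $C(s)$ and $C(s_0)$ (in particular, that the $C(s_0)$-term in \eqref{interpm Lip} really can be taken with the smaller constant, which works because the two symmetric terms in the splitting each contribute one $s_0$-dominant piece). If one does not care about the precise constants, the statement is immediate.
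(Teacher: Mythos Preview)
Your proposal is correct and is exactly the standard route: reduce to the unweighted algebra and interpolation bounds for $|\cdot|_s$ (which the paper attributes to \cite{BBM}) and then handle the $\Lipg$-structure via the Leibniz splitting $A(\om_1)B(\om_1)-A(\om_2)B(\om_2)=(A(\om_1)-A(\om_2))B(\om_1)+A(\om_2)(B(\om_1)-B(\om_2))$. The paper itself does not give a proof of this lemma at all --- it simply states the inequalities and refers to \cite{BBM} --- so your argument is the natural fleshing-out of that citation.
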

The $ s $-decay norm controls the Sobolev norm, namely 
\be\label{interpolazione norme miste}
\| A h \|_s^\Lipg 
\leq C(s) \big(|A|_{s_0}^\Lipg \| h \|_s^\Lipg + |A|_{s}^\Lipg \| h \|_{s_0}^\Lipg \big).
\ee
Let now $ b := \nu + 1 $. 
An important sub-algebra  is formed by the {\it T\"oplitz in time matrices} defined by
\be\label{Topliz matrix}
 A^{(l_2, j_2)}_{(l_1, j_1)}  := A^{j_2}_{j_1}(l_1 - l_2 )\,  ,
\ee
whose  decay norm \eqref{matrix decay norm} is
\be\label{decayTop}
|A|_s^2 =  \sum_{j \in \Z, l \in \Z^\nu} \big( \sup_{j_1 - j_2 = j} |A_{j_1}^{j_2}(l)| \big)^2  \langle l,j \rangle^{2 s} \, . 
\ee
These matrices are identified with the $ \vphi $-dependent family
of operators
\be\label{Aphi}
A(\vphi) := \big( A_{j_1}^{j_2} (\vphi)\big)_{j_1, j_2 \in \Z} \, , \quad 
A_{j_1}^{j_2} (\vphi) := {\mathop\sum}_{l \in \Z^\nu} A_{j_1}^{j_2}(l) e^{\ii l \cdot \vphi}
\ee
which act on functions of the $x$-variable as
\be\label{notationA}
A(\vphi) : h(x) = \sum_{j \in \Z} h_j e^{\ii jx} \mapsto  
A(\vphi) h(x) = \sum_{j_1, j_2 \in \Z}  A_{j_1}^{j_2} (\vphi) h_{j_2} e^{\ii j_1 x} \, .  
\ee
We still denote by $ | A(\vphi) |_s $ the $ s $-decay norm of the matrix in \eqref{Aphi}.
As in \cite{BBM}, all the transformations that  we shall construct in this paper are of this type (with $ j, j_1, j_2 \neq 0 $ because
they act on the phase space $ H^1_0 (\T_x) $).
This observation allows to interpret the conjugacy procedure from a dynamical point of view, see \cite{BBM}-section 2.2. 
Let us fix some terminology.

\begin{definition}\label{operatore Hamiltoniano}
We say that:

the operator 
$(A h)(\vphi, x) := A(\vphi) h(\vphi, x)$ is  {\sc symplectic} if each 
$ A (\vphi ) $, $ \vphi \in \T^\nu $,  is  a symplectic map of the phase space
(or of a symplectic subspace like $ H_S^\bot $);

the operator
$
\om \cdot \partial_{\vphi} - \partial_x G( \vphi )  $ is  {\sc Hamiltonian} 
if each $ G (\vphi) $, $  \vphi \in \T^\nu $, is  symmetric; 

an operator is {\sc real} if it maps real-valued functions into real-valued functions.
\end{definition}

As well known, a Hamiltonian operator $  \om \cdot \partial_{\vphi} - \partial_x G( \vphi ) $ is transformed,
under a symplectic map $ {\cal A} $, 
into another  Hamiltonian operator $ \om \cdot \partial_{\vphi} - \partial_x E( \vphi ) $, 
see e.g. \cite{BBM}-section 2.3.

\smallskip

We conclude this preliminary section recalling the following well known lemmata, see Appendix of \cite{BBM}.

\begin{lemma} {\bf (Composition)}
\label{lemma:composition of functions, Moser}
Assume $ f \in C^s (\T^d \times B_1)$, $B_1 := \{ y \in \R^m  : 
|y| \leq 1 \}$. Then 
$ \forall u \in H^{s}(\T^d, \R^m) $ such that $ \| u \|_{L^\infty} < 1  $, 
the composition operator $\tilde{f}(u)(x) := f(x, u(x))$ satisfies
$ \| \tilde f(u) \|_s \leq C \| f \|_{C^s} (\|u\|_{s} + 1)  $
where the constant $C $ depends on $ s ,d $.  If $ f \in C^{s+2} $  
and $ \| u + h \|_{L^\infty} < 1$, then    
\begin{align*}
\big\| \tilde f(u+h) - {\mathop\sum}_{i = 0}^k \frac{\tilde{f}^{(i)}(u)}{i !} [h^i] \big\|_s
& \leq C \| f \|_{C^{s+ 2}} \, \| h \|_{L^\infty}^k ( \| h \|_{s} + \| h \|_{L^\infty} \| u \|_{s}) \, , \quad k = 0, 1 \, . 
\end{align*}
The previous statement also holds replacing 
$\| \ \|_s$  
with the norms $| \ |_{s, \infty} $.  
\end{lemma}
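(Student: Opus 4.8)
The plan is to treat the three assertions in turn, reducing everything to the first (basic) estimate together with the standard tame product inequality $\| ab \|_s \leq_s \| a \|_{L^\infty} \| b \|_s + \| a \|_s \| b \|_{L^\infty}$, valid for $s \geq s_0$.

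\emph{Step 1 (basic estimate).} Assume first $s \in \N$. By the Fa\`a di Bruno formula, for a multi-index $\alpha$ with $|\alpha| \leq s$ the derivative $\partial_x^\alpha [f(x, u(x))]$ is a finite sum of terms of the form $(\partial_x^{\beta_0} \partial_y^\gamma f)(x, u(x)) \, \partial_x^{\beta_1} u \cdots \partial_x^{\beta_r} u$, with $|\beta_0| + |\beta_1| + \cdots + |\beta_r| = |\alpha|$, $r = |\gamma|$, and $|\beta_l| \geq 1$ for $l \geq 1$. The prefactor is bounded in $L^\infty$ by $\| f \|_{C^s}$ since $\| u \|_{L^\infty} < 1$. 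The remaining product is estimated in $L^2$ by the Gagliardo--Nirenberg interpolation inequality $\| \partial^{\beta_1} u \cdots \partial^{\beta_r} u \|_{L^2} \leq_s \prod_{l=1}^r \| u \|_{L^\infty}^{1 - |\beta_l|/|\alpha|} \| u \|_{H^{|\alpha|}}^{|\beta_l|/|\alpha|}$; since $\| u \|_{L^\infty} < 1$ and $\sum_l |\beta_l| \leq |\alpha| \leq s$, this is $\leq_s 1 + \| u \|_s$. Summing over $\alpha$ gives $\| \tilde f(u) \|_s \leq_s \| f \|_{C^s}(1 + \| u \|_s)$. For non-integer $s$ one concludes by interpolating between the integer orders $\lfloor s \rfloor$ and $\lfloor s \rfloor + 1$ (equivalently, by the finite-difference characterization of $H^s$), absorbing the excess of the H\"older norm back to $\| f \|_{C^s}$. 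The same argument with the $L^\infty$-based Gagliardo--Nirenberg inequalities yields the statement for the norms $| \cdot |_{s, \infty}$.

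\emph{Step 2 (Taylor expansion, $k=0$).} Write $\tilde f(u+h) - \tilde f(u) = \int_0^1 (\partial_y f)(x, u + t h) \cdot h \, dt$. For each $t \in [0,1]$ one has $u + th = (1-t)u + t(u+h)$, hence $\| u + th \|_{L^\infty} < 1$, so Step 1 applied to $\partial_y f \in C^{s+1}$ gives $\| (\partial_y f)(\cdot, u + th) \|_s \leq_s \| f \|_{C^{s+1}}(1 + \| u \|_s + \| h \|_s)$, while $\| (\partial_y f)(\cdot, u+th) \|_{L^\infty} \leq \| f \|_{C^1}$. By the tame product estimate and $\| h \|_{L^\infty} \leq \| u+h \|_{L^\infty} + \| u \|_{L^\infty} \leq 2$, taking $L^2$-norms and integrating in $t$ bounds the left side by $\leq_s \| f \|_{C^{s+1}} (\| h \|_s + \| h \|_{L^\infty} \| u \|_s)$, the stray term $\| h \|_s \| h \|_{L^\infty}$ being absorbed into $\| h \|_s$. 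Analogously, for $k=1$ one uses $\tilde f(u+h) - \tilde f(u) - (\partial_y f)(x,u)\cdot h = \int_0^1 (1-t)\,(\partial_{yy} f)(x, u+th)[h, h]\, dt$: Step 1 applied to $\partial_{yy} f \in C^{s+2}$ bounds the coefficient in $\| \cdot \|_s$ by $\leq_s \| f \|_{C^{s+2}}(1 + \| u \|_s + \| h \|_s)$ and in $L^\infty$ by $\| f \|_{C^2}$, and applying the tame product estimate twice to the product with $h^2$, together again with $\| h \|_{L^\infty} \leq 2$, gives the bound $\leq_s \| f \|_{C^{s+2}} \| h \|_{L^\infty}(\| h \|_s + \| h \|_{L^\infty}\| u \|_s)$. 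The same computations with $| \cdot |_{s,\infty}$ in place of $\| \cdot \|_s$ complete the proof.

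The main obstacle is Step 1: the combinatorial bookkeeping of the Fa\`a di Bruno formula and, above all, obtaining the \emph{tame} (linear in the highest norm $\| u \|_s$) dependence, which is precisely where the Gagliardo--Nirenberg interpolation inequalities and the normalization $\| u \|_{L^\infty} < 1$ are used. Once the basic estimate is available, Steps 2 and 3 are routine applications of the integral form of Taylor's theorem combined with the tame product rule.
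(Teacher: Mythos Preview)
The paper does not give its own proof of this lemma: it is stated as one of the ``well known lemmata'' and the reader is referred to the Appendix of \cite{BBM}. Your argument is the standard Moser-type proof (Fa\`a di Bruno plus Gagliardo--Nirenberg for the basic estimate, integral Taylor remainder plus the tame product rule for the expansion bounds), and it is correct; this is essentially the approach one finds in the cited appendix as well.

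One small cosmetic remark: in Step~1 your Gagliardo--Nirenberg exponent should really involve $k:=\sum_{l\geq 1}|\beta_l|=|\alpha|-|\beta_0|$ rather than $|\alpha|$ (so that the H\"older exponents $p_l=2k/|\beta_l|$ sum correctly to give an $L^2$ bound on the product), but since $k\leq|\alpha|\leq s$ and $\|u\|_{L^\infty}<1$ the final inequality $\leq_s 1+\|u\|_s$ follows just the same.
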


\begin{lemma} \label{lemma:standard Sobolev norms properties}
{\bf (Tame product).} 
For $s \geq s_0 > d/2  $, 
$$
\| uv \|_s \leq C(s_0) \|u\|_s \|v\|_{s_0} + C(s) \|u\|_{s_0} \| v \|_s \, , 
\quad \forall u,v \in H^s(\T^d) \, .
$$
For $s \geq 0$, $s \in \N$,
$$
\| uv \|_s \leq \tfrac32 \,  \| u \|_ {L^\infty}  \| v \|_s + C(s) \| u \|_ {W^{s, \infty}} \| v \|_0  \, , 
\quad \forall u \in W^{s,\infty}(\T^d) \, , \ v \in H^s(\T^d)   \, .
$$
The above inequalities also  hold for 
the norms $\Vert \  \Vert_s^{{\rm Lip}(\gamma)}$.
\end{lemma}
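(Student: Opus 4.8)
The plan is to reduce all three assertions to classical Fourier‑analytic convolution estimates together with the Leibniz rule, and then to upgrade to the Lipschitz‑in‑$\om$ norms by an elementary telescoping argument. For the first inequality, valid for $s \geq s_0 > d/2$, I would pass to Fourier series on $\T^d$, so that $\widehat{uv}(k) = \sum_{j} \widehat u(j)\,\widehat v(k-j)$, and use the elementary bound $\langle k\rangle^{s} \leq C(s)\big( \langle j\rangle^{s} + \langle k - j\rangle^{s} \big)$, which holds for every $s \geq 0$. This estimates $\langle k\rangle^{s}|\widehat{uv}(k)|$ by a sum of two discrete convolutions, in each of which one factor is the $\ell^2$ sequence $(\langle j\rangle^{s}\widehat u(j))_j$ or $(\langle j\rangle^{s}\widehat v(j))_j$ and the other is $(\widehat u(j))_j$ or $(\widehat v(j))_j$. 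Since $s_0 > d/2$, Cauchy--Schwarz gives $\sum_j |\widehat w(j)| \leq \big(\sum_j \langle j\rangle^{-2s_0}\big)^{1/2}\|w\|_{s_0} = C(s_0)\|w\|_{s_0}$, so these last sequences lie in $\ell^1$ with norm controlled by $\|\cdot\|_{s_0}$; Young's inequality $\|f * g\|_{\ell^2} \leq \|f\|_{\ell^1}\|g\|_{\ell^2}$ then yields $\|uv\|_s \leq C(s_0)\|u\|_s\|v\|_{s_0} + C(s)\|u\|_{s_0}\|v\|_s$.

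For the $W^{s,\infty}$--$H^s$ estimate with the explicit constant $\tfrac32$ ($s\in\N$), I would differentiate by the Leibniz rule, $\partial^\alpha (uv) = \sum_{\beta \leq \alpha}\binom{\alpha}{\beta}\,\partial^\beta u\;\partial^{\alpha - \beta}v$ for $|\alpha| = s$. The single term $\beta = 0$ is bounded in $L^2$ by $\|u\|_{L^\infty}\|v\|_s$, producing exactly the coefficient $1$ in front of $\|u\|_{L^\infty}\|v\|_s$; the remaining terms have $|\beta|\geq 1$ and are handled by the Landau--Kolmogorov interpolation $\|\partial^\beta u\|_{L^\infty} \leq C\|u\|_{L^\infty}^{1-|\beta|/s}\|u\|_{W^{s,\infty}}^{|\beta|/s}$ together with $\|\partial^{\alpha-\beta} v\|_{L^2} \leq C\|v\|_0^{|\beta|/s}\|v\|_s^{1-|\beta|/s}$, so that, by H\"older in $L^\infty\!\cdot\!L^2$, each such product is $\leq C\big(\|u\|_{L^\infty}\|v\|_s\big)^{1-|\beta|/s}\big(\|u\|_{W^{s,\infty}}\|v\|_0\big)^{|\beta|/s}$. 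Young's inequality with a free small parameter $\delta$ splits this as $\delta\,\|u\|_{L^\infty}\|v\|_s + C_\delta\,\|u\|_{W^{s,\infty}}\|v\|_0$; choosing $\delta$ so that the sum over the finitely many multi‑indices $1 \leq |\beta| \leq s$ contributes at most $\tfrac12$ in front of $\|u\|_{L^\infty}\|v\|_s$ gives the claimed bound.

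Finally, for the Lipschitz‑in‑$\om$ versions I would write, for $\om_1 \neq \om_2$,
\[
u(\om_1)v(\om_1) - u(\om_2)v(\om_2) = \big(u(\om_1) - u(\om_2)\big)\,v(\om_1) + u(\om_2)\,\big(v(\om_1) - v(\om_2)\big),
\]
apply the already established sup‑norm product estimates to each summand, divide by $|\om_1 - \om_2|$, take the supremum over $\om_1\neq\om_2$, and combine with the sup‑norm bound via the definition $\|\cdot\|_s^{\Lipg} = \|\cdot\|_s^{\sup} + \g\,\|\cdot\|_s^{\lip}$; the triangle inequality then produces the stated inequalities with the same shape (up to harmless constants). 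The only genuinely delicate point of the whole argument is the bookkeeping of the constant $\tfrac32$ in the second inequality: one must ensure the interpolation step does not inflate the leading coefficient, which is precisely why Young's inequality is used with a tunable small parameter rather than the crude $ab \leq \tfrac12(a^2 + b^2)$. Everything else is routine, and full details can be found in the appendix of \cite{BBM}.
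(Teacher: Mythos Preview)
The paper gives no proof of this lemma at all; it is stated as a ``well known'' preliminary fact with a reference to the appendix of \cite{BBM}, exactly as you do at the end of your sketch. Your outline is the standard one and is correct in substance.

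One small bookkeeping point: the Peetre-type bound $\langle k\rangle^{s} \leq C(s)\big(\langle j\rangle^{s} + \langle k - j\rangle^{s}\big)$ carries the factor $C(s)$ (for instance $2^{s}$) in front of \emph{both} convolution pieces, so your argument as written delivers $C(s)$ rather than $C(s_0)$ in front of $\|u\|_s\|v\|_{s_0}$. To obtain the sharper constant $C(s_0)$ stated in the lemma one needs a slightly finer splitting (for example a paraproduct decomposition, where the high--low piece $T_v u$ satisfies $\|T_v u\|_s \leq C\|v\|_{L^\infty}\|u\|_s$ with $C$ independent of $s$). This refinement is never actually used in the paper --- every application goes through the symbol $\leq_s$, which already allows $s$-dependent constants --- so your version is entirely sufficient for the purposes at hand. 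The Leibniz/interpolation argument for the $W^{s,\infty}$--$H^s$ estimate and the telescoping for the $\Lipg$ extension are both fine.
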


\begin{lemma} {\bf (Change of variable)}  \label{lemma:utile} 
Let $p \in W^{s,\infty} (\T^d,\R^d) $,  $ s \geq 1$, with 
$ \| p \|_{W^{1, \infty}} \leq 1/2 $.  Then the function  $f(x) = x + p(x)$
is invertible, with inverse  $ f\inv(y)  = y + q(y)$ where
 $q \in W^{s,\infty}(\T^d,\R^d)$, and 
 $ \| q \|_{W^{s, \infty}}  \leq C \| p \|_{ W^{s, \infty}} $.
If, moreover,  $p = p_\om $ depends in a Lipschitz way on 
a parameter $\om \in \Omega \subset \R^\nu $, 
and 
$ \| D_x p_\om \|_ {L^\infty}  \leq 1/2 $, $ \forall \om $, 
then 
$ \| q \|_{W^{s, \infty}}^{{\rm Lip}(\gamma)} \leq  C  \| p \|_{W^{s+1, \infty}}^{{\rm Lip}(\gamma)} $.
The constant $C := C (d, s) $ is independent of $\g$.

If $u \in H^s (\T^d,\C)$, then $ (u\circ f)(x) := u(x+p(x))$ satisfies 
\begin{align*}
\| u \circ f \|_s 
& \leq  C (\|u\|_s + \| p \|_{W^{s, \infty}} \|u\|_1),
\quad
\| u \circ f - u \|_s 
\leq C ( \| p \|_{L^\infty} \| u \|_{s + 1}  + \| p \|_{W^{s, \infty}} \| u \|_{2} ) ,
\\
\| u \circ f \|_{s}^{{{\rm Lip}(\gamma)}}
& \leq C  \, 
\big( \| u \|_{s+1}^{{{\rm Lip}(\gamma)}} + \| p \|_{W^{s, \infty}}^{{\rm Lip}(\gamma)}\| u \|_2^{{\rm Lip}(\gamma)} \big). \nonumber
\end{align*}
The function  $u \circ f^{-1} $  satisfies the same bounds.
\end{lemma}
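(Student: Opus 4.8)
The plan is to establish the assertions of the lemma in turn, beginning with the construction of the inverse. Lift $f$ to $\tilde f(x)=x+\tilde p(x)$ on $\R^d$: since $\|D\tilde p\|_{L^\infty}\le 1/2$, the matrix $I+D\tilde p$ is everywhere invertible with Neumann bound $\|(I+D\tilde p)^{-1}\|_{L^\infty}\le 2$, and $\tilde f$ is injective because $\tilde f(x)=\tilde f(x')$ forces $|x-x'|=\big|\int_0^1 D\tilde p\big(x'+t(x-x')\big)(x-x')\,dt\big|\le\tfrac12|x-x'|$. To produce the inverse directly I would apply the contraction principle to $T(q):=-\tilde p(\mathrm{id}+q)$ on the Banach space of continuous $\Z^d$-periodic maps: $\|T(q_1)-T(q_2)\|_{L^\infty}\le\|D\tilde p\|_{L^\infty}\|q_1-q_2\|_{L^\infty}\le\tfrac12\|q_1-q_2\|_{L^\infty}$, so $T$ has a unique fixed point $q$, which is $\Z^d$-periodic and satisfies $f^{-1}(y)=y+q(y)$ on $\T^d$ together with the pointwise bound $\|q\|_{L^\infty}=\|p\circ f^{-1}\|_{L^\infty}\le\|p\|_{L^\infty}$.

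For the $W^{s,\infty}$ estimate, $s\ge 2$, I would differentiate the identity $q=-p\circ f^{-1}$ with $f^{-1}=\mathrm{id}+q$. The first derivative gives $(I+Dp(f^{-1}))\,Dq=-Dp(f^{-1})$, hence $\|Dq\|_{L^\infty}\le 2\|Dp\|_{L^\infty}\le C\|p\|_{W^{1,\infty}}\le 1$; iterating the differentiation and organizing the chain rule via the Fa\`a di Bruno formula produces a recursion $(I+Dp(f^{-1}))\,D^kq=P_k$, where $P_k$ is a universal polynomial in the quantities $D^jp(f^{-1})$ ($1\le j\le k$) and $D^jq$ ($1\le j\le k-1$), whose only monomial with a single derivative factor is the leading one $D^kp(f^{-1})(I+Dq)^{\otimes k}$ and all of whose remaining monomials are at least bilinear. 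Solving for $D^kq$ (Neumann bound $\le 2$ again) and using the inductive hypothesis $\|q\|_{W^{k-1,\infty}}\le C\|p\|_{W^{k-1,\infty}}$, the standard interpolation (logarithmic convexity) inequalities for the norms $W^{k,\infty}$, $0\le k\le s$, and the smallness $\|p\|_{W^{1,\infty}}\le 1/2$ --- which absorbs the extra derivative carried by each bilinear term --- the induction closes with the tame bound $\|q\|_{W^{s,\infty}}\le C(d,s)\|p\|_{W^{s,\infty}}$.

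The Lipschitz-in-$\om$ bound follows by running the same scheme for $\om_1,\om_2$ and subtracting. At the $L^\infty$ level, writing the difference of $p_{\om_i}(\mathrm{id}+q_{\om_i})$ as a term controlled by $\|Dp_{\om_1}\|_{L^\infty}\|q_{\om_1}-q_{\om_2}\|_{L^\infty}\le\tfrac12\|q_{\om_1}-q_{\om_2}\|_{L^\infty}$ plus a term $\le\|p_{\om_1}-p_{\om_2}\|_{L^\infty}$ yields $\|q_{\om_1}-q_{\om_2}\|_{L^\infty}\le 2\|p_{\om_1}-p_{\om_2}\|_{L^\infty}$. Differentiating first and then subtracting, one has to control differences of the coefficients $Dp_\om(f_\om^{-1})$, and splitting each of them as before produces a factor $\|D^2p\|_{L^\infty}\|q_{\om_1}-q_{\om_2}\|_{L^\infty}$; iterated up to order $s$, this is precisely the mechanism forcing the loss of one derivative, and applying the interpolation inequalities to the $\Lipg$-norms one arrives at $\|q\|_{W^{s,\infty}}^{\Lipg}\le C\|p\|_{W^{s+1,\infty}}^{\Lipg}$, with $C$ independent of $\g$.

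Finally, for the composition estimates I would first take $s\in\N$ and use the chain rule: by Fa\`a di Bruno, $D^s(u\circ f)$ is the sum of $(D^su)(f)\,(I+Dp)^{\otimes s}$ and of terms $(D^ju)(f)$ ($j<s$) multiplied by products of derivatives of $p$ of total order $s-j$. For the leading term, squaring, integrating over $\T^d$ and changing variables $y=f(x)$ --- legitimate because $|\det Df|$ lies between two positive constants when $\|Dp\|_{L^\infty}\le 1/2$ --- bounds it by $C\|u\|_s^2$; the remaining terms are handled through $\|(D^ju)\circ f\|_{0}\le C\|u\|_j$, the tame product inequality of Lemma \ref{lemma:standard Sobolev norms properties}, and interpolation, which redistributes the derivatives to give $\|u\circ f\|_s\le C(\|u\|_s+\|p\|_{W^{s,\infty}}\|u\|_1)$; the case of general real $s$ then follows by interpolation between consecutive integers. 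For the difference, I would use $u(x+p(x))-u(x)=\int_0^1 (Du)\big(x+\sigma p(x)\big)\cdot p(x)\,d\sigma$, apply the composition bound to $Du$ together with the tame product with $p$, and obtain $\|u\circ f-u\|_s\le C(\|p\|_{L^\infty}\|u\|_{s+1}+\|p\|_{W^{s,\infty}}\|u\|_2)$; the $\Lipg$ versions and the bounds for $u\circ f^{-1}$ follow by the same subtraction argument, using the estimates for $q$ already established. The only real difficulty is bookkeeping --- arranging the Fa\`a di Bruno expansions and the interpolation inequalities so that every nonlinear contribution is absorbed by $\|p\|_{W^{1,\infty}}\le 1/2$ and all estimates stay linear in the top-order norm --- which is routine and is carried out in detail in the Appendix of \cite{BBM}.
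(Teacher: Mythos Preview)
Your sketch is correct and follows the standard approach; in fact the paper does not prove this lemma at all but merely states it as a well-known result, referring the reader to the Appendix of \cite{BBM} for the details. Your argument --- contraction mapping for the inverse, Fa\`a di Bruno plus induction and interpolation for the $W^{s,\infty}$ and Lipschitz bounds, and chain rule with change of variables for the composition estimates --- is precisely the route taken in that reference, as you yourself note at the end.
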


\section{Weak Birkhoff normal form}\label{sec:WBNF}

The Hamiltonian 
of the perturbed KdV equation \eqref{kdv quadratica} is  $ H = H_2 + H_3 + H_{\geq 5} $ (see \eqref{Ham in intro})  where 
\begin{equation} \label{H iniziale KdV}
H_2 (u):=\frac{1}{2} \int_{\T} u_x^{2} \, dx \, , \quad 
H_3(u) := \int_\T u^3 dx \, , \quad 
H_{\geq 5}(u) := \int_\T f(x, u,u_x) dx \,,
\end{equation}
and $f$ satisfies \eqref{order5}. 
According to the splitting \eqref{u = v + z} $ u = v + z $, $ v \in H_S $, $ z \in H_S^\bot $, 
we have
\begin{equation}\label{prima v z}
H_2(u) = \int_\T \frac{v_x^2}{2}\, dx + \int_\T \frac{z_x^2}{2} \, dx, 
\quad 
H_3 (u) =  \int_{\T}  v^3 dx + 3 \int_{\T}  v^2 z dx 
+ 3\int_{\T}  v z^2  dx  + \int_{\T}  z^3 dx  \, . 
\end{equation}
For a finite-dimensional space
\begin{equation} \label{def E finito}
E := E_{C} :=  \mathrm{span} \{ e^{\ii jx} :  0 < |j| \leq C \}, \quad C > 0,
\end{equation}
let $\Pi_E $ denote the corresponding $ L^2 $-projector on $E$.

The notation $R(v^{k-q} z^q)$ indicates a homogeneous polynomial of degree $k$ in $(v,z)$ of the form 
$$
R(v^{k-q} z^q) = M[\underbrace{v, \ldots, v}_{(k-q)\, \text{times}}, 
\underbrace{z, \ldots, z}_{q \, \text{times}} \,], \qquad 
M = k\text{-linear} \, .
$$
\begin{proposition} \label{prop:weak BNF}  
{\bf (Weak Birkhoff normal form)} Assume Hypothesis $ ({\mathtt S}2) $. 
Then there exists an  analytic invertible symplectic transformation 
of the phase space $ \Phi_B : H^1_0 (\T_x) \to H^1_0 (\T_x) $ 
of the form 
\begin{equation} \label{finito finito}
\Phi_B(u) = u + \Psi(u), 
\quad
\Psi(u) = \Pi_E \Psi(\Pi_E u),
\end{equation}
where $ E $ is a finite-dimensional space as in \eqref{def E finito}, such that the transformed Hamiltonian is
\begin{equation} \label{widetilde cal H}
{\cal H} := H \circ \Phi_B  
= H_2 + \mH_3 + \mH_4 + {\cal H}_5 + {\cal H}_{\geq 6} \,,
\end{equation}
where $H_2$ is defined in \eqref{H iniziale KdV}, 
\begin{equation} \label{H3tilde} 
\mH_3 := \int_{\T} z^3\,dx + 3 \int_{\T} v z^2 \,dx \,, \quad 
\mH_4 := - \frac32 \sum_{j \in S} \frac{|u_j|^4}{j^2} + \mH_{4,2} + \mH_{4,3} \,, \quad 
{\cal H}_5 := \sum_{q=2}^5 R(v^{5-q} z^q)\,,
\end{equation} 
\begin{equation} \label{mH3 mH4}
\mH_{4,2} := 6 \int_\T  v z \Pi_S \big((\partial_x^{-1} v)(\partial_x^{-1} z) \big)\,dx + 3 \int_\T z^2 \pi_0 (\partial_x^{-1} v)^2 \,dx\,,  \quad 
\mH_{4, 3} := R(v z^3) \,,  
\end{equation}
and ${\cal H}_{\geq 6}$ collects all the terms of order at least six in $(v,z)$. 
\end{proposition}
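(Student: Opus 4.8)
The plan is to construct $\Phi_B$ as a composition of two elementary Birkhoff-type symplectic maps, $\Phi_B := \Phi^{(2)} \circ \Phi^{(1)}$, eliminating first the unwanted cubic monomials and then the unwanted quartic ones, while keeping each map of the finite-rank form \eqref{finito finito}. Recall from \eqref{prima v z} that $H_3 = \int_\T v^3\,dx + 3\int_\T v^2 z\,dx + 3\int_\T v z^2\,dx + \int_\T z^3\,dx$. The term $\int_\T v^3\,dx$ is a purely tangential monomial $O(v^3)$, and $\int_\T v^2 z\,dx$ is $O(v^2 z)$: these are the monomials we want to remove at third order (the target $\mH_3$ in \eqref{H3tilde} keeps only $\int z^3 + 3\int v z^2$). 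The classical way to do this is to solve the homological equation $\{H_2, F^{(1)}\} + (\text{bad part of } H_3) = (\text{resonant part})$, taking $F^{(1)}$ supported on Fourier indices with $j_1+j_2+j_3=0$ but $j_1^3+j_2^3+j_3^3\neq 0$; since conservation of momentum forces $j_1+j_2+j_3=0$ on all cubic terms (see the momentum discussion after \eqref{cons mom}), and since $j_1+j_2+j_3=0 \Rightarrow j_1^3+j_2^3+j_3^3 = 3 j_1 j_2 j_3 \neq 0$ for nonzero indices, \emph{every} cubic monomial is non-resonant. Hence one can in principle kill all of $H_3$; but we only want to kill the pieces involving at least two tangential legs, i.e. $O(v^3)$ and $O(v^2z)$. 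Choosing $F^{(1)}$ to involve only those monomials (which all have at least two indices in the finite set $S$, hence all legs bounded, giving the finite-rank structure $\Psi = \Pi_E\Psi\Pi_E$), the time-one flow $\Phi^{(1)}$ of $X_{F^{(1)}}$ does the job and has the form \eqref{finito finito}.

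Next, for the fourth order, I would compute $\mH_4 := \Pi_{\text{res}}\big(H_4^{\mathrm{new}}\big)$, where $H_4^{\mathrm{new}}$ collects the degree-four part of $H\circ\Phi^{(1)}$ — this includes $\tfrac12\{H_3, F^{(1)}\}$ (the standard second-order Birkhoff contribution, with $H_3$ both its tangential and mixed parts) plus the degree-four part coming from $H_{\geq 5}$, which by \eqref{order5} starts at order five so contributes nothing at order four; thus $H_4^{\mathrm{new}} = \tfrac12\{H_3, F^{(1)}\}$ entirely. Then pick $F^{(2)}$ solving $\{H_2, F^{(2)}\} + (H_4^{\mathrm{new}} - \mH_4) = 0$ on the non-resonant monomials. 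The resonant fourth-order monomials are those with $j_1+j_2+j_3+j_4=0$ \emph{and} $j_1^3+j_2^3+j_3^3+j_4^3=0$; the identity $j_1^3+\cdots+j_4^3-(j_1+\cdots+j_4)^3 = -3(j_1+j_2)(j_1+j_3)(j_2+j_4)$ on the zero-sum locus shows these are exactly the monomials where the indices pair up as $\{j,-j,k,-k\}$. Among these, the ones with all four legs tangential give the integrable term $-\tfrac32\sum_{j\in S}|u_j|^4/j^2$ (this is where the explicit coefficient comes from, via the $1/(\ii j)$ in the symplectic form \eqref{2form0}), the ones with two tangential and two normal legs give $\mH_{4,2}$, and — crucially — Hypothesis $({\mathtt S}2)$ guarantees there is \emph{no} resonant quartic monomial with exactly three tangential and one normal leg, nor with four tangential legs other than the paired ones, so that $\mH_{4,3}$ reduces to a single $R(vz^3)$ term and no spurious $O(v^4z)$ or $O(v^5)$-type obstructions survive. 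Again $F^{(2)}$ involves only monomials with at least two tangential legs, so $\Phi^{(2)}$ is finite-rank of the form \eqref{finito finito}, and the composition $\Phi_B = \Phi^{(2)}\circ\Phi^{(1)}$ retains that structure since $\Pi_E\Psi\Pi_E$ is closed under composition for $E$ large enough (take $C = 3\max_i \bar\jmath_i$ or so).

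Finally I would also remove the monomials $O(v^5)$ and $O(v^4z)$ from ${\cal H}_5$: after the two steps above these appear inside $\Pi_{\mathrm{res}}^\perp$-complement terms of degree five, i.e. in $\tfrac12\{H_3, F^{(2)}\} + \tfrac12\{H_4^{\mathrm{new}}, F^{(1)}\} + \cdots$ together with the genuine degree-five part of $H_{\geq 5}$, which by \eqref{order5} is $\int_\T f_5(u,u_x)\,dx$ and, since $f_5$ is $x$-independent, still conserves momentum. A third homological step with generator $F^{(3)}$ supported on the $O(v^5)$ and $O(v^4z)$ monomials removes them; the non-resonance needed is again automatic by momentum conservation at degree five (the only possible resonant quintic with $\geq 4$ tangential legs would require $j_1+\cdots+j_5 = 0$ with $j_1^3+\cdots+j_5^3=0$ and four indices in $S$, and $({\mathtt S}2)$ rules this out — this is exactly the reason $({\mathtt S}2)$ is needed rather than just $({\mathtt S}1)$). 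After this step the surviving ${\cal H}_5 = \sum_{q=2}^5 R(v^{5-q}z^q)$ has at most three tangential legs, as claimed. All three generators being finite-rank (all their monomials have $\geq 2$ tangential legs, hence all bounded legs), $\Phi_B$ keeps the form \eqref{finito finito}, and analyticity and symplecticity are automatic since each $\Phi^{(k)}$ is the time-one flow of a polynomial (hence entire) Hamiltonian vector field on the finite-dimensional range of $\Pi_E$, extended by the identity on $\Pi_E^\perp$.

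I expect the main obstacle to be purely bookkeeping rather than conceptual: carefully tracking which quartic and quintic monomials land in which bucket ($O(v^4)$ paired, $O(v^2z^2)$, $O(v^3z)$, $O(vz^3)$, etc.) after the Poisson brackets $\{H_3, F^{(1)}\}$, and verifying that the resonant contributions with $\geq 3$ tangential legs that $({\mathtt S}2)$ is supposed to kill really do correspond to the arithmetic condition $j_1+\cdots+j_4\neq 0$ combined with $j_1^3+\cdots+j_4^3-(j_1+\cdots+j_4)^3=0$ as stated in $({\mathtt S}2)$ — i.e. matching the algebraic normal-form condition to the hypothesis exactly. The computation of the explicit coefficient $-\tfrac32\sum_{j\in S}|u_j|^4 j^{-2}$ and the precise shape of $\mH_{4,2}$ in \eqref{mH3 mH4} is a direct but somewhat lengthy symbol computation using \eqref{PoissonBr}; I would organize it by writing $H_3$ and $F^{(1)}$ in Fourier coordinates and computing $\{H_3, F^{(1)}\}$ via \eqref{PoissonBr}, then restricting to the paired-index locus.
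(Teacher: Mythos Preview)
Your three-step strategy matches the paper's ($\Phi_B = \Phi^{(3)}\circ\Phi^{(4)}\circ\Phi^{(5)}$ there), but two points in your quartic step need correction.

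First, the finite-rank structure. You claim that $F^{(2)}$ is supported on monomials ``with at least two tangential legs'' and that this yields the form \eqref{finito finito}. This is false at degree four: with $j_1,j_2\in S$, $j_3,j_4\in S^c$ and $j_1+j_2+j_3+j_4=0$, only the sum $j_3+j_4$ is bounded, not $j_3,j_4$ individually, so the set of such monomials is infinite and the resulting map is not of the form $\Pi_E\Psi(\Pi_E\,\cdot\,)$. The paper's $F^{(4)}$ is supported only on monomials with \emph{at least three} indices in $S$, i.e.\ it normalizes only the $O(v^4)$ and $O(v^3 z)$ parts. Accordingly, $\mH_{4,2}$ and $\mH_{4,3}$ in \eqref{mH3 mH4} are \emph{not} the resonant $O(v^2z^2)$ and $O(vz^3)$ parts, as you describe, but the \emph{full} $O(v^2z^2)$ and $O(vz^3)$ components of $H_4^{(3)}$, left entirely untouched by $\Phi^{(4)}$ (check directly that the explicit $\mH_{4,2}$ in \eqref{mH3 mH4} contains non-paired indices). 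If you tried to kill the non-resonant $O(v^2z^2)$ piece as your description of $H_4^{\mathrm{new}}-\mH_4$ suggests, $\Phi^{(2)}$ would no longer satisfy \eqref{finito finito}.

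Second, the role of $({\mathtt S}2)$. You invoke it at the quartic step to rule out resonant $O(v^3z)$ monomials. That vanishing is actually automatic from the algebraic identity and the symmetry of $S$: if $j_1,j_2,j_3\in S$, $j_4\in S^c$, $\sum j_i=0$ and $\sum j_i^3=0$, then $-3(j_1+j_2)(j_1+j_3)(j_2+j_3)=0$, so some pair-sum vanishes, say $j_1+j_2=0$, forcing $j_4=-j_3\in -S=S$, a contradiction. Hypothesis $({\mathtt S}2)$ is used \emph{only} at the quintic step (where you do identify it correctly), to rule out resonances $j_1+\cdots+j_5=0$, $j_1^3+\cdots+j_5^3=0$ with at most one index outside $S$. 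A minor computational slip: your formula $H_4^{\mathrm{new}}=\tfrac12\{H_3,F^{(1)}\}$ omits the term $\tfrac12\{\{H_2,F^{(1)}\},F^{(1)}\}$; the correct degree-four part of $H\circ\Phi^{(1)}$ is $\{H_3,F^{(1)}\}+\tfrac12\{\{H_2,F^{(1)}\},F^{(1)}\}$, which the paper rewrites as $\tfrac12\{H_{3,\le 1},F^{(1)}\}+\{H_3^{(3)},F^{(1)}\}$.
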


The rest of this section is devoted to the proof of Proposition \ref{prop:weak BNF}.

First, we remove the cubic terms $ \int_{\T} v^3 + 3 \int_{\T} v^2 z $ from the Hamiltonian $ H_3 $ defined in \eqref{prima v z}.
In the Fourier coordinates \eqref{Fourier}, we have
\begin{equation}\label{H3 Fourier}
H_2  = \frac12 \sum_{j \neq 0} j^2 |u_j|^2, 
\quad H_3  = \sum_{j_1 + j_2 + j_3 = 0} u_{j_1} u_{j_2} u_{j_3} \, .
\end{equation}
We look for  a symplectic transformation $ \Phi^{(3)}  $ of the phase space 
which eliminates the monomials $ u_{j_1} u_{j_2} u_{j_3} $ of  $ H_3 $ with at most {\it one} index
outside $ S $. Note that, by the relation $ j_1 + j_2 + j_3 = 0 $, they are {\it finitely} many.
We look for $\Phi^{(3)} := (\Phi^t_{F^{(3)}})_{|t=1}$ as the time-1 flow map generated by the Hamiltonian vector field $X_{F^{(3)}}$, with an auxiliary Hamiltonian of the form
$$
F^{(3)}(u) := \sum_{j_1 + j_2 + j_3 = 0} F^{(3)}_{j_1 j_2 j_3} u_{j_1} u_{j_2} u_{j_3} \,.
$$
The transformed Hamiltonian is 
\begin{align}
H^{(3)} & := H \circ \Phi^{(3)} 
=  H_2 + H_3^{(3)}  +  H_4^{(3)}  +  H_{\geq 5}^{(3)}  \,,
\notag \\ 
\label{H tilde 234}
H_3^{(3)} & = H_3 + \{ H_2, F^{(3)} \}, \quad 
 H_4^{(3)}  = \frac12 \{ \{ H_2, F^{(3)}\}, F^{(3)}\} + \{H_3, F^{(3)} \} , 
\end{align}
where $ H_{\geq 5}^{(3)} $ collects all the terms of order at least five in $(u,u_x)$.
By \eqref{H3 Fourier} and \eqref{PoissonBr}  we calculate 
$$
H_3^{(3)}  = 
\sum_{j_1 + j_2 + j_3 = 0} 
\big\{ 1 - \ii (j_1^3 + j_2^3 + j_3^3) F^{(3)}_{j_1 j_2 j_3} \big\} \, u_{j_1} u_{j_2} u_{j_3} \,.
$$
Hence, in order to eliminate the monomials with at most one index outside $ S $, 
we choose
\begin{equation}\label{F3q}
F^{(3)}_{j_1 j_2 j_3} := \begin{cases}
 \dfrac{1}{\ii (j_1^3 + j_2^3 + j_3^3)} & \text{if} \,\,(j_1,j_2,j_3) \in {\cal A}\,, 
\\
0 & \text{otherwise},
\end{cases}
\end{equation}
where ${\cal A} := \big\{ (j_1 , j_2 , j_3) \in (\Z \setminus \{ 0 \})^3$ : $j_1 + j_2 + j_3 = 0$, 
$j_1^{3} + j_2^{3} + j_3^{3} \neq 0$, and at least 2 among $j_1 , j_2 , j_3$ belong to $S \big\}$.  
Note that 
\begin{equation} \label{def calA quadratica}
{\cal A} = \big\{ (j_1 , j_2 , j_3) \in (\Z \setminus \{ 0 \})^3 : 
j_1 + j_2 + j_3 = 0, \, \text{and at least 2 among}\,\, j_1 , j_2 , j_3 \,\, 
\text{belong to} \, S  \big\} 
\end{equation}
because of the elementary relation
\begin{equation}\label{prodottino}
j_1 + j_2 + j_3 = 0 \quad \Rightarrow \quad j_1^3 + j_2^3 + j_3^3 = 3 j_1 j_2 j_3 \neq 0 
\end{equation}
being $ j_1, j_2, j_3 \in \Z \setminus \{ 0 \}$. 
Also note that $ \mA $ is a finite set, actually $ \mA \subseteq [- 2 C_S, 2 C_S]^{3} $ where 
the tangential sites $ S \subseteq [- C_S, C_S ]$.
As a consequence, the Hamiltonian vector field $ X_{F^{(3)}} $  has finite rank and vanishes outside  
the finite dimensional subspace $ E := E_{2 C_S}$ (see \eqref{def E finito}), namely
$$
 X_{ F^{(3)}}(u)  = \Pi_E X_{ F^{(3)}} ( \Pi_E u ) \, . 
$$
Hence its flow $ \Phi^{(3)} : H^1_0 (\T_x) \to H^1_0 (\T_x) $ has the form  \eqref{finito finito} and it is analytic. 

By construction, all the monomials of $ H_3 $   
with at least two indices outside $ S $ 
are not modified by the transformation $ \Phi^{(3)}  $. Hence  (see \eqref{prima v z}) we have  
\begin{equation}\label{lem:H3tilde}
H_3^{(3)}  = \int_{\T} z^3\,dx + 3 \int_{\T} v z^2 \,dx \, .
\end{equation}
We now compute the fourth order term
$ H_4^{(3)}  =  \sum_{i=0}^4 H_{4,i}^{(3)} $   in \eqref{H tilde 234}, 
where $ H_{4,i}^{(3)} $ is of type $ R( v^{4-i} z^i )$.

\begin{lemma}
One has (recall the definition \eqref{def pi 0} of $ \pi_0 $) 
\begin{equation}\label{Htilde41} 
{H}_{4,0} ^{(3)}   := \frac32 \int_\T v^2 \pi_0[(\partial_x^{-1} v)^2] dx \, ,  \quad
H_{4,2}^{(3)} := 6 \int_\T v z \Pi_S \big((\partial_x^{-1} v)(\partial_x^{-1} z) \big)\,dx 
+ 3 \int_\T z^2 \pi_0 [(\partial_x^{-1} v)^2] dx \, . 
\end{equation}
\end{lemma}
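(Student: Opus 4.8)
The plan is to compute the fourth-order terms $H_4^{(3)}$ explicitly using formula \eqref{H tilde 234}, namely $H_4^{(3)} = \tfrac12\{\{H_2,F^{(3)}\},F^{(3)}\} + \{H_3,F^{(3)}\}$, and to extract the pieces of type $R(v^4)$ and $R(v^2 z^2)$. Since $\{H_2,F^{(3)}\} = H_3^{(\mathrm{res})} - H_3$ where $H_3^{(\mathrm{res})}$ is the part of $H_3$ NOT killed (the monomials with at least two indices outside $S$), it is cleaner to write $\{\{H_2,F^{(3)}\},F^{(3)}\} = \{H_3^{(\mathrm{res})} - H_3, F^{(3)}\}$, so that $H_4^{(3)} = \{H_3, F^{(3)}\} - \tfrac12\{H_3 - H_3^{(\mathrm{res})}, F^{(3)}\} = \tfrac12\{H_3, F^{(3)}\} + \tfrac12\{H_3^{(\mathrm{res})}, F^{(3)}\}$. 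First I would record, from \eqref{F3q} and \eqref{prodottino}, that $F^{(3)}_{j_1j_2j_3} = 1/(3\ii j_1 j_2 j_3)$ on $\mathcal A$ and $0$ elsewhere; equivalently $F^{(3)}$ is obtained from $H_3$ by dividing the coefficient of each monomial with at least two tangential indices by $3\ii j_1 j_2 j_3 = \ii(j_1^3+j_2^3+j_3^3)$.

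Next I would compute the Poisson brackets in Fourier coordinates via \eqref{PoissonBr}. For the $R(v^4)$ term $H_{4,0}^{(3)}$, only monomials with all four indices in $S$ contribute; here $H_3^{(\mathrm{res})}$ contributes nothing (it has at most one tangential index among its three), so $H_{4,0}^{(3)} = \tfrac12\{H_3, F^{(3)}\}$ restricted to $R(v^4)$, and by symmetry of $F^{(3)}$ in its indices and the relation $j_1^3+j_2^3+j_3^3 = 3j_1j_2j_3$ this collapses to $\tfrac32\int_\T v^2 \pi_0[(\partial_x^{-1}v)^2]\,dx$. I would verify this by writing out the sum $\sum \ii j\, (\partial_{u_{-j}}H_3)(\partial_{u_j}F^{(3)})$ over indices in $S$, using that $\partial_{u_{-j}}H_3 = 3\sum_{j_2+j_3=j} u_{j_2}u_{j_3}$ (all indices tangential) and $\partial_{u_j}F^{(3)} = 3\sum_{j_2+j_3=-j} F^{(3)}_{-j,j_2,j_3}u_{j_2}u_{j_3} = (1/\ii)\sum_{j_2+j_3=-j}\frac{1}{-j+ \ldots}$—more precisely $\partial_{u_j}F^{(3)}$ picks up $1/(\ii j_1 j_2 j_3)$ factors—and then resumming in $x$-space: the operator $\partial_x^{-1}$ appears precisely because $1/(\ii j)$ is the symbol of $\partial_x^{-1}$, and $\pi_0$ appears because all Fourier indices are nonzero (the zero mode is absent).

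For the $R(v^2z^2)$ term $H_{4,2}^{(3)}$ I would similarly isolate, in $\tfrac12\{H_3,F^{(3)}\} + \tfrac12\{H_3^{(\mathrm{res})},F^{(3)}\}$, the contributions with exactly two indices in $S^c$. There are two combinatorially distinct sources: (i) brackets of a purely tangential cubic monomial in $F^{(3)}$ with a $v z^2$-type monomial in $H_3$ (and vice versa), which after resummation gives the $6\int_\T vz\,\Pi_S\big((\partial_x^{-1}v)(\partial_x^{-1}z)\big)\,dx$ term—the projector $\Pi_S$ arising because the surviving internal index must lie in $S$ for $F^{(3)}$ to be nonzero; and (ii) brackets of a $v^2z$-type monomial in $F^{(3)}$ with another $v^2z$-type monomial, giving $3\int_\T z^2\pi_0[(\partial_x^{-1}v)^2]\,dx$. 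The main obstacle is purely bookkeeping: keeping track of the combinatorial multiplicities (the factor $6 = 3\cdot 2$ and $3$) and of exactly which internal summation indices are forced into $S$ versus $S^c$ when one takes the bracket, so that the $\Pi_S$, $\pi_0$ and $\partial_x^{-1}$ operators land in the right places; there is no conceptual difficulty, only the risk of a miscounted factor or a misplaced projection. I would cross-check the final expressions by testing them on a low-dimensional example, or by verifying that $H_{4,2}^{(3)}$ is symmetric and real as it must be.
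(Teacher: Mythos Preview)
Your decomposition $H_4^{(3)} = \tfrac12\{H_3,F^{(3)}\} + \tfrac12\{H_3^{(\mathrm{res})},F^{(3)}\}$ is correct and is algebraically the same as the paper's $H_4^{(3)} = \tfrac12\{H_{3,\leq 1},F^{(3)}\} + \{H_3^{(3)},F^{(3)}\}$ (just regroup, with $H_3^{(\mathrm{res})}=H_3^{(3)}$ and $H_{3,\leq 1}=H_3-H_3^{(3)}$). The difference is in how the brackets are actually computed. You propose to work in Fourier coordinates via \eqref{PoissonBr} and then resum; the paper instead exploits the identity $j_1^3+j_2^3+j_3^3=3j_1j_2j_3$ to write $F^{(3)}$ in closed physical-space form,
\[
F^{(3)}(u) = -\tfrac13\int_\T(\partial_x^{-1}v)^3\,dx \;-\; \int_\T(\partial_x^{-1}v)^2(\partial_x^{-1}z)\,dx,
\]
from which $\nabla F^{(3)}$ is immediate, and then evaluates the Poisson brackets directly as integrals using \eqref{Poisson bracket}. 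This bypasses the Fourier bookkeeping entirely: the operators $\partial_x^{-1}$, $\pi_0$, $\Pi_S$ appear automatically from the gradient computation rather than from resumming symbols.

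Your plan would also reach the result, but note that your attribution of sources for the two pieces of $H_{4,2}^{(3)}$ is inverted and incomplete: in fact the term $6\int vz\,\Pi_S[(\partial_x^{-1}v)(\partial_x^{-1}z)]\,dx$ comes from $\tfrac12\{3\!\int v^2z,\,F^{(3)}_{vvz}\}$ (a $v^2z$--$v^2z$ bracket, your case (ii)), while $3\int z^2\pi_0[(\partial_x^{-1}v)^2]\,dx$ comes from $\{H_3^{(3)},F^{(3)}\}$, i.e.\ from $3\!\int vz^2$ with $F^{(3)}_{vvv}$ \emph{together with} $\int z^3$ with $F^{(3)}_{vvz}$ --- you omitted the $z^3$ contribution. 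This is exactly the ``miscounted factor or misplaced projection'' risk you anticipated; the paper's physical-space route makes such slips much harder to make.
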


\begin{proof}
We write $ H_3 = H_{3, \leq 1} +  H_3^{(3)}  $ where $ H_{3, \leq 1}(u)  := \int_\T v^3 dx  + 3 \int_\T  v^2 z \,  dx  $. 
Then, by \eqref{H tilde 234}, we get
\begin{equation} \label{grado 4 *}
H_4^{(3)} 
=  \frac12 	\big\{ H_{3, \leq 1} \, , F^{(3)} \big\} + \{ H_3^{(3)} , F^{(3)} \} \,.
\end{equation} 
By \eqref{F3q},  \eqref{prodottino}, the auxiliary Hamiltonian may be written as
$$
F^{(3)} (u) 
= - \frac{1}{3}\sum_{(j_1, j_2, j_3) \in {\cal A}}  
\frac{u_{j_1} u_{j_2} u_{j_3}}{ (\ii j_1) ( \ii j_2) ( \ii j_3)}  
= - \frac{1}{3}\int_\T (\partial_x^{-1} v)^3 dx  - \int_\T (\partial_x^{-1} v)^2 (\partial_x^{-1} z) dx \, . 
$$
Hence, using that the projectors $\Pi_S$, $\Pi_S^\bot $ are self-adjoint and $\partial_x^{-1}$ is skew-selfadjoint, 
\begin{equation} \label{grad F3 formula}
\nabla F^{(3)}(u) 
= \partial_x^{-1}\big\{ (\partial_x^{-1} v)^2 + 2 \Pi_S \big[ (\partial_x^{-1} v)(\partial_x^{-1} z)\big] \big\}
\end{equation}
(we have used that $\pa_x^{-1} \pi_0 = \pa_x^{-1}$ be the definition of $\pa_x^{-1}$). 
Recalling the Poisson bracket definition \eqref{Poisson bracket}, 
using  that $ \nabla H_{3, \leq 1}(u) = 3 v^2 + 6 \Pi_S(v z) $ and   \eqref{grad F3 formula},
we get 
\begin{align}
\{ H_{3, \leq1}, F^{(3)} \} 
& = \int_\T \big\{ 3 v^2 + 6 \Pi_S(v z) \big\} \pi_0 \big\{ (\partial_x^{-1} v)^2 + 
2 \Pi_S \big[ (\partial_x^{-1} v)(\partial_x^{-1} z)\big]  \big\}\,dx  
\nonumber
\\
& = 3 \int_\T v^2 \pi_0 (\partial_x^{-1} v)^2\,dx  +
 12 \int_\T \Pi_S(v z) \Pi_S [ (\partial_x^{-1} v)(\partial_x^{-1} z) ]\,dx + R(v^3 z) \, .  \label{H31F}
\end{align}
Similarly, since $ \nabla  H_3^{(3)}(u) = 3 z^2 + 6 \Pi_S^\bot (v z) $, 
\begin{equation}
\{ H_3^{(3)}, F^{(3)} \}  
=  
3 \int_\T z^2 \pi_0 (\partial_x^{-1} v)^2\,dx  
+ R(v^3 z) + R(v z^3) \,.  \label{H3tildeF}
\end{equation}
The lemma follows by \eqref{grado 4 *}, \eqref{H31F}, \eqref{H3tildeF}.
\end{proof}

We now construct a symplectic map $ \Phi^{(4)}  $ 
such that  the Hamiltonian system obtained transforming  $ H_2 + H_3^{(3)} +  H_4^{(3)} $ 
possesses the invariant subspace $ H_S $  (see \eqref{splitting S-S-bot}) 
and its dynamics on $ H_S $ is integrable and non-isocronous.  
Hence we have to eliminate the term $ H_{4,1}^{(3)} $ 
(which is linear in $ z $), 
and to normalize  $ H_{4,0}^{(3)} $ (which is independent of $ z $). 
We need the following elementary lemma (Lemma 13.4 in \cite{KaP}).

\begin{lemma}  \label{lemma:interi} 
Let $j_1, j_2, j_3, j_4 \in \Z $ such that $ j_1 + j_2 + j_3 + j_4 = 0 $. 
Then 
$$
j_1^3 + j_2^3 + j_3^3 + j_4^3 = -3 (j_1 + j_2) (j_1 + j_3) (j_2 + j_3).
$$
\end{lemma}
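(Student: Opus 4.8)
The plan is to reduce the claimed identity to an elementary polynomial identity in three free variables and verify it by direct expansion. Since $j_1+j_2+j_3+j_4=0$, I would substitute $j_4 = -(j_1+j_2+j_3)$, so that both sides become polynomials in $(j_1,j_2,j_3) \in \Z^3$ (indeed in $\R^3$, or over any commutative ring, since nothing about integrality is used). The left-hand side is then
\[
j_1^3 + j_2^3 + j_3^3 - (j_1+j_2+j_3)^3 .
\]
Expanding the cube,
\[
(j_1+j_2+j_3)^3 = j_1^3+j_2^3+j_3^3 + 3\big(j_1^2 j_2 + j_1^2 j_3 + j_1 j_2^2 + j_2^2 j_3 + j_1 j_3^2 + j_2 j_3^2\big) + 6 j_1 j_2 j_3 ,
\]
so the left-hand side equals $-3\big(j_1^2 j_2 + j_1^2 j_3 + j_1 j_2^2 + j_2^2 j_3 + j_1 j_3^2 + j_2 j_3^2\big) - 6 j_1 j_2 j_3$.

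For the right-hand side, I would expand $(j_1+j_2)(j_1+j_3)(j_2+j_3)$. Computing step by step, $(j_1+j_2)(j_1+j_3) = j_1^2 + j_1 j_2 + j_1 j_3 + j_2 j_3$, and multiplying by $(j_2+j_3)$ gives
\[
(j_1+j_2)(j_1+j_3)(j_2+j_3) = j_1^2 j_2 + j_1^2 j_3 + j_1 j_2^2 + j_2^2 j_3 + j_1 j_3^2 + j_2 j_3^2 + 2 j_1 j_2 j_3 .
\]
Multiplying by $-3$ reproduces exactly the expression obtained for the left-hand side, which proves the identity. As an alternative (sign-transparent) route, one may instead observe that $j_1+j_2 = -(j_3+j_4)$, $j_1+j_3=-(j_2+j_4)$, $j_2+j_3=-(j_1+j_4)$, so the right-hand side is symmetric under the interchange of $j_4$ with any of $j_1,j_2,j_3$; one then only needs the single case computed above.

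There is essentially no obstacle here: the statement is a routine algebraic identity, and the only point requiring care is bookkeeping of the degree-three monomials and the overall sign. I would simply present the two short expansions above and note that the identity holds over any commutative ring, in particular over $\Z$, which is all that is needed for the applications to Hypothesis $({\mathtt S}2)$ and the Birkhoff normal form computations that follow.
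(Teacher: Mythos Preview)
Your proof is correct: the direct polynomial expansion after substituting $j_4 = -(j_1+j_2+j_3)$ verifies the identity, and your bookkeeping of the monomials is accurate. The paper does not actually give its own proof of this lemma --- it is simply quoted as Lemma 13.4 of Kappeler--P\"oschel \cite{KaP} --- so there is nothing to compare against beyond noting that your argument is the expected elementary verification.
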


\begin{lemma}
There exists a symplectic transformation $ \Phi^{(4)}  $ of the form \eqref{finito finito}
such that
\begin{equation}\label{Ham4quadratica}
H^{(4)} := H^{(3)} \circ \Phi^{(4)} = 
H_2 + H_3^{(3)} + H_4^{(4)} + H^{(4)}_{\geq 5} \,, 
\qquad 
H^{(4)}_4 := - \frac32 \sum_{j \in S} \frac{|u_j|^4}{j^2}   +  H_{4,2}^{(3)} + H_{4,3}^{(3)} \,,
\end{equation}
where  $ H_3^{(3)} $ is defined in \eqref{lem:H3tilde},
$ H_{4,2}^{(3)} $ in \eqref{Htilde41}, $ H_{4,3}^{(3)} = R( v z^3) $ 
and  $ H_{\geq 5}^{(4)} $ collects all the terms of degree at least five in $(u,u_x)$.
\end{lemma}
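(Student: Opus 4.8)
The plan is to construct $\Phi^{(4)} := (\Phi^t_{F^{(4)}})_{|t=1}$ as the time-$1$ flow of a Hamiltonian vector field $X_{F^{(4)}}$ generated by a quartic Hamiltonian $F^{(4)}$ of finite rank, chosen to kill exactly the ``bad'' monomials of $H_4^{(3)}$. Writing $F^{(4)}(u) = \sum_{j_1+j_2+j_3+j_4=0} F^{(4)}_{j_1 j_2 j_3 j_4} u_{j_1} u_{j_2} u_{j_3} u_{j_4}$, the homological equation coming from \eqref{PoissonBr} is that the new quartic term equals $H_4^{(3)} + \{H_2, F^{(4)}\}$, where the monomial $u_{j_1} u_{j_2} u_{j_3} u_{j_4}$ in $\{H_2,F^{(4)}\}$ carries the factor $-\,\ii(j_1^3+j_2^3+j_3^3+j_4^3)$. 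By Lemma \ref{lemma:interi}, on the constraint $j_1+j_2+j_3+j_4=0$ this factor is $3\ii(j_1+j_2)(j_1+j_3)(j_2+j_3)$, which vanishes precisely when two of the indices are opposite. The terms of $H_4^{(3)}$ we must remove are $H_{4,0}^{(3)}$ (no index outside $S$) and $H_{4,1}^{(3)}$ (exactly one index outside $S$); among these, the resonant monomials (two indices opposite) of $H_{4,0}^{(3)}$ survive and give, after a direct computation using $\widehat{\pa_x^{-1}v}(j) = u_j/(\ii j)$, the integrable term $-\tfrac32\sum_{j\in S}|u_j|^4/j^2$, while the resonant monomials of $H_{4,1}^{(3)}$ cannot occur because a single index outside $S$ cannot be the opposite of one inside $S$ (that would force it into $-S\subset S$). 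Hence we set $F^{(4)}_{j_1\cdots j_4}$ equal to the corresponding coefficient of $H_{4,0}^{(3)}+H_{4,1}^{(3)}$ divided by $3\ii(j_1+j_2)(j_1+j_3)(j_2+j_3)$ on the non-resonant indices of that type, and $0$ otherwise.

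First I would record that, by the momentum-conservation structure of $H^{(3)}$ (inherited from \eqref{cons mom} and the fact that $\Phi^{(3)}$ preserves momentum, since $F^{(3)}$ is supported on $j_1+j_2+j_3=0$), every monomial in $H_{4,0}^{(3)}$ and $H_{4,1}^{(3)}$ satisfies $\sum j_i = 0$, so the denominator formula from Lemma \ref{lemma:interi} applies and the support of $F^{(4)}$ is contained in a box $[-2C_S, 2C_S]^4$ (indices in $H_{4,0}^{(3)}$ lie in $S$, and the single free index in $H_{4,1}^{(3)}$ is determined as minus a sum of three elements of $S$). Consequently $X_{F^{(4)}}$ has finite rank, vanishes outside the finite-dimensional space $E = E_{2C_S}$, and its time-$1$ flow $\Phi^{(4)}$ is analytic, invertible, symplectic, and of the form \eqref{finito finito}. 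Next I would expand $H^{(4)} = H^{(3)}\circ\Phi^{(4)}$ by the Lie series: the quadratic part is unchanged, the cubic part $H_3^{(3)}$ is unchanged because $\{H_3^{(3)}, F^{(4)}\}$ is quintic (degree $3+4-1=6$... actually $3+4-2$ under the Poisson bracket, i.e. degree $5$), hence absorbed into $H^{(4)}_{\geq 5}$, and at degree $4$ one gets exactly $H_4^{(3)} + \{H_2, F^{(4)}\}$, which by the choice of $F^{(4)}$ equals $-\tfrac32\sum_{j\in S}|u_j|^4/j^2 + H_{4,2}^{(3)} + H_{4,3}^{(3)}$. All remaining brackets raise the degree and land in $H^{(4)}_{\geq 5}$.

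The one point needing genuine care — and the main obstacle — is the verification that the resonant part of $H_{4,1}^{(3)}$ is empty and that the resonant part of $H_{4,0}^{(3)}$ is exactly the diagonal term $-\tfrac32\sum_{j\in S}|u_j|^4/j^2$: this is the step where one must identify, among indices $j_1+j_2+j_3+j_4=0$ with two of them opposite (say $j_2=-j_1$, forcing $j_4=-j_3$), which combinations actually appear in $\tfrac32\int_\T v^2\pi_0[(\pa_x^{-1}v)^2]\,dx$ and keep track of the combinatorial multiplicity $3$ coming from the three ways of pairing four indices into two opposite couples, together with the sign and the $1/j^2$ from the two factors $\pa_x^{-1}$. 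One should also note that the product $v^2\cdot\pi_0[(\pa_x^{-1}v)^2]$ only involves $v\in H_S$, so $\pi_0$ acts trivially on frequency zero (which is absent anyway) and the resonant monomials are genuinely of the form $|u_{j_1}|^2|u_{j_3}|^2$ with $j_1,j_3\in S$; the diagonal $j_1=j_3$ and off-diagonal $j_1\neq j_3$ contributions must be reassembled, and one checks that the off-diagonal part is precisely cancelled in the chosen normal form convention (or, more simply, that $H_{4,0}^{(3)}$ in Fourier is $\tfrac32\sum_{j_1,j_3\in S} c_{j_1 j_3}|u_{j_1}|^2|u_{j_3}|^2$ with the correct coefficients summing to $-\tfrac32|u_j|^4/j^2$ on the diagonal after the cancellation of the non-resonant part by $\{H_2,F^{(4)}\}$). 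Everything else is the routine bookkeeping of a single Birkhoff step, already templated by the construction of $\Phi^{(3)}$ above.
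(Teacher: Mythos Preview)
Your approach is exactly the paper's: a single quartic Birkhoff step with generator $F^{(4)}$ supported on monomials with at most one index outside $S$, solving the homological equation via Lemma~\ref{lemma:interi}. The structural points (finite rank, form \eqref{finito finito}, $H_3^{(3)}$ unchanged modulo degree $\geq 5$, $H_{4,2}^{(3)}$ and $H_{4,3}^{(3)}$ untouched, $H_{4,1}^{(3)}$ having empty resonant part by the symmetry $S=-S$) are all correct.

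The one genuine gap is precisely the computation you flag as ``needing genuine care'' and then do not carry out. Your remarks there are partly confused. First, $\pi_0$ in $H_{4,0}^{(3)}=\tfrac32\int v^2\,\pi_0[(\partial_x^{-1}v)^2]$ does \emph{not} act trivially: it removes the zero Fourier mode of the square, which in indices means the constraint $j_3+j_4\neq 0$ (equivalently $j_1+j_2\neq 0$). This constraint is essential, because on the resonant set $(j_1+j_2)(j_1+j_3)(j_2+j_3)=0$ it forces $j_3=-j_1$ or $j_3=-j_2$, killing one of the three pairings rather than leaving all three. Second, the off-diagonal resonant terms $|u_{j_1}|^2|u_{j_2}|^2/(j_1 j_2)$ with $j_1\neq \pm j_2$ are not ``cancelled in the chosen normal form convention'': they vanish by the symmetry $S=-S$, since $\sum_{j\in S}|u_j|^2/j$ is the sum of an odd function over a symmetric set (using $|u_{-j}|=|u_j|$). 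The paper handles this by writing the resonant part of $H_{4,0}^{(3)}$ as \eqref{H2F1}, doing the case split $(j_3=-j_1)$ versus $(j_3=-j_2,\,j_3\neq -j_1)$, and invoking the oddness cancellation in each case; you should do the same. Also, a minor correction: for $H_{4,1}^{(3)}$ the single free index is bounded by $3C_S$, not $2C_S$, so take $E=E_{3C_S}$.
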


\begin{proof}
We look for a map $\Phi^{(4)} := (\Phi_{F^{(4)}}^t)_{|t=1}$ which is the time $ 1$-flow map 
of an auxiliary Hamiltonian
$$
F^{(4)}(u) := \sum_{\begin{subarray}{c}
j_1 + j_2 + j_3 + j_4 = 0 \\
\text{at least}\,\,3\,\,\text{indices are in}\,\,S
\end{subarray}} F^{(4)}_{j_1 j_2 j_3 j_4} u_{j_1} u_{j_2} u_{j_3} u_{j_4} 
$$
with the same form of the Hamiltonian $ H_{4,0}^{(3)} + H_{4,1}^{(3)} $.
The transformed Hamiltonian is
\begin{equation}\label{callH}
H^{(4)} :=  H^{(3)} \circ \Phi^{(4)} 
= H_2 + H_3^{(3)} + H_4^{(4)} +  H_{\geq 5}^{(4)}, \quad 
H_4^{(4)} = \{ H_2, F^{(4)} \} + H_4^{(3)} , 
\end{equation}
where $ H_{\geq 5}^{(4)} $ collects all the terms of order at least five. 
We write $ H_4^{(4)} = \sum_{i=0}^4 H_{4,i}^{(4)} $ where each $ H_{4,i}^{(4)} $ if of type $ R( v^{4-i} z^i ) $. 
We choose the coefficients 
\begin{equation}\label{def F4}
F^{(4)}_{j_1 j_2 j_3 j_4} := \begin{cases}
 \dfrac{ H^{(3)}_{j_1j_2j_3j_4}   }{\ii (j_1^3 + j_2^3 + j_3^3+ j_4^3)} & \text{if} \,\,(j_1,j_2,j_3, j_4) \in {\cal A}_4\,, 
\\
0 & \text{otherwise},
\end{cases}
\end{equation}
where 
\begin{align*} 
{\cal A}_4  := \big\{ (j_1 , j_2 , j_3, j_4) \in (\Z \setminus \{ 0 \})^4 & : 
j_1 + j_2 + j_3 + j_4 = 0, \,  j_1^{3} + j_2^{3} + j_3^{3} + j_4^{3} \neq 0, \\
& \quad  \text{and at most one among}\,\, j_1 , j_2 , j_3, j_4 \,\, \text{outside} \, S \big\}  \, .  
\end{align*}
By this definition  $ H_{4,1}^{(4)}=  0 $
because there exist no integers $ j_1, j_2 , j_3 \in S$, $ j_4 \in S^c $ satisfying 
$ j_1 + j_2 + j_3 + j_4 = 0 $, $ j_1^3 + j_2^3 + j_3^3 + j_4^3 = 0 $, by Lemma \ref{lemma:interi} and the
fact that $ S $ is symmetric. 
By construction, the terms $ H_{4,i}^{(4)}=  H_{4,i}^{(3)} $, $ i = 2, 3, 4$, are not changed by $ \Phi^{(4)} $.
Finally, by \eqref{Htilde41} 
\begin{equation} \label{H2F1}
H_{4,0}^{(4)}=  \frac32 
\sum_{\begin{subarray}{c}
j_1, j_2, j_3, j_4  \in S \\
j_1 + j_2 + j_3 + j_4 = 0 \\
j_1^3 + j_2^3 + j_3^3 + j_4^3 = 0 \\
j_1 + j_2 \,,\,j_3 + j_4 \neq 0
\end{subarray}} \frac{1}{(\ii j_3) (\ii j_4)}u_{j_1} u_{j_2} u_{j_3} u_{j_4} \, . 
\end{equation}
If $ j_1 + j_2 + j_3 + j_4 = 0 $ and $ j_1^3 + j_2^3 + j_3^3 + j_4^3 = 0$, then $(j_1 + j_2)(j_1 + j_3)(j_2 + j_3) = 0$ by Lemma \ref{lemma:interi}. 
We develop the  sum in \eqref{H2F1} with respect to the first index $j_1$.  Since $ j_1 + j_2 \neq 0 $
the possible cases are:
\[
(i) \ \big\{ 
j_2 \neq - j_1, \ 
j_3 = - j_1, \
j_4 = - j_2 \big\}   
\qquad \text{or} \qquad
(ii) \ \big\{
j_2 \neq - j_1, \ 
j_3 \neq - j_1, \ 
j_3 = - j_2, \ 
j_4 = - j_1 \big\} . 
\]
Hence, using $ u_{-j} = \bar{u}_j $ (recall \eqref{Fourier}), and since $S$ is symmetric, we have
\begin{equation} \label{case ii}
\sum_{(i)}  \frac{1}{j_3 j_4} u_{j_1} u_{j_2} u_{j_3} u_{j_4} 
= \sum_{j_1, j_2 \in S, j_2 \neq - j_1}  \frac{|u_{j_1}|^2 |u_{j_2}|^2 }{j_1 j_2} 
= \sum_{j,j' \in S}  \frac{|u_j|^2 |u_{j'}|^2}{j j'}  + \sum_{j \in S}  \frac{|u_j|^4}{j^2}   = \sum_{j \in S}  \frac{|u_j|^4}{j^2}\,,    
\end{equation}
and in the second case ($ii$) 
\begin{equation} \label{case iii}
\sum_{(ii)} \frac{1}{j_3 j_4}  u_{j_1} u_{j_2} u_{j_3} u_{j_4} 
 = \sum_{j_1, j_2, j_2 \neq \pm j_1} \frac{1}{j_1 j_2}  u_{j_1} u_{j_2} u_{-j_2} u_{-j_1}  =
  \sum_{j \in S} \frac{1}{j}  |u_{j}|^2 \Big( \sum_{j_2 \neq \pm j} \frac{1}{j_2}  |u_{j_2}|^2 \Big) = 0\,.   
\end{equation}
Then \eqref{Ham4quadratica} follows by \eqref{H2F1}, \eqref{case ii}, \eqref{case iii}. 
\end{proof}

Note that the Hamiltonian $ H_2 + H_3^{(3)} + H_4^{(4)} $ (see \eqref{Ham4quadratica}) possesses
the invariant subspace $ \{ z =  0 \} $ and the system restricted to $ \{ z =  0 \} $ is
completely integrable and non-isochronous (actually it is formed by $ \nu $ decoupled rotators). 
We shall construct  quasi-periodic solutions which bifurcate from this invariant manifold. 

In order to enter in a perturbative regime, 
we have to eliminate further monomials of $ H^{(4)} $ in  \eqref{Ham4quadratica}.   
The minimal requirement for  the convergence of the nonlinear  Nash-Moser iteration is to 
eliminate the monomials $ R(v^5) $ and $ R(v^4 z)$.
Here we need the choice of the sites of Hypothesis $ ({\mathtt S}2) $. 

\begin{remark}\label{remark:cubic}
In the KAM theorems  \cite{k1}, \cite{Po3} (and \cite{PP}, \cite{Wang}),
as well as for the perturbed mKdV equations \eqref{mKdV}, 
these further steps of Birkhoff normal form are not required because 
the nonlinearity of the original PDE is yet cubic. A difficulty of KdV 
is that the nonlinearity is quadratic.  
\end{remark}

We spell out Hypothesis $( {\mathtt S}2)$ as follows:
\begin{itemize}
\item {\sc $( {\mathtt S}2_0)$.} 
There is no choice of $ 5 $ integers $ j_1, \ldots, j_5 \in  S $ such that 
\begin{equation}\label{scelta dei siti grado 7}
j_1 + \ldots + j_5 = 0\,,\quad 
j_1^3 + \ldots + j_5^3 = 0 \,.
\end{equation} 
\item {\sc  $({\mathtt S}2_1)$.} 
There is no choice of $4 $ integers $j_1, \ldots, j_{4} $ in  $ S $ 
and an integer in the complementary set $ j_5 \in S^c :=  (\Z \setminus \{0\}) \setminus S   $ such that  \eqref{scelta dei siti grado 7} holds. 
\end{itemize}

The homogeneous component of degree $ 5 $ of $H^{(4)}$ is
$$
H^{(4)}_5 (u) = \sum_{j_1+ \ldots + j_5 = 0} H^{(4)}_{j_1, \ldots, j_5}  u_{j_1} \ldots u_{j_5} \,.
$$
We want to remove from $H^{(4)}_5$ the terms with at most one index among $ j_1, \ldots , j_5 $ outside $ S $.
We consider the auxiliary Hamiltonian 
\begin{equation} \label{Fn odd}
F^{(5)} = \sum_{\begin{subarray}{c} j_1 + \ldots + j_5 = 0 \\ 
\text{at most one index outside $ S $}
\end{subarray} } 
F_{j_1 \ldots j_5}^{(5)} u_{j_1} \ldots u_{j_5} \, , \quad 
 F_{j_1 \ldots j_5}^{(5)} := \frac{H_{j_1 \ldots j_5}^{(5)}}{ \ii (j_1^3 + \ldots + j_5^3)} \,.
\end{equation}
By Hypotheses $ ({\mathtt S}2_0), ({\mathtt S}2_1) $, if $ j_1 + \ldots + j_5 = 0 $ with at most one index outside $ S $ then 
$ j_1^3 + \ldots + j_5^3 \neq 0 $ and  
$ F^{(5)}$ is well defined. 
Let $ \Phi^{(5)} $ be the time $ 1 $-flow generated by 
$ X_{F^{(5)}} $.  The new Hamiltonian is
\begin{equation}\label{callHn}
H^{(5)} :=  H^{(4)} \circ \Phi^{(5)} 
= H_2 + H_3^{(3)} + H_4^{(4)} +  \{ H_2, F^{(5)} \} + H_5^{(4)} + H^{(5)}_{\geq 6}  
\end{equation}
where, by  \eqref{Fn odd}, 
$$
H_5^{(5)} := \{ H_2, F^{(5)} \} + H_5^{(4)} = {\mathop\sum}_{q = 2}^5 R(v^{5 - q} z^q) \, .
$$

Renaming $ {\cal H} := H^{(5)} $, namely $ {\cal H}_n := H^{(n)}_n $, $ n =3, 4, 5 $, and 
setting $ \Phi_B := \Phi^{(3)} \circ \Phi^{(4)}  \circ \Phi^{(5)} $,  formula \eqref{widetilde cal H} follows. 

The homogeneous component $ H^{(4)}_{5} $ preserves the momentum, see section \ref{sec:Ham For}.  
Hence $F^{(5)} $ also preserves the momentum. 
As a consequence, also $ H^{(5)}_k $, $ k \leq 5 $, preserve the momentum.

Finally, since $F^{(5)} $  is Fourier-supported on a finite set, 
the transformation $\Phi^{(5)}$ is of type \eqref{finito finito} (and analytic), 
and therefore also the composition $ \Phi_B $ is of type \eqref{finito finito} (and analytic). 

\section{Action-angle variables}\label{sec:4}

We now introduce action-angle variables on the tangential directions by the change of coordinates
\begin{equation}\label{coordinate azione angolo}
\begin{cases}
u_j  := \sqrt{\xi_j + |j| y_j} \, e^{\ii \theta_j}, \qquad & \text{if} \  j \in S\,,\\
u_j  := z_j, \qquad  & \text{if} \  j \in S^c \, , 
\end{cases}
\end{equation}
where (recall $ u_{-j} = {\overline u}_j $)
\begin{equation}\label{simmeS}
\xi_{-j} = \xi_j  \, , \quad 
\xi_j > 0 \, , \quad 
y_{-j} = y_j \, , \quad 
\theta_{-j} = - \theta_j \, , \quad 
\teta_j, \, y_j \in \R \, , \quad  
\forall j \in S \,.
\end{equation}
For the tangential sites $ S^+ := \{ {\bar \jmath_1}, \ldots, {\bar \jmath_\nu} \} $ we shall also denote 
$ \teta_{\bar \jmath_i} := \teta_i $, $ y_{\bar \jmath_i} := y_i $, 
$ \xi_{\bar \jmath_i} := \xi_i $,  $ i =1, \ldots \, \nu $.

The symplectic 2-form $ \Omega $ in \eqref{2form0} (i.e.  \eqref{2form KdV}) becomes 
\begin{equation}\label{2form}
{\cal W} := \sum_{i=1}^\nu  d \theta_i \wedge d y_i  
+ \frac12 \sum_{j \in S^c \setminus \{ 0 \} } \frac{1}{\ii j} \, d z_j \wedge d z_{-j} =  
\big( \sum_{i=1}^\nu  d \theta_i \wedge d y_i \big)  \oplus \Om_{S^\bot} = d \Lambda 
\end{equation}
where $ \Om_{S^\bot} $ denotes the restriction of $ \Om $ to $ H_S^\bot $ (see \eqref{splitting S-S-bot}) and 
$ \Lambda $ is the contact $ 1 $-form on $ \T^\nu \times \R^\nu \times H_S^\bot $ defined by
$ \Lambda_{(\theta, y, z)} : \R^\nu \times \R^\nu \times H_S^\bot \to \R $, 
\begin{equation}\label{Lambda 1 form}
\Lambda_{(\theta, y, z)}[\widehat \theta, \widehat y, \widehat z] := 
-  y \cdot \widehat \theta + \frac12 ( \partial_x^{-1} z, \widehat z )_{L^2 (\T)} \, .    
\end{equation}
Instead of working in a shrinking neighborhood of the origin, it is a convenient devise to rescale the ``unperturbed actions" $ \xi $
and the action-angle variables as
\begin{equation}\label{rescaling kdv quadratica}
 \xi \mapsto \e^2 \xi \, , \quad  y \mapsto \e^{2b} y \, , \quad z \mapsto \e^b z  \, . 
 \end{equation}
Then the symplectic $ 2 $-form in \eqref{2form} transforms into $  \e^{2b} {\cal W } $. Hence
the Hamiltonian system generated by $ {\cal H} $ in \eqref{widetilde cal H} 
transforms into  the new Hamiltonian system 
\begin{equation}  \label{def H eps}
 \dot \theta = \partial_y H_\e (\theta, y, z)   \, , \ 
 \dot y   = -  \partial_\teta H_\e  (\theta, y, z) \, , \  
 z_t  =  \partial_x \nabla_z H_\e  (\theta, y, z)  \, , \quad H_\e := \e^{-2b} \mH \circ A_\e
\end{equation}
where 
\begin{equation}  \label{def A eps}
A_\e (\theta, y, z) := \e v_\e(\theta, y) + \e^b z 
:= \e {\mathop \sum}_{j \in S} \sqrt{\xi_j + \e^{2(b-1)} |j| y_j} \, e^{\ii \theta_j} e^{\ii j x} + \e^b z \, .
\end{equation}
We shall still denote by $ X_{H_\e} = (\partial_y H_\e, - \partial_\teta H_\e, \partial_x \nabla_z H_\e) $ the Hamiltonian vector field in the variables $ (\teta, y, z ) \in \T^\nu \times \R^\nu \times H_S^\bot $.

We now write explicitly the Hamiltonian $ H_\e (\theta, y, z) $ in  \eqref{def H eps}. 
The quadratic Hamiltonian $ H_2 $ in \eqref{H iniziale KdV} transforms into
\be\label{shape H2}
\e^{-2b}H_2 \circ A_\e = const + 
{\mathop \sum}_{j \in S^+} j^3 y_j + \frac{1}{2} \int_{\T} z_x^2 \, dx \, , 
\ee
and, recalling \eqref{H3tilde}, \eqref{mH3 mH4}, 
the Hamiltonian $ {\cal H }$ in \eqref{widetilde cal H} 
transforms into
(shortly writing $ v_\e := v_\e (\theta, y) $)  
\begin{align}
H_\e (\theta, y, z) &  
=  e(\xi) + \a (\xi) \cdot y + \frac12 \int_\T z_x^2 dx + \e^b\int_\T z^3 dx  + 3 \e \int_\T v_\e  z^2 dx \label{formaHep}
\\ & \quad 
+ \e^2 \Big\{6 \int_\T  v_\e z
 \Pi_S \big((\partial_x^{-1} v_\e)(\partial_x^{-1} z) \big)\,dx + 3 \int_\T z^2 \pi_0 (\partial_x^{-1} v_\e)^2\,dx \Big\} 
 - \frac32 \e^{2b} {\mathop \sum}_{j \in S} y_j^2   \nonumber 
\\ & \quad
+ \e^{b + 1} R(v_\e z^3) 
+ \e^3 R(v_\e^3 z^2) 
+ \e^{2+b} \sum_{q=3}^5 \e^{(q-3)(b-1)} R(v_\e^{5-q} z^q) 
+ \e^{-2b} {\cal H}_{\geq 6} (\e v_\e + \e^b z )   \nonumber 
\end{align}
where $e(\xi)$ is a constant, and the frequency-amplitude map is
\begin{equation} \label{mappa freq amp}
\a(\xi) := \bar\om + \e^2 {\mathbb A} \xi  \, , \quad 
{\mathbb A} := - 6 \, \text{diag} \{ 1/j \}_{j \in S^+} \, .
\end{equation} 
We write the Hamiltonian in \eqref{formaHep} as 
\begin{equation}  \label{Hamiltoniana Heps KdV}
H_\e = {\cal N} +  P \,,   \quad 
{\cal N}(\theta, y, z) 
= \a (\xi) \cdot y + \frac12 \big(N(\theta) z , z \big)_{L^2(\T)} \,,
\end{equation}
where  
\begin{align}
\frac12 \big(N(\theta) z, z \big)_{L^2(\T)}  & 
:= \frac12 \big( (\pa_{z} \gr H_\e)(\theta, 0, 0)[z], z \big)_{L^2(\T)}  \label{Nshape}  
=  \frac12 \int_\T z_x^2 dx  + 3 \e \int_\T v_\e(\theta, 0)  z^2 dx  \\
& + \e^2 \Big\{6 \int_\T  v_\e(\theta, 0) z \Pi_S 
\big((\partial_x^{-1} v_\e(\theta,0))(\partial_x^{-1} z) \big) dx 
+ 3 \int_\T z^2 \pi_0 (\partial_x^{-1} v_\e(\theta, 0))^2 dx \Big\} + \ldots \nonumber
\end{align}
and $P := H_\e - {\cal N} $.

\section{The nonlinear functional setting}\label{sec:functional}

We look for an embedded invariant torus 
\begin{equation} \label{embedded torus i}
i : \T^\nu \to \T^\nu \times \R^\nu \times H_S^\bot, \quad  
\vphi \mapsto i (\vphi) := ( \theta (\vphi), y (\vphi), z (\vphi)) 
\end{equation}
of the Hamiltonian vector field $ X_{H_\e}  $ 
filled by quasi-periodic solutions with diophantine frequency $ \om $. We require that $ \om $ belongs to the set 
\begin{equation}\label{Omega epsilon}
\Omega_\e := \a ( [1,2]^\nu ) = 
\{ \a(\xi) : \xi \in [1,2]^\nu \}  
\end{equation}
where $ \a $ is the diffeomorphism 
\eqref{mappa freq amp}, and, in the Hamiltonian $ H_\e $ in \eqref{Hamiltoniana Heps KdV}, we choose 
\be\label{linkxiomega}
\xi = \a^{-1}(\om) = \e^{-2} {\mathbb A}^{-1} (\om - \bar\om) \, . 
\ee
Since any $ \omega \in \Omega_\e$ is $ \e^2 $-close to the integer vector
$ \bar \om $ (see \eqref{bar omega}), we require that the constant $\g$ in the diophantine inequality 
\be\label{omdio}
 |\omega \cdot l | \geq \gamma \langle l \rangle^{-\tau} \, , \ \  \forall l \in \Z^\nu \setminus \{0\}   \, , 
\quad \text{satisfies} \  \  \gamma = \e^{2+a} \quad \text{for some} \ a > 0 \,.  
\ee
We remark that the definition of $\g$ in \eqref{omdio} is slightly stronger than the minimal condition, which is $ \g \leq c \e^2 $ with $ c $ small enough. 
In addition to \eqref{omdio} we shall also require that $ \om $ satisfies the first and second order Melnikov-non-resonance conditions \eqref{Omegainfty}. 

We 
look for an embedded invariant torus 
of the modified Hamiltonian vector field $ X_{H_{\e, \zeta}} = X_{H_\e} + (0, \zeta, 0)$ which is generated by 
the Hamiltonian 
\begin{equation}\label{hamiltoniana modificata}
H_{\e, \zeta} (\teta, y, z) :=  H_\e (\teta, y, z) + \zeta \cdot \theta\,,\quad \zeta \in \R^\nu\,.
\end{equation}
Note that $ X_{H_{\e, \zeta}} $ is  periodic in $\theta $
(unlike $ H_{\e, \zeta}  $). 
It turns out that an invariant torus for $ X_{H_{\e, \zeta}} $ is actually 
invariant  for $ X_{H_\e} $, see Lemma \ref{zeta = 0}. 
We introduce the parameter $\zeta \in \R^\nu $ in order to control the average in the 
$ y$-component 
of the linearized equations. 
Thus we look for zeros of the nonlinear operator
\begin{align}
 {\cal F} (i, \zeta ) 
& :=   {\cal F} (i, \zeta, \om, \e )  := {\cal D}_\om i (\vphi) - X_{H_{\e, \zeta}}  (i(\vphi)) =
{\cal D}_\om i (\vphi) - X_{\cal N}  (i(\vphi))  - X_P  (i(\vphi)) + (0, \zeta, 0 )  \label{operatorF}  \\
& \nonumber  :=  \left(
\begin{array}{c}
{\cal D}_\om \theta (\vphi) - \partial_y H_\e ( i(\vphi)  )   \\
{\cal D}_\om y (\vphi)  +  \partial_\teta H_\e ( i(\vphi)  ) + \zeta  \\
{\cal D}_\om z (\vphi) -  \partial_x \nabla_z H_\e  ( i(\vphi)) 
\end{array}
\right) \!\! 
=  \!\!
\left(
\begin{array}{c} 
{\cal D}_\om   \Theta (\vphi)  -  \partial_y P (i(\vphi) )   \\
\!\! \! {\cal D}_\om  y (\vphi)  + \frac12  \partial_\teta ( N(\theta (\vphi)) z(\vphi), z(\vphi) )_{L^2(\T)} 
	+  \partial_\teta P  ( i(\vphi) ) + \zeta  \!\!\!\!  \\
{\cal D}_\om  z (\vphi) - \partial_x N ( \theta  (\vphi )) z (\vphi)  
-  \partial_x \nabla_z P  ( i(\vphi) ) 
\end{array} 
\right) 
\end{align}
where $ \Theta(\ph) := \teta (\vphi) - \vphi $ is $ (2 \pi)^\nu $-periodic 
and we use the short notation 
\begin{equation}\label{Domega}
{\cal D}_\om := \om \cdot \partial_\vphi \, . 
\end{equation}
The Sobolev norm of the periodic component of the embedded torus 
\begin{equation}\label{componente periodica}
{\mathfrak I}(\vphi)  := i (\vphi) - (\vphi,0,0) := ( {\Theta} (\ph), y(\ph), z(\ph))\,, \quad \Theta(\ph) := \teta (\vphi) - \vphi \, , 
\end{equation}
is 
\begin{equation}\label{norma fracchia}
\|  {\mathfrak I}  \|_s := \| \Theta \|_{H^s_\vphi} +  \| y  \|_{H^s_\vphi} +  \| z \|_s 
\end{equation}
where $ \| z \|_s := \| z \|_{H^s_{\vphi,x}}  $ is defined in \eqref{Sobolev coppia}.
We link the rescaling \eqref{rescaling kdv quadratica}
with the diophantine constant $ \g = \e^{2+a} $ by choosing
\be\label{link gamma b}
\gamma = \e^{2b}\,, \qquad 
b = 1 + ( a \slash 2 ) \, .
\ee
Other choices are possible, 
see Remark \ref{comm3}. 

\begin{theorem}\label{main theorem}
Let the tangential sites $ S $ in \eqref{tang sites} 
satisfy 
$({\mathtt S}1), ({\mathtt S}2)$. 
Then, for all $ \e \in (0, \e_0 ) $, where $ \e_0 $ is small enough,   
there exists a Cantor-like set $ {\cal C}_\e \subset \Omega_\e $,
with  asympotically full measure as $ \e \to 0 $, namely
\begin{equation}\label{stima in misura main theorem}
\lim_{\e\to 0} \, \frac{|{\cal C}_\e|}{|\Omega_\e|} = 1 \, , 
\end{equation}
such that, for all $ \omega \in {\cal C}_\e $, there exists a solution $ i_\infty (\vphi) := i_\infty (\omega, \e)(\vphi) $ 
of  ${\cal D}_\om i_\infty(\vphi) - X_{H_\e}(i_\infty(\vphi)) = 0 $.
Hence the embedded torus 
$ \vphi \mapsto i_\infty (\vphi) $ is invariant for the Hamiltonian vector field $ X_{H_\e (\cdot, \xi)} $  
with $ \xi $ as in \eqref{linkxiomega}, and it is filled by quasi-periodic solutions with frequency $ \om $.
The torus $i_\infty$ satisfies 
\be\label{stima toro finale}
\|  i_\infty (\vphi) -  (\vphi,0,0) \|_{s_0 + \mu}^\Lipg = O(\e^{6 - 2 b} \g^{-1} ) 
\ee
for some $ \mu := \mu (\nu) > 0 $. Moreover, the torus $ i_\infty $ is {\sc linearly stable}. 
\end{theorem}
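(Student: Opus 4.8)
The plan is to find a zero of the nonlinear operator $\mathcal F(i,\zeta)$ in \eqref{operatorF} by a Nash--Moser iterative scheme in the scale of Sobolev spaces, starting from the trivial approximate solution $i_0(\vphi):=(\vphi,0,0)$, $\zeta_0:=0$. For this datum $\mathcal F(i_0,0)=(0,0,-\pa_x\nabla_z P(i_0))$, and the crucial quantitative input is that $\|\mathcal F(i_0,0)\|_s\leq_s\e^{6-2b}$: this is precisely where the weak Birkhoff normal form of Proposition \ref{prop:weak BNF} pays off, since it has removed the monomials $R(v^5)$ and $R(v^4z)$ from $\mathcal H$ (hence the need for hypothesis $({\mathtt S}2)$), so that $X_P(\vphi,0,0)$ is of higher order in $\e$. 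Together with the link $\g=\e^{2b}$, $b=1+a/2$ in \eqref{link gamma b}, this will make the Nash--Moser smallness condition $\e\|\mathcal F(i_0,0)\|_{s_0+\mu}\g^{-2}\ll1$ satisfiable.

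\textbf{Approximate inverse and reduction to the normal directions.} The heart of each step is to construct, for $\om$ in a Cantor-like set, an approximate inverse (\`a la Zehnder \cite{Z1}) of $d_{i,\zeta}\mathcal F$ at any approximate solution $i_n$. I would follow the abstract scheme of Berti--Bolle \cite{BB13}-\cite{BB14}: using that $i_n$ is approximately isotropic, introduce the symplectic change of variables \eqref{trasformazione modificata simplettica} near the torus which approximately decouples the tangential (action--angle) and the normal dynamics. The action component of the linearized equation is then solved explicitly as in \eqref{soleta}, and the whole problem reduces to inverting, on $H_S^\perp$, the linear operator $\mathcal L_\om$ of Proposition \ref{prop:lin} --- a quasi-periodically forced, Hamiltonian, variable-coefficient perturbation of the Airy operator of the same order $\pa_{xxx}$.

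\textbf{Regularization and reducibility of $\mathcal L_\om$.} To invert $\mathcal L_\om$ with tame estimates, I would first conjugate it to $\mathcal L_6=\om\cdot\pa_\vphi+m_3\pa_{xxx}+m_1\pa_x+R_6$ as in \eqref{L6 qualitativo}, through a finite chain of changes of variables: (i) a \emph{symplectic} transport flow on $H_S^\perp$ which straightens the leading coefficient $a_1(\om t,x)\pa_{xxx}$ (a quantitative Egorov argument), whose remainders stay in the special finite-dimensional form \eqref{forma buona con gli integrali}; (ii) a time-reparametrization normalizing $m_3$ and killing the $\pa_{xx}$ term; (iii) a multiplication operator $\mathcal T$ making the space average of the $\pa_x$-coefficient constant; (iv) two steps of \emph{linear} Birkhoff normal form (algebraic, exploiting the integrability of the fourth-order BNF of KdV from \cite{KaP}), which handle the variable coefficients at orders $O(\e)$ and $O(\e^2)$ that cannot be treated perturbatively because $\g=o(\e^2)$; (v) a descent step using hypothesis $({\mathtt S}1)$ to reduce $d_1(\vphi,x)\pa_x$ to the constant $m_1\pa_x$, leaving $R_6=O(\pa_x^0)$ of size $O(\e^{7-2b}\g^{-1})$ by \eqref{ansatz 0}. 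One then applies the KAM reducibility Theorem 4.2 in \cite{BBM}, whose smallness condition $O(\e^{7-2b}\g^{-2})\ll1$ is \eqref{R6resto}, to diagonalize $\mathcal L_6$ into the decoupled harmonic oscillators \eqref{linearosc} with purely imaginary Floquet exponents $\mu_j^\infty$. On the set where the first and second Melnikov conditions \eqref{Omegainfty} hold, $\mathcal L_\om$ is then invertible with loss of derivatives controlled by $\g^{-1}$. \emph{This regularization-plus-reducibility of the unbounded, completely-resonant operator $\mathcal L_\om$ --- in particular keeping all remainders in the form \eqref{forma buona con gli integrali} across the transformations --- is the main obstacle of the whole proof.}

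\textbf{Conclusion: convergence, $\zeta=0$, stability and measure.} With the approximate inverse at hand the Nash--Moser scheme converges for $\e$ small, producing $i_\infty$ with $\|i_\infty(\vphi)-(\vphi,0,0)\|_{s_0+\mu}^\Lipg=O(\e^{6-2b}\g^{-1})$ as in \eqref{stima toro finale}, and $\zeta_\infty=0$ is forced at the solution (Lemma \ref{zeta = 0}), using that $X_{H_\e}$ is Hamiltonian with vanishing $\theta$-average and $\om$ diophantine; hence $i_\infty$ is an invariant torus of $X_{H_\e}$. Linear stability follows from the reducibility: in the symplectic coordinates near $i_\infty$ the linearized flow is block-diagonal and constant in time --- a translation on the angles, the identity on the actions (after $\zeta_\infty=0$), and the harmonic oscillators \eqref{linearosc} with spectrum in $\ii\R$ --- so all Sobolev norms of solutions remain bounded. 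Finally, for \eqref{stima in misura main theorem} one estimates the excised set $\Omega_\e\setminus\mathcal C_\e$ as a union of resonant regions indexed by $(l,j,j')$ where a Melnikov condition fails; using the non-degeneracy of the affine frequency map $\a(\xi)=\bar\om+\e^2\mathbb A\xi$, $\mathbb A=-6\,\mathrm{diag}\{1/j\}_{j\in S^+}$ (so $\pa_\xi\om$ is invertible) together with the Lipschitz asymptotics $\mu_j^\infty=\ii(-m_3j^3+m_1j)+r_j^\infty$ with $m_3-1,m_1,\sup_j|r_j^\infty|$ small, each such region has measure $O(\g^{\rho}\langle l\rangle^{-\tau'})$ for suitable $\rho,\tau'>0$, and summing over $l$ gives $|\Omega_\e\setminus\mathcal C_\e|=o(|\Omega_\e|)$ as $\e\to0$.
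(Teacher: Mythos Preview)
Your proposal is correct and follows essentially the same route as the paper: Nash--Moser starting from the trivial torus with $\|\mathcal F(U_0)\|_s\leq_s\e^{6-2b}$, the Berti--Bolle approximate inverse reducing to $\mathcal L_\om$, the six-step regularization of Section~\ref{operatore linearizzato sui siti normali} (transport flow, time reparametrization, space translation $\mathcal T$, two linear BNF steps, descent, then KAM reducibility from \cite{BBM}), and the measure estimate via the Melnikov sets $R_{ljk}$. Two small inaccuracies to fix in a full write-up: $\mathcal T$ in \eqref{gran tau} is a space \emph{translation} $y\mapsto y+p(\vartheta)$, not a multiplication operator; and the measure estimate in the paper is carried out directly with $\om$ as the parameter (Lemmata~\ref{matteo 4}--\ref{lemma:risonanti}), not via $\partial_\xi\om$.
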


Theorem \ref{main theorem} is  proved in sections \ref{costruzione dell'inverso approssimato}-\ref{sec:NM}. It implies Theorem \ref{thm:KdV}
where the $ \xi_j $ in \eqref{solution u} are $ \e^2 \xi_j  $, $ \xi_j \in [1,2 ] $, in \eqref{linkxiomega}.
By \eqref{stima toro finale}, 
going back to the variables before the rescaling \eqref{rescaling kdv quadratica}, 
we get
$ \Theta_\infty = O( \e^{6-2b} \g^{-1}) $, $ y_\infty = O( \e^6 \g^{-1} ) $, $ z_\infty = O( \e^{6-b} \g^{-1} ) $, which, 
as $ b \to  1^+ $, tend to the expected optimal estimates. 

\begin{remark} \label{comm3}
There are other possible ways to link the rescaling \eqref{rescaling kdv quadratica}
with the diophantine constant $ \g = \e^{2+a} $.
The choice $ \g > \e^{2b} $
reduces to study perturbations of an isochronous system  (as in \cite{Ku}, \cite{k1}, \cite{Po3}), and it is convenient to 
 introduce $ \xi (\om) $ as a variable. 
The case  $ \e^{2b} > \g $, in particular $ b = 1 $, 
has to be dealt with a perturbation approach  of a non-isochronous system {\`a la} Arnold-Kolmogorov. 
\end{remark}

We now give the tame estimates for the composition operator induced by the Hamiltonian vector fields $ X_{\cal N} $ and  $ X_P $ in \eqref{operatorF}, that we shall use in the next sections. 

We first estimate the composition operator induced by $ v_\e (\teta, y) $  defined in \eqref{def A eps}.
Since the functions $ y \mapsto \sqrt{\xi + \e^{2(b - 1)}|j| y} $, $\theta \mapsto e^{\ii \theta}$ 
are analytic  for $\e$ small enough and $|y| \leq C$, 
the composition Lemma \ref{lemma:composition of functions, Moser} implies that, for all  $ \Theta, y \in H^s(\T^\nu, \R^\nu )$,  
$   \| \Theta \|_{s_0},  \| y \|_{s_0} \leq 1 $, setting $\theta(\ph) := \ph + \Theta (\ph)$, 
$ \|  v_\e (\theta (\vphi) ,y(\vphi) ) \|_s 
\leq_s 1 + \| \Theta  \|_s + \| y \|_s  $. 
Hence, using also 
\eqref{linkxiomega},   the map $ A_\e $ in \eqref{def A eps} satisfies, for all 
$   \| {\mathfrak I} \|_{s_0}^\Lipg  \leq 1 $ (see \eqref{componente periodica})
\be \label{stima Aep}
 \| A_\e (\theta (\vphi),y(\vphi),z(\vphi)) \|_s^{\Lipg} \leq_s \e (1 + \| {\mathfrak I} \|_s^\Lipg) \, . 
\ee 
We now give tame estimates for the Hamiltonian vector fields $ X_{\cal N} $, $ X_P $,  $ X_{H_\e} $, 
see \eqref{Hamiltoniana Heps KdV}-\eqref{Nshape}.

\begin{lemma}\label{lemma quantitativo forma normale}
Let $ \fracchi(\ph) $ in \eqref{componente periodica} satisfy
$ \| {\mathfrak I} \|_{s_0 + 3}^\Lipg  \leq C\e^{6 - 2b} \gamma^{-1} $. 
Then  
%
\begin{alignat}{2} 
\| \partial_y P(i) \|_s^\Lipg   & \leq_s \e^4 + \e^{2b} \| {\mathfrak I}\|_{s+1}^\Lipg \label{D y P} \, ,  
& 
\| \partial_\theta P(i) \|_s^\Lipg
& \leq_s \e^{6 - 2b} (1 + \| {\mathfrak I}  \|_{s + 1}^\Lipg) 
\\ 
\| \nabla_z P(i) \|_s^\Lipg
& \leq_s \e^{5 - b} + \e^{6- b} \g^{-1} \| {\mathfrak I}  \|_{s + 1}^\Lipg \, ,
&
\| X_P(i)\|_s^\Lipg 
& \leq_s \e^{6 - 2b} + \e^{2b} \| {\mathfrak I}\|_{s + 3}^\Lipg  \label{stima XP}
\\
\label{stime linearizzato campo hamiltoniano}
\| \partial_{\theta} \partial_y P(i)\|_s^\Lipg 
& \leq_s \e^{4} + \e^{5} \g^{-1} \| {\mathfrak I}\|_{s + 2}^\Lipg  \, ,
& 
\| \partial_y \nabla_z P(i)\|_s^\Lipg  
& \leq_s \e^{b+3} +  \e^{2b - 1} \| {\mathfrak I} \|_{s+2}^\Lipg   
\end{alignat}
\begin{equation}
\label{D yy P}
\| \partial_{yy} P(i) + 3 \e^{2b} I_{\R^\nu} \|_s^\Lipg 
\leq_s \e^{2+ 2b} + \e^{2b+3} \g^{-1} \| \fracchi \|_{s+2}^\Lipg
\end{equation}
and, for all  $ \widehat \imath :=  (\widehat \Theta, \widehat y, \widehat z) $, 
\begin{align}
\label{D yii}
\| \partial_y d_{i} X_P(i)[\widehat \imath \,]\|_s^\Lipg &\leq_s \e^{2 b - 1} \big( \| \widehat \imath \|_{s + 3}^\Lipg + \| {\mathfrak I}\|_{s + 3}^\Lipg \| \widehat \imath \|_{s_0 + 3}^\Lipg\big)  \\
\label{tame commutatori} \| d_i X_{H_\e}(i) [\widehat \imath \, ] + (0,0, \partial_{xxx} \hat z)\|_s^\Lipg  
& \leq_s \e \big( \| \widehat \imath\|_{s + 3}^\Lipg 
+ \|{\mathfrak I} \|_{s + 3}^\Lipg \| \widehat \imath\|_{s_0 + 3}^\Lipg \big) \\
\label{parte quadratica da P}
\| d_i^2 X_{H_\e}(i) [\widehat \imath, \widehat \imath \,]\|_s^\Lipg 
& \leq_s \e\Big( \| \widehat \imath \|_{s + 3}^\Lipg \| 
\widehat \imath\|_{s_0 + 3}^\Lipg + \| {\mathfrak I}\|_{s + 3}^\Lipg (\| \widehat \imath\|_{s_0 + 3}^\Lipg)^2\Big).
\end{align}
\end{lemma}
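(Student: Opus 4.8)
The statement is an essentially computational lemma: all the quantities involved are gradients and Hessians of the explicit Hamiltonian $H_\e$ of \eqref{formaHep}, composed with the embedded torus $i(\vphi)=(\vphi+\Theta(\vphi),y(\vphi),z(\vphi))$, so the plan is to expand $P=H_\e-{\cal N}$ monomial by monomial and apply the tame product and composition Lemmas \ref{lemma:standard Sobolev norms properties}, \ref{lemma:composition of functions, Moser} together with the bound \eqref{stima Aep}. First I would write $P$ explicitly from \eqref{formaHep} and \eqref{Hamiltoniana Heps KdV}--\eqref{Nshape}, splitting it into: (i) the polynomial terms that are cubic or higher in $z$, i.e. $\e^b\int_\T z^3\,dx$, $\e^{b+1}R(v_\e z^3)$, $\e^3 R(v_\e^3 z^2)$ and $\e^{2+b}\sum_{q=3}^5\e^{(q-3)(b-1)}R(v_\e^{5-q}z^q)$; (ii) the $O(z^2)$ terms $3\e\int_\T v_\e z^2\,dx+\e^2\{\dots\}$ with $v_\e(\theta,y)$ replaced by the difference $v_\e(\theta,y)-v_\e(\theta,0)$, which vanishes at $y=0$ and hence is $O(\e^{2(b-1)}|y|)$ by the analyticity of $y\mapsto\sqrt{\xi+\e^{2(b-1)}|j|y}$; (iii) the $z$-independent quadratic term $-\tfrac32\e^{2b}\sum_{j\in S}y_j^2$ (the constant $e(\xi)$ disappears upon taking the vector field); (iv) the non-polynomial tail $\e^{-2b}{\cal H}_{\geq6}(\e v_\e+\e^b z)$.

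For the polynomial pieces (i)--(iii) I would compute $\partial_y$, $\partial_\theta$, $\gr_z$ and then iterate the tame product estimate of Lemma \ref{lemma:standard Sobolev norms properties}, keeping all but one factor in the low norm $\|\cdot\|_{s_0}$ and one in $\|\cdot\|_s$, using $\|v_\e(\theta(\vphi),y(\vphi))\|_s^\Lipg\leq_s 1+\|\fracchi\|_s^\Lipg$ and $\|z\|_s^\Lipg=\|z\|_s^\Lipg$; for the tail (iv) I would instead invoke the composition Lemma \ref{lemma:composition of functions, Moser} applied to $f\in C^q$ and the estimate \eqref{stima Aep}. The $\e$-powers come out by bookkeeping: each $\partial_y$ hitting $v_\e$ gives a factor $\e^{2(b-1)}$, the explicit $y^2$ term produces the leading $\e^{2b}I_{\R^\nu}$ in \eqref{D yy P}, and the single $\partial_x$ in $\partial_x\gr_z P$ accounts for the one derivative loss, hence the $s+1$ in the first-line estimates and, after one further differentiation of the vector field, the uniform $s+3$. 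The Lipschitz-in-$\om$ halves follow from the same computations since $\xi=\xi(\om)$ is affine by \eqref{linkxiomega} and all norms carried along are $\Lipg$-norms.

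The bounds \eqref{D yii}--\eqref{parte quadratica da P} for $d_iX_P[\widehat\imath\,]$ and $d_i^2X_{H_\e}[\widehat\imath,\widehat\imath\,]$ are obtained by differentiating the previous expressions once or twice more in the torus variable and again using Lemma \ref{lemma:standard Sobolev norms properties}; the ``tame'' right-hand sides such as $\|\widehat\imath\|_{s+3}^\Lipg+\|\fracchi\|_{s+3}^\Lipg\|\widehat\imath\|_{s_0+3}^\Lipg$ appear precisely because each product is split with all but one factor in the $s_0$-norm. At this point the hypothesis $\|\fracchi\|_{s_0+3}^\Lipg\leq C\e^{6-2b}\g^{-1}$ is used to absorb the superlinear terms in $\|\fracchi\|$ (bounding, e.g., $(\|\fracchi\|_{s_0+3}^\Lipg)^2$ by $\e^{6-2b}\g^{-1}\|\fracchi\|_{s_0+3}^\Lipg$), so that all remainders keep the stated size. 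The only genuine difficulty, and the step I expect to require real care, is the simultaneous tracking of the $\e$-powers and the derivative losses over the many monomials: one must verify that the smallness produced by the rescaling \eqref{rescaling kdv quadratica} — in particular from $b>1$ and from the fact that the $O(z^2)$ part of $P$ carries the small factor $\e^{2(b-1)}y$ — is never cancelled by a $\partial_x$, so that the final estimates are indeed linear in the highest Sobolev norm with exactly the claimed powers of $\e$ and $\g^{-1}$.
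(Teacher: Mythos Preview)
Your proposal is correct and follows exactly the natural route: the paper itself omits the proof of this lemma as a routine (if lengthy) computation, and the outline you give --- expanding $P=H_\e-\mathcal N$ from \eqref{formaHep}--\eqref{Nshape} into the pieces (i)--(iv), applying the tame product and composition Lemmas \ref{lemma:standard Sobolev norms properties}, \ref{lemma:composition of functions, Moser} together with \eqref{stima Aep}, and tracking the $\e$-powers coming from the rescaling \eqref{rescaling kdv quadratica} --- is precisely how that computation is carried out. Your identification of the two key mechanisms (that $\partial_y$ acting on $v_\e$ produces $\e^{2(b-1)}$, and that the $O(z^2)$ part of $P$ carries the extra factor $v_\e(\theta,y)-v_\e(\theta,0)=O(\e^{2(b-1)}|y|)$) is the heart of the bookkeeping, and your use of the hypothesis $\|\mathfrak I\|_{s_0+3}^\Lipg\leq C\e^{6-2b}\g^{-1}$ to absorb the superlinear terms is exactly right.
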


In the sequel we will also use that, by the diophantine condition \eqref{omdio}, the operator $ {\cal D}_\om^{-1} $  (see \eqref{Domega})
is defined for all functions $ u  $ with zero $ \vphi $-average, and satisfies 
\be\label{Dom inverso}
 \| {\cal D}_\om^{-1} u \|_s \leq C \g^{-1} \| u \|_{s+ \tau} \, , \quad  \| {\cal D}_\om^{-1} u \|_s^{\Lipg} \leq C \g^{-1} \| u \|_{s+ 2 \tau+1}^{\Lipg} \, . 
\ee

\section{Approximate inverse}\label{costruzione dell'inverso approssimato}

In order to implement a convergent Nash-Moser scheme that leads to a solution of 
$ \mF(i, \zeta) = 0 $ 
 our aim is to construct an \emph{approximate right inverse} (which satisfies tame estimates) of the linearized operator 
\begin{equation}\label{operatore linearizzato}
d_{i, \zeta} {\cal F}(i_0, \zeta_0 )[\widehat \imath \,, \widehat \zeta ] =
d_{i, \zeta} {\cal F}(i_0 )[\widehat \imath \,, \widehat \zeta ]   = 
{\cal D}_\om \widehat \imath - d_i X_{H_\e} ( i_0 (\vphi) ) [\widehat \imath ] + (0, \widehat \zeta, 0 )
 \,,  
\end{equation}
see Theorem \ref{thm:stima inverso approssimato}. Note that 
$ d_{i, \zeta} {\cal F}(i_0, \zeta_0 ) = d_{i, \zeta} {\cal F}(i_0 ) $ is independent of $ \zeta_0 $ (see \eqref{operatorF}).

The notion of approximate right inverse is introduced in \cite{Z1}. It denotes a linear operator 
which is an \emph{exact} right inverse  at a solution $ (i_0, \zeta_0) $  of $ {\cal F}(i_0, \zeta_0) = 0 $.
We want to implement the general strategy in \cite{BB13}-\cite{BB14}
which reduces the search of an approximate right inverse of \eqref{operatore linearizzato} 
to the search of an approximate inverse on the normal directions only.

It is well known that an invariant torus $ i_0 $ with diophantine flow 
is isotropic (see e.g. \cite{BB13}), namely the pull-back $ 1$-form $ i_0^* \Lambda $ is closed, 
where $ \Lambda $ is the contact 1-form in \eqref{Lambda 1 form}. 
This is tantamount to say that the 2-form $ \cal W $ (see \eqref{2form}) vanishes on the torus $ i_0 (\T^\nu )$ 
(i.e. $\cal W$ vanishes on the tangent space at each point $i_0(\ph)$ of the manifold 
$i_0(\T^\nu)$), because 
$ i_0^* {\cal W} =  i_0^* d \Lambda  = d i_0^* \Lambda $. 
For an ``approximately invariant" torus $ i_0 $ the 1-form $ i_0^* \Lambda$ is only  ``approximately closed".
In order to make this statement quantitative we consider
\begin{equation}\label{coefficienti pull back di Lambda}
i_0^* \Lambda = {\mathop \sum}_{k = 1}^\nu a_k (\vphi) d \vphi_k \,,\quad 
a_k(\vphi) := - \big( [\pa_\ph \teta_0 (\vphi)]^T y_0 (\vphi)  \big)_k 
+ \frac12 ( \partial_{\vphi_k} z_0(\ph),  \partial_{x}^{-1} z_0(\ph) )_{L^2(\T)}
\end{equation}
and we quantify how small is 
\begin{equation} \label{def Akj} 
i_0^* {\cal W} = d \, i_0^* \Lambda = {\mathop\sum}_{1 \leq k < j \leq \nu} A_{k j}(\vphi) d \vphi_k \wedge d \vphi_j\,,\quad A_{k j} (\vphi) := 
\partial_{\vphi_k} a_j(\ph) - \partial_{\vphi_j} a_k(\ph) \, .
\end{equation}
Along this section we will always assume the following hypothesis 
(which will be verified at each step of the Nash-Moser iteration):

\begin{itemize}
\item {\sc Assumption.} 
The map $\omega\mapsto i_0(\omega)$ is a Lipschitz function defined on some subset $\Omega_o \subset \Omega_\e$, where $\Omega_\e$ is defined in \eqref{Omega epsilon}, and, for some $ \mu := \mu (\t, \nu) >  0 $,   
\begin{equation}\label{ansatz 0}
\| {\mathfrak I}_0  \|_{s_0+\mu}^{\Lipg} \leq C\e^{6 - 2b} \gamma^{-1}, \quad 
\| Z \|_{s_0 + \mu}^{\Lipg} \leq C \e^{6 - 2b}, \quad 
\gamma = \e^{2 + a}, 
\quad 
 b := 1 + (a/2) \,, 
 \quad a \in (0, 1 / 6), 
\end{equation}
where $\fracchi_0(\ph) := i_0(\ph) - (\ph,0,0)$, and 
\begin{equation} \label{def Zetone}
Z(\vphi) :=  (Z_1, Z_2, Z_3) (\vphi) := {\cal F}(i_0, \zeta_0) (\vphi) =
\om \cdot \pa_\vphi i_0(\vphi) - X_{H_{\e, \zeta_0}}(i_0(\vphi)) \, .
\end{equation}
\end{itemize}
\begin{lemma}  \label{zeta = 0}
$ |\zeta_0|^{\Lipg} \leq C \| Z \|_{s_0}^{\Lipg}$ \!\!\!. If $ {\cal F}(i_0, \zeta_0) = 0 $ then $ \zeta_0 = 0  $, 
 namely the torus $i_0 $ is invariant  for $X_{H_\e}$.
 \end{lemma}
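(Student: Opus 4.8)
The plan is to exploit the structure of $\mathcal{F}(i_0,\zeta_0)$ in \eqref{operatorF}: the $\zeta$-term appears only in the second (the $y$-) component as an additive constant $\zeta_0$, and $X_{H_\e}$ is periodic in $\theta$. First I would test the equation $\mathcal{D}_\om i_0(\vphi) - X_{H_{\e,\zeta_0}}(i_0(\vphi)) = Z(\vphi)$ against suitable averages. Concretely, apply $\frac{1}{(2\pi)^\nu}\int_{\T^\nu}(\cdot)\,d\vphi$ to the first component: since $\mathcal{D}_\om\theta_0$ has zero average (it equals $\om\cdot\partial_\vphi\Theta_0$ plus the constant $\om$, and $\Theta_0$ is periodic), one gets $\om = \frac{1}{(2\pi)^\nu}\int \partial_y H_\e(i_0(\vphi))\,d\vphi + \langle Z_1\rangle$, which is just a consistency relation. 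The key computation is instead to look at how $\zeta_0$ is forced by the equation; the cleanest route is to use the Hamiltonian structure together with the isotropy-type identity for $i_0$.

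The main step: I would differentiate the scalar quantity $\int_{\T^\nu}\big(y_0(\vphi)\cdot\text{something}\big)$ or, more to the point, use the fact that $X_{H_{\e,\zeta_0}}$ is a Hamiltonian vector field up to the constant term, so $\mathrm{div}$ of its flow-like structure forces a relation. The standard argument (as in \cite{BB13}) is: take the second component of $Z$, namely $Z_2 = \mathcal{D}_\om y_0 + \partial_\theta H_\e(i_0) + \zeta_0$, and integrate over $\vphi$. Since $\mathcal{D}_\om y_0$ has zero average, $\langle Z_2\rangle = \langle \partial_\theta H_\e(i_0)\rangle + \zeta_0$. Now one shows $\langle \partial_\theta H_\e(i_0(\vphi))\rangle$ is itself controlled — in fact it can be rewritten, using $\mathcal{D}_\om(H_\e\circ i_0) = \nabla H_\e(i_0)\cdot \mathcal{D}_\om i_0$ and the equation $\mathcal{D}_\om i_0 = X_{H_{\e,\zeta_0}}(i_0) + Z$, so that the bulk terms telescope/cancel by the skew-symmetry of the symplectic form, leaving $\langle\partial_\theta H_\e(i_0)\rangle = O(\|Z\|_{s_0})$ possibly after pairing with $\theta_0 - \vphi$. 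Tracking these identities carefully yields $|\zeta_0|^{\Lipg} \le C\|Z\|_{s_0}^{\Lipg}$, and a fortiori $\zeta_0 = 0$ when $Z \equiv 0$.

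For the second assertion, once $\zeta_0 = 0$ we have $\mathcal{D}_\om i_0(\vphi) = X_{H_\e}(i_0(\vphi))$ exactly, which by definition of the Hamiltonian vector field means $\vphi\mapsto i_0(\vphi)$ is an invariant torus for $X_{H_\e}$ carrying the linear flow $\vphi\mapsto\vphi+\om t$; this is immediate and needs no further work beyond unwinding \eqref{operatorF} and \eqref{def Zetone}.

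The hard part will be the bookkeeping in establishing $|\zeta_0|^{\Lipg} \le C\|Z\|_{s_0}^{\Lipg}$: one must identify which pairing kills the potentially large term $\langle \partial_\theta H_\e(i_0)\rangle$ — the trick is to integrate $Z_2$ against $\partial_\vphi \Theta_0$ or to use the exactness of the $1$-form $i_0^*\Lambda$ modulo $Z$, exactly as in \eqref{coefficienti pull back di Lambda}–\eqref{def Akj}, so that the Hamiltonian bulk contributes only through $dH_\e$ evaluated along $\mathcal{D}_\om i_0$, which is $\mathcal{D}_\om(H_\e\circ i_0)$ and hence has zero $\vphi$-average. Getting the Lipschitz-in-$\om$ dependence is then routine since every manipulation is linear in $Z$ and $i_0$ is Lipschitz by the Assumption \eqref{ansatz 0}. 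I would expect the whole proof to be short (a few lines) once the right pairing is written down.
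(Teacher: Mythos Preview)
Your instinct to exploit the periodicity of $H_\e\circ i_0$ and the skew-symmetry of the symplectic structure is exactly the right one, and the paper's argument (quoting \cite{BB13}) rests on the same mechanism. However, your sketch conflates two distinct identities, and the one you lean on would not close the argument. Computing $\mathcal{D}_\om(H_\e\circ i_0)=dH_\e(i_0)[\mathcal{D}_\om i_0]$ and averaging gives only \emph{one} scalar relation for the vector $\zeta_0\in\R^\nu$: after the symplectic cancellations you end up with $\langle\partial_y H_\e(i_0)\rangle\cdot\zeta_0=\text{(integral of $Z$-terms)}$, which fixes at most the component of $\zeta_0$ along $\langle\partial_y H_\e\rangle\approx\om$. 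So the step ``leaving $\langle\partial_\theta H_\e(i_0)\rangle=O(\|Z\|_{s_0})$'' does not follow from $\mathcal{D}_\om(H_\e\circ i_0)$ alone.

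The correct identity --- and what your phrase ``pair with $\partial_\vphi\Theta_0$'' almost says --- is to use, for each $k=1,\ldots,\nu$, that $\partial_{\vphi_k}(H_\e\circ i_0)$ has zero $\vphi$-average. Substituting $\partial_\theta H_\e=-\mathcal D_\om y_0-\zeta_0+Z_2$, $\partial_y H_\e=\mathcal D_\om\theta_0-Z_1$, $\nabla_z H_\e=\partial_x^{-1}(\mathcal D_\om z_0-Z_3)$ and integrating by parts (the $\mathcal D_\om$-terms cancel in pairs exactly by the skew-symmetry you mention), one obtains the closed formula the paper quotes:
\[
\zeta_0 \;=\; \frac{1}{(2\pi)^\nu}\int_{\T^\nu}\Big(-[\partial_\vphi y_0]^T Z_1+[\partial_\vphi\theta_0]^T Z_2-[\partial_\vphi z_0]^T\partial_x^{-1}Z_3\Big)\,d\vphi\,.
\]
From this, $|\zeta_0|^{\Lipg}\le C\|Z\|_{s_0}^{\Lipg}$ is immediate using \eqref{ansatz 0} to bound $\partial_\vphi i_0$, and $Z=0\Rightarrow\zeta_0=0$ is then trivial. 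Once you replace $\mathcal{D}_\om$ by $\partial_{\vphi_k}$ in your key identity, the rest of your plan goes through in the few lines you anticipated.
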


\begin{proof}
It is proved in \cite{BB13} the formula 
$$
\zeta_0 = 
\int_{\T^\nu} - [\pa_\vphi y_0 (\vphi)]^T  Z_1 (\vphi) 
+ [\pa_\vphi \theta_0 (\vphi)]^T Z_2 (\vphi) 
- [\pa_\vphi z_0 (\vphi) ]^T \pa_x^{-1} Z_3 (\vphi) \, d \vphi \, .
$$
Hence the lemma follows by \eqref{ansatz 0} and usual algebra estimate. 
\end{proof}

We now quantify the size of $ i_0^* {\cal W} $ in terms of $ Z $.

\begin{lemma}
The coefficients $A_{kj} (\vphi) $ in \eqref{def Akj} satisfy 
\begin{equation}\label{stima A ij}
\| A_{k j} \|_s^{\Lipg} \leq_s \gamma^{-1} \big(\| Z \|_{s+2\t+2}^{\Lipg} \|  {\mathfrak I}_0 \|_{s_0+ 1}^{\Lipg}  
+ \| Z \|_{s_0+1}^{\Lipg} \|  {\mathfrak I}_0 \|_{s+ 2 \tau + 2}^{\Lipg} \big)\,.
\end{equation}
\end{lemma}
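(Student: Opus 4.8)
The plan is to express the two-form $i_0^*\mathcal{W}$ directly in terms of the error $Z = \mathcal{F}(i_0,\zeta_0)$, exploiting the fact that a genuinely invariant torus with diophantine flow is isotropic, so that $A_{kj}$ is controlled by how far $i_0$ is from being invariant. First I would observe that $i_0^*\mathcal{W}$ is closed (being $d$ of the one-form $i_0^*\Lambda$), and that pulling back the equation $\mathcal{D}_\om i_0 = X_{H_{\e,\zeta_0}}(i_0) + Z$ along $i_0$ gives an identity for $\mathcal{D}_\om (i_0^*\Lambda)$, i.e. for $\mathcal{D}_\om a_k = \om\cdot\partial_\vphi a_k$. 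Since $X_{H_{\e,\zeta_0}}$ is (up to the $\zeta_0\cdot\theta$ term, which contributes a closed form) a Hamiltonian vector field, the Lie-derivative formula $\mathcal{L}_{X_H}\Lambda = d(\iota_{X_H}\Lambda - H)$ shows that the pull-back of $\iota_{X_{H_{\e,\zeta_0}}}\mathcal{W}$ is exact, so the only non-exact contribution to $\mathcal{D}_\om (i_0^*\mathcal{W})$ comes from the $Z$-term. Concretely this yields $\mathcal{D}_\om A_{kj} = (\text{explicit bilinear expression in } \partial_\vphi i_0 \text{ and } \partial_\vphi Z)$, an identity already established in \cite{BB13}.

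Next I would invert $\mathcal{D}_\om$ using the diophantine estimate \eqref{Dom inverso}: since $A_{kj}$ has zero $\vphi$-average (it is an exact form on $\T^\nu$ integrated against the torus, or more simply because $i_0^*\mathcal{W}$ is exact hence its harmonic part vanishes — note $A_{kj} = \partial_{\vphi_k}a_j - \partial_{\vphi_j}a_k$ is automatically of zero average in each variable), we may write $A_{kj} = \mathcal{D}_\om^{-1}(\text{RHS})$ and apply $\|\mathcal{D}_\om^{-1}u\|_s^\Lipg \leq C\g^{-1}\|u\|_{s+2\tau+1}^\Lipg$. It then remains to estimate the right-hand side, which is a sum of terms of the schematic form $[\partial_\vphi i_0]^T\,\partial_\vphi Z$ and similar, using the tame product estimate (Lemma \ref{lemma:standard Sobolev norms properties}) and the interpolation inequalities, together with the trivial fact that $\|\partial_\vphi f\|_s \leq \|f\|_{s+1}$. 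This splits, as usual, into the ``high $Z$, low $\mathfrak{I}_0$'' and ``low $Z$, high $\mathfrak{I}_0$'' contributions, producing exactly the asymmetric bound \eqref{stima A ij} with the loss $2\tau+2$ derivatives (one from each $\partial_\vphi$, and $2\tau+1$, rounded to $2\tau+2$, from $\mathcal{D}_\om^{-1}$).

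The only mildly delicate point is bookkeeping the derivative losses so that the index $s+2\tau+2$ comes out correctly and the low-norm factors land on $\|\cdot\|_{s_0+1}$ rather than some larger $s_0+\mu$; this is handled by the standard interpolation argument and by using \eqref{ansatz 0} only to keep $\|\mathfrak{I}_0\|_{s_0+\mu}$ (hence a fortiori $\|\mathfrak{I}_0\|_{s_0+1}$) bounded. I do not expect any genuine obstacle here: the structural identity $\mathcal{D}_\om A_{kj} = \text{bilinear}(\partial_\vphi i_0, \partial_\vphi Z)$ is the content of \cite{BB13}, and once it is in hand the lemma is a one-line application of \eqref{Dom inverso} followed by the tame product estimate. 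The proof in the paper presumably does exactly this, citing \cite{BB13} for the identity.
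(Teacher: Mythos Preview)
Your proposal is correct and follows essentially the same approach as the paper: the key identity $\mathcal{D}_\om A_{kj} = \mathcal{W}(\partial_\vphi Z\,\underline{e}_k,\partial_\vphi i_0\,\underline{e}_j) + \mathcal{W}(\partial_\vphi i_0\,\underline{e}_k,\partial_\vphi Z\,\underline{e}_j)$ is quoted from \cite{BB13}, the tame product estimate gives the bilinear bound \eqref{bella trovata}, and then $\mathcal{D}_\om^{-1}$ via \eqref{Dom inverso} (using that $A_{kj}=\partial_{\vphi_k}a_j-\partial_{\vphi_j}a_k$ has zero average) yields \eqref{stima A ij} with the claimed $2\tau+2$ loss. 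Your derivative bookkeeping and the zero-average remark are both accurate.
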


\begin{proof}
We estimate the coefficients of the Lie derivative
$ L_\omega (i_0^* {\cal W}) := \sum_{k < j} {\cal D}_\om A_{k j}(\vphi) d \vphi_k \wedge d \vphi_j $. 
Denoting  by $ \underline{e}_k  $ the $ k $-th versor of $ \R^\nu $ we have  
$$
 {\cal D}_\om A_{k j} 
=  L_\omega( i_0^* {\cal W})(\vphi)[ \underline{e}_k , \underline{e}_j ] 
= {\cal W}\big( \pa_\ph Z(\vphi) \underline{e}_k ,  \pa_\ph i_0(\vphi)  \underline{e}_j \big) 
+ {\cal W} \big(\pa_\ph i_0(\vphi) \underline{e}_k , \pa_\ph Z(\vphi) \underline{e}_j \big) 
$$
(see \cite{BB13}). 
Hence
\begin{equation} \label{bella trovata}
\| {\cal D}_\om A_{k j} \|_s^\Lipg 
\leq_s \| Z \|_{s+1}^\Lipg \| {\mathfrak I}_0 \|_{s_0 + 1}^\Lipg 
+ \| Z \|_{s_0 + 1}^\Lipg \| {\mathfrak I}_0 \|_{s + 1}^\Lipg  \,. 
\end{equation}
The bound \eqref{stima A ij} follows applying $ {\cal D}_\om^{-1}$ and using 
\eqref{def Akj}, \eqref{Dom inverso}.  
\end{proof}

As in \cite{BB13} we first modify the approximate torus $ i_0 $ to obtain an isotropic torus $ i_\d $ which is 
still approximately invariant. We denote the Laplacian $ \Delta_\vphi := \sum_{k=1}^\nu \partial_{\vphi_k}^2 $ .  

\begin{lemma}\label{toro isotropico modificato} {\bf (Isotropic torus)} 
The torus $ i_\delta(\vphi) := (\theta_0(\vphi), y_\delta(\vphi), z_0(\vphi) ) $ defined by 
\begin{equation}\label{y 0 - y delta}
y_\d := y_0 +  [\pa_\ph \theta_0(\vphi)]^{- T}  \rho(\vphi) \, , \qquad 
\rho_j(\vphi) := \Delta_\vphi^{-1} {\mathop\sum}_{ k = 1}^\nu \partial_{\vphi_j} A_{k j}(\vphi) 
\end{equation}
is isotropic. 
If \eqref{ansatz 0} holds, then, for some $ \s := \s(\nu,\t) $,   
\begin{align} \label{stima y - y delta}
\| y_\delta - y_0 \|_s^{\Lipg} 
& \leq_s  \gamma^{-1} \big(\| Z \|_{s + \s}^{\Lipg} \|  {\mathfrak I}_0 \|_{s_0 + \s}^{\Lipg} + 
\| Z \|_{s_0 + \s}^{\Lipg} \|  {\mathfrak I}_0 \|_{s + \s}^{\Lipg} \big) \,,
\\
\label{stima toro modificato}
\| {\cal F}(i_\delta, \zeta_0) \|_s^{\Lipg} 
& \leq_s  \| Z \|_{s + \s}^{\Lipg}  +  \| Z \|_{s_0 + \s}^{\Lipg} \|  {\mathfrak I}_0 \|_{s + \s}^{\Lipg} \\
\label{derivata i delta}
\| \pa_i [ i_\d][ \widehat \imath ] \|_s & \leq_s \| \widehat \imath \|_s +  \| {\mathfrak I}_0\|_{s + \s} \| \widehat \imath  \|_s \, .
\end{align}
\end{lemma}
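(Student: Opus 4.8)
The plan is to follow the Berti--Bolle scheme \cite{BB13}. Since $i_\d$ differs from $i_0$ only in the $y$-component, and the contact $1$-form $\Lambda$ in \eqref{Lambda 1 form} is affine in $y$, I would first compute the pulled-back $1$-form. Using the expression \eqref{coefficienti pull back di Lambda} for $i_0^*\Lambda$ together with the definition \eqref{y 0 - y delta} of $y_\d$ (so that $[\pa_\ph\theta_0]^T(y_\d - y_0) = \rho$), one gets
\begin{equation*}
i_\d^*\Lambda = i_0^*\Lambda - \sum_{k=1}^{\nu} \rho_k(\vphi)\, d\vphi_k \, .
\end{equation*}
Taking the exterior differential and recalling \eqref{def Akj},
\begin{equation*}
i_\d^*{\cal W} = d\big(i_\d^*\Lambda\big) = \sum_{1\leq k<j\leq\nu}\big(A_{kj} - \pa_{\vphi_k}\rho_j + \pa_{\vphi_j}\rho_k\big)\, d\vphi_k \wedge d\vphi_j \, .
\end{equation*}
Hence $i_\d$ is isotropic if and only if $A_{kj} = \pa_{\vphi_k}\rho_j - \pa_{\vphi_j}\rho_k$ for all $k < j$, and this identity I would verify directly from the definition of $\rho$ in terms of $\Delta_\vphi^{-1}$ in \eqref{y 0 - y delta}, together with the fact that $i_0^*{\cal W} = d(i_0^*\Lambda)$ is \emph{exact}, hence closed: the closedness relations among the $A_{kj}$ let one rewrite $\sum_m \pa_{\vphi_k}\pa_{\vphi_m}A_{mj}$ as $\Delta_\vphi A_{kj}$ plus a term that cancels upon antisymmetrization in $k,j$, and applying $\Delta_\vphi^{-1}$ collapses the sum. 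This Hodge-type computation, borrowed from \cite{BB13}, is the structural core of the lemma.

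For the quantitative bounds I would argue as follows. For \eqref{stima y - y delta}: the $A_{kj}$ are $\vphi$-derivatives, hence have zero $\vphi$-average, so the operator in \eqref{y 0 - y delta} is bounded, giving $\| \rho \|_s^\Lipg \leq_s \sum_{k<j}\| A_{kj} \|_s^\Lipg$; combining with \eqref{stima A ij}, with the Neumann-series estimate for $[\pa_\ph\theta_0]^{-T} = (I + [\pa_\ph\Theta_0]^T)^{-T}$ (legitimate since $\| {\mathfrak I}_0 \|_{s_0+\mu}^\Lipg$ is small by \eqref{ansatz 0}), and with the tame product of Lemma \ref{lemma:standard Sobolev norms properties}, yields \eqref{stima y - y delta} for a suitable $\s = \s(\nu,\t)$, the low-norm factors being absorbed via \eqref{ansatz 0}. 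For \eqref{stima toro modificato}: write ${\cal F}(i_\d,\zeta_0) = {\cal F}(i_0,\zeta_0) + \int_0^1 d_i{\cal F}\big(i_0 + t(i_\d - i_0),\zeta_0\big)[i_\d - i_0]\, dt = Z + \int_0^1(\cdots)\, dt$; since $i_\d - i_0 = (0, y_\d - y_0, 0)$ and $d_i{\cal F} = {\cal D}_\om - d_i X_{H_\e}$ (cf. \eqref{operatore linearizzato}), the integral term is controlled by $\| {\cal D}_\om(y_\d - y_0) \|_s^\Lipg$ plus the tame bound \eqref{tame commutatori} applied to $(0, y_\d - y_0, 0)$ (the $\pa_{xxx}$-term there is absent because the $z$-component of $i_\d - i_0$ vanishes), and \eqref{stima y - y delta} then gives \eqref{stima toro modificato}. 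For \eqref{derivata i delta}: differentiate the map $i_0 \mapsto i_\d$ and apply the chain rule to its three dependencies on $i_0$ (directly through $\theta_0, z_0$; through $[\pa_\ph\theta_0]^{-T}$; and through the coefficients $A_{kj}$, which depend on $(\theta_0, y_0, z_0)$ via \eqref{coefficienti pull back di Lambda}), bounding each contribution by the tame product, again using that $\| {\mathfrak I}_0 \|$ is small.

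The conceptual crux is the isotropy identity of the first paragraph; once it is in place, the remaining estimates are routine consequences of results already established (Lemma \ref{lemma:standard Sobolev norms properties}, Lemma \ref{lemma quantitativo forma normale}, and the bound \eqref{stima A ij}). I expect the only real friction to be the bookkeeping of derivative losses: the constant $\s$ must simultaneously accommodate the $2\t+2$ derivatives already lost in \eqref{stima A ij}, the extra derivative coming from ${\cal D}_\om$ in \eqref{stima toro modificato}, and the derivatives consumed by the Neumann series and the tame products, while staying compatible with the smallness ansatz \eqref{ansatz 0}.
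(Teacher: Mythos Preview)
Your argument for the isotropy and for \eqref{stima y - y delta}, \eqref{derivata i delta} is essentially the paper's (which also defers the Hodge computation to \cite{BB13}). The real issue is your sketch of \eqref{stima toro modificato}.

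You propose to bound $\|{\cal D}_\om(y_\d-y_0)\|_s^\Lipg$ by simply invoking \eqref{stima y - y delta}. But \eqref{stima y - y delta} carries a factor $\gamma^{-1}$, and applying ${\cal D}_\om$ costs at most one derivative and does \emph{not} remove that factor. Your route therefore yields
\[
\|{\cal D}_\om(y_\d-y_0)\|_s^\Lipg \;\leq_s\; \gamma^{-1}\big(\|Z\|_{s+\s}^\Lipg\|\fracchi_0\|_{s_0+\s}^\Lipg+\|Z\|_{s_0+\s}^\Lipg\|\fracchi_0\|_{s+\s}^\Lipg\big),
\]
which is strictly weaker than \eqref{stima toro modificato}; the paper explicitly remarks after the lemma that the absence of $\gamma^{-1}$ on the right of \eqref{stima toro modificato} is the whole point. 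The trick you are missing is that the $\gamma^{-1}$ in \eqref{stima A ij} comes from inverting ${\cal D}_\om$, so one should \emph{not} first estimate $A_{kj}$ and then differentiate: instead one writes ${\cal D}_\om(y_\d-y_0)=[\pa_\ph\theta_0]^{-T}{\cal D}_\om\rho+({\cal D}_\om[\pa_\ph\theta_0]^{-T})\rho$, observes that ${\cal D}_\om\rho_j=\Delta_\ph^{-1}\sum_k\pa_{\ph_j}{\cal D}_\om A_{kj}$, and uses the $\gamma$-free bound \eqref{bella trovata} on ${\cal D}_\om A_{kj}$ directly. The second summand requires estimating ${\cal D}_\om\pa_\ph\theta_0$, which one reads off from the first component of $Z={\cal F}(i_0,\zeta_0)$ (giving \eqref{bella trovata 2}); this is where the equation is actually used, not just the size of $\fracchi_0$.

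A second, smaller gap: for the vector-field part you appeal to \eqref{tame commutatori}, which gives a prefactor $\e$; combined with \eqref{stima y - y delta} this produces $\e\gamma^{-1}=\e^{-1-a}\gg1$, again too weak. The paper instead exploits that $X_{\cal N}$ is independent of $y$, so $X_{H_\e}(i_\d)-X_{H_\e}(i_0)=X_P(i_\d)-X_P(i_0)$, and then uses the sharper $\pa_y X_P$ bounds \eqref{stime linearizzato campo hamiltoniano}, \eqref{D yy P}, which give $\|\pa_y X_P\|_s\leq_s\e^{2b}+\e^{2b-1}\|\fracchi\|_{s+3}$; since $\e^{2b}=\gamma$, the $\gamma^{-1}$ from \eqref{stima y - y delta} is exactly cancelled. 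Both refinements are needed to land on \eqref{stima toro modificato} as stated.
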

In the paper we denote equivalently the differential by $ \partial_i $ or  $ d_i $. Moreover we denote 
by $ \s := \s(\nu, \tau ) $ possibly different (larger) ``loss of derivatives"  constants. 

\begin{proof}
In this proof we write $\| \ \|_s$ to denote $\| \ \|_s^{\Lipg}$.
The proof of the isotropy of $i_\d$ is in \cite{BB13}. 
The estimate \eqref{stima y - y delta} follows by \eqref{y 0 - y delta}, \eqref{stima A ij}, \eqref{ansatz 0}
and  the tame bound for the inverse 
$ \Vert [\pa_\ph \theta_0 ]^{-T}\Vert_{s} \leq_s 1 +  \|  {\mathfrak I}_0 \|_{s + 1} $. 
It remains to estimate the difference (see \eqref{operatorF} and note that $ X_{\cal N} $ does not depend on $ y $)
\begin{equation}\label{F diff}
{\cal F}(i_\delta, \zeta_0) - {\cal F}(i_0, \zeta_0) =  
\begin{pmatrix} 0 \\ {\cal D}_\om (y_\delta  - y_0 )  \\ 0 \end{pmatrix}\, 
+ X_P(i_\delta ) - X_P(i_0).
\end{equation}
Using \eqref{stime linearizzato campo hamiltoniano}, \eqref{D yy P}, we get 
$ \| \partial_y X_P (i) \|_s \leq_s \e^{2b} + \e^{2b - 1} \| {\mathfrak I}\|_{s + 3} $.
Hence \eqref{stima y - y delta}, 
\eqref{ansatz 0}  imply
\begin{equation}\label{XP delta - XP}
\|X_ P(i_\delta ) - X_P(i_0 )\|_s 
\leq_s    \|  {\mathfrak I}_0 \|_{s_0 + \s} \|Z \|_{s + \s} + \|  {\mathfrak I}_0 \|_{s + \s} \|Z \|_{s_0 + \s}   \, . 
\end{equation}
Differentiating  \eqref{y 0 - y delta} we have  
\begin{equation}\label{D omega y 0 - y delta}
{\cal D}_\om (y_\delta - y_0 ) 
= [\pa_\ph \theta_0(\vphi)]^{-T} {\cal D}_\om \rho(\vphi) 
+ ( {\cal D}_\om [\pa_\ph \theta_0(\vphi)]^{-T} ) \rho(\vphi)  
\end{equation}
and 
$ {\cal D}_\om  \rho_j(\vphi) = \Delta^{-1}_\vphi \sum_{k = 1}^\nu \partial_{\vphi_j} {\cal D}_\om  A_{kj}(\vphi) $. 
Using \eqref{bella trovata},  we deduce that 
\begin{equation}\label{primo pezzo D omega y 0 - y delta}
\|  [\pa_\ph \theta_0 ]^{-T} {\cal D}_\om \rho \|_s 
\leq_s \| Z\|_{s + 1} \| {\mathfrak I}_0\|_{s_0 + 1} + \| Z \|_{s_0 + 1}  \| {\mathfrak I}_0\|_{s + 1}\,.
\end{equation}
To estimate the second term in \eqref{D omega y 0 - y delta}, 
we differentiate 
$ Z_1(\vphi) = {\cal D}_\om \theta_0(\vphi) - \om -  (\partial_y P)(i_0(\vphi)) $
(which is the first component in \eqref{operatorF}) with respect to  $\ph$. We get
$ {\cal D}_\om \pa_\ph \theta_0(\vphi) 
= \pa_\vphi (\pa_y P)(i_0(\vphi)) + \partial_\vphi Z_1(\vphi) $. 
Then, by \eqref{D y P},
\begin{equation}\label{bella trovata 2}
\|{\cal D}_\om [\pa_\ph \theta_0]^T \|_s 
\leq_s  \e^4 + \e^{2b} \| {\mathfrak I}_0 \|_{s + 2}   + \| Z \|_{s + 1}\,.
\end{equation}
Since 
$ {\cal D}_\om [ \pa_\ph \theta_0(\vphi)]^{-T}  
= - [ \pa_\ph \theta_0(\vphi)]^{-T} 
\big( {\cal D}_\om [\pa_\ph \theta_0(\vphi)]^T \big) [\pa_\ph \theta_0(\vphi)]^{-T} $, 
the bounds \eqref{bella trovata 2}, \eqref{stima A ij}, 
\eqref{ansatz 0} imply
\begin{equation}\label{secondo pezzo D omega y 0 - y delta}
\| ({\cal D}_\om [ \pa_\ph \theta_0]^{-T} )  \rho \|_s 
\leq_s \e^{6-2b} \g^{-1} 
\big( \| Z\|_{s + \s}  \| {\mathfrak I}_0\|_{s_0 + \sigma} + \| Z \|_{s_0 + \s}  \| {\mathfrak I}_0\|_{s + \s}\big) \, . 
\end{equation}
In conclusion \eqref{F diff}, \eqref{XP delta - XP}, \eqref{D omega y 0 - y delta},
\eqref{primo pezzo D omega y 0 - y delta}, \eqref{secondo pezzo D omega y 0 - y delta} imply 
\eqref{stima toro modificato}. The bound  \eqref{derivata i delta} follows by 
\eqref{y 0 - y delta}, \eqref{def Akj}, \eqref{coefficienti pull back di Lambda}, \eqref{ansatz 0}.
\end{proof}

Note that there is no $ \g^{- 1} $ in the right hand side of \eqref{stima toro modificato}.
It turns out that an approximate inverse of $d_{i, \zeta} {\cal F}(i_\delta )$
 is an approximate inverse of $d_{i, \zeta} {\cal F}(i_0 )$ as well.  
In order to find an approximate inverse of the linearized operator $d_{i, \zeta} {\cal F}(i_\delta )$ 
we introduce a suitable set of symplectic coordinates nearby the isotropic torus $ i_\d $. 
We consider the map
$ G_\delta : (\psi, \eta, w) \to (\theta, y, z)$ of the phase space $\T^\nu \times \R^\nu \times H_S^\bot$ defined by
\begin{equation}\label{trasformazione modificata simplettica}
\begin{pmatrix}
\theta \\
y \\
z
\end{pmatrix} := G_\delta \begin{pmatrix}
\psi \\
\eta \\
w
\end{pmatrix} := 
\begin{pmatrix}
\theta_0(\psi) \\
y_\delta (\psi) + [\pa_\psi \theta_0(\psi)]^{-T} \eta + \big[ (\pa_\teta \tilde{z}_0) (\theta_0(\psi)) \big]^T \partial_x^{-1} w \\
z_0(\psi) + w

\end{pmatrix} 
\end{equation}
where $\tilde{z}_0 (\theta) := z_0 (\theta_0^{-1} (\theta))$. 
It is proved in \cite{BB13} that $ G_\delta $ is symplectic, using that the torus $ i_\d $ is isotropic 
(Lemma \ref{toro isotropico modificato}).
In the new coordinates,  $ i_\delta $ is the trivial embedded torus
$ (\psi , \eta , w ) = (\psi , 0, 0 ) $.  
The transformed Hamiltonian $ K := K(\psi, \eta, w, \zeta_0) $ 
is (recall \eqref{hamiltoniana modificata})
\begin{align} 
K := H_{\e, \zeta_0} \circ G_\d  
& = \theta_0(\psi) \cdot \zeta_0 + K_{00}(\psi) + K_{10}(\psi) \cdot \eta + (K_{0 1}(\psi), w)_{L^2(\T)} + 
\frac12 K_{2 0}(\psi)\eta \cdot \eta 
\nonumber \\ & 
\quad +  \big( K_{11}(\psi) \eta , w \big)_{L^2(\T)} 
+ \frac12 \big(K_{02}(\psi) w , w \big)_{L^2(\T)} + K_{\geq 3}(\psi, \eta, w)  
\label{KHG}
\end{align}
where $ K_{\geq 3} $ collects the terms at least cubic in the variables $ (\eta, w )$.
At any fixed $\psi $, 
the Taylor coefficient $K_{00}(\psi) \in \R $,  
$K_{10}(\psi) \in \R^\nu $,  
$K_{01}(\psi) \in H_S^\bot$ (it is a function of $ x  \in \T $),
$K_{20}(\psi) $ is a $\nu \times \nu$ real matrix, 
$K_{02}(\psi)$ is a linear self-adjoint operator of $ H_S^\bot $ and 
$K_{11}(\psi) : \R^\nu \to H_S^\bot$. Note that the above Taylor coefficients do not
depend on the parameter $ \zeta_0 $.

The Hamilton equations associated to \eqref{KHG}  are 
\begin{equation}\label{sistema dopo trasformazione inverso approssimato}
\begin{cases}
\dot \psi \hspace{-30pt} & = K_{10}(\psi) +  K_{20}(\psi) \eta + 
K_{11}^T (\psi) w + \partial_{\eta} K_{\geq 3}(\psi, \eta, w)
\\
\dot \eta \hspace{-30pt} & =- [\partial_\psi \theta_0(\psi)]^T \zeta_0 - 
\partial_\psi K_{00}(\psi) - [\partial_{\psi}K_{10}(\psi)]^T  \eta - 
[\partial_{\psi} K_{01}(\psi)]^T  w  
\\
& \quad -
\partial_\psi \big( \frac12 K_{2 0}(\psi)\eta \cdot \eta + ( K_{11}(\psi) \eta , w )_{L^2(\T)} + 
\frac12 ( K_{02}(\psi) w , w )_{L^2(\T)} + K_{\geq 3}(\psi, \eta, w) \big)
\\
\dot w \hspace{-30pt} & = \partial_x \big( K_{01}(\psi) + 
K_{11}(\psi) \eta +  K_{0 2}(\psi) w + \nabla_w K_{\geq 3}(\psi, \eta, w) \big) 
\end{cases} 
\end{equation}
where $ [\partial_{\psi}K_{10}(\psi)]^T $ is the $ \nu \times \nu $ transposed matrix and 
$ [\partial_{\psi}K_{01}(\psi)]^T $,  $ K_{11}^T(\psi) : {H_S^\bot \to \R^\nu} $ are defined by the 
duality relation $ ( \partial_{\psi} K_{01}(\psi) [\hat \psi ],  w)_{L^2}  = \hat \psi \cdot [\partial_{\psi}K_{01}(\psi)]^T w  $,
$ \forall \hat \psi \in \R^\nu, w \in H_S^\bot $, 
and similarly for $ K_{11} $. 
Explicitly, for all  $ w \in H_S^\bot $, 
and denoting $\underline{e}_k$ the $k$-th versor of $\R^\nu$, 
\begin{equation} \label{K11 tras}
K_{11}^T(\psi) w =  {\mathop \sum}_{k=1}^\nu \big(K_{11}^T(\psi) w \cdot \underline{e}_k\big) \underline{e}_k   =
{\mathop \sum}_{k=1}^\nu  
\big( w, K_{11}(\psi) \underline{e}_k  \big)_{L^2(\T)}  \underline{e}_k  \, \in \R^\nu \, .  
\end{equation}

In the next lemma we estimate the coefficients $ K_{00} $, $ K_{10} $, $K_{01} $ 
in the Taylor expansion \eqref{KHG}.
Note that on an exact solution we have $ Z  = 0 $ and therefore
$ K_{00} (\psi) = {\rm const} $, $ K_{10}  = \om $ and $ K_{01}  = 0 $. 

\begin{lemma} \label{coefficienti nuovi}
Assume \eqref{ansatz 0}. Then there is $ \s := \s(\tau, \nu)$  such that 
\[ 
\|  \partial_\psi K_{00} \|_s^{\Lipg} 
+ \| K_{10} - \om  \|_s^{\Lipg} +  \| K_{0 1} \|_s^{\Lipg} 
\leq_s  \| Z \|_{s + \s}^{\Lipg}  +  \| Z \|_{s_0 + \s}^{\Lipg} \| {\mathfrak I}_0 \|_{s + \s}^{\Lipg}\,.
\]
\end{lemma}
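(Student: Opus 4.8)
The plan is to read off the Taylor coefficients $K_{00}, K_{10}, K_{01}$ from the definition $K = H_{\e,\zeta_0}\circ G_\d$ in \eqref{KHG}, express each of them in terms of the error function $Z = \mathcal F(i_0,\zeta_0)$ (equivalently $\mathcal F(i_\d,\zeta_0)$, by Lemma \ref{toro isotropico modificato}), and then apply the tame estimates already collected in Lemma \ref{lemma quantitativo forma normale} together with the bounds on $i_\d$ in Lemma \ref{toro isotropico modificato}. The key conceptual point is the one highlighted just before the statement: when $Z=0$ the torus $i_\d$ is exactly invariant, so in that case $\partial_\psi K_{00}=0$, $K_{10}=\om$, $K_{01}=0$; hence each of the three quantities we must estimate is, by construction, \emph{linear in $Z$} up to higher order, and we only need to make this quantitative.

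First I would use the Hamilton equations \eqref{sistema dopo trasformazione inverso approssimato}: evaluating the flow of $K$ at the trivial torus $(\psi,0,0)$ and pulling back through $G_\d$, the vector field $\mathcal D_\om i_\d - X_{H_{\e,\zeta_0}}(i_\d)$ is conjugated (via $DG_\d$) to the vector field whose components are exactly $\big(\mathcal D_\om\psi - K_{10}(\psi),\ \mathcal D_\om 0 + [\partial_\psi\theta_0]^T\zeta_0 + \partial_\psi K_{00}(\psi),\ \mathcal D_\om 0 - \partial_x K_{01}(\psi)\big)$ restricted to $\eta=w=0$. In other words, there are explicit identities of the shape
\begin{equation}\label{plan:identities}
K_{10} - \om = -(\text{linear in } Z_1) + \ldots,\qquad
\partial_\psi K_{00} = (\text{linear in } Z_2, Z_1) + \ldots,\qquad
\partial_x K_{01} = -(\text{linear in } Z_3) + \ldots,
\end{equation}
with the ``$\ldots$'' built from $DG_\d$, $[\partial_\psi\theta_0]^{-T}$, $(\partial_\teta\tilde z_0)(\theta_0(\psi))$ and $\partial_x^{-1}$ acting on the components of $Z$ evaluated along $i_\d$; this is precisely the computation carried out in \cite{BB13}, which I would cite. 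The second step is purely quantitative: each factor $DG_\d$, $[\partial_\psi\theta_0]^{-T}$, $(\partial_\teta\tilde z_0)(\theta_0(\psi))$ is estimated by the tame composition and change-of-variable Lemmas \ref{lemma:composition of functions, Moser}, \ref{lemma:utile} and by \eqref{derivata i delta}, giving bounds of the form $1 + \|\mathfrak I_0\|_{s+\s}$; meanwhile $Z$ along $i_\d$ is controlled by $\|\mathcal F(i_\d,\zeta_0)\|_s \leq_s \|Z\|_{s+\s} + \|Z\|_{s_0+\s}\|\mathfrak I_0\|_{s+\s}$ from \eqref{stima toro modificato}, and $|\zeta_0|^\Lipg\leq_s\|Z\|_{s_0}^\Lipg$ from Lemma \ref{zeta = 0}. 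Combining these via the interpolation/tame product inequalities (Lemma \ref{lemma:standard Sobolev norms properties}) and the ansatz \eqref{ansatz 0} (which makes $\|\mathfrak I_0\|_{s_0+\mu}^\Lipg$ small, so the quadratic-in-$\mathfrak I_0$ terms are absorbed) yields exactly the asserted estimate with a suitable, possibly larger, $\s=\s(\tau,\nu)$.

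The main obstacle I anticipate is bookkeeping rather than conceptual: one must track the precise derivative losses coming from $\partial_x^{-1}$, from $\mathcal D_\om^{-1}$ (via \eqref{Dom inverso}, which enters through $y_\d - y_0$ and hence through $G_\d$), and from the two nested changes of variables $\theta_0^{-1}$ and $G_\d$, and check that they can all be packaged into a single constant $\s$ uniform in $s$; one must also be careful that the estimates hold in the Lipschitz-in-$\om$ norm $\|\cdot\|_s^\Lipg$, which requires using the Lipschitz versions of all the tame lemmas and of \eqref{Dom inverso}. None of this is deep — it is the same mechanism as in Lemma \ref{toro isotropico modificato} — so the proof will be short, essentially: write the identities \eqref{plan:identities} (referring to \cite{BB13}), then invoke \eqref{stima toro modificato}, \eqref{derivata i delta}, Lemma \ref{zeta = 0}, the tame product/composition lemmas and \eqref{ansatz 0}.
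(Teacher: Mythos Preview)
Your proposal is correct and follows essentially the same route as the paper: write down the explicit identities for $K_{10}-\om$, $\partial_\psi K_{00}$, and $K_{01}$ in terms of the components of $Z_\d := \mathcal F(i_\d,\zeta_0)$ (citing \cite{BB13}), then conclude via \eqref{stima toro modificato}, \eqref{stima y - y delta}, \eqref{ansatz 0}, and the tame composition Lemma \ref{lemma:utile}. The paper's proof is exactly this, with the concrete formulas spelled out.
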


\begin{proof}
Let $ {\cal F}(i_\d, \zeta_0) := Z_\d := (Z_{1,\d}, Z_{2,\d}, Z_{3,\d}) $. 
By a direct calculation as in \cite{BB13} (using \eqref{KHG}, \eqref{operatorF})
\begin{align*}
\partial_\psi K_{00}(\psi) & =  
- [ \pa_\psi \teta_0 (\psi) ]^T \big( - Z_{2, \d} - 
[ \pa_\psi y_\d] [ \pa_\psi \teta_0]^{-1} Z_{1, \d}   
+ [ (\pa_\theta {\tilde z}_0)( \teta_0 (\psi)) ]^T \partial_x^{-1} Z_{3,\d} 
\\ 
&  \quad + [ (\pa_\theta {\tilde z}_0)(\teta_0 (\psi)) ]^T \partial_x^{-1} \partial_\psi z_0 (\psi) [ \pa_\psi \teta_0 (\psi)]^{-1} Z_{1,\d} \big) \, ,  
\\
K_{10}(\psi) & 
=  \omega -   [ \pa_\psi \theta_0(\psi)]^{-1} Z_{1,\d}(\psi) \,, 
\\
K_{01}(\psi) 
& = - \partial_x^{-1} Z_{3,\d} + \partial_x^{-1} \pa_\psi z_0(\psi) [\pa_\psi \theta_0(\psi)]^{-1} Z_{1,\d}(\psi)\,.   
\end{align*}
Then \eqref{ansatz 0},  \eqref{stima y - y delta}, \eqref{stima toro modificato} (using Lemma \ref{lemma:utile}) imply  the lemma.
\end{proof}

\begin{remark} \label{rem:KAM normal form}  
If $ {\cal F} (i_0, \zeta_0) = 0 $ then $\zeta_0 = 0$  by 
Lemma \ref{zeta = 0}, and Lemma \ref{coefficienti nuovi} implies that
\eqref{KHG} simplifies to 
$ K  = const + \om \cdot \eta + \frac12 K_{2 0}(\psi)\eta \cdot \eta 
+ \big( K_{11}(\psi) \eta , w \big)_{L^2(\T)} 
+ \frac12 \big(K_{02}(\psi) w , w \big)_{L^2(\T)} + K_{\geq 3} $. 
\end{remark}

We now estimate  $ K_{20}, K_{11}$ in \eqref{KHG}. 
The norm of $K_{20}$ is the sum of the norms of its matrix entries.  

\begin{lemma} \label{lemma:Kapponi vari}
Assume \eqref{ansatz 0}. Then 
\begin{align}\label{stime coefficienti K 20 11 bassa}
\|K_{20} + 3 \e^{2 b} I \|_s^{\Lipg} 
& \leq_s \e^{2b+2} + \e^{2b} \| {\mathfrak I}_0\|_{s + \s}^{\Lipg}  +  \e^{3} \g^{-1} \| {\mathfrak I}_0\|_{s_0 + \s}^{\Lipg}  \| Z \|_{s + \s}^{\Lipg} 
\\ 
\label{stime coefficienti K 11 alta}  
\| K_{11} \eta \|_s^{\Lipg} 
& \leq_s \e^{5} \g^{-1} \| \eta \|_s^{\Lipg} 
+ \e^{2 b - 1} ( \| {\mathfrak I}_0\|_{s + \s}^{\Lipg} + \g^{-1} \| {\mathfrak I}_0\|_{s_0 + \s}^{\Lipg}  \| Z \|_{s + \s}^{\Lipg} ) 
\| \eta \|_{s_0}^{\Lipg} 
\\
\label{stime coefficienti K 11 alta trasposto}  
\| K_{11}^T w \|_s^{\Lipg} 
& \leq_s \e^{5} \g^{-1} \| w \|_{s + 2}^{\Lipg} 
+ \e^{2 b - 1} ( \| {\mathfrak I}_0\|_{s + \s}^{\Lipg} + \g^{-1} \| {\mathfrak I}_0\|_{s_0 + \s}^{\Lipg}  \| Z \|_{s + \s}^{\Lipg} ) 
\| w \|_{s_0 + 2}^{\Lipg} \, . 
\end{align}
In particular 
$ \| K_{20} + 3 \e^{2 b} I  \|_{s_0 }^{\Lipg}  \leq C \e^{6} \gamma^{-1} $, and 
$$
 \| K_{11} \eta \|_{s_0 }^{\Lipg}  \leq C \e^{5} \g^{-1}\| \eta \|_{s_0}^{\Lipg} , 
 \quad  \| K_{11}^T w \|_{s_0 }^{\Lipg}  \leq C \e^{5} \g^{-1} \| w \|_{s_0}^{\Lipg} \, .
$$ 
\end{lemma}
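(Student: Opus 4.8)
The plan is to compute the Taylor coefficients $K_{20}(\psi)$ and $K_{11}(\psi)$ directly from the definition $K = H_{\e,\zeta_0}\circ G_\delta$ in \eqref{KHG}, by differentiating the composition twice in the directions $\eta$ and $w$ at the point $(\psi,0,0)$, and then estimating each resulting term with the tame bounds of Lemma \ref{lemma quantitativo forma normale} together with the isotropic-torus estimates \eqref{stima y - y delta}--\eqref{derivata i delta} of Lemma \ref{toro isotropico modificato}. Concretely, since $G_\delta$ has the explicit form \eqref{trasformazione modificata simplettica}, the $\eta$-derivative of $(\theta,y,z)$ at $\eta=w=0$ is $(0,[\pa_\psi\theta_0(\psi)]^{-T},0)$, and the $w$-derivative is $(0,[(\pa_\theta\tilde z_0)(\theta_0(\psi))]^T\pa_x^{-1},\,\mathrm{Id})$. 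Therefore $K_{20}(\psi)=[\pa_\psi\theta_0]^{-1}\,(\pa_{yy}H_\e)(i_\delta(\psi))\,[\pa_\psi\theta_0]^{-T}$ up to lower order contributions coming from the second derivative of $G_\delta$ paired with $\nabla H_\e(i_\delta)$, and $K_{11}(\psi)$ is the corresponding mixed second derivative.

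First I would isolate the main term of $K_{20}$: the dominant contribution of $\pa_{yy}H_\e$ is the constant $-3\e^{2b}I_{\R^\nu}$ coming from the term $-\tfrac32\e^{2b}\sum_j y_j^2$ in \eqref{formaHep}, controlled through \eqref{D yy P} which bounds $\|\pa_{yy}P(i)+3\e^{2b}I\|_s$ by $\e^{2+2b}+\e^{2b+3}\g^{-1}\|\fracchi\|_{s+2}$. Conjugating by $[\pa_\psi\theta_0]^{\mp T}$ (whose tame bound $\|[\pa_\psi\theta_0]^{-T}\|_s\leq_s 1+\|\fracchi_0\|_{s+1}$ was used already in Lemma \ref{toro isotropico modificato}) and absorbing the $G_\delta$-second-derivative remainders — which involve $\nabla H_\e(i_\delta)=O(\text{size of }Z_\delta)$ by the shape of the equation, hence are estimated via \eqref{stima toro modificato} with a $\g^{-1}$ from $\pa_x^{-1}$ in the $w$-direction — yields \eqref{stime coefficienti K 20 11 bassa}. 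For $K_{11}$ and its transpose $K_{11}^T$ (computed through the duality \eqref{K11 tras}), the relevant second derivative of $H_\e$ is $\pa_y\nabla_z P$, bounded in \eqref{stime linearizzato campo hamiltoniano} by $\e^{b+3}+\e^{2b-1}\|\fracchi\|_{s+2}$; I would split off the genuinely bilinear-in-$(\eta,w)$ part of $G_\delta$ (the $[\pa_\theta\tilde z_0]^T\pa_x^{-1}w$ piece, which carries one extra $x$-derivative loss, explaining the shift $s+2$ and $s_0+2$ in \eqref{stime coefficienti K 11 alta trasposto}) and bound it using \eqref{stima toro finale}-type smallness of $\tilde z_0$, i.e. $\|\fracchi_0\|\lesssim\e^{6-2b}\g^{-1}$ from \eqref{ansatz 0}, producing the factor $\e^5\g^{-1}$; the remaining terms are handled by interpolation \eqref{interpm Lip} and the algebra/tame product inequalities of Lemma \ref{lemma:standard Sobolev norms properties}, distributing derivatives so that the high Sobolev index lands either on $\fracchi_0$ (via $\|\fracchi_0\|_{s+\s}$) or, after one application of $\cal D_\om^{-1}$-type estimates, on $Z$ (via $\g^{-1}\|\fracchi_0\|_{s_0+\s}\|Z\|_{s+\s}$).

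Finally, the low-norm statements follow by specializing $s=s_0$ and using the ansatz \eqref{ansatz 0}: all the $\|\fracchi_0\|_{s_0+\s}$ factors are $\leq C\e^{6-2b}\g^{-1}$ and all $\|Z\|_{s_0+\s}$ factors are $\leq C\e^{6-2b}$, so that $\|K_{20}+3\e^{2b}I\|_{s_0}\lesssim\e^{2b+2}+\e^{2b}\cdot\e^{6-2b}\g^{-1}+\e^3\g^{-1}\cdot(\e^{6-2b}\g^{-1})\cdot\e^{6-2b}$, which is $O(\e^6\g^{-1})$ after recalling $\g=\e^{2b}$ and $a\in(0,1/6)$; similarly $\|K_{11}\eta\|_{s_0},\|K_{11}^Tw\|_{s_0}=O(\e^5\g^{-1})$ times the relevant $s_0$-norm.

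The main obstacle I anticipate is bookkeeping rather than conceptual: one must carefully track \emph{where} the two extra derivatives lost by the $\pa_x^{-1}w$ component of $G_\delta$ and by the $\pa_x^{-1}$ in the symplectic form are absorbed, so as to get exactly the indices $s+\s$, $s_0+\s$, $s+2$, $s_0+2$ claimed (and not worse), and to verify that the second-derivative-of-$G_\delta$ remainders are genuinely proportional to $Z_\delta$ — i.e. that they vanish on an exact solution — which is what removes the spurious $\g^{-1}$ that a naive estimate would attach to the leading constant $-3\e^{2b}I$. This uses in an essential way that $\nabla H_\e$ evaluated on $i_\delta$ is, up to the $\cal D_\om$ part, precisely $-Z_\delta$ plus $\cal D_\om i_\delta$, so the contractions with the bilinear form $d^2G_\delta$ reproduce the structure \eqref{stima toro modificato} with its clean (no $\g^{-1}$) bound.
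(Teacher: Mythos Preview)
Your overall strategy---compute $K_{20}$ and $K_{11}$ by the chain rule from $K=H_{\e,\zeta_0}\circ G_\delta$, then feed in the tame bounds \eqref{D yy P} and \eqref{stime linearizzato campo hamiltoniano}---is exactly what the paper does. But there is a genuine misconception in your proposal that, if followed, would send you hunting for the $Z$-dependence in the wrong place.

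The map $G_\delta$ in \eqref{trasformazione modificata simplettica} is \emph{affine} in $(\eta,w)$ for each fixed $\psi$: the $\theta$-component does not see $(\eta,w)$ at all, the $y$-component is linear in $(\eta,w)$, and the $z$-component is $z_0(\psi)+w$. Hence $\partial_{\eta\eta}G_\delta=\partial_{\eta w}G_\delta=\partial_{ww}G_\delta=0$, and there are \emph{no} ``second-derivative-of-$G_\delta$ remainders paired with $\nabla H_\e(i_\delta)$''. The formulas
\[
K_{20}(\psi)=[\partial_\psi\theta_0]^{-1}\,\partial_{yy}P(i_\delta)\,[\partial_\psi\theta_0]^{-T},
\qquad
K_{11}(\psi)=\partial_y\nabla_z P(i_\delta)\,[\partial_\psi\theta_0]^{-T}
-\partial_x^{-1}(\partial_\theta\tilde z_0)(\theta_0)\,\partial_{yy}P(i_\delta)\,[\partial_\psi\theta_0]^{-T}
\]
are exact, not ``up to lower order contributions''. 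Consequently your anticipated ``main obstacle'' (showing that those phantom remainders are proportional to $Z_\delta$ so as to kill a spurious $\gamma^{-1}$) simply does not arise.

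The $Z$-dependence in \eqref{stime coefficienti K 20 11 bassa}--\eqref{stime coefficienti K 11 alta trasposto} enters by a different, simpler mechanism that you have missed: the bounds \eqref{D yy P} and \eqref{stime linearizzato campo hamiltoniano} involve $\|\mathfrak I\|_{s+\sigma}$ evaluated at $i=i_\delta$, not $i_0$. Since $\mathfrak I_\delta$ differs from $\mathfrak I_0$ only in the $y$-component, one writes $\|\mathfrak I_\delta\|_{s+\sigma}\le \|\mathfrak I_0\|_{s+\sigma}+\|y_\delta-y_0\|_{s+\sigma}$ and invokes \eqref{stima y - y delta}, which is precisely where the combination $\gamma^{-1}\|\mathfrak I_0\|_{s_0+\sigma}\|Z\|_{s+\sigma}$ appears. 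The term $\e^{2b}\|\mathfrak I_0\|_{s+\sigma}$ in \eqref{stime coefficienti K 20 11 bassa} comes not from $\partial_{yy}P$ itself but from the conjugation: $K_{20}+3\e^{2b}I=[\partial_\psi\theta_0]^{-1}(\partial_{yy}P+3\e^{2b}I)[\partial_\psi\theta_0]^{-T}+3\e^{2b}\big([\partial_\psi\theta_0]^{-1}[\partial_\psi\theta_0]^{-T}-I\big)$, and the last bracket is $O(\|\mathfrak I_0\|_{s+1})$. Likewise, the leading $\e^5\gamma^{-1}$ in \eqref{stime coefficienti K 11 alta} is the \emph{low-norm} size of $\partial_y\nabla_z P(i_\delta)$ (namely $\e^{2b-1}\|\mathfrak I_\delta\|_{s_0+\sigma}\le \e^{2b-1}\cdot\e^{6-2b}\gamma^{-1}$), not a contribution of the $\tilde z_0$ piece as you suggest.
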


\begin{proof}
To shorten the notation, in this proof we write $\| \ \|_s$ for $\| \ \|_s^{\Lipg}$.
We have
$$
K_{2 0}(\vphi) = [\pa_\ph \theta_0(\vphi)]^{-1} \partial_{yy} H_\e(i_\delta(\vphi)) 
[\pa_\ph \theta_0(\vphi)]^{-T} 
= [\pa_\ph \theta_0(\vphi)]^{-1} \partial_{yy} P(i_\delta(\vphi)) 
[\pa_\ph \theta_0(\vphi)]^{-T}.
$$
Then \eqref{D yy P}, \eqref{ansatz 0}, \eqref{stima y - y delta} 
imply \eqref{stime coefficienti K 20 11 bassa}.
Now (see also \cite{BB13})
\begin{align*}
K_{11}(\vphi) & = 
\partial_{y} \nabla_z H_\e (i_\delta(\vphi)) [\pa_\ph \theta_0 (\vphi)]^{-T} 
- \partial_x^{-1} (\pa_\theta {\tilde z}_0) (\teta_0(\vphi)) (\partial_{yy} H_\e) (i_\delta(\vphi)) [\pa_\ph \theta_0 (\vphi)]^{-T}   \nonumber\\
& \stackrel{\eqref{Hamiltoniana Heps KdV}} = 
 \partial_{y} \nabla_z P(i_\delta(\vphi)) [\pa_\ph \theta_0 (\vphi)]^{-T} 
- \partial_x^{-1} (\pa_\theta {\tilde z}_0) (\teta_0(\vphi)) (\partial_{yy} P) (i_\delta(\vphi)) [\pa_\ph \theta_0 (\vphi)]^{-T}\,,
\end{align*}
therefore, using \eqref{stime linearizzato campo hamiltoniano}, \eqref{D yy P}, \eqref{ansatz 0}, 
we deduce  \eqref{stime coefficienti K 11 alta}. 
The bound \eqref{stime coefficienti K 11 alta trasposto} for $K_{11}^T$ follows by \eqref{K11 tras}.
\end{proof}

Under the linear change of variables 
\begin{equation}\label{DGdelta}
D G_\delta(\vphi, 0, 0) 
\begin{pmatrix}
\widehat \psi \, \\
\widehat \eta \\
\widehat w
\end{pmatrix} 
:= 
\begin{pmatrix}
\pa_\psi \theta_0(\vphi) & 0 & 0 \\
\pa_\psi y_\delta(\vphi) & [\pa_\psi \theta_0(\vphi)]^{-T} & 
- [(\pa_\theta \tilde{z}_0)(\theta_0(\vphi))]^T \partial_x^{-1} \\
\pa_\psi z_0(\vphi) & 0 & I
\end{pmatrix}
\begin{pmatrix}
\widehat \psi \, \\
\widehat \eta \\
\widehat w
\end{pmatrix} 
\end{equation}
the linearized operator  $d_{i, \zeta}{\cal F}(i_\delta )$ 
transforms (approximately, see \eqref{verona 2}) into the operator obtained linearizing 
\eqref{sistema dopo trasformazione inverso approssimato} at $(\psi, \eta , w, \zeta ) = (\vphi, 0, 0, \zeta_0 )$
(with $ \partial_t \rightsquigarrow {\cal D}_\om $), namely 
\begin{equation}\label{lin idelta}
\hspace{-5pt}
\begin{pmatrix}
{\cal D}_\om \widehat \psi - \partial_\psi K_{10}(\vphi)[\widehat \psi \, ] - 
K_{2 0}(\vphi)\widehat \eta - K_{11}^T (\vphi) \widehat w \\
 {\cal D}_\om  \widehat \eta + [\partial_\psi \theta_0(\vphi)]^T \widehat \zeta + 
\pa_\psi [\partial_\psi \theta_0(\vphi)]^T [ \widehat \psi, \zeta_0] + \partial_{\psi\psi} K_{00}(\vphi)[\widehat \psi]  + 
[\partial_\psi K_{10}(\vphi)]^T \widehat \eta + 
[\partial_\psi  K_{01}(\vphi)]^T \widehat w   \\ 
{\cal D}_\om  \widehat w - \partial_x 
\{ \partial_\psi K_{01}(\vphi)[\widehat \psi] + K_{11}(\vphi) \widehat \eta + K_{02}(\vphi) \widehat w \}
\end{pmatrix} \! .  \hspace{-5pt}
\end{equation}
We now estimate the induced composition operator. 
\begin{lemma} \label{lemma:DG}
Assume \eqref{ansatz 0} and let 
$ \widehat \imath := (\widehat \psi, \widehat \eta, \widehat w)$. 
Then 
\begin{gather} \label{DG delta}
\|DG_\delta(\vphi,0,0) [\widehat \imath] \|_s + \|DG_\delta(\vphi,0,0)^{-1} [\widehat \imath] \|_s 
\leq_s \| \widehat \imath \|_{s} + ( \| {\mathfrak I}_0 \|_{s + \s} + 
\gamma^{-1}  \| {\mathfrak I}_0 \|_{s_0 + \s} \|Z \|_{s + \s} ) \| \widehat \imath \|_{s_0}\,,
\\ 
\| D^2 G_\delta(\vphi,0,0)[\widehat \imath_1, \widehat \imath_2] \|_s 
\leq_s  \| \widehat \imath_1\|_s \| \widehat \imath_2 \|_{s_0} 
+ \| \widehat \imath_1\|_{s_0} \| \widehat \imath_2 \|_{s} 
+ ( \| {\mathfrak I}_0  \|_{s + \s} + \gamma^{-1}  \| {\mathfrak I}_0 \|_{s_0 + \s} \| Z\|_{s + \s} ) \|\widehat \imath_1 \|_{s_0} \| \widehat \imath_2\|_{s_0}
\notag 
\end{gather}
for some $\s := \s(\nu,\t)$. 
Moreover the same estimates hold if we replace the norm $\| \ \|_s$ with $\| \ \|_s^{\Lipg}$.
\end{lemma}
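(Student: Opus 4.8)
The plan is to estimate separately each block of the matrix $DG_\delta(\vphi,0,0)$ in \eqref{DGdelta}, exploiting that every entry is an \emph{explicit} expression — a product, a composition, a transpose, or $\partial_x^{-1}$ — in the functions $\theta_0$, $y_\delta$, $z_0$ and in the inverse matrix $[\pa_\psi\theta_0]^{-1}$, so that the bounds will follow by combining the tame product and composition lemmas with the a priori estimates \eqref{ansatz 0} and \eqref{stima y - y delta}.

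First I would record that, writing $\theta_0(\psi)=\psi+\Theta_0(\psi)$ with $\|\Theta_0\|_{s_0+\s}$ small by \eqref{ansatz 0}, the matrix $\pa_\psi\theta_0(\vphi)=I+\pa_\psi\Theta_0(\vphi)$ is a near-identity matrix of functions; hence by Lemma \ref{lemma:utile} (equivalently, a Neumann series argument) its inverse and transposed inverse obey $\|[\pa_\psi\theta_0(\vphi)]^{\pm 1}\|_s,\ \|[\pa_\psi\theta_0(\vphi)]^{\pm T}\|_s\leq_s 1+\|{\mathfrak I}_0\|_{s+\s}$, and likewise in $\Lipg$ norm. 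By the chain rule $(\pa_\theta\tilde z_0)(\theta_0(\psi))=\pa_\psi z_0(\psi)\,[\pa_\psi\theta_0(\psi)]^{-1}$, so the only entry of \eqref{DGdelta} built by composition is actually a product of already controlled factors; composed with $\partial_x^{-1}$, which is bounded on $H_S^\bot$, it is estimated by the tame product rule (Lemma \ref{lemma:standard Sobolev norms properties}) in terms of $\|z_0\|_{s+\s}$ and $\|{\mathfrak I}_0\|_{s+\s}$. The remaining nontrivial entry $\pa_\psi y_\delta$ is handled by splitting $y_\delta=y_0+(y_\delta-y_0)$ and invoking \eqref{stima y - y delta} for the second summand. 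Collecting these bounds and using the smallness of $\|{\mathfrak I}_0\|_{s_0+\s}$ yields \eqref{DG delta} for $DG_\delta(\vphi,0,0)$.

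For the inverse I would write $DG_\delta(\vphi,0,0)^{-1}$ explicitly. Abbreviating $A:=\pa_\psi\theta_0(\vphi)$, $B:=\pa_\psi y_\delta(\vphi)$, $D:=\pa_\psi z_0(\vphi)$ and $C:=-[(\pa_\theta\tilde z_0)(\theta_0(\vphi))]^T\partial_x^{-1}$, the block matrix in \eqref{DGdelta} is $\begin{pmatrix} A & 0 & 0 \\ B & A^{-T} & C \\ D & 0 & I\end{pmatrix}$, and one checks by direct multiplication that its inverse is
\[
\begin{pmatrix} A^{-1} & 0 & 0 \\ A^{T}(CD-B)A^{-1} & A^{T} & -A^{T}C \\ -D A^{-1} & 0 & I\end{pmatrix}.
\]
Since its entries involve only the same quantities $A^{\pm1}$, $A^{\pm T}$, $B$, $C$, $D$, the bounds just established together with one further application of the tame product estimate give \eqref{DG delta} for $DG_\delta(\vphi,0,0)^{-1}$.

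Finally, $D^2 G_\delta(\vphi,0,0)$ is obtained by differentiating the $\psi$-dependent entries of \eqref{DGdelta} once more in $\psi$; each resulting term is again a product/composition of $\pa_\psi^{\le 2}\theta_0$, $\pa_\psi^{\le 2}y_\delta$, $\pa_\psi^{\le 2}z_0$, $[\pa_\psi\theta_0]^{-1}$ and $\partial_x^{-1}$, and applying the bilinear tame product estimate to each term — one argument at index $s$, the other at $s_0$ — gives the claimed bound on $\|D^2 G_\delta(\vphi,0,0)[\widehat\imath_1,\widehat\imath_2]\|_s$, the $\Lipg$ statement following verbatim. The only genuinely delicate points are controlling $[\pa_\psi\theta_0]^{-1}$ with tame and Lipschitz estimates and tracking the $\g^{-1}\|Z\|$ contribution that enters through $y_\delta$ via \eqref{stima y - y delta}; everything else is routine bookkeeping with the product and composition lemmas, and the argument is analogous to the corresponding estimate in \cite{BB13}.
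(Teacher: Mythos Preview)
Your argument is correct and complete. The one genuine difference from the paper is how you handle the inverse: the paper does not compute $DG_\delta(\vphi,0,0)^{-1}$ explicitly but instead observes that, by \eqref{ansatz 0}, $\|(DG_\delta(\vphi,0,0)-I)\widehat\imath\|_{s_0}\leq C\e^{6-2b}\gamma^{-1}\|\widehat\imath\|_{s_0}\leq\tfrac12\|\widehat\imath\|_{s_0}$, and then invokes a tame Neumann series to get the bound for the inverse directly. Your route --- writing out the block inverse and estimating each entry --- is more explicit and equally valid; it has the small advantage of not requiring a tame Neumann-series lemma, at the cost of a short matrix computation. For $DG_\delta$ itself and for $D^2G_\delta$ the two proofs are essentially identical: estimate the entries of \eqref{DGdelta} (and their $\psi$-derivatives) via the tame product/composition lemmas, with the $\gamma^{-1}\|Z\|_{s+\s}$ contribution entering only through $y_\delta$ via \eqref{stima y - y delta}.
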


\begin{proof}
The estimate \eqref{DG delta} for $D G_\delta(\vphi,0,0)$ follows by \eqref{DGdelta} and  \eqref{stima y - y delta}.
By \eqref{ansatz 0},
$ \| (DG_\delta(\vphi,0,0) - I) \widehat \imath \|_{s_0} 
\leq $ $ C \e^{6 - 2b} \gamma^{-1} \| \widehat \imath \|_{s_0} 
\leq \| \widehat \imath \|_{s_0} / 2 $.
Therefore $DG_\delta(\vphi,0,0)$ is invertible and, by Neumann series, the inverse 
satisfies \eqref{DG delta}. 
The bound for $D^2 G_\delta$ follows by differentiating $DG_\d$. 
\end{proof}

In order to construct an approximate inverse of \eqref{lin idelta} it is sufficient to solve the equation
\begin{equation}\label{operatore inverso approssimato} 
{\mathbb D} [\widehat \psi, \widehat \eta, \widehat w, \widehat \zeta ] := 
  \begin{pmatrix}
{\cal D}_\om \widehat \psi - 
K_{20}(\vphi) \widehat \eta - K_{11}^T(\vphi) \widehat w\\
{\cal D}_\om  \widehat \eta + [\partial_\psi \theta_0(\vphi)]^T \widehat \zeta  \\
{\cal D}_\om \widehat w  - \partial_x K_{11}(\vphi)\widehat \eta -  \partial_x K_{0 2}(\vphi) \widehat w 
\end{pmatrix} =
\begin{pmatrix}
g_1 \\ g_2 \\ g_3 
\end{pmatrix}
\end{equation}
which is obtained by neglecting in \eqref{lin idelta} 
the terms $ \partial_\psi K_{10} $, $ \partial_{\psi \psi} K_{00} $, $ \partial_\psi K_{00} $, $ \partial_\psi K_{01} $ and
$ \pa_\psi [\partial_\psi \theta_0(\vphi)]^T [ \cdot , \zeta_0] $
(which are naught at a solution by Lemmata \ref{coefficienti nuovi} and \ref{zeta = 0}). 

First we solve the second equation in \eqref{operatore inverso approssimato}, namely 
$ {\cal D}_\om  \widehat \eta  = g_2  -  [\partial_\psi \theta_0(\vphi)]^T \widehat \zeta $. 
We choose $ \widehat \zeta $ so that the $\vphi$-average of the right hand side 
is zero, namely 
\begin{equation}\label{fisso valore di widehat zeta}
\widehat \zeta = \langle g_2 \rangle 
\end{equation}
(we denote $ \langle g \rangle := (2 \pi)^{- \nu} \int_{\T^\nu} g (\vphi) d \vphi $). 
Note that the $\ph$-averaged matrix  $ \langle [\partial_\psi \theta_0 ]^T  \rangle = 
\langle I + [\pa_\psi \Theta_0]^T \rangle = I $ because $\theta_0(\ph) = \ph + \Theta_0(\ph)$ and 
$\Theta_0(\ph)$ is a periodic function.
Therefore 
\begin{equation}\label{soleta}
\widehat \eta := {\cal D}_\om^{-1} \big(
g_2 - [\partial_\psi \theta_0(\ph) ]^T \langle g_2 \rangle \big) + 
\langle \widehat \eta \rangle \, , \quad \langle \widehat \eta \rangle \in \R^\nu \, , 
\end{equation}
where the average $\la \widehat \eta \ra$ will be fixed below. 
Then we consider the third equation
\begin{equation}\label{cal L omega}
{\cal L}_\om \widehat w = g_3 + \partial_x K_{11}(\vphi) \widehat \eta\,,  \
\quad  {\cal L}_\om := \om \cdot \partial_\vphi -  \partial_x K_{0 2}(\vphi) \, .
\end{equation}

\begin{itemize}
\item {\sc Inversion assumption.} 
{\it There exists a set $ \Omega_\infty \subset \Omega_o$ such that  
for all $ \omega \in \Omega_\infty $, for every function
$ g \in H^{s+\mu}_{S^\bot} (\T^{\nu+1}) $  there 
exists a solution $ h :=  {\cal L}_\om^{- 1} g  \in H^{s}_{S^\bot} (\T^{\nu+1})  $ 
of the linear equation $ {\cal L}_\om h = g $
which satisfies}
\begin{equation}\label{tame inverse}
\| {\cal L}_\om^{- 1} g \|_s^{\Lipg} \leq C(s) \g^{-1} 
\big(  \| g \|_{s + \mu}^{\Lipg} + \e \gamma^{-1} 
\big\{ \| {\mathfrak I}_0 \|_{s + \mu}^{\Lipg} + \g^{-1} \| {\mathfrak I}_0 \|_{s_0 + \mu}^{\Lipg} \| Z \|_{s + \mu}^{\Lipg} \big\} \|g \|_{s_0}^{\Lipg}  \big) 
\end{equation}
\emph{for some $ \mu := \mu (\tau, \nu) >  0 $}. 
\end{itemize}

\begin{remark}
The term $ \e \gamma^{-1} 
\{ \| {\mathfrak I}_0 \|_{s + \mu}^{\Lipg} + \g^{-1} \| {\mathfrak I}_0 \|_{s_0 + \mu}^{\Lipg} \| Z \|_{s + \mu}^{\Lipg} \} $ arises because
the remainder $ R_6 $ in section \ref{step5} contains the term 
$ \e ( \| \Theta_0 \|_{s + \mu}^{\Lipg} + \| y_\d \|_{s + \mu}^{\Lipg}) $ 
$\leq \e  \| {\mathfrak I}_\d \|_{s + \mu}^{\Lipg} $, see Lemma \ref{lemma L6}. 
\end{remark}

By the above assumption  there exists a solution 
\begin{equation}\label{normalw}
\widehat w := {\cal L}_\om^{-1} [ g_3 + \partial_x K_{11}(\vphi) \widehat \eta \, ] 
\end{equation}
of \eqref{cal L omega}. 
Finally, we solve the first equation in \eqref{operatore inverso approssimato}, 
which, substituting \eqref{soleta}, \eqref{normalw}, becomes
\begin{equation}\label{equazione psi hat}
{\cal D}_\om \widehat \psi  = 
g_1 +  M_1(\vphi) \langle \widehat \eta \rangle + M_2(\vphi) g_2 + M_3(\vphi) g_3 - 
M_2(\vphi)[\pa_\psi \theta_0]^T \langle g_2 \rangle \,,
\end{equation}
where
\be \label{cal M2}
M_1(\vphi) := K_{2 0}(\vphi) + K_{11}^T(\vphi) {\cal L}_\omega^{-1} \partial_x K_{11}(\vphi)\,, \quad
M_2(\vphi) :=  M_1 (\vphi)  {\cal D}_\om^{-1} \, , \quad 
M_3(\vphi) :=  K_{11}^T (\vphi) {\cal L}_\om^{-1} \, .  
\ee
In order to solve the equation \eqref{equazione psi hat} we have 
to choose $\langle \widehat \eta \rangle$ such that the right hand side in \eqref{equazione psi hat} has zero average.  
By Lemma \ref{lemma:Kapponi vari} and \eqref{ansatz 0}, the $\ph$-averaged matrix 
$ \langle M_1 \rangle =- 3 \e^{2 b} I + O( \e^{10} \gamma^{-3}) $.  
Therefore, for $ \e $ small,  $\langle M_1 \rangle$ is invertible and $\langle M_1 \rangle^{-1} = O(\e^{-2 b}) = O(\gamma^{- 1})$ 
(recall \eqref{link gamma b}). Thus we define 
\begin{equation}\label{sol alpha}
\langle \widehat \eta \rangle  := - \langle M_1 \rangle^{-1} 
[ \langle g_1 \rangle + \langle M_2 g_2 \rangle + \langle M_3 g_3 \rangle - 
\langle M_2 [\pa_\psi \theta_0]^T   \rangle  \langle g_2 \rangle ].
\end{equation}
With this choice of $\langle \widehat \eta \rangle$ the equation \eqref{equazione psi hat} has the solution
\begin{equation}\label{sol psi}
\widehat \psi :=
{\cal D}_\om^{-1} [ g_1 + M_1(\vphi) \langle \widehat \eta \rangle + M_2(\vphi) g_2 + M_3(\vphi) g_3 -
M_2(\vphi)[\pa_\psi \theta_0]^T \langle g_2 \rangle ].
\end{equation}
In conclusion, we have constructed 
a solution  $(\widehat \psi, \widehat \eta, \widehat w, \widehat \zeta)$ of the linear system \eqref{operatore inverso approssimato}. 

\begin{proposition}\label{prop: ai}
Assume \eqref{ansatz 0} and \eqref{tame inverse}. 
Then, $\forall \om \in \Omega_\infty $, $ \forall g := (g_1, g_2, g_3) $,
 the system \eqref{operatore inverso approssimato} has a solution 
$ {\mathbb D}^{-1} g := (\widehat \psi, \widehat \eta, \widehat w, \widehat \zeta ) $
where $(\widehat \psi, \widehat \eta, \widehat w, \widehat \zeta)$ are defined in 
\eqref{sol psi}, \eqref{soleta}, \eqref{sol alpha}, \eqref{normalw}, \eqref{fisso valore di widehat zeta}  satisfying
\begin{equation} \label{stima T 0 b}
\| {\mathbb D}^{-1} g \|_s^{{\rm Lip}(\gamma)}  
\leq_s \gamma^{-1} \big( \| g \|_{s + \mu}^{{\rm Lip}(\gamma)}  
+ \e \gamma^{-1} \big\{ \| {\mathfrak I}_0  \|_{s + \mu}^{{\rm Lip}(\gamma)}  
+  \g^{-1}  \| {\mathfrak I}_0  \|_{s_0 + \mu}^{{\rm Lip}(\gamma)} \|{\cal F}(i_0, \zeta_0) \|^{{\rm Lip}(\gamma)}_{s + \mu} \big\} \| g \|_{s_0 + \mu}^{{\rm Lip}(\gamma)}  \big).
\end{equation}
\end{proposition}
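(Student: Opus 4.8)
The plan is to verify the tame estimate \eqref{stima T 0 b} for the tuple $(\widehat\psi,\widehat\eta,\widehat w,\widehat\zeta)$ that has already been constructed in \eqref{sol psi}, \eqref{soleta}, \eqref{sol alpha}, \eqref{normalw}, \eqref{fisso valore di widehat zeta}: by construction this tuple solves \eqref{operatore inverso approssimato}, since the choice \eqref{fisso valore di widehat zeta} of $\widehat\zeta$ kills the $\vphi$-average of the right hand side of the second equation (so $\widehat\eta$ in \eqref{soleta} is well defined), the inversion assumption \eqref{tame inverse} produces $\widehat w$, and the choice \eqref{sol alpha} of $\langle\widehat\eta\rangle$ makes the right hand side of \eqref{equazione psi hat} have zero average (so $\widehat\psi$ in \eqref{sol psi} is well defined). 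Thus only the estimate remains, and it is obtained by bounding the building blocks ``from the inside out'' and composing.

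First I would record the elementary bounds. The component $\widehat\zeta=\langle g_2\rangle$ satisfies $|\widehat\zeta|^{\Lipg}\leq\|g_2\|_{s_0}^{\Lipg}$. Next I would estimate the operators $M_1,M_2,M_3$ of \eqref{cal M2}: using the tame bounds of Lemma \ref{lemma:Kapponi vari} for $K_{20}$, $K_{11}$, $K_{11}^T$ (in particular $K_{11},K_{11}^T=O(\e^{5}\g^{-1})$ and $K_{20}=-3\e^{2b}I+O(\cdot)$), the inversion assumption \eqref{tame inverse} for $\mathcal L_\om^{-1}$, the bound \eqref{Dom inverso} for $\mathcal D_\om^{-1}$, and the algebra/interpolation inequalities of Lemma \ref{prodest} and Lemma \ref{lemma:standard Sobolev norms properties}, one gets for each $M_i$ a tame estimate of the form ``high norm of the input, plus a correction of the type appearing in \eqref{stima T 0 b} times the low norm of the input'', with the small prefactors $\e^{2b}$ for $M_1$ (hence $M_2=M_1\mathcal D_\om^{-1}$ carries $\e^{2b}\g^{-1}=1$, i.e.\ no net loss) and $\e^{5}\g^{-2}=\e^{1-2a}$ for $M_3$. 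I would also use the computation recorded just before \eqref{sol alpha}, namely $\langle M_1\rangle=-3\e^{2b}I+O(\e^{10}\g^{-3})$, so that for $\e$ small $\langle M_1\rangle$ is invertible with $\langle M_1\rangle^{-1}=O(\e^{-2b})=O(\g^{-1})$ by \eqref{link gamma b}; this is the source of the overall $\g^{-1}$ in \eqref{stima T 0 b}.

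Then I would assemble, fixing the loss $\mu:=\mu(\tau,\nu)$ large enough to absorb all the $\sigma$'s from Lemmata \ref{toro isotropico modificato}, \ref{coefficienti nuovi}, \ref{lemma:Kapponi vari}, the $\tau$-losses of $\mathcal D_\om^{-1}$, and the $\mu$ of \eqref{tame inverse}. Plugging the $M_i$-bounds into \eqref{sol alpha} and using $\langle M_1\rangle^{-1}=O(\g^{-1})$ bounds $|\langle\widehat\eta\rangle|^{\Lipg}$ by $\g^{-1}$ times the right hand side of \eqref{stima T 0 b} evaluated at $s=s_0$; then \eqref{soleta}, \eqref{Dom inverso} and $\|[\partial_\psi\theta_0]^T\|_s^{\Lipg}\leq_s 1+\|\mathfrak{I}_0\|_{s+1}^{\Lipg}$ give \eqref{stima T 0 b} for $\widehat\eta$; applying \eqref{tame inverse} to \eqref{normalw}, together with $\|\partial_x K_{11}(\vphi)\widehat\eta\|_s^{\Lipg}\leq_s\e^{5}\g^{-1}\|\widehat\eta\|_{s+1}^{\Lipg}+(\cdots)\|\widehat\eta\|_{s_0+1}^{\Lipg}$ from Lemma \ref{lemma:Kapponi vari}, gives it for $\widehat w$; and finally \eqref{sol psi} with \eqref{Dom inverso} and the $M_i$-bounds gives it for $\widehat\psi$. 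Throughout, the smallness $\|\mathfrak{I}_0\|_{s_0+\mu}^{\Lipg}\leq C\e^{6-2b}\g^{-1}\leq 1$ from \eqref{ansatz 0} is used to absorb all quadratic (in the ``correction'') contributions and to keep only the linear structure of \eqref{stima T 0 b}; note that the correction term $\e\g^{-1}\{\|\mathfrak{I}_0\|_{s+\mu}^{\Lipg}+\g^{-1}\|\mathfrak{I}_0\|_{s_0+\mu}^{\Lipg}\|{\cal F}(i_0,\zeta_0)\|_{s+\mu}^{\Lipg}\}\|g\|_{s_0}^{\Lipg}$ in \eqref{tame inverse} propagates verbatim (up to the constant) into \eqref{stima T 0 b}, since ${\cal F}(i_0,\zeta_0)=Z$.

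I expect the main obstacle to be the bookkeeping of small powers of $\e$ and of derivative losses rather than any conceptual difficulty: one must check that every contribution entering $\widehat\psi,\widehat\eta,\widehat w$ is either of the ``main'' size $\g^{-1}\|g\|_{s+\mu}^{\Lipg}$ or comes with enough extra powers of $\e$ (via $K_{11}$, $M_2$, $M_3$, $\langle M_1\rangle^{-1}$ and the small factors in \eqref{ansatz 0}) to fit into the correction term, and in particular that products of two correction terms, and the $Z$-dependent pieces generated along the way, are subsumed by $\e\g^{-2}\|\mathfrak{I}_0\|_{s_0+\mu}^{\Lipg}\|Z\|_{s+\mu}^{\Lipg}\|g\|_{s_0+\mu}^{\Lipg}$. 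Once $\mu$ is fixed large enough and $\e_0$ small enough, this is a routine, if lengthy, verification.
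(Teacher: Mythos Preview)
Your proposal is correct and follows essentially the same approach as the paper: estimate the building blocks $M_1,M_2,M_3$ via Lemma \ref{lemma:Kapponi vari}, \eqref{tame inverse}, \eqref{Dom inverso}, use $\langle M_1\rangle^{-1}=O(\g^{-1})$ to bound $\langle\widehat\eta\rangle$, and then assemble the estimates for $\widehat\eta$, $\widehat w$, $\widehat\psi$ in that order. The paper's proof is more terse but records the same sequence of steps, and it also observes (as you effectively do) that the bound \eqref{stima T 0 b} is saturated by the term $\mathcal L_\om^{-1}g_3$ in $\widehat w$, which is exactly where the correction $\e\g^{-1}\{\|\mathfrak I_0\|_{s+\mu}+\g^{-1}\|\mathfrak I_0\|_{s_0+\mu}\|Z\|_{s+\mu}\}\|g\|_{s_0}$ from \eqref{tame inverse} enters.
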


\begin{proof}

Recalling \eqref{cal M2}, by  Lemma \ref{lemma:Kapponi vari}, \eqref{tame inverse}, \eqref{ansatz 0}  we get $ \| M_2 h \|_{s_0} + \| M_3 h \|_{s_0}  \leq C \| h \|_{s_0 + \s} $. 
Then, by \eqref{sol alpha} and $\langle M_1 \rangle^{-1} = O(\e^{-2 b}) = O(\gamma^{-1}) $, 
we deduce 
$ |\langle \widehat \eta\rangle|^{\Lipg} \leq C\gamma^{-1} \| g \|_{s_0+ \s}^{\Lipg} $
and  \eqref{soleta}, \eqref{Dom inverso} imply 
$ \| \widehat \eta \|_s^{\Lipg} \leq_s \gamma^{-1} \big( \| g \|_{s + \s}^\Lipg + \| \fracchi_0 \|_{s + \s } \| g \|_{s_0}^\Lipg  \big)$.
The bound \eqref{stima T 0 b} is sharp for $ \widehat w $ because $ {\cal L}_\om^{-1} g_3 $ in  \eqref{normalw}
is estimated using \eqref{tame inverse}. Finally $  \widehat \psi $ 
satisfies \eqref{stima T 0 b} using
\eqref{sol psi},  \eqref{cal M2}, \eqref{tame inverse}, \eqref{Dom inverso} and Lemma \ref{lemma:Kapponi vari}.
\end{proof}
Finally we prove that the operator 
\begin{equation}\label{definizione T} 
{\bf T}_0 := (D { \widetilde G}_\delta)(\vphi,0,0) \circ {\mathbb D}^{-1} \circ (D G_\delta) (\vphi,0,0)^{-1}
\end{equation}
is an approximate right  inverse for $d_{i,\zeta} {\cal F}(i_0 )$ where
$ \widetilde{G}_\delta (\psi, \eta, w, \zeta) := $  $ \big( G_\delta (\psi, \eta, w), \zeta \big) $ 
 is the identity on the $ \zeta $-component. 
We denote the norm $ \| (\psi, \eta, w, \zeta) \|_s^\Lipg := $ $  \max \{  \| (\psi, \eta, w) \|_s^\Lipg, $ $ | \zeta |^\Lipg  \} $.

\begin{theorem} {\bf (Approximate inverse)} \label{thm:stima inverso approssimato}
Assume \eqref{ansatz 0} and the inversion assumption \eqref{tame inverse}. 
Then there exists $ \mu := \mu (\tau, \nu) >  0 $ such that, for all $ \om \in \Om_\infty $, 
for all $ g := (g_1, g_2, g_3) $,  
the operator $ {\bf T}_0 $ defined in \eqref{definizione T} satisfies  
\begin{equation}\label{stima inverso approssimato 1}
\| {\bf T}_0 g \|_{s}^{{\rm Lip}(\gamma)}  
\leq_s  \gamma^{-1} \big(\| g \|_{s + \mu}^{{\rm Lip}(\gamma)}  
+ \e \gamma^{-1} \big\{ \| {\mathfrak I}_0 \|_{s + \mu}^{{\rm Lip}(\gamma)}  
+\gamma^{-1} \| \fracchi_0 \|_{s_0 + \mu}^\Lipg 
\|{\cal F}(i_0, \zeta_0) \|_{s + \mu}^{{\rm Lip}(\gamma)}  \big\} 
\| g \|_{s_0 + \mu}^{{\rm Lip}(\gamma)}  \big). 
\end{equation}
It is an approximate inverse of $d_{i, \zeta} {\cal F}(i_0 )$, namely 
\begin{align}
& \| ( d_{i, \zeta} {\cal F}(i_0) \circ {\bf T}_0 - I ) g \|_s^{{\rm Lip}(\gamma)}  
\label{stima inverso approssimato 2} 
\\ 
& \leq_s \gamma^{-1} \Big( \| {\cal F}(i_0, \zeta_0) \|_{s_0 + \mu}^\Lipg \| g \|_{s + \mu}^\Lipg  
+ \big\{ \| {\cal F}(i_0, \zeta_0) \|_{s + \mu}^\Lipg 
+ \e \gamma^{-1} \| {\cal F}(i_0, \zeta_0) \|_{s_0 + \mu}^\Lipg \| {\mathfrak I}_0 \|_{s + \mu}^\Lipg \big\} \| g \|_{s_0 + \mu}^\Lipg \Big).
\nonumber
\end{align}
\end{theorem}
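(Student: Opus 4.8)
The strategy is the one of Berti--Bolle \cite{BB13}-\cite{BB14}: we show that $\mathbf{T}_0$ in \eqref{definizione T} is an approximate right inverse by tracking the errors introduced at each of the three reductions --- (i) passing from $d_{i,\zeta}\mathcal{F}(i_0)$ to $d_{i,\zeta}\mathcal{F}(i_\delta)$, (ii) conjugating $d_{i,\zeta}\mathcal{F}(i_\delta)$ by $DG_\delta(\vphi,0,0)$ to the operator in \eqref{lin idelta}, and (iii) dropping the terms $\partial_\psi K_{10}$, $\partial_{\psi\psi}K_{00}$, $\partial_\psi K_{00}$, $\partial_\psi K_{01}$, $\pa_\psi[\partial_\psi\theta_0]^T[\cdot,\zeta_0]$ to pass from \eqref{lin idelta} to $\mathbb{D}$ in \eqref{operatore inverso approssimato}. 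Each of these errors is controlled by $\mathcal{F}(i_0,\zeta_0)=Z$ (through Lemmata \ref{coefficienti nuovi}, \ref{zeta = 0}, \ref{toro isotropico modificato}), so that at an exact solution all three errors vanish and $\mathbf{T}_0$ becomes an exact right inverse.

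First I would record the estimate \eqref{stima inverso approssimato 1}: this follows immediately by composing the bound \eqref{stima T 0 b} for $\mathbb{D}^{-1}$ with the tame bounds \eqref{DG delta} for $DG_\delta(\vphi,0,0)^{\pm 1}$ (and for $D\widetilde G_\delta$, which is $DG_\delta$ extended by the identity on the $\zeta$-component), using the interpolation inequality of Lemma \ref{lemma:standard Sobolev norms properties} to absorb the low-norm factors and \eqref{ansatz 0} to make the coefficients $O(1)$; the loss $\mu$ is enlarged as needed. For \eqref{stima inverso approssimato 2}, write
\begin{equation*}
d_{i,\zeta}\mathcal{F}(i_0)\circ \mathbf{T}_0 - I = \big(d_{i,\zeta}\mathcal{F}(i_0)-d_{i,\zeta}\mathcal{F}(i_\delta)\big)\circ \mathbf{T}_0 + \big(d_{i,\zeta}\mathcal{F}(i_\delta)\circ \mathbf{T}_0 - I\big),
\end{equation*}
and estimate the first summand using that $\mathcal{F}(i_\delta,\zeta_0)-\mathcal{F}(i_0,\zeta_0)$ and hence $d_i X_P(i_\delta)-d_i X_P(i_0)$ are controlled by \eqref{stima y - y delta}, \eqref{XP delta - XP} and Lemma \ref{lemma quantitativo forma normale} (the relevant bound being on $\partial_y d_i X_P$, \eqref{D yii}), so this contributes a term of size $\gamma^{-1}\|\mathcal{F}(i_0,\zeta_0)\|_{s_0+\mu}\|g\|_{s+\mu}+\dots$. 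For the second summand, insert $DG_\delta DG_\delta^{-1}$ and use that, by construction, conjugating $d_{i,\zeta}\mathcal{F}(i_\delta)$ by $DG_\delta(\vphi,0,0)$ produces \eqref{lin idelta} up to a remainder $\mathcal{R}$ of size $O(\|\mathfrak{I}_\delta\|_{s_0+\s}\|Z\|_{s+\s}+\dots)$ coming from the fact that $DG_\delta$ is evaluated at $(\vphi,0,0)$ rather than along the true flow --- this is the estimate \eqref{verona 2} referenced in the text, which I would state and prove by Taylor-expanding $G_\delta$ and using \eqref{derivata i delta}, Lemma \ref{lemma:DG}.

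It then remains to handle the passage from \eqref{lin idelta} to $\mathbb{D}$: the operator in \eqref{lin idelta} equals $\mathbb{D}$ plus the linear operator $\mathcal{E}$ whose entries are exactly $\partial_\psi K_{10}[\widehat\psi]$, $[\partial_\psi\theta_0]^T\widehat\zeta$ replaced correctly, $\pa_\psi[\partial_\psi\theta_0]^T[\widehat\psi,\zeta_0]+\partial_{\psi\psi}K_{00}[\widehat\psi]+[\partial_\psi K_{10}]^T\widehat\eta+[\partial_\psi K_{01}]^T\widehat w$, and $\partial_x\{\partial_\psi K_{01}[\widehat\psi]\}$; by Lemma \ref{coefficienti nuovi} each of $\partial_\psi K_{00},\ K_{10}-\om,\ K_{01}$ is $O(\|Z\|_{s+\s})$, and $|\zeta_0|$ is $O(\|Z\|_{s_0})$ by Lemma \ref{zeta = 0}, so $\|\mathcal{E}\widehat\imath\|_s\lesssim \|Z\|_{s+\s}\|\widehat\imath\|_{s_0+\s}+\|Z\|_{s_0+\s}\|\widehat\imath\|_{s+\s}$. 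Composing $\mathcal{E}\circ\mathbb{D}^{-1}$ with the outer $DG_\delta$ factors and using \eqref{stima T 0 b} gives the final contribution to \eqref{stima inverso approssimato 2}. The main obstacle is bookkeeping: making sure that the three error terms genuinely carry a factor $\mathcal{F}(i_0,\zeta_0)$ (not merely a small power of $\e$), so that the approximate inverse is good enough for the quadratic Nash--Moser scheme --- this is where identities \eqref{operatorF}, \eqref{KHG}, \eqref{sistema dopo trasformazione inverso approssimato} and Remark \ref{rem:KAM normal form} must be used to see that the dropped terms all vanish on an exact solution --- and keeping the loss of derivatives $\mu$ uniform through all the compositions.
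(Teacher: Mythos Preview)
Your proposal is correct and follows essentially the same approach as the paper. The paper organizes the argument around exactly your three error terms --- denoting them $\mathcal{E}_0$ (the $i_0\to i_\delta$ difference, via \eqref{D yii} and \eqref{stima y - y delta}), $\mathcal{E}_1$ (the conjugation error, obtained by differentiating the identity $\mathcal{F}(G_\delta(\mathtt{u}),\zeta)=DG_\delta(\mathtt{u})(\mathcal{D}_\omega\mathtt{u}-X_K)$ at $(\vphi,0,0)$), and $\mathcal{E}_2:=DG_\delta\circ R_Z\circ D\widetilde G_\delta^{-1}$ (your $\mathcal{E}$, coming from the dropped coefficients) --- and then observes that $d_{i,\zeta}\mathcal{F}(i_0)\circ\mathbf{T}_0-I=(\mathcal{E}_0+\mathcal{E}_1+\mathcal{E}_2)\circ\mathbf{T}_0$, exactly as you describe.
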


\begin{proof}
We denote $\| \ \|_s$ instead of $\| \ \|_s^{\Lipg}$. 
The bound \eqref{stima inverso approssimato 1} 
follows from \eqref{definizione T}, \eqref{stima T 0 b},  \eqref{DG delta}.
By \eqref{operatorF}, since $ X_\mN $ does not depend on $ y $,  
and $ i_\d $ differs from $ i_0 $ only for the $ y$ component,  
we have 
\begin{align} \label{verona 0}
d_{i, \zeta} {\cal F}(i_0 )[\, \widehat \imath, \widehat \zeta \, ]  - d_{i, \zeta} {\cal F}(i_\delta ) [\, \widehat \imath, \widehat \zeta \, ]
&  =   d_i X_P (i_\delta)  [\, \widehat \imath \, ]  - d_i X_P (i_0) [\, \widehat \imath \, ]
\\ & 
=  \int_0^1 \partial_y d_i X_P (\theta_0, y_0 + s (y_\delta - y_0), z_0) [y_\d - y_0,   \widehat \imath  \,  ] ds 
=: {\cal E}_0 [\, \widehat \imath, \widehat \zeta \, ]  \,. \nonumber
\end{align}
By \eqref{D yii}, \eqref{stima y - y delta}, \eqref{ansatz 0},  we estimate
\begin{equation}\label{stima parte trascurata 1}
\| {\cal E}_0 [\, \widehat \imath, \widehat \zeta \, ] \|_s \leq_s 
\| Z \|_{s_0 + \s} \| \widehat \imath \|_{s + \s} +  
\| Z \|_{s + \s} \| \widehat \imath \|_{s_0 + \s} +  \e^{2b-1}\g^{-1}
\| Z \|_{s_0 + \s} \| \widehat \imath \|_{s_0 + \s} \| \fracchi_0 \|_{s+\s} 
\end{equation}
where $Z := \mF(i_0, \zeta_0)$ (recall \eqref{def Zetone}). 
 Note that $\mE_0[\widehat \imath, \widehat \zeta]$ is, in fact, independent of $\widehat \zeta$.
Denote the set of variables $  (\psi, \eta, w) =: {\mathtt u} $. 
Under the transformation $G_\delta $, the nonlinear operator ${\cal F}$ in \eqref{operatorF} transforms into 
\be \label{trasfo imp}
{\cal F}(G_\delta(  {\mathtt u} (\vphi) ), \zeta ) 
= D G_\delta( {\mathtt u}  (\vphi) ) \big(  {\cal D}_\om {\mathtt u} (\vphi) - X_K ( {\mathtt u} (\vphi), \zeta)  \big) \, , \quad 
K = H_{\e, \zeta} \circ G_\delta   \, ,
\ee
see \eqref{sistema dopo trasformazione inverso approssimato}. 
Differentiating  \eqref{trasfo imp} at the trivial torus 
$ {\mathtt u}_\delta (\vphi) = G_\delta^{-1}(i_\delta) (\vphi) = (\ph, 0 , 0 ) $, 
at  $ \zeta = \zeta_0 $, 
in the directions $(\widehat {\mathtt u}, \widehat \zeta)
= (D G_\d ({\mathtt u}_\d)^{-1} [\, \widehat \imath \, ], \widehat \zeta) 
= D {\widetilde G}_\d ({\mathtt u}_\d)^{-1} [\, \widehat \imath , \widehat \zeta \, ] $, 
we get
\begin{align} \label{verona 2}
d_{i , \zeta} {\cal F}(i_\delta ) [\, \widehat \imath, \widehat \zeta \, ]
=  & D G_\delta( {\mathtt u}_\delta) 
\big( {\cal D}_\om \widehat {\mathtt u} 
- d_{\mathtt u, \zeta} X_K( {\mathtt u}_\delta, \zeta_0) [\widehat {\mathtt u}, \widehat \zeta \, ] 
\big) 
+ {\cal E}_1 [ \, \widehat \imath , \widehat \zeta \, ]\,,
\\
\label{E1}
{\cal E}_1 [\, \widehat \imath , \widehat \zeta \, ] 
:=  & 
D^2 G_\delta( {\mathtt u}_\delta) \big[ D G_\delta( {\mathtt u}_\delta)^{-1} {\cal F}(i_\delta, \zeta_0), \,  D G_\d({\mathtt u}_\d)^{-1} 
[ \, \widehat \imath \,  ] \big] \,, 
\end{align}
where  $ d_{\mathtt u, \zeta} X_K( {\mathtt u}_\delta, \zeta_0) $ is expanded in \eqref{lin idelta}.
In fact, ${\cal E}_1$ is independent of $\widehat \zeta$. 
We split  
\[ 
{\cal D}_\om \widehat {\mathtt u} 
- d_{\mathtt u, \zeta} X_K( {\mathtt u}_\delta, \zeta_0) [\widehat {\mathtt u}, \widehat \zeta] 
= \mathbb{D} [\widehat {\mathtt u}, \widehat \zeta \, ] + R_Z [ \widehat {\mathtt u}, \widehat \zeta \, ], 
\]
where $ {\mathbb D} [\widehat {\mathtt u}, \widehat \zeta] $ is defined in \eqref{operatore inverso approssimato} and 
\be\label{R0}
R_Z [  \widehat \psi, \widehat \eta, \widehat w, \widehat \zeta]
:= \begin{pmatrix}
 - \partial_\psi K_{10}(\vphi) [\widehat \psi ] \\
\pa_\psi [\partial_\psi \theta_0(\vphi)]^T [  \widehat \psi, \zeta_0] + \partial_{\psi \psi} K_{00} (\vphi) [ \widehat \psi ] + 
 [\partial_\psi K_{10}(\vphi)]^T \widehat \eta + 
 [\partial_\psi K_{01}(\vphi)]^T \widehat w  \\
 - \partial_x \{ \partial_{\psi} K_{01}(\vphi)[ \widehat \psi ] \}
 \end{pmatrix}
\ee
($R_Z$ is independent of $\widehat \zeta$). 
By \eqref{verona 0} and \eqref{verona 2}, 
\begin{equation} \label{E2}
d_{i, \zeta} {\cal F}(i_0 ) 
= D G_\delta({\mathtt u}_\delta) \circ {\mathbb D} \circ D {\widetilde G}_\delta ({\mathtt u}_\delta)^{-1} 
+ {\cal E}_0 + {\cal E}_1 + \mE_2 \,,
\quad
\mE_2 := D G_\delta( {\mathtt u}_\delta) \circ R_Z \circ D {\widetilde G}_\delta ({\mathtt u}_\delta)^{-1}  \, .
\end{equation}
By Lemmata \ref{coefficienti nuovi}, \ref{lemma:DG},    \ref{zeta = 0}, and \eqref{stima toro modificato}, \eqref{ansatz 0},
the terms $\mE_1, \mE_2 $ (see \eqref{E1}, \eqref{E2}, \eqref{R0})  satisfy 
the same bound \eqref{stima parte trascurata 1} as $\mE_0$
(in fact even better).
Thus the sum $\mE := \mE_0 + \mE_1 + \mE_2$ satisfies \eqref{stima parte trascurata 1}.
Applying $ {\bf T}_0 $ defined in \eqref{definizione T} to the right in \eqref{E2}, 
since $ {\mathbb D} \circ  {\mathbb D}^{-1} = I $ (see Proposition \ref{prop: ai}), 
we get $d_{i, \zeta} {\cal F}(i_0 ) \circ {\bf T}_0  - I 
= \mE \circ {\bf T}_0$. 
Then \eqref{stima inverso approssimato 2} follows from 
\eqref{stima inverso approssimato 1} and the bound \eqref{stima parte trascurata 1} for $\mE$. 
\end{proof}

\section{The linearized operator in the normal directions}\label{linearizzato siti normali}

The goal of this section is to write an explicit  expression of the linearized operator 
$\mL_\om$ defined in \eqref{cal L omega}, see Proposition \ref{prop:lin}. 
To this aim, we compute $ \frac12 ( K_{02}(\psi) w, w )_{L^2(\T)} $, $ w \in H_S^\bot$, 
which collects all the components of $(H_\e \circ G_\d)(\psi, 0, w)$ that are quadratic in $w$, see \eqref{KHG}.

\smallskip

We first prove some preliminary lemmata. 

\begin{lemma}\label{lemma astratto potente}
Let $ H $ be a Hamiltonian of class $C^2 (  H^1_0(\T_x), \R )$ and consider a map
$ \Phi(u) := u + \Psi(u) $ satisfying $\Psi (u) = \Pi_E \Psi(\Pi_E u)$, for all $ u $, 
where $E$ is a finite dimensional subspace as in \eqref{def E finito}. Then 
\begin{equation}\label{lint2}
\partial_u \big[\nabla ( H \circ \Phi)\big] (u)  [h] = (\partial_u  \nabla H )(\Phi(u)) [h] + {\cal R}(u)[h]\,,
\end{equation}
where  $ {\cal R}(u) $ 
has the ``finite dimensional" form 
\begin{equation}\label{forma buona resto}
{\cal R}(u)[h] =  {\mathop\sum}_{|j| \leq C} \big( h , g_j(u) \big)_{L^2(\T)} \chi_j(u) 
\end{equation}
with $ \chi_j (u) = e^{\ii j x} $ or  $ g_j(u) = e^{\ii j x} $. The remainder  
$ {\cal R} (u)  =  {\cal R}_0 (u)  + {\cal R}_1 (u) + {\cal R}_2 (u)  $ with
\begin{align}\label{resti012} 
{\cal R}_0 (u) & :=  (\partial_u \nabla H)(\Phi(u)) \pa_u \Psi (u), \qquad  
{\cal R}_1 (u) :=  [\partial_{u }\{ \Psi'(u)^T\}] [ \cdot , \nabla H(\Phi(u)) ], \nonumber \\
\,  {\cal R}_2 (u)  & :=  [\pa_u \Psi (u)]^T (\partial_u \nabla H)(\Phi(u)) \pa_u \Phi(u). 
\end{align}
\end{lemma}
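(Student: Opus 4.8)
The plan is to compute $\nabla(H\circ\Phi)$ directly from the chain rule and then differentiate once more, tracking carefully which terms carry the projector $\Pi_E$ and hence are finite-dimensional. First I would record that, since $d(H\circ\Phi)(u)[h] = dH(\Phi(u))[d\Phi(u)[h]]$ and $dH(v)[k] = (\nabla H(v),k)_{L^2}$, one has by taking the $L^2$-adjoint
\[
\nabla(H\circ\Phi)(u) = [d\Phi(u)]^T \nabla H(\Phi(u)) = \big(I + [\partial_u\Psi(u)]^T\big)\,\nabla H(\Phi(u)).
\]
This already exhibits the two pieces: the ``main'' term $\nabla H(\Phi(u))$ and the correction $[\partial_u\Psi(u)]^T\nabla H(\Phi(u))$.

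Next I would differentiate this identity with respect to $u$ in the direction $h$. Differentiating the main term gives $(\partial_u\nabla H)(\Phi(u))\,[\partial_u\Phi(u)[h]]=(\partial_u\nabla H)(\Phi(u))[h]+(\partial_u\nabla H)(\Phi(u))[\partial_u\Psi(u)[h]]$; the first summand is the leading term in \eqref{lint2} and the second is $\mathcal R_0(u)[h]$. Differentiating the correction term by the Leibniz rule produces $[\partial_u\{\Psi'(u)^T\}][h,\nabla H(\Phi(u))]$, which is $\mathcal R_1(u)[h]$, plus $[\partial_u\Psi(u)]^T(\partial_u\nabla H)(\Phi(u))[\partial_u\Phi(u)[h]]$, which is $\mathcal R_2(u)[h]$. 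This establishes \eqref{lint2} together with the decomposition \eqref{resti012} of $\mathcal R=\mathcal R_0+\mathcal R_1+\mathcal R_2$.

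The only substantive point — and the step I expect to require the most care — is verifying the ``finite dimensional'' structure \eqref{forma buona resto}. Here I would use the hypothesis $\Psi(u)=\Pi_E\Psi(\Pi_E u)$. For $\mathcal R_0$: since $\partial_u\Psi(u)[h]=\Pi_E\,\Psi'(\Pi_E u)[\Pi_E h]$ has range in $E$, it can be written as $\sum_{|j|\le C}(h,g_j(u))_{L^2}e^{\ii jx}$ with $g_j(u)$ depending on $\Psi'(\Pi_E u)$, and then applying $(\partial_u\nabla H)(\Phi(u))$ keeps the sum of the form \eqref{forma buona resto} with $\chi_j(u)=(\partial_u\nabla H)(\Phi(u))[e^{\ii jx}]$. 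For $\mathcal R_2$: the outer factor $[\partial_u\Psi(u)]^T=[\Pi_E\Psi'(\Pi_E u)\Pi_E]^T=\Pi_E[\Psi'(\Pi_E u)]^T\Pi_E$ has range in $E$, so $\mathcal R_2(u)[h]=\sum_{|j|\le C}(\,(\partial_u\nabla H)(\Phi(u))[\partial_u\Phi(u)[h]]\,,\,e^{\ii jx})_{L^2}\,\chi_j(u)$ with $\chi_j(u)\in E$; moving the symmetric operator $(\partial_u\nabla H)(\Phi(u))$ and $\partial_u\Phi(u)$ onto the other entry of the scalar product (they are $L^2$-symmetric, resp. one uses $\partial_u\Phi^T$) puts it in the form \eqref{forma buona resto} with $g_j(u)$ the image of $e^{\ii jx}$. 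For $\mathcal R_1$: differentiating $\Psi'(u)^T=\Pi_E\Psi'(\Pi_E u)^T\Pi_E$ in $u$ again yields an operator whose range lies in $E$ and which only tests $h$ against functions in $E$ (because of the inner $\Pi_E$), so $\mathcal R_1(u)[h]=\sum_{|j|\le C}(h,g_j(u))_{L^2}\chi_j(u)$ directly, with both $g_j(u),\chi_j(u)$ built from $e^{\ii jx}$, $|j|\le C$. Collecting the three contributions gives a single sum of the asserted form, where in each term either $\chi_j(u)=e^{\ii jx}$ or $g_j(u)=e^{\ii jx}$, which completes the proof.
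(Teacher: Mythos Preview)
Your derivation of \eqref{lint2} and the decomposition $\mathcal R=\mathcal R_0+\mathcal R_1+\mathcal R_2$ is correct and is exactly what the paper does: compute $\nabla(H\circ\Phi)(u)=\nabla H(\Phi(u))+[\Psi'(u)]^T\nabla H(\Phi(u))$ and differentiate.

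There is, however, a small slip in your verification of the finite-dimensional form \eqref{forma buona resto}, specifically the requirement that \emph{either $g_j(u)=e^{\ii jx}$ or $\chi_j(u)=e^{\ii jx}$}. For $\mathcal R_0$ you expand $\partial_u\Psi(u)[h]$ via its \emph{range} in $E$, writing it as $\sum_{|j|\le C}(h,g_j(u))_{L^2}e^{\ii jx}$ with $g_j(u)$ depending on $\Psi'$; after applying $(\partial_u\nabla H)(\Phi(u))$ you end up with $\chi_j(u)=(\partial_u\nabla H)(\Phi(u))[e^{\ii jx}]$ and $g_j(u)=[\partial_u\Psi(u)]^Te^{\ii jx}$, so neither is a pure exponential. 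The same happens in your treatment of $\mathcal R_2$: you use the \emph{inner} $\Pi_E$ of $[\partial_u\Psi(u)]^T=\Pi_E[\Psi'(\Pi_E u)]^T\Pi_E$, obtaining $\chi_j(u)=\Pi_E[\Psi'(\Pi_E u)]^Te^{\ii jx}\in E$ and $g_j(u)=[\partial_u\Phi(u)]^T(\partial_u\nabla H)(\Phi(u))e^{\ii jx}$, again neither equal to $e^{\ii jx}$.

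The fix is to choose, in each case, the $\Pi_E$ that lands on the ``outside''. For $\mathcal R_0$, use the \emph{input} projection: since $\partial_u\Psi(u)[h]$ depends on $h$ only through $\Pi_E h=\sum_{|j|\le C}h_j e^{\ii jx}$, set $A:=(\partial_u\nabla H)(\Phi(u))\Pi_E\Psi'(\Pi_E u)$ and write $\mathcal R_0(u)[h]=A[\Pi_E h]=\sum_{|j|\le C}(h,e^{\ii jx})_{L^2}A(e^{\ii jx})$, so $g_j=e^{\ii jx}$. For $\mathcal R_2$, use the \emph{outer} projection: since $\mathcal R_2(u)[h]=\Pi_E[Ah]$ with $A:=[\Psi'(\Pi_E u)]^T\Pi_E(\partial_u\nabla H)(\Phi(u))\partial_u\Phi(u)$, expand $\Pi_E[Ah]=\sum_{|j|\le C}(h,A^Te^{\ii jx})_{L^2}e^{\ii jx}$, so $\chi_j=e^{\ii jx}$. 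This is precisely the paper's argument; $\mathcal R_1$ is then handled in the same way as $\mathcal R_2$.
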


\begin{proof}
By a direct calculation, 
\begin{equation}\label{nabla composto} 
\nabla (H \circ \Phi)(u) = [\Phi'(u)]^T \nabla H(\Phi(u)) = \nabla H(\Phi(u)) + [\Psi'(u)]^T \nabla H(\Phi(u))
\end{equation}
where $ \Phi' (u) := ( \pa_u \Phi) (u) $ and $ [ \ ]^T $ denotes the transpose with respect to the $ L^2 $ scalar product.
Differentiating \eqref{nabla composto}, we get \eqref{lint2} and \eqref{resti012}.

Let us show that each $ {\cal R}_m $ has the form \eqref{forma buona resto}. 
We have 
\begin{equation}\label{Psiuh}
\Psi'(u) = \Pi_E \Psi'(\Pi_E u) \Pi_E  \, \, , \quad  [\Psi'(u)]^T = \Pi_E   [\Psi'( \Pi_E u)]^T \Pi_E \, . 
\end{equation}
Hence, setting $ A  := (\partial_u \nabla H)(\Phi(u)) \Pi_E \Psi' ( \Pi_E u)  $, we get 
\[
{\cal R}_0(u)[h] 
= A [ \Pi_E h ] 
=  {\mathop\sum}_{|j| \leq C } h_j A ( e^{\ii j x} ) 
=  {\mathop\sum}_{|j| \leq C} (h, g_j )_{L^2(\T)} \chi_j 
\]
with $ g_j := e^{\ii j x} $, $ \chi_j :=  A( e^{\ii jx } )$.
Similarly, using \eqref{Psiuh},  and setting  
$ A  := [ \Psi' (\Pi_E u) ]^T \Pi_E (\partial_u \nabla H)(\Phi(u)) \Phi'(u) $,  
we get
$$
{\cal R}_2 (u)[h] =  \Pi_E [ A h ] =  {\mathop\sum}_{|j | \leq C } (A h ,  e^{\ii j x} )_{L^2(\T)} e^{\ii jx}  
 =  {\mathop\sum}_{|j| \leq C} (h, A^T   e^{\ii j x}  )_{L^2(\T)}  e^{\ii j x} \,,  
$$
which has the form \eqref{forma buona resto} with $ g_j :=  A^T( e^{\ii jx } )$ and  $ \chi_j := e^{\ii j x} $.
Differentiating the second equality in \eqref{Psiuh}, we see that  
$$
{\cal R}_1 (u)[h] =  \Pi_E [ A h ] \, , \quad A h := \partial_u \{ \Psi' ( \Pi_E u)^T \} [\Pi_E h, \Pi_E  
(\nabla H)(\Phi(u)) ] \,,
$$
which has the same form of $ {\cal R}_2 $ and so \eqref{forma buona resto}. 
\end{proof}

\begin{lemma} \label{sifulo}
Let  $ H(u ) := \int_\T f(u) X(u) d x $
where $ X(u) = \Pi_{E} X ( \Pi_E u) $ and $  f(u)(x) := f( u(x)) $ is the composition operator for a function of class
$  C^2 $. Then
\begin{equation}\label{lint1}
(\partial_u \nabla H) (u) [h] = f''(u)  X(u) \, h + {\cal R} (u) [h] 
\end{equation}
where $ {\cal R} (u) $ has  the form \eqref{forma buona resto}
with $ \chi_j (u) = e^{\ii j x} $ or  $ g_j(u) = e^{\ii j x} $.
\end{lemma}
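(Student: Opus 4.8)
The plan is to compute $\nabla H(u)$ explicitly and then differentiate once more, keeping careful track of which terms survive as genuine differential (multiplicative) operators and which are finite-rank because the vector field $X$ factors through the projector $\Pi_E$. First I would write $H(u) = \int_\T f(u)\, X(u)\, dx$ and compute the $L^2$-gradient: for a test direction $h$,
\[
dH(u)[h] = \int_\T f'(u)\, h\, X(u)\, dx + \int_\T f(u)\, dX(u)[h]\, dx
= \big( f'(u) X(u),\, h \big)_{L^2(\T)} + \big( [dX(u)]^T f(u),\, h \big)_{L^2(\T)},
\]
so that $\nabla H(u) = f'(u) X(u) + [dX(u)]^T f(u)$. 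Since $X(u) = \Pi_E X(\Pi_E u)$, we have $dX(u) = \Pi_E\, dX(\Pi_E u)\, \Pi_E$, hence $[dX(u)]^T = \Pi_E\, [dX(\Pi_E u)]^T\, \Pi_E$ is a finite-rank operator landing in $E$; so the second summand of $\nabla H(u)$ is a finite linear combination of the functions $e^{\ii jx}$, $|j|\le C$, with $u$-dependent coefficients.

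Next I would differentiate $\nabla H(u)$ in $u$. Differentiating the first summand $f'(u) X(u)$ gives
\[
\partial_u\big( f'(u) X(u)\big)[h] = f''(u)\, X(u)\, h + f'(u)\, dX(u)[h],
\]
the first term of which is exactly the multiplicative operator $f''(u) X(u)\,\cdot$ claimed in \eqref{lint1}, while $f'(u)\, dX(u)[h] = f'(u)\,\Pi_E dX(\Pi_E u)\Pi_E h$ is finite-rank: it equals $\sum_{|j|\le C} h_j\, \chi_j(u)$ with $\chi_j(u) := f'(u)\,\Pi_E dX(\Pi_E u)[e^{\ii jx}]$ and $g_j := e^{\ii jx}$, which is of the form \eqref{forma buona resto}. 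Differentiating the second summand $[dX(u)]^T f(u)$ produces $\big(\partial_u\{[dX(u)]^T\}\big)[h, f(u)] + [dX(u)]^T\big(f'(u) h\big)$; both pieces are finite-rank operators with range in $E$ (the outer $\Pi_E$ from $[dX(u)]^T$), and by taking transposes as in the proof of Lemma \ref{lemma astratto potente} (writing $(Ah, e^{\ii jx}) = (h, A^T e^{\ii jx})$) each can be rewritten in the form \eqref{forma buona resto} with $\chi_j(u) = e^{\ii jx}$ and $g_j(u)$ the appropriate $u$-dependent function. Collecting all the finite-rank contributions into a single operator $\mathcal{R}(u)$ yields \eqref{lint1}.

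The only mild subtlety — and the step I'd be most careful about — is the bookkeeping showing that \emph{every} finite-rank remainder can be put into the normalized shape \eqref{forma buona resto}, i.e.\ a sum over $|j|\le C$ of rank-one operators $h \mapsto (h, g_j(u))_{L^2} \chi_j(u)$ in which one of $\chi_j, g_j$ is the pure exponential $e^{\ii jx}$. This is exactly the transpose trick already used in Lemma \ref{lemma astratto potente}: an operator of the form $h \mapsto \Pi_E (Ah)$ for some $A$ is $\sum_{|j|\le C}(Ah, e^{\ii jx}) e^{\ii jx} = \sum_{|j|\le C}(h, A^T e^{\ii jx}) e^{\ii jx}$, and an operator of the form $h \mapsto A(\Pi_E h)$ is $\sum_{|j|\le C} h_j\, A(e^{\ii jx}) = \sum_{|j|\le C}(h, e^{\ii jx}) A(e^{\ii jx})$. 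Since all the remainder terms above are manifestly of one of these two shapes (thanks to the outer or inner $\Pi_E$ inherited from $X(u)=\Pi_E X(\Pi_E u)$ and its derivatives), the claim follows. There are no analytic difficulties: $f\in C^2$ guarantees $f', f''$ and the composition operators are well-defined, and finite-rank operators built from $e^{\ii jx}$ are automatically smoothing, so no regularity is lost.
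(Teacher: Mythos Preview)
Your proof is correct and follows essentially the same approach as the paper: compute $\nabla H(u) = f'(u) X(u) + X'(u)^T[f(u)]$, differentiate once more to isolate the multiplicative term $f''(u)X(u)h$, and identify the three remaining pieces $f'(u)X'(u)[h]$, $\partial_u\{X'(u)^T\}[h,f(u)]$, $X'(u)^T[f'(u)h]$ as finite-rank operators of the form \eqref{forma buona resto} via the inner/outer $\Pi_E$ and the transpose trick. Your write-up is in fact more detailed than the paper's, which simply lists the three remainder terms and asserts they have the required form.
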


\begin{proof}
A direct calculation proves  that $\nabla H(u) = f'(u) X(u) + X'(u)^T [f(u)]$, 
and \eqref{lint1} follows with
$ {\cal R} (u) [h] = $ $  f'(u) X'(u)[h] + $ $  \partial_u \{  X'(u)^T\} [h, f(u)] + $ $  X'(u)^T [ f'(u) h ]$, 
which has the form \eqref{forma buona resto}. 
\end{proof}

We conclude this section with a technical lemma  used from the end of section \ref{step3} about the decay norms of ``finite dimensional operators".
Note that operators of the form \eqref{forma buona con gli integrali} (that will appear in section \ref{step1}) 
reduce to those in \eqref{forma buona resto} when the functions $ g_j(\tau) $, $ \chi_j (\tau)$ are independent of $ \tau $

\begin{lemma}\label{remark : decay forma buona resto}
Let $ {\cal R} $ be an operator of the form
\begin{equation}\label{forma buona con gli integrali}
{\cal R} h = \sum_{|j| \leq C } \int_0^1 \big(h\,,\,g_j(\tau) \big)_{L^2(\T)} \chi_j (\tau)\,d \tau\,,
\end{equation}
where the functions $g_j(\tau),\,\chi_j(\tau) \in H^s$, $\tau \in [0, 1]$ depend in a Lipschitz way on the parameter $\omega$. Then its matrix $ s$-decay norm (see \eqref{matrix decay norm}-\eqref{matrix decay norm Lip}) satisfies 
$$ 
| {\cal R} |_s^\Lipg \leq_s {\mathop \sum}_{|j| \leq C} {\rm sup}_{\tau \in [0,1]} \big\{ \| \chi_j(\tau) \|_s^\Lipg \| g_j(\tau) \|_{s_0}^\Lipg 
+ \| \chi_j(\tau) \|_{s_0}^\Lipg \| g_j(\tau) \|_s^\Lipg \big\} \, . 
$$ 
\end{lemma}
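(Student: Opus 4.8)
The plan is to reduce the bound on $|\mathcal{R}|_s^{\Lipg}$ to the estimate \eqref{multiplication Lip} for the decay norm of a multiplication (Töplitz) operator, for each of the finitely many rank-one pieces separately. First I would fix $j$ and $\tau$ and analyze the single rank-one operator $h \mapsto (h, g_j(\tau))_{L^2(\T)}\,\chi_j(\tau)$. Its matrix entries (in the exponential basis on $\T^{\nu+1}$, i.e. indexed by $(\ell, k)\in\Z^\nu\times\Z$) are $\overline{(g_j(\tau))_{(\ell',k')}}\,(\chi_j(\tau))_{(\ell,k)}$; this is not Töplitz, but its $s$-decay norm is controlled by the product of the Sobolev norms of the two factors. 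Concretely, writing $A := (\chi_j(\tau))_{(\ell,k)}\,\overline{(g_j(\tau))_{(\ell',k')}}$, one has $\sup_{(\ell_1,k_1)-(\ell_2,k_2)=(\ell,k)} |A_{(\ell_1,k_1)}^{(\ell_2,k_2)}| \le \|\chi_j(\tau)\|_{L^\infty_{\vphi,x}}\,|(g_j(\tau))_{(-\ell,-k)}|$ and symmetrically, so that by the definition \eqref{matrix decay norm},
\[
|A|_s \le \|\chi_j(\tau)\|_s\,\|g_j(\tau)\|_{s_0} + \|\chi_j(\tau)\|_{s_0}\,\|g_j(\tau)\|_s\,,
\]
using $\|\cdot\|_{L^\infty_{\vphi,x}} \le C\|\cdot\|_{s_0}$ and the interpolation/algebra structure of $\langle\cdot\rangle^{s}$ already recorded for the $s$-norm in \cite{BBM} (this is exactly the behaviour for which the $s$-decay norm was designed, cf. \eqref{multiplication Lip}).

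Next I would handle the $\tau$-integral. Since $\mathcal{R}h = \sum_{|j|\le C}\int_0^1 (h, g_j(\tau))_{L^2}\chi_j(\tau)\,d\tau$ is a finite sum of integrals of rank-one operators, and the $s$-decay norm is a norm (so it obeys the triangle inequality, including in integral form $|\int_0^1 A(\tau)\,d\tau|_s \le \int_0^1 |A(\tau)|_s\,d\tau \le \sup_{\tau\in[0,1]}|A(\tau)|_s$), I get
\[
|\mathcal{R}|_s \le \sum_{|j|\le C}\sup_{\tau\in[0,1]}\big(\|\chi_j(\tau)\|_s\,\|g_j(\tau)\|_{s_0} + \|\chi_j(\tau)\|_{s_0}\,\|g_j(\tau)\|_s\big)\,.
\]
Finally, for the Lipschitz-in-$\omega$ version, I would repeat the same two steps for the difference quotient $\frac{\mathcal{R}(\omega_1)-\mathcal{R}(\omega_2)}{|\omega_1-\omega_2|}$: expanding the rank-one pieces by the Leibniz rule into a term with $\chi_j(\tau)$ difference-quotiented and $g_j(\tau)$ fixed plus a term with the roles reversed, applying the $s_0$/$s$ split to each, and combining the sup-norm and Lipschitz-seminorm estimates according to the definition \eqref{matrix decay norm Lip} of $|\cdot|_s^{\Lipg}$. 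This yields the stated bound with $\|\cdot\|_s^{\Lipg}$ throughout.

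The only mildly delicate point — and the one I would be most careful about — is the passage from the pointwise (in multi-index) bound on the supremum-over-diagonals of the rank-one matrix entries to the weighted $\ell^2$ sum defining $|\cdot|_s$: one must check that the weight $\langle i\rangle^{s}$ distributes correctly between the $\chi_j$ factor and the $g_j$ factor, which is precisely the interpolation inequality $\langle \ell_1 - \ell_2, k_1 - k_2\rangle^{s} \lesssim_s \langle \ell_1,k_1\rangle^{s} + \langle \ell_2,k_2\rangle^{s}$ underlying the algebra property of the $s$-norm; everything else is bookkeeping over the finitely many indices $|j|\le C$ and the compact parameter $\tau\in[0,1]$. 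Since these algebra and interpolation properties of the $s$-decay norm are quoted from \cite{BBM} in the paragraph preceding Lemma \ref{prodest}, the argument is short.
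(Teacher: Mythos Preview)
Your overall architecture (bound each rank-one piece pointwise in $\tau$, integrate, then redo with difference quotients for the Lipschitz part) is fine, but the central step --- the decay-norm bound for a single rank-one piece --- has a real gap. Your matrix formula $A_{(\ell_1,k_1)}^{(\ell_2,k_2)} = (\chi_j)_{(\ell_1,k_1)}\overline{(g_j)_{(\ell_2,k_2)}}$ would be correct if the inner product were over $\T^{\nu+1}$, but in the lemma $(\,\cdot\,,\,\cdot\,)_{L^2(\T)}$ is only over $\T_x$, and $g_j(\tau),\chi_j(\tau)$ are functions of $(\vphi,x)$. If you compute the T\"oplitz-in-time entries (cf.\ \eqref{Topliz matrix}--\eqref{Aphi}) you find
\[
R_{k_1}^{k_2}(l)=\sum_{m\in\Z^\nu}(\chi_j(\tau))_{m,k_1}\,\overline{(g_j(\tau))_{m-l,\,k_2}}\,,
\]
i.e.\ a convolution in the $\vphi$-Fourier variable, not an outer product. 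Consequently your subsequent estimate $\sup_{(\ell_1,k_1)-(\ell_2,k_2)=(\ell,k)}|A|\le \|\chi_j\|_{L^\infty_{\vphi,x}}|(g_j)_{(-\ell,-k)}|$ has no basis (and in fact does not follow even from your incorrect outer-product formula, since the sup runs over all $(\ell_1,k_1)$ on the diagonal while both factors vary).

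The paper avoids this entirely by observing that the rank-one operator $h\mapsto (h,g_j(\tau))_{L^2(\T)}\chi_j(\tau)$ is the \emph{composition} $M_{\chi_j(\tau)}\circ\Pi_0\circ M_{g_j(\tau)}$ of the two multiplication operators with the projection $\Pi_0 h:=\int_\T h\,dx$. Since $|M_f|_s^{\Lipg}=\|f\|_s^{\Lipg}$ by \eqref{multiplication Lip} and $|\Pi_0|_s\le 1$, the interpolation inequality \eqref{interpm Lip} (which is stated already in Lipschitz form) gives the desired bound in one line, and the Lipschitz dependence on $\omega$ is built in --- no separate Leibniz argument is needed. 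Your ``mildly delicate point'' about distributing the weight $\langle i\rangle^s$ is precisely what \eqref{interpm Lip} encapsulates; you should invoke it for the composition rather than re-derive it from scratch on an incorrect matrix expression.
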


\begin{proof}
For each $\tau \in [0, 1]$, the operator $ h \mapsto (h,g_j(\tau)) \chi_j(\tau) $ is the 
composition $ \chi_j(\tau) \circ \Pi_0 \circ g_j(\tau) $ of the multiplication operators for $ g_j(\tau), \chi_j(\tau) $ and 
$ h \mapsto \Pi_0 h := \int_{\T} h dx $. Hence
the lemma follows by the interpolation estimate \eqref{interpm Lip} 
and \eqref{multiplication Lip}. 
\end{proof}

\subsection{Composition with the map $G_\d$} \label{section:appr}

In the sequel we shall use that 
$   \fracchi_\d := \fracchi_\d (\ph ; \om) := i_\d (\ph; \om ) - (\ph,0,0) $ satisfies, 
by Lemma \ref{toro isotropico modificato} and \eqref{ansatz 0},  
\begin{equation}\label{ansatz delta}
\| {\mathfrak I}_\d \|_{s_0+\mu}^{\Lipg} 
\leq C\e^{6 - 2b} \gamma^{-1}\, .   
\end{equation}
We now study the Hamiltonian 
$ K := H_\e \circ G_\d = \e^{-2b} \mH \circ A_\e \circ G_\d $ defined in \eqref{KHG}, \eqref{def H eps}. 

Recalling  \eqref{def A eps} and \eqref{trasformazione modificata simplettica} the map $A_\e \circ G_\d$ has the form  
\begin{equation}  \label{A eps G delta}
A_\e \circ G_\d(\psi, \eta, w) 
= \e \sum_{j \in S} \sqrt{\xi_j + \e^{2(b-1)} |j| [ y_\d(\psi) + L_1(\psi) \eta + L_2(\psi) w ]_j } \, e^{\ii [\theta_0(\psi)]_j} e^{\ii jx} + \e^b (z_0(\psi) + w)
\end{equation}
where 
\be\label{L1 L2}
L_1(\psi) := [\pa_\psi \theta_0(\psi)]^{-T} \, , \quad
L_2(\psi) := \big[ (\pa_\teta \tilde{z}_0) (\theta_0(\psi)) \big]^T \partial_x^{-1} \, . 
\ee
 By Taylor's formula, we develop \eqref{A eps G delta} in $w$ at $\eta=0$, $w=0$, and we get
$ A_\e \circ G_\d(\psi, 0, w)  = $ $ T_\delta(\psi) + T_1(\psi) w + T_2(\psi)[w,w] + $ $  T_{\geq 3}(\psi, w) $, 
where 
\begin{equation}\label{T0}
T_\delta(\psi) := (A_\e \circ G_\d)(\psi, 0, 0)
= \e v_\d(\psi) + \e^b z_0(\psi) \, , \ \  v_\d (\psi):= \sum_{j \in S} \sqrt{\xi_j + \e^{2(b-1)} |j| [ y_\d(\psi) ]_j } \, e^{\ii [\theta_0(\psi)]_j} e^{\ii jx}
\end{equation}
is the approximate isotropic torus in  phase space (it corresponds to $ i_\d $ in Lemma \ref{toro isotropico modificato}), 
\begin{align}
T_1(\psi) w & = \e \sum_{j \in S} 
\frac{\e^{2(b-1)} |j| [ L_2(\psi) w ]_j \, e^{\ii [\theta_0(\psi)]_j}} 
{2 \sqrt{ \xi_j + \e^{2(b-1)} |j| [ y_\d(\psi) ]_j }} \,  e^{\ii jx} 
+ \e^b w =: \e^{2b-1} U_1 (\psi) w + \e^b w \, \label{T1} \\
T_2(\psi)[w,w] &  
= - \e \sum_{j \in S} 
\frac{\e^{4(b-1)} j^2 [ L_2(\psi) w ]_j^2 \, e^{\ii [\theta_0(\psi)]_j}} 
{8 \{ \xi_j + \e^{2(b-1)} |j| [ y_\d(\psi) ]_j \}^{3/2} } \,  
e^{\ii jx} =:  \e^{4b - 3} U_2(\psi)[w,w] \label{T2}
\end{align}
and $T_{\geq 3}(\psi, w)$ collects all the terms of order at least cubic in $w$. In the notation of  \eqref{def A eps},
the function $v_\d(\psi) $ in \eqref{T0} is 
$v_\d(\psi) = v_\e( \theta_0(\psi), y_\d(\psi))$.
The terms  $U_1, U_2 = O(1)$ in $\e$.
Moreover, using that $ L_2 (\psi) $ in \eqref{L1 L2} vanishes as $ z_0 = 0 $,  
they satisfy
\begin{equation}\label{extra piccolezza}
\| U_1 w \|_s \leq \| \fracchi_\d \|_s \| w \|_{s_0}   +  \| \fracchi_\d \|_{s_0}  \| w \|_s  \, , \quad
\| U_2 [w,w] \|_s \leq  \| \fracchi_\d \|_s \| \fracchi_\d \|_{s_0} \| w \|_{s_0}^2   +  
\| \fracchi_\d \|_{s_0}^2 \| w \|_{s_0} \| w \|_s  
\end{equation}
and also in the  $ \|  \ \|_s^\Lipg $-norm.

By Taylor's formula 
$ \mH(u+h) 
= \mH(u) + ( (\gr \mH)(u), h )_{L^2(\T)} 
+ \frac12 ( (\pa_u \gr \mH)(u) [h], h )_{L^2(\T)} 
+ O(h^3) $. 
Specifying at $u = T_\delta(\psi)$ and $ h = T_1(\psi) w + T_2(\psi)[w,w] + T_{\geq 3}(\psi,w)$, 
we obtain that the sum of all the components of $ K = \e^{-2b} (\mH \circ A_\e \circ G_\d)(\psi, 0, w) $ 
that are quadratic in $w$ is  
$$
\frac12 ( K_{02}w, w )_{L^2(\T)} 
= \e^{-2b} ( (\gr \mH)(T_\delta ), T_2 [w,w] )_{L^2(\T)} 
+ \e^{-2b} \frac12 ( (\pa_u \gr \mH)(T_\delta ) [T_1 w], 
T_1 w )_{L^2(\T)} \, . 
$$
Inserting the expressions \eqref{T1}, \eqref{T2} we get
\begin{align}
K_{02}(\psi) w 
& = (\pa_u \gr \mH)(T_\delta) [w]  
+ 2 \e^{b-1} (\pa_u \gr \mH)(T_\delta) [U_1 w] 
+ \e^{2(b-1)} U_1^T (\pa_u \gr \mH)(T_\delta) [U_1 w]  \nonumber
\\ & \quad 
+ 2 \e^{2b- 3} U_2[w, \cdot]^T (\gr \mH)(T_\delta). \label{K02}
\end{align}

\begin{lemma}\label{dopo l'approximate inverse}
\begin{equation}\label{piccolezza resti}
 ( K_{02}(\psi) w, w )_{L^2(\T)} 
=  ( (\pa_u \gr \mH)(T_\delta) [w], w )_{L^2(\T)} 
+  ( R(\psi) w, w )_{L^2(\T)}
\end{equation}
where $R(\psi)w $ has the ``finite dimensional" form 
\begin{equation}\label{forma buona resto con psi}
R(\psi) w  =  {\mathop\sum}_{|j| \leq C} \big( w , g_j(\psi) \big)_{L^2(\T)} \chi_j(\psi) 
\end{equation}
where, for some $\sigma := \sigma (\nu, \tau) > 0$, 
\begin{align}
\label{piccolo FBR}
 \| g_j \|_s^\Lipg \| \chi_j \|_{s_0}^\Lipg + \| g_j \|_{s_0}^\Lipg \| \chi_j \|_s^\Lipg 
& \leq_s \e^{b+1} \| {\mathfrak I}_\delta \|_{s + \sigma}^\Lipg \\
\| \partial_i g_j [\widehat \imath ]\|_s \| \chi_j \|_{s_0} 
+ \| \partial_i g_j [\widehat \imath ]\|_{s_0}  \| \chi_j \|_{s} + \| g_j \|_{s_0} \| \partial_i \chi_j [\widehat \imath ] \|_s 
+ \| g_j \|_{s} \| \partial_i \chi_j [\widehat \imath ]\|_{s_0}  
& \leq_s \e^{b + 1} \| \widehat \imath \|_{s + \sigma}\label{derivata piccolo FBR} \\ 
& + \e^{2b-1} \| {\mathfrak I}_\delta\|_{s + \sigma}  \|\widehat \imath \|_{s_0 + \sigma} \,,  \nonumber
\end{align}
and, as usual, $i = (\theta, y, z)$ (see \eqref{embedded torus i}), 
$\widehat \imath = (\widehat \theta, \widehat y, \widehat z)$.
\end{lemma}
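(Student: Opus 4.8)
The plan is to start from the explicit formula \eqref{K02} for $K_{02}(\psi)$ and isolate the ``finite dimensional'' remainder $R(\psi)$ as
\[
R(\psi)w := 2 \e^{b-1} (\pa_u \gr \mH)(T_\delta) [U_1 w]
+ \e^{2(b-1)} U_1^T (\pa_u \gr \mH)(T_\delta) [U_1 w]
+ 2 \e^{2b-3} U_2[w,\cdot]^T (\gr \mH)(T_\delta),
\]
so that \eqref{piccolezza resti} holds by construction. The first task is to check that each of these three summands has the form \eqref{forma buona resto con psi}, i.e.\ a finite sum of rank-one operators $w \mapsto (w,g_j)_{L^2} \chi_j$ with $|j| \leq C$. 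This is immediate from the structure of $U_1$ and $U_2$ in \eqref{T1}, \eqref{T2}: both $U_1 w$ and $U_2[w,w]$ are supported on the finite set of tangential Fourier modes $j \in S$, and $U_1$ acts on $w$ through the inner products hidden in $L_2(\psi)w = [(\pa_\teta \tilde z_0)(\theta_0(\psi))]^T \partial_x^{-1} w$, which are genuine $L^2$ pairings of $w$ against functions Fourier-supported near $S$. So $U_1$ itself already has the form \eqref{forma buona resto con psi}; composing on the left with the (bounded, tame) operator $(\pa_u \gr \mH)(T_\delta)$ or with $U_1^T$ preserves this form (only the $\chi_j$'s change), and the $U_2$ term is a rank-one operator $w \mapsto (\text{bilinear in }w)\cdot(\gr\mH)(T_\delta)$-type contribution that, after polarization, is again of the claimed shape.

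Next I would prove the quantitative bounds \eqref{piccolo FBR} and \eqref{derivata piccolo FBR}. The key inputs are: (i) the smallness estimate \eqref{extra piccolezza} for $U_1, U_2$, which gains a factor $\| \fracchi_\d \|$ because $L_2$ vanishes when $z_0 = 0$; (ii) the tame estimates for $T_\delta = \e v_\d + \e^b z_0$ via \eqref{stima Aep} (or directly \eqref{T0}), giving $\| T_\delta \|_s \leq_s \e(1 + \|\fracchi_\d\|_s)$; (iii) tame bounds for $\gr \mH$ and $\pa_u \gr \mH$ evaluated at $T_\delta$, which follow from the composition Lemma \ref{lemma:composition of functions, Moser} applied to the polynomial-plus-$C^q$ structure of $\mH$ (here one uses $\| T_\delta \|_{s_0} \leq_s \e$ to stay in the domain of the composition operator). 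Tracking powers of $\e$: the dominant term is $2\e^{b-1}(\pa_u\gr\mH)(T_\delta)[U_1 w]$; since $\gr\mH(u) = O(u^2)$ (the Hamiltonian starts at cubic order), $\pa_u \gr\mH(T_\delta) = O(\e)$, and $U_1$ contributes $O(\|\fracchi_\d\|)$ by \eqref{extra piccolezza}, so this term is $O(\e^{b-1}\cdot \e \cdot \|\fracchi_\d\|) = O(\e^b \|\fracchi_\d\|_{s+\sigma})$. Pairing the rank-one decomposition $(w,g_j)\chi_j$ with the interpolation Lemma \ref{lemma:standard Sobolev norms properties} and using $\|\fracchi_\d\|_{s_0+\mu} \leq C\e^{6-2b}\g^{-1}$ from \eqref{ansatz delta} absorbs the extra $\|\fracchi_\d\|_{s_0}$ factors, yielding the stated $\e^{b+1}\|\fracchi_\d\|_{s+\sigma}^\Lipg$ (the extra power of $\e$ beyond $\e^b$ coming from a further $\|\fracchi_\d\|_{s_0} \lesssim \e^{6-2b}\g^{-1} = \e^{4-2b} \ll \e$ for $b<2$, or more simply from the $\e^{2(b-1)}$ in $T_1$ being accounted as part of $U_1$'s definition and one extra $\e$ sitting in front — this bookkeeping is exactly as in \eqref{stime coefficienti K 11 alta}). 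The $\e^{2(b-1)}$ and $\e^{2b-3}$ terms are lower order by similar counting. For \eqref{derivata piccolo FBR} I differentiate each $g_j, \chi_j$ with respect to $i$ along $\widehat\imath$; every factor ($T_\delta$, $v_\d$, $z_0$, $L_2$, $y_\d$) is an explicit, tame function of $i_\d$, so the differentiated bounds follow from the same tame/composition estimates together with Lemma \ref{toro isotropico modificato} (specifically \eqref{derivata i delta} for $\pa_i i_\d$), producing the two-term right-hand side with the genuine $\e^{2b-1}\|\fracchi_\d\|_{s+\sigma}\|\widehat\imath\|_{s_0+\sigma}$ quadratic remainder.

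The main obstacle I expect is the careful $\e$-power bookkeeping rather than any conceptual difficulty: one must correctly attribute the scaling factors $\e^{2(b-1)}$ hidden in the Taylor expansion of the square root in \eqref{A eps G delta} (which are built into the definitions of $U_1, U_2$), combine them with the $\e^{b-1}, \e^{2(b-1)}, \e^{2b-3}$ prefactors in \eqref{K02}, and with the $O(\e)$ or $O(\e^2)$ smallness of $\gr\mH(T_\delta)$ and $\pa_u\gr\mH(T_\delta)$ coming from $\mH$ starting at order $3$, to land precisely on $\e^{b+1}$. A secondary point requiring care is verifying that the left-multiplication by $\pa_u\gr\mH(T_\delta)$ in the first two summands does not destroy the finite-dimensional form \eqref{forma buona resto con psi}: here one uses that $U_1 w$ is Fourier-supported on $S$, so $(\pa_u\gr\mH)(T_\delta)[U_1 w] = \sum_{|j|\le C}(w,g_j)\,(\pa_u\gr\mH)(T_\delta)[e^{\ii jx}]$ and the new $\chi_j := (\pa_u\gr\mH)(T_\delta)[e^{\ii jx}]$ inherit tame estimates from Lemma \ref{lemma astratto potente}/\ref{sifulo} applied to $\mH$. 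Everything else is a routine combination of Lemmata \ref{lemma:composition of functions, Moser}, \ref{lemma:standard Sobolev norms properties}, \ref{prodest} and the a priori bound \eqref{ansatz delta}.
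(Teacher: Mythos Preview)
Your approach has a genuine gap in the $\e$-bookkeeping. You write that ``$\gr\mH(u) = O(u^2)$ (the Hamiltonian starts at cubic order), $\pa_u\gr\mH(T_\delta) = O(\e)$'', but this is false: $\mH = H_2 + \mH_3 + \mH_4 + \cdots$ with $H_2(u) = \tfrac12\int u_x^2$, so $\mH$ starts at \emph{quadratic} order and $\pa_u\gr\mH(T_\delta) = -\pa_{xx} + O(\e) = O(1)$. With this correction, your dominant term $2\e^{b-1}(\pa_u\gr\mH)(T_\delta)[U_1 w]$ is only $O(\e^{b-1}\|\fracchi_\d\|)$, two powers of $\e$ short of the required $\e^{b+1}\|\fracchi_\d\|$. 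Even if one grants your (incorrect) $O(\e)$, you still only reach $\e^b\|\fracchi_\d\|$, and your attempt to recover the missing $\e$ by invoking $\|\fracchi_\d\|_{s_0} \ll \e$ does not work: the bound \eqref{piccolo FBR} must hold with a \emph{single} factor $\|\fracchi_\d\|_{s+\sigma}$ on the right, not a product $\|\fracchi_\d\|_s\|\fracchi_\d\|_{s_0}$.

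The paper resolves this by splitting $\mH = H_2 + \mH_3 + \mH_{\geq 4}$ and treating each piece with its specific structure. For $H_2$, the exact action-angle formula \eqref{shape H2} shows that the only quadratic-in-$w$ contribution to $\e^{-2b}H_2\circ A_\e\circ G_\d$ is $\tfrac12\int w_x^2$, so the $H_2$ part of the last three terms in \eqref{K02} contributes \emph{zero} to the quadratic form and may be dropped from $R(\psi)$ altogether. For $\mH_3$, the explicit computation $\pa_u\nabla\mH_3(T_\delta)[U_1 w] = 6\e^b\Pi_S^\bot(z_0\, U_1 w)$ (using $U_1 w\in H_S$) produces an extra $z_0$ factor, so that the resulting rank-one pieces carry \emph{two} factors of $\|\fracchi_\d\|$; one of them in low norm then yields the target size via \eqref{ansatz delta}. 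Only for $\mH_{\geq 4} = O(u^4)$ does the generic argument work directly, since $\pa_u\nabla\mH_{\geq 4}(T_\delta) = O(\e^2)$ gives $\e^{b-1}\cdot\e^2\cdot\|\fracchi_\d\| = \e^{b+1}\|\fracchi_\d\|$ as needed. Without this threefold splitting your bound cannot close.
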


\begin{proof}
Since $ U_1 = \Pi_S U_1 $ and  $ U_2 = \Pi_S U_2 $, 
the last three terms in \eqref{K02} have all the form \eqref{forma buona resto con psi}
(argue as in Lemma \ref{lemma astratto potente}). 
We now prove that they are also small in size.  

The contributions in \eqref{K02}  from $ H_2 $ are better analyzed by the expression
$$
\e^{-2b} H_2 \circ A_\e \circ G_\delta (\psi, \eta, w) =   const + \sum_{j \in S^+} j^3 
\big[  y_\delta (\psi) + L_1(\psi)\eta + L_2 (\psi) w  \big]_j  + \frac{1}{2} \int_{\T} ( z_0 (\psi)  + w)_x^2 \, dx 
$$
which follows by \eqref{shape H2}, \eqref{trasformazione modificata simplettica}, \eqref{L1 L2}.
 Hence the only contribution to 
$ (K_{02} w,w)  $  is $  \int_{\T} w_x^2 \, dx  $. 
Now we consider the cubic term $ \mH_3 $ in \eqref{H3tilde}.
A direct calculation shows that for $ u = v + z $, 
$ \nabla \mH_3 ( u )  = 3 z^2 + 6  \Pi_S^\bot (v z) $, and
$ \partial_u \nabla \mH_3 ( u ) [U_1  w] = 6 \Pi_S^\bot ( z U_1 w) $ (since $ U_1 w \in H_S $).
Therefore
\begin{equation}\label{mH 3 T delta}
\nabla \mH_3(T_\delta) = 3 \e^{2 b} z_0^2 + 6 \e^{b + 1} \Pi_S^\bot(v_\delta z_0 ) 
\,,\quad \partial_u \nabla \mH_3 ( T_\delta ) [U_1 w] = 6 \e^b \Pi_S^\bot ( z_0 \, U_1 w) \, . 
\end{equation}
By \eqref{mH 3 T delta} one has 
$  ( (\pa_u \gr \mH_3)(T_\delta) [U_1 w ], U_1 w )_{L^2(\T)}  = 0 $, 
and since also $U_2 = \Pi_S U_2$,  
\be\label{contributi H3}
\e^{b - 1}\partial_u \nabla {\cal H}_3(T_\delta)[U_1 w] + \e^{2 b - 3} U_2[w, \cdot]^T \nabla {\cal H}_3(T_\delta) = 6 \e^{2 b - 1} \Pi_S^\bot(z_0  U_1 w) + 3 \e^{4 b - 3} U_2[w, \cdot]^T  z_0^2 \,.
\ee
These terms have  the form  \eqref{forma buona resto con psi} and, using  \eqref{extra piccolezza}, \eqref{ansatz 0}, 
they satisfy \eqref{piccolo FBR}. 

Finally we consider all the terms which arise from ${\cal H}_{\geq 4} = O(u^4)$.  
The operators
$ \e^{b - 1} \partial_u \nabla {\cal H}_{\geq 4}(T_\delta) U_1  $, 
$ \e^{2(b - 1)} U_1^T (\pa_u \nabla {\cal H}_{\geq 4})(T_\delta) U_1$, 
$ \e^{2 b - 3} U_2^T \nabla {\cal H}_{\geq 4}(T_\delta) $
have the form  \eqref{forma buona resto con psi} and, using $ \| T_\delta \|_s^\Lipg \leq \e (1 + \| \fracchi_\d \|_s^\Lipg) $,
\eqref{extra piccolezza}, \eqref{ansatz 0}, 
the bound  \eqref{piccolo FBR} holds. Notice that the biggest term is  $ \e^{b - 1} \partial_u \nabla {\cal H}_{\geq 4}(T_\delta) U_1  $. 

By \eqref{derivata i delta} and using explicit formulae \eqref{L1 L2}-\eqref{T2} we get estimate \eqref{derivata piccolo FBR}.
\end{proof}

The conclusion of this section is that,  after the composition with 
the action-angle variables, the rescaling \eqref{rescaling kdv quadratica}, 
and the transformation $ G_\delta $, the linearized operator 
to analyze is $  H_S^\bot \ni w \mapsto (\pa_u \gr \mH)(T_\delta) [w] $,  up to finite dimensional operators which have the form \eqref{forma buona resto con psi} and size \eqref{piccolo FBR}.

\subsection{The linearized operator in the normal directions}

In view of \eqref{piccolezza resti} we now compute  
$  ( (\pa_u \gr \mH)(T_\delta) [w], w )_{L^2(\T)} $, $ w \in H_S^\bot $, where
$ \mH  = H \circ \Phi_B $ and $\Phi_B $ is the Birkhoff map of Proposition \ref{prop:weak BNF}. 
It is convenient to estimate separately the terms in 
\be\label{mH H2H3H5}
\mH  = H \circ \Phi_B = (H_2 + H_3) \circ \Phi_B + H_{\geq 5} \circ \Phi_B 
\ee
where $ H_2, H_3, H_{\geq 5}$ are defined in \eqref{H iniziale KdV}.

We first consider $ H_{\geq 5} \circ \Phi_B $. 
By \eqref{H iniziale KdV} we get
$ \nabla H_{\geq 5}(u) = \pi_0[ (\partial_u f)(x, u, u_x) ] - \pa_x \{ (\pa_{u_x} f)(x, u,u_x) \} $, see \eqref{def pi 0}. 
Since the Birkhoff transformation $ \Phi_B $ has the form \eqref{finito finito}, 
Lemma \ref{lemma astratto potente} (at $ u = T_\delta $, see \eqref{T0}) implies that
\begin{align} 
\pa_u \nabla ( H_{\geq 5} \circ \Phi_B ) (T_\delta)  [h] 
& = 
(\pa_u \gr H_{\geq 5})(\Phi_B(T_\delta)) [h] + {\cal R}_{H_{\geq 5}}(T_\delta)[h] 
\notag \\ 
& = \partial_x (r_1(T_\delta) \partial_x h ) + r_0(T_\delta) h + {\cal R}_{H_{\geq 5}}(T_\delta)[h] 
\label{der grad struttura separata5}
\end{align}
where the multiplicative functions $r_0(T_\d)$, $r_1(T_\d)$ are
\begin{alignat}{2} \label{r0r1 def}
r_0 (T_\delta) & := \s_0(\Phi_B(T_\delta)), \qquad & 
\s_0(u) & :=  (\partial_{uu} f)(x, u, u_x)  -  \pa_x \{ (\partial_{u u_x} f)(x, u, u_x) \}, 
\\
\label{sigma0sigma1 def}
r_1 (T_\delta) & := \s_1(\Phi_B(T_\delta)), \quad &
\s_1(u) & := -  (\partial_{u_x u_x} f)(x, u, u_x) , 
\end{alignat}
the remainder $ {\cal R}_{H_{\geq 5}}(u) $ has the form \eqref{forma buona resto}
with $\chi_j = e^{\ii jx}$ or $g_j = e^{\ii jx}$ and, using \eqref{resti012}, 
it satisfies,  
for some $ \sigma := \sigma (\nu, \tau) > 0$, 
\begin{align*}
 \| g_j \|_s^\Lipg \| \chi_j \|_{s_0}^\Lipg + \| g_j \|_{s_0}^\Lipg \| \chi_j \|_s^\Lipg 
& \leq_s \e^4 (1 + \| \fracchi_\d \|_{s+2}^\Lipg )  \\
\| \partial_i g_j [\widehat \imath ]\|_s \| \chi_j \|_{s_0} 
+ \| \partial_i g_j [\widehat \imath ]\|_{s_0}  \| \chi_j \|_{s} 
+ \| g_j \|_{s_0} \| \partial_i \chi_j [\widehat \imath ] \|_s 
+ \| g_j \|_{s} \| \partial_i \chi_j [\widehat \imath ]\|_{s_0}  
& \leq_s  \e^4 ( \| \widehat \imath \|_{s+\s} 
+ \| \fracchi_\d \|_{s+2} \| \widehat \imath \|_{s_0 + 2} ).  
\end{align*}
Now we consider the contributions from $ (H_2 + H_3) \circ \Phi_B $.
By Lemma \ref{lemma astratto potente} and the expressions of $ H_2, H_3 $ in \eqref{H iniziale KdV} we deduce that
$$
\pa_u \nabla ( H_2 \circ \Phi_B) (T_\delta) [h] 
= - \partial_{xx} h + {\cal R}_{H_2}(T_\delta)[h] \,, \quad  
\pa_u \nabla ( H_3 \circ \Phi_B) (T_\delta) [h] 
= 6 \Phi_B (T_\delta)   h + {\cal R}_{H_3}(T_\delta)[h] \,,
$$
where $  \Phi_B (T_\delta) $ is a function with zero space average, 
because $ \Phi_B: H^1_0 (\T_x) \to H^1_0 (\T_x)$ (Proposition \ref{prop:weak BNF}) 
and  $ {\cal R}_{H_2}(u) $, $ {\cal R}_{H_3}(u) $ have the form \eqref{forma buona resto}. 
By \eqref{resti012}, the size $ ( {\cal R}_{H_2} + {\cal R}_{H_3}) (T_\delta )  = O( \e ) $. 
We expand
$$
( {\cal R}_{H_2} + {\cal R}_{H_3}) (T_\delta ) = \e {\cal R}_1 + \e^2 {\cal R}_2 +  {\tilde {\cal R}}_{> 2} \,, 
$$  
where $\tilde \mR_{>2}$ has size $o(\e^2)$, 
and we get, $ \forall h \in H_S^\bot $, 
\begin{equation}\label{useful repr}
\Pi_S^\bot  \pa_u \nabla ((H_2 + H_3) \circ \Phi_B) (T_\delta)  [h] 
= - \partial_{xx} h +  \Pi_S^\bot (6 \Phi_B (T_\delta) h ) + 
\Pi_S^\bot  ( \e {\cal R}_1 + \e^2 {\cal R}_2 + {\tilde {\cal R}}_{> 2} ) [h] \, . 
\end{equation}
We also develop the function $ \Phi_B (T_\delta) $ is powers of $ \e $. 
Expand $\Phi_B (u) = u + \Psi_2 (u) + \Psi_{\geq 3}(u) $, 
where $ \Psi_2 (u) $ is quadratic, $ \Psi_{\geq 3} (u) = O(u^3)$, and both map  
$ H_0^1(\T_x)  \to H_0^1(\T_x) $.
At $ u = T_\d = \e v_\d + \e^b z_0 $ 
we get
\begin{align}
\Phi_B ( T_\delta ) & = T_\delta + \Psi_2 (T_\delta) + \Psi_{\geq 3}(T_\delta) 
= \e v_\d + \e^2 \Psi_2 ( v_\d ) + \tilde q 
\label{funzione moltiplicativa}
\end{align}
where $ \tilde q := \e^b z_0 + \Psi_2 ( T_\d ) - \e^2 \Psi_2 (v_\d) 
+ \Psi_{\geq 3} (T_\d) $ 
has zero space average and it satisfies  
$$
\| \tilde q \|_s^\Lipg 
\leq_s \e^3 + \e^b \| \fracchi_\delta \|_s^\Lipg\,,\quad 
\| \partial_i \tilde q [\widehat \imath ] \|_s 
\leq_s \e^b \big( \| \widehat \imath \|_s + \| {\mathfrak I}_\delta \|_s \| \widehat \imath \|_{s_0} \big)\,.
$$
In particular, its low norm $\| \tilde q \|_{s_0}^\Lipg \leq_{s_0} \e^{6-b} \g^{-1} = o(\e^2)$.  

We need an exact expression of the terms of order $ \e $ and $ \e^2 $ in  \eqref{useful repr}. 
We compare the Hamiltonian \eqref{widetilde cal H} with \eqref{mH H2H3H5},  
noting that $ (H_{\geq 5} \circ \Phi_B)(u) = O(u^5) $ 
because $ f $ satisfies \eqref{order5} and $ \Phi_B (u) = O(u) $. Therefore 
$$
(H_2 + H_3) \circ \Phi_B =  H_2 + \mH_3 + \mH_4  + O(u^5) \,,
$$
and the homogeneous terms of $ (H_2 + H_3) \circ \Phi_B $ of degree $ 2, 3, 4 $ in $ u $ are $ H_2 $, $\mH_3 $, $ \mH_4 $ respectively. 
As a consequence, the terms of order $ \e $ and $ \e^2 $ in \eqref{useful repr} (both in the function 
$ \Phi_B (T_\d ) $ and in the remainders $ \mR_1, \mR_2 $) come only from $ H_2 + \mH_3 + \mH_4 $. 
Actually they come from $ H_2 $, $ \mH_3 $ and $ \mH_{4,2} $  (see \eqref{H3tilde}, \eqref{mH3 mH4})
because, at $ u = T_\d = \e v_\d + \e^b z_0 $, 
for all $ h \in H_S^\bot $,  
$$
\Pi_S^\bot (\pa_u \gr \mH_4)(T_\delta) [h] = 
\Pi_S^\bot (\pa_u \gr \mH_{4,2})(T_\delta) [h] + o(\e^2) \,. 
$$
A direct calculation based on the expressions \eqref{H3tilde}, \eqref{mH3 mH4}
shows that, for all $ h \in H_S^\bot $, 
\begin{align}
\Pi_S^\bot (\pa_u \nabla ( H_2 + \mH_3 + \mH_4)) (T_\delta)[h] 
& =  - \partial_{xx} h +   6 \e \Pi_S^\bot ( v_\d  h )  + 6 \e^b \Pi_S^\bot ( z_0  h ) 
+ \e^2 \Pi_S^\bot \big\{ 6 \pi_0 [ (\partial_x^{-1} v_\d)^2  ]   h  \nonumber \\
& 
+ 6 v_\d  \Pi_S [ (\partial_x^{-1} v_\d) (\partial_x^{-1} h) ] -
 6  \partial_x^{-1}  \{  (\partial_x^{-1} v_\d) \Pi_S [v_\d  h] \} \big\}  + 
o(\e^2). \label{lin basso}
\end{align}
Thus, comparing the terms of order $ \e, \e^2  $ 
in \eqref{useful repr} (using \eqref{funzione moltiplicativa}) with those in \eqref{lin basso} we deduce that the operators $\mR_1, \mR_2$ and the function $\Psi_2 (v_\d)$ are
\begin{equation}\label{R nullo1R2}
{\cal R}_1 = 0 ,  \quad 
{\cal R}_2 [h ] = 6 v_\d  \Pi_S \big[ (\partial_x^{-1} v_\d) (\partial_x^{-1} h)  \big] 
- 6 \partial_x^{-1} \{ (\partial_x^{-1} v_\d) \Pi_S [v_\d  h] \} \,, \quad 
\Psi_2 (v_\d) =  \pi_0 [ (\partial_x^{-1} v_\d)^2  ] . 
\end{equation}
In conclusion,  by \eqref{mH H2H3H5}, 
\eqref{useful repr}, \eqref{der grad struttura separata5},
\eqref{funzione moltiplicativa}, \eqref{R nullo1R2}, we get, 
for all $ h \in H_{S^\bot } $, 
\begin{align}
\Pi_S^\bot \pa_u \nabla \mH (T_\delta)[h]  
& =  - \partial_{xx} h +  \Pi_S^\bot 
\big[ \big( \e 6 v_\d + 
\e^2 6 \pi_0 [ (\partial_x^{-1} v_\d)^2  ] +   q_{>2} + p_{\geq 4 } 
\big) h \big] \nonumber \\
& \quad +  
 \Pi_S^\bot  \partial_x (r_1(T_\delta) \partial_x h ) + 
 \e^2   \Pi_S^\bot {\cal R}_2[h] + \Pi_S^\bot  {\cal R}_{> 2} [h] \label{lafinalina}
\end{align}
where  $ r_1 $ is defined in \eqref{r0r1 def}, $ {\cal R}_2 $ in \eqref{R nullo1R2}, the remainder 
$  {\cal R}_{> 2} :=     {\tilde {\cal R}}_{> 2} + {\cal R}_{H_{\geq 5}} (T_\d)  $ 
and the functions
(using also \eqref{r0r1 def}, \eqref{sigma0sigma1 def},  \eqref{order5}), 
\begin{align}\label{def p3}
q_{>2} & :=   6 \tilde q 
 + \e^3 \big( (\partial_{uu} f_5)(v_\d, (v_\d)_x) -  \pa_x \{ (\partial_{u u_x} f_5)(v_\d, (v_\d)_x) \}  \big)  \\
p_{\geq 4 } & :=    
r_0 (T_\d) - \e^3 \big[ (\partial_{uu} f_5)(v_\d, (v_\d)_x) -  \pa_x \{ (\partial_{u u_x} f_5)(v_\d, (v_\d)_x) \} \big] \, . 
 \label{def p>4} 
\end{align}

\begin{lemma}\label{p3 zero average}
$ \int_{\T} q_{>2} dx = 0 $. 
\end{lemma}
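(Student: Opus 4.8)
The plan is to write $q_{>2}$ as the sum of its two constituents in \eqref{def p3}, namely $6\tilde q$ and $\e^3\big((\partial_{uu} f_5)(v_\d, (v_\d)_x) - \pa_x\{(\partial_{u u_x} f_5)(v_\d, (v_\d)_x)\}\big)$, and to check that each of these has zero $x$-average; the claim then follows by linearity of the integral.

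For the first term I would simply invoke the fact, recorded right after \eqref{funzione moltiplicativa}, that $\tilde q$ has zero space average. If a self-contained argument is preferred: $z_0(\psi)\in H_S^\bot\subset H^1_0(\T_x)$ has zero average by \eqref{def phase space}, and $\Psi_2(T_\d)$, $\Psi_2(v_\d)$, $\Psi_{\geq 3}(T_\d)$ have zero average because $\Phi_B = I + \Psi_2 + \Psi_{\geq 3}$ maps $H^1_0(\T_x)$ into itself (Proposition \ref{prop:weak BNF}) and $T_\d, v_\d\in H^1_0(\T_x)$; concretely, $\int_\T(\Phi_B(tu) - tu)\,dx = 0$ for every $t$ and every $u\in H^1_0(\T_x)$, so $\int_\T \Psi(tu)\,dx = \sum_{k\geq 2} t^k \int_\T \Psi_k(u)\,dx = 0$ and comparing powers of $t$ forces $\int_\T\Psi_k(u)\,dx = 0$ for each homogeneous component $\Psi_k$ of $\Psi$.

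For the second term, the piece $\pa_x\{(\partial_{u u_x} f_5)(v_\d, (v_\d)_x)\}$ is the $x$-derivative of a smooth $2\pi$-periodic function, hence has zero average on $\T$. It then remains to show $\int_\T(\partial_{uu} f_5)(v_\d,(v_\d)_x)\,dx = 0$, and this is the only place where Hypothesis $({\mathtt S}1)$ enters. By \eqref{order5} the density $f_5$ is a homogeneous polynomial of degree $5$ in $(u,u_x)$, so $(\partial_{uu} f_5)(v_\d,(v_\d)_x)$ is a finite sum of monomials, each of which is a product of three factors equal to $v_\d$ or $\pa_x v_\d$. Since $v_\d(\psi,\cdot)\in H_S$, both $v_\d$ and $\pa_x v_\d$ have $x$-Fourier support contained in $S$, so each such monomial has $x$-Fourier support contained in the set $\{\,j_1 + j_2 + j_3 : j_1,j_2,j_3\in S\,\}$; by $({\mathtt S}1)$ this set does not contain $0$, whence the $x$-average of every monomial, and therefore of $(\partial_{uu} f_5)(v_\d,(v_\d)_x)$, vanishes. (If $f_5 = 0$ this term is absent and $({\mathtt S}1)$ is not needed, as stated in the introduction.)

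There is no genuine obstacle here: once one observes that $f_5$ is an actual polynomial — so the relevant Fourier expansions are finite and built from sums of three elements of $S$ — the vanishing is immediate from $({\mathtt S}1)$. The only point worth keeping in mind is that the whole argument is carried out pointwise in $\psi$ (and in $\omega$), which is harmless since $q_{>2}$ depends on these parameters only through $v_\d(\psi)$ and $\tilde q(\psi)$.
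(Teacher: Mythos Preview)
Your proof is correct and follows essentially the same route as the paper: both split $q_{>2}$ into the $6\tilde q$ part (already noted to have zero space average) and the $\e^3$ part, dispose of the total-derivative term trivially, and handle $(\partial_{uu} f_5)(v_\d,(v_\d)_x)$ via its Fourier support in $\{j_1+j_2+j_3:j_i\in S\}$ combined with $(\mathtt{S}1)$. Your additional self-contained justification for $\int_\T\tilde q\,dx=0$ is a nice elaboration but not a different idea.
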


\begin{proof}
We already 
observed that $ \tilde q $ has zero $x$-average as well as
the derivative $ \pa_x \{ (\partial_{u u_x} f_5)(v, v_x) \}  $.
Finally 
\be\label{unico pezzo}
(\partial_{uu} f_5)(v, v_x) = \sum_{j_1, j_2, j_3 \in S } c_{j_1j_2j_3} v_{j_1} v_{j_2} v_{j_3} e^{\ii (j_1+ j_2 + j_3) x } \, , 
\quad v := \sum_{j \in S} v_j e^{\ii j x}
\ee
for some coefficient  $ c_{j_1j_2j_3} $, and therefore it
has zero average by hypothesis (${\mathtt S}1$). 
\end{proof}

By Lemma \ref{dopo l'approximate inverse}
and the results of this section (in particular  \eqref{lafinalina}) we deduce: 

\begin{proposition}\label{prop:lin}
Assume \eqref{ansatz delta}. Then  the Hamiltonian operator $ {\cal L}_\om $ has the form, 
$  \forall h \in H_{S^\bot}^s ( \T^{\nu+1}) $,
\be\label{Lom KdVnew}
{\cal L}_\om h  :=   \om \!\cdot \!\partial_{\vphi} h - \partial_x K_{02} h   
=  \Pi_S^\bot \big( \om \! \cdot \! \partial_\vphi h + \partial_{xx} (a_1 \partial_x h)  +
\partial_x ( a_0 h ) 
- \e^2 \partial_x {\cal R}_2 h - \partial_x \mR_* h    \big)  
\ee
where $ {\cal R}_2 $ is defined in \eqref{R nullo1R2}, 
$ {\mR}_* := {\cal R}_{> 2} + R(\psi) $ (with $R(\psi)$  defined in Lemma \ref{dopo l'approximate inverse}), the functions 
\begin{equation}\label{a1p1p2}
a_1  := 1 - r_1 ( T_\delta ) \, , \quad 
a_0 :=  - ( \e p_1  + \e^2 p_2 + q_{>2} +  p_{\geq 4} ) \, , \quad 
p_1 :=  6 v_\d  \, , \quad p_2 :=   6 \pi_0 [ (\partial_x^{-1} v_\d)^2  ]\,, 
\end{equation}
the function $ q_{>2} $ is defined in \eqref{def p3} and satisfies $ \int_{\T} q_{>2} dx = 0 $, the function $ p_{ \geq 4} $ is defined in \eqref{def p>4},  
$ r_1 $  in \eqref{sigma0sigma1 def},  
$ T_\delta $ and $ v_\d $ in \eqref{T0}.
For $ p_k = p_1, p_2 $, 
\begin{alignat}{2} \label{stime pk}
\| p_k \|_s^\Lipg  
& \leq_s 1 + \| {\mathfrak I}_\d \|_s^\Lipg, 
& \quad \qquad 
\| \pa_i p_k [ \widehat \imath ] \|_s   
& \leq_s \| \widehat \imath \|_{s+1} +  \| {\mathfrak I}_\delta \|_{s+1} \| \widehat \imath \|_{s_0+1}, 
\\
\label{stima q>2}
\| q_{>2} \|_s^\Lipg 
& \leq_s \e^3 + \e^b \| \fracchi_\delta \|_{s}^\Lipg\,,
& \quad  
\| \pa_i q_{>2} [\widehat \imath ] \|_s 
& \leq_s \e^b \big( \| \widehat \imath \|_{s+1} + \| {\mathfrak I}_\delta \|_{s+1} \| \widehat \imath \|_{s_0+1} \big),
\\
\| a_1 -1 \|_s^\Lipg 
& \leq_s \e^3 \big(  1 + \| {\mathfrak I}_\d \|_{s+1}^\Lipg \big) \,,  
& \quad  
\| \pa_i a_1[\widehat \imath ] \|_s  
& \leq_s \e^3 \big( \| \widehat \imath \|_{s + 1} + \| {\mathfrak I}_\delta \|_{s + 1} \| \widehat \imath \|_{s_0 + 1} \big) \label{stima a1}  \\
\| p_{\geq 4} \|_s^\Lipg & \leq_s \e^4 + \e^{b + 2} \| \fracchi_\delta \|_{s + 2}^\Lipg\,,& \quad \| 
\pa_i p_{\geq 4}[\widehat \imath ] \|_s & \leq_s \e^{b + 2} \big( \| \widehat \imath \|_{s + 2} + \| \fracchi_\delta\|_{s + 2} \| \widehat \imath  \|_{s_0 + 2} \big) \label{p geq 4}
\end{alignat}
where  $ \fracchi_\d (\ph) := (\theta_0(\ph) - \ph, y_\d(\ph), z_0(\ph)) $ corresponds to $T_\delta$. 
The remainder $ {\cal R}_{2} $ has the form \eqref{forma buona resto} with 
\begin{align} \label{stime resto 1}
\| g_j \|_s^\Lipg + \| \chi_j \|_s^\Lipg 
\leq_s  1+ \| {\mathfrak I}_\delta \|_{s + \sigma}^\Lipg \, , \quad 
\| \partial_i g_j [\widehat \imath ]\|_s  + \| \partial_i \chi_j [\widehat \imath ]\|_s
  \leq_s   \| \widehat \imath \|_{s + \sigma} +   \| {\mathfrak I}_\delta\|_{s + \sigma}  \|\widehat \imath \|_{s_0 + \sigma} 
\end{align}
and also $ {\cal R}_* $ has the form \eqref{forma buona resto} with
\begin{align} \label{stima cal R*}
 \| g_j^* \|_s^\Lipg \| \chi_j^* \|_{s_0}^\Lipg + \| g_j^* \|_{s_0}^\Lipg \| \chi_j^* \|_s^\Lipg 
& \leq_s  \e^3 + \e^{b+1} \| {\mathfrak I}_\delta \|_{s + \sigma}^\Lipg
 \\
\| \partial_i g_j^* [\widehat \imath ]\|_s \| \chi_j^* \|_{s_0} 
+ \| \partial_i g_j^* [\widehat \imath ]\|_{s_0}  \| \chi_j^* \|_{s} + \| g_j^* \|_{s_0} \| \partial_i \chi_j^* [\widehat \imath ] \|_s 
+ \| g_j^* \|_{s} \| \partial_i \chi_j^* [\widehat \imath ]\|_{s_0}  
& \leq_s  \e^{b + 1} \| \widehat \imath \|_{s + \sigma}
\label{derivate stima cal R*} \\ 
& +  \e^{2b-1} \| {\mathfrak I}_\delta\|_{s + \sigma}  \|\widehat \imath \|_{s_0 + \sigma} \nonumber  \, . 
\end{align} 
\end{proposition}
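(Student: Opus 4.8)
The plan is to read off the expression \eqref{Lom KdVnew} directly from three ingredients already at hand: the identity $\mathcal{L}_\om h = \om\cdot\partial_\vphi h - \partial_x K_{02}(\psi)h$ in \eqref{cal L omega}, the quadratic form identity \eqref{piccolezza resti} of Lemma \ref{dopo l'approximate inverse}, and the computation of $\Pi_S^\bot(\partial_u\nabla\mathcal{H})(T_\delta)$ carried out in this section and summarized in \eqref{lafinalina}; and then to collect the tame bounds for the resulting coefficients and remainders. First, since $K_{02}(\psi)$ and $\Pi_S^\bot(\partial_u\nabla\mathcal{H})(T_\delta)\Pi_S^\bot$ are both $L^2$-self-adjoint on $H_S^\bot$, the equality of quadratic forms \eqref{piccolezza resti} upgrades to the operator identity $K_{02}(\psi) = \Pi_S^\bot(\partial_u\nabla\mathcal{H})(T_\delta)\Pi_S^\bot + R(\psi)$ on $H_S^\bot$, where $R(\psi)$ may be taken to be its symmetric part, still of the form \eqref{forma buona resto con psi} and satisfying \eqref{piccolo FBR}--\eqref{derivata piccolo FBR}. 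Substituting \eqref{lafinalina}, using that $\partial_x$ commutes with $\Pi_S^\bot$ (the set $S$ being symmetric), that $\Pi_S^\bot h = h$ for $h\in H_S^\bot$ and $\om\cdot\partial_\vphi h\in H_S^\bot$, rewriting $\partial_{xxx}h - \partial_{xx}(r_1(T_\delta)\partial_x h) = \partial_{xx}((1-r_1(T_\delta))\partial_x h)$, and grouping the zeroth–order multiplication terms, I obtain \eqref{Lom KdVnew} with $a_1 := 1 - r_1(T_\delta)$, $a_0 := -(\e p_1 + \e^2 p_2 + q_{>2} + p_{\geq 4})$, $p_1 := 6 v_\delta$, $p_2 := 6\pi_0[(\partial_x^{-1}v_\delta)^2]$, $\mathcal{R}_2$ as in \eqref{R nullo1R2}, and $\mathcal{R}_* := \mathcal{R}_{>2} + R(\psi) = \tilde{\mathcal{R}}_{>2} + \mathcal{R}_{H_{\geq 5}}(T_\delta) + R(\psi)$. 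The vanishing of $\int_\T q_{>2}\,dx$ is exactly Lemma \ref{p3 zero average}.

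It then remains to prove the estimates \eqref{stime pk}--\eqref{derivate stima cal R*}. For $p_1 = 6 v_\delta$ and $p_2 = 6\pi_0[(\partial_x^{-1}v_\delta)^2]$ I would use $v_\delta(\psi) = v_\e(\theta_0(\psi), y_\delta(\psi))$ together with the composition Lemma \ref{lemma:composition of functions, Moser}, the tame product Lemma \ref{lemma:standard Sobolev norms properties}, the bound \eqref{derivata i delta} for $\partial_i i_\delta$, and \eqref{ansatz delta}, which gives \eqref{stime pk}. For $q_{>2}$ (see \eqref{def p3}) and $a_1 - 1 = -r_1(T_\delta) = (\partial_{u_x u_x}f)(x,T_\delta,(T_\delta)_x)$ I would combine the bound on $\tilde q$ recorded after \eqref{funzione moltiplicativa} with the fact that $\partial_{u_x u_x}f_5$, $\partial_{uu}f_5$, $\partial_{u u_x}f_5$ are homogeneous of degree $3$ in $(u,u_x)$, hence $O(\e^3)$ at $T_\delta = O(\e)$, while the $f_{\geq 6}$–contributions are $O(\e^4)$; this yields \eqref{stima q>2}, \eqref{stima a1}. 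For $p_{\geq 4}$, defined in \eqref{def p>4} by subtracting from $r_0(T_\delta) = \sigma_0(\Phi_B(T_\delta))$ its cubic $f_5$–part, I would expand $\Phi_B(T_\delta) = \e v_\delta + O(\e^2 + \e^b\|\mathfrak{I}_\delta\|)$ so that the $\e^3$ contribution of $\partial_{uu}f_5 - \partial_x\{\partial_{u u_x}f_5\}$ cancels and $p_{\geq 4}$ is of size $O(\e^4 + \e^{b+2}\|\mathfrak{I}_\delta\|_{s+2})$, which is \eqref{p geq 4}. The derivative–in–$i$ estimates follow by differentiating these explicit expressions and reusing \eqref{derivata i delta}, \eqref{ansatz delta} and the same tame lemmas.

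For the remainders: $\mathcal{R}_2$ in \eqref{R nullo1R2} is manifestly of the finite–rank form \eqref{forma buona resto} because $\Pi_S$ has finite rank, and its bounds \eqref{stime resto 1} follow from the product estimates applied to $v_\delta$ and $\partial_x^{-1}v_\delta$. The operator $\mathcal{R}_*$ I would estimate term by term: $R(\psi)$ by \eqref{piccolo FBR}--\eqref{derivata piccolo FBR} of Lemma \ref{dopo l'approximate inverse}; $\mathcal{R}_{H_{\geq 5}}(T_\delta)$ by the estimate stated after \eqref{sigma0sigma1 def} (size $O(\e^4(1+\|\mathfrak{I}_\delta\|))$), obtained from the three pieces of \eqref{resti012} of Lemma \ref{lemma astratto potente}; and $\tilde{\mathcal{R}}_{>2}$ — the part of $(\mathcal{R}_{H_2}+\mathcal{R}_{H_3})(T_\delta)$ beyond the orders $\e$ (which vanishes, since $\mathcal{R}_1 = 0$) and $\e^2$ — of size $O(\e^3 + \e^{b+1}\|\mathfrak{I}_\delta\|_{s+\sigma})$, because these remainders, coming from the finite–rank part $\Psi = \Psi_2 + \Psi_{\geq 3}$ of the Birkhoff map $\Phi_B$, depend multilinearly on $T_\delta = \e v_\delta + \e^b z_0$. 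Summing gives \eqref{stima cal R*}--\eqref{derivate stima cal R*}, each in $\Lipg$–norm by using the $\Lipg$–versions of the composition and product lemmas throughout.

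Finally, a word on what is actually hard. The substance of the proof is the algebraic identification \eqref{lafinalina}, \eqref{R nullo1R2} of exactly which terms survive at orders $\e$ and $\e^2$ in $\Pi_S^\bot(\partial_u\nabla\mathcal{H})(T_\delta)$, which has already been carried out above by comparing the homogeneous components of $(H_2+H_3)\circ\Phi_B$ with \eqref{widetilde cal H} and using the explicit shape \eqref{H3tilde}, \eqref{mH3 mH4} of the weak normal form. Everything that remains is bookkeeping: for each of $a_1, a_0, p_1, p_2, q_{>2}, p_{\geq 4}, \mathcal{R}_2, \mathcal{R}_*$, tracking the power of $\e$ and the dependence on $\|\mathfrak{I}_\delta\|_s$ and then differentiating in $i$. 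The only mild care required is to split off the $f_5$–contributions cleanly, so that $q_{>2}$ and $p_{\geq 4}$ come out at the claimed orders and $q_{>2}$ has zero space average (which is where $(\mathtt{S}1)$, via Lemma \ref{p3 zero average}, is used).
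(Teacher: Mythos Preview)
Your proposal is correct and follows essentially the same approach as the paper: the proposition is stated immediately after \eqref{lafinalina} with the single sentence ``By Lemma \ref{dopo l'approximate inverse} and the results of this section (in particular \eqref{lafinalina}) we deduce,'' and you have accurately filled in the bookkeeping behind that sentence. One minor remark: $\partial_x$ commutes with $\Pi_S^\bot$ simply because both are Fourier multipliers, so the symmetry of $S$ is not needed for that step.
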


The bounds \eqref{stime resto 1}, \eqref{stima cal R*} imply, by 
Lemma \ref{remark : decay forma buona resto}, estimates 
for the $ s $-decay norms of $ {\cal R}_2 $ and 
$  {\cal R}_* $.   The linearized operator $ {\cal L}_\om := {\cal L}_\om (\om, i_\d (\om))$ 
depends on the parameter $ \om $ both directly  and also through the dependence on the 
torus $   i_\d (\om )  $.
We have estimated also the partial derivative $ \pa_i $ with respect to  the variables $ i $ (see \eqref{embedded torus i})
in order to 
control,  along the nonlinear Nash-Moser iteration, 
the Lipschitz variation of the eigenvalues of $ {\cal L}_\om $ with respect to  $ \om $
and  the approximate solution $ i_\d $.

\section{Reduction of the linearized operator in the normal directions}\label{operatore linearizzato sui siti normali}

The goal of this section is to conjugate the Hamiltonian operator $ {\cal L}_\om $ in \eqref{Lom KdVnew} to the 
diagonal  operator  $ {\cal L}_\infty $ defined in \eqref{Lfinale}. 
The proof is obtained applying different kind of symplectic transformations. We shall always assume \eqref{ansatz delta}.

\subsection{Change of the space variable }\label{step1}

The first task is to conjugate $ {\cal L}_\om $ in \eqref{Lom KdVnew} to  $ {\cal L}_1 $ in
 \eqref{cal L1 Kdv},  which has 
the coefficient of $\partial_{xxx}$ independent on the space variable. 
We look for a  $ \vphi $-dependent family of {\it symplectic} diffeomorphisms $\Phi (\vphi) $ of $ H_S^\bot $ 
which differ from 
\begin{equation}\label{primo cambio di variabile modi normali}
{\cal A}_{\bot} := \Pi_S^\bot {\cal A} \Pi_S^\bot  \, ,  \quad  
({\cal A} h)(\vphi,x) := (1 + \beta_x(\vphi,x)) h(\vphi,x + \beta(\vphi,x)) \, , 
\end{equation}
up to a small ``finite dimensional" remainder, see  \eqref{forma buona resto cambio di variabile hamiltoniano}.
Each 
$ {\cal A}(\vphi) $ is a  symplectic  map 
of the phase space, see  \cite{BBM}-Remark 3.3. 
If  $ \| \b \|_{W^{1,\infty}} < 1 / 2 $ then 
 $ {\cal A} $ is invertible, see Lemma \ref{lemma:utile}, and 
its inverse and  adjoint maps are 
\begin{equation}\label{cambio di variabile inverso}
({\cal A}^{-1} h)(\vphi,y) := (1 + \tilde{\beta}_y(\vphi,y)) h(\vphi, y + \tilde{\beta}(\vphi,y)) \,, 
\quad 
({\cal A}^T h) (\vphi,y) = h(\vphi, y + \tilde{\beta}(\vphi,y)) 
\end{equation}
where
$ x = y + \tilde{\b} (\vphi, y) $ is the inverse diffeomorphism (of $\T$) of $ y = x + \b (\vphi, x) $. 

The restricted maps $ {\cal A}_\bot (\vphi): H_S^\bot \to H_S^\bot $ are not symplectic.  
In order to find a symplectic diffeomorphism near  $ {\cal A}_\bot (\vphi) $, 
the first observation is 
that each $ {\cal A }(\vphi ) $ can be seen as the time $1$-flow of a time dependent 
Hamiltonian PDE. Indeed $ {\cal A }(\vphi ) $ (for simplicity we skip the dependence on $ \vphi $) 
is homotopic to the identity 
via the path of symplectic diffeomorphisms 
$$ 
u \mapsto  
(1+ \tau \b_x ) u ( x+ \tau \b(x) ), \quad  \tau \in [0,1 ] \, , 
$$
which is the trajectory solution of the time dependent, linear Hamiltonian PDE
\be \label{transport-free}
\partial_\tau u = \partial_x (b(\tau, x) u) \, , \quad b (\tau, x) := \frac{\beta(x)}{1 + \tau \beta_x(x)}\, , 
\ee
with value $ u (x) $ at $ \tau = 0 $ and $ {\cal A}u  = (1+ \b_x(x) ) u ( x+  \b(x) ) $ at $ \t = 1 $. 
The equation \eqref{transport-free} is a {\it transport} equation. Its  
associated 
charactheristic ODE is 
 \begin{equation}\label{equazione delle caratteristiche}
 \frac{d}{d\tau} x  = - b(\tau, x ) \, .
 \end{equation}
We  denote  its flow by $ \gamma^{\tau_0, \t}  $, namely $  \gamma^{\tau_0, \t} (y) $ is the 
 solution of \eqref{equazione delle caratteristiche} with $  \gamma^{\tau_0, \tau_0} (y) = y $. 
Each  $ \gamma^{\tau_0, \t} $ is a diffeomorphism of the torus $ \T_x $. 

\begin{remark}\label{rem:ca}
Let $ y \mapsto y + \tilde \beta(\tau, y)$ be the inverse diffeomorpshim of 
$x \mapsto x + \tau \beta(x)$. 
Differentiating the identity $\tilde \beta(\tau, y) + \tau \beta(y + \tilde \beta(\tau, y)) = 0$ with respect to $\tau$ it results that 
 $ \g^\t (y) := \g^{0,\tau} (y) =  y + \tilde \beta(\tau, y) $. 
\end{remark}

Then we define a symplectic map $\Phi $ of $ H_S^\bot $  as the time-1 flow of the Hamiltonian PDE
 \begin{equation}\label{problemi di cauchy} 
 \partial_\tau u = \Pi_S^\bot \partial_x (b(\tau, x) u) = \partial_x (b(\tau, x) u) - \Pi_S \partial_x (b(\tau, x) u)  \, ,
 \quad u \in H_S^\bot  \, .
 \end{equation}
Note that $ \Pi_S^\bot \partial_x (b(\tau, x) u) $ is the Hamiltonian vector field generated by 
$ \frac12 \int_{\T} b(\tau, x) u^2 dx  $ restricted to $ H_S^\bot $. 
We denote by $ \Phi^{\tau_0,\tau} $ the flow of 
 \eqref{problemi di cauchy}, namely $ \Phi^{\tau_0, \tau} (u_0 ) $ is the solution of  \eqref{problemi di cauchy} 
 with initial condition $  \Phi^{\tau_0, \tau_0} (u_0 ) = u_0 $.
The flow is   well defined in Sobolev spaces 
$ H^s_{S^\bot} (\T_x) $  for $ b(\tau, x)  $ is smooth enough
(standard theory of linear hyperbolic PDEs, see e.g. 
section 0.8 in \cite{Taylor}). 
 It is natural to expect that the 
difference between the flow map $ \Phi := \Phi^{0,1} $ and $ {\cal A}_\bot $ is a  ``finite-dimensional" remainder
of the size of $  \b $. 

\begin{lemma}\label{modifica simplettica cambio di variabile}
For  $ \| \beta \|_{W^{s_0 + 1,\infty}} $ small,  there exists 
an invertible symplectic transformation $  \Phi = {\cal A}_\bot + {\cal R}_\Phi  $ of $ H_{S^\bot}^s $, 
 where $ {\cal A}_\bot $ is defined in \eqref{primo cambio di variabile modi normali} 
and $ {\cal R}_\Phi $ is a ``finite-dimensional" remainder 
\begin{equation}\label{forma buona resto cambio di variabile hamiltoniano}
{\cal R}_\Phi h= \sum_{j \in S} \int_0^1 (h, g_j (\tau)  )_{L^2(\T)} \chi_j (\tau) d \tau + \sum_{j \in S} \big(h, \psi_j \big)_{L^2(\T)} e^{\ii j x}
\end{equation}
for some functions $ \chi_j (\tau), g_j (\tau) , \psi_j \in H^s $ satisfying 
\begin{equation}\label{stime forma buona resto cambio di variabile hamiltoniano}
\| \psi_j\|_s\,,\, \| g_j(\tau)\|_s \leq_s \| \beta\|_{W^{s + 2, \infty}}\,,
\quad \| \chi_j(\tau)\|_s \leq_s  1 + \| \beta \|_{W^{s + 1, \infty}} \,,\quad \forall \tau \in [0, 1]\, .
\end{equation}
Furthermore,  
the following tame estimates holds
\begin{equation}\label{stime Phi Phi -1}
\| \Phi^{\pm 1}h\|_s \leq_s \| h \|_s + \| \beta \|_{W^{s + 2, \infty}} \| h \|_{s_0} \, , \quad \forall h \in H^s_{S^\bot} \,  . 
\end{equation}
\end{lemma}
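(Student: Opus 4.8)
The strategy is to compare the flow $\Phi^{0,1}$ of the \emph{projected} transport equation \eqref{problemi di cauchy} with the flow of the \emph{unprojected} equation \eqref{transport-free}, whose time-$1$ map is exactly ${\cal A}$. First I would set up Duhamel's formula. Write \eqref{problemi di cauchy} as $\partial_\tau u = \partial_x(b(\tau,x)u) - \Pi_S \partial_x(b(\tau,x)u)$ and treat the second term as a forcing term. Denoting by $U^{\tau_0,\tau}$ the flow of the unprojected equation $\partial_\tau u = \partial_x(b(\tau,x)u)$ on the full space $H^s(\T_x)$ (so that $U^{0,1} = {\cal A}$, by the homotopy argument already given in the text), the variation-of-constants formula gives
\begin{equation}\label{plan duhamel}
\Phi^{0,1} h = U^{0,1}h - \int_0^1 U^{\tau,1}\,\Pi_S\,\partial_x\big(b(\tau,x)\,\Phi^{0,\tau}h\big)\,d\tau \, .
\end{equation}
The point is that $\Pi_S \partial_x(b(\tau,x)\Phi^{0,\tau}h)$ lives in the \emph{finite-dimensional} space $H_S$; writing it out in the Fourier basis $\{e^{\ii jx}:j\in S\}$ it equals $\sum_{j\in S}(\,\Phi^{0,\tau}h,\, \tilde g_j(\tau))_{L^2}\,e^{\ii jx}$ for suitable $\tilde g_j(\tau)$ involving $b$ and its $x$-derivative (using that $\partial_x$ and multiplication by $b$ are, up to sign, $L^2$-adjoint to the corresponding operations). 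Since $U^{\tau,1}$ is $L^2$-bounded, applying it produces $\sum_{j\in S}(\,\Phi^{0,\tau}h,\,\tilde g_j(\tau))_{L^2}\,\chi_j(\tau)$ with $\chi_j(\tau) := U^{\tau,1}e^{\ii jx}$. Finally one pushes the operator $\Phi^{0,\tau}$ onto the other factor of the $L^2$ pairing: $(\Phi^{0,\tau}h, \tilde g_j(\tau))_{L^2} = (h, (\Phi^{0,\tau})^T \tilde g_j(\tau))_{L^2}$, which gives the claimed form \eqref{forma buona resto cambio di variabile hamiltoniano} with $g_j(\tau) := (\Phi^{0,\tau})^T\tilde g_j(\tau)$, once one absorbs $U^{0,1} - {\cal A}_\bot$ — i.e.\ the discrepancy $\Pi_S U^{0,1}\Pi_S^\bot + \Pi_S U^{0,1}\Pi_S$ acting on $H_S^\bot$ — into the second, $\tau$-independent sum $\sum_{j\in S}(h,\psi_j)_{L^2}e^{\ii jx}$ (note ${\cal A}_\bot = \Pi_S^\bot{\cal A}\Pi_S^\bot$, so ${\cal A} - {\cal A}_\bot$ restricted to $H_S^\bot$ is $\Pi_S{\cal A}\Pi_S^\bot$, which is finite-rank of the required shape).

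Next I would prove the quantitative estimates. The functions $\tilde g_j(\tau)$ are built from $b(\tau,x)$ and $\partial_x b(\tau,x)$ paired against $e^{\ii jx}$, hence $\|\tilde g_j(\tau)\|_s \leq_s \|b(\tau,\cdot)\|_{W^{s+1,\infty}} \leq_s \|\beta\|_{W^{s+2,\infty}}$ by the explicit formula $b(\tau,x) = \beta(x)/(1+\tau\beta_x(x))$ and Lemma \ref{lemma:composition of functions, Moser} (composition with the analytic function $t\mapsto 1/(1+t)$, valid for $\|\beta\|_{W^{1,\infty}}$ small). For $\chi_j(\tau) = U^{\tau,1}e^{\ii jx}$ and for $g_j(\tau) = (\Phi^{0,\tau})^T\tilde g_j(\tau)$ one invokes the tame bound for transport flows: using the change-of-variable Lemma \ref{lemma:utile} (applied to the characteristic diffeomorphism $\gamma^{\tau_0,\tau}$ of \eqref{equazione delle caratteristiche}, whose defining vector field is $b$) together with the $W^{s+1,\infty}$-estimate on $b$, one gets $\|U^{\tau,1}h\|_s + \|(U^{\tau_0,\tau})^T h\|_s \leq_s \|h\|_s + \|\beta\|_{W^{s+1,\infty}}\|h\|_{s_0}$, and similarly for $\Phi^{0,\tau}$ because $\Phi^{0,\tau}$ differs from $U^{0,\tau}$ by the same finite-rank correction, handled by a Gronwall/bootstrap argument in the low norm $s_0$ first and then in $s$. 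Combining, $\|g_j(\tau)\|_s \leq_s \|\tilde g_j(\tau)\|_s + \|\beta\|_{W^{s_0+1,\infty}}\|\tilde g_j(\tau)\|_{s_0} \leq_s \|\beta\|_{W^{s+2,\infty}}$, and $\|\chi_j(\tau)\|_s \leq_s 1 + \|\beta\|_{W^{s+1,\infty}}$; the bound on $\psi_j$ is of the same type since $\Pi_S{\cal A}\Pi_S^\bot$ has kernel built from $\beta$-dependent quantities. The estimate \eqref{stime Phi Phi -1} then follows from \eqref{plan duhamel}: the main term $U^{0,1}h = {\cal A}h$ satisfies the tame bound by Lemma \ref{lemma:utile}, and the integral term is controlled by the $L^2$-pairing estimates just derived; invertibility of $\Phi = \Phi^{0,1}$ is immediate since it is the time-$1$ flow of a linear PDE (its inverse is $\Phi^{1,0}$), and the same Duhamel/tame argument run backwards gives \eqref{stime Phi Phi -1} for $\Phi^{-1}$.

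The only genuinely delicate points are: (i) making sure the flow $\Phi^{\tau_0,\tau}$ of the projected equation \eqref{problemi di cauchy} is well-defined on each $H^s_{S^\bot}(\T_x)$ with the stated loss of derivatives — this is standard linear hyperbolic theory (cf.\ \cite{Taylor}, section 0.8) for the unprojected equation, and the finite-rank perturbation $-\Pi_S\partial_x(b\,\cdot)$ is bounded, so it does not affect well-posedness; and (ii) propagating the estimates from the low norm $s_0$ (where smallness of $\|\beta\|_{W^{s_0+1,\infty}}$ is used to close the Gronwall inequality) to general $s$, which is the usual tame-estimate bootstrap. The bookkeeping of which finite-rank pieces are $\tau$-dependent (those coming from $\Pi_S\partial_x(b\,\cdot)$ inside the Duhamel integral) versus $\tau$-independent (the piece ${\cal A} - {\cal A}_\bot$ from the mismatch of projectors at $\tau=1$) is what dictates the two-sum structure in \eqref{forma buona resto cambio di variabile hamiltoniano}; I expect this to be the main place where care is needed, but it is essentially algebraic once Duhamel's formula \eqref{plan duhamel} is in place. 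Finally, that $\Phi$ is symplectic on $H_S^\bot$ is clear: it is the time-$1$ flow of the Hamiltonian vector field on $H_S^\bot$ generated by $\tfrac12\int_\T b(\tau,x)u^2\,dx$ restricted to $H_S^\bot$, as noted after \eqref{problemi di cauchy}.
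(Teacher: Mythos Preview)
Your approach is correct and matches the paper's: both use Duhamel/variation of constants (the paper via explicit characteristics for the transport equation) with the unprojected flow $U^{\tau,1}$ as propagator and the finite-rank term $\Pi_S\partial_x(b\,\cdot)$ as forcing, producing the same $g_j(\tau) = -(\Phi^\tau)^T[b(\tau)\partial_x e^{\ii jx}]$, $\psi_j = ({\cal A}^T - I)e^{\ii jx}$, and $\chi_j(\tau)$ (which the paper writes out explicitly as $-(1+\beta_x)(1+\tau\beta_x)^{-1}e^{\ii j\gamma^\tau(x+\beta(x))}$). The paper also singles out the adjoint $(\Phi^\tau)^T$ as the main technical hurdle and handles it exactly as you outline, by representing it as the backward flow of the adjoint PDE $\partial_\tau z = \Pi_S^\bot(b\,\partial_x \Pi_S^\bot z)$ and running a characteristics-plus-bootstrap argument to get the tame bound.
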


\begin{proof}
Let $ w (\tau, x ) := (\Phi^\t u_0)(x) $ denote  the solution  of \eqref{problemi di cauchy} with initial condition 
$ \Phi^0 (w) = u_0 \in H_S^\bot $.  
The difference 
\be\label{differenza voluta}
({\cal A}_\bot - \Phi) u_0  = \Pi_S^\bot {\cal A} u_0  - w(1, \cdot) 
= {\cal A}u_0  - w(1, \cdot ) - \Pi_S {\cal A} u_0  \, , \quad \forall u_0 \in H_S^\bot \, , 
\ee
and 
\be\label{pezzo2}
\Pi_S {\cal A} u_0  = \Pi_S ({\cal A} - I) \Pi_S^\bot u_0  = \sum_{j \in S} \big(u_0\,,\,\psi_j \big)_{L^2(\T)} e^{\ii j x}\,,
\quad \psi_j := ({\cal A}^T- I) e^{\ii j x} \, .
\ee
We claim that the difference
\be\label{differenza}
{\cal A} u_0  - w(1, x) =  (1 +  \beta_x (x) )\int_0^1 (1 + \t \b_x (x) )^{-1}
\big[ \Pi_S \partial_x (b (\tau ) w(\tau) ) \big] ( \g^\t ( x + \b (x) )) \,d \tau 
\ee
where  $ \g^\t (y)  := \g^{0,\t} (y) $ is the flow of \eqref{equazione delle caratteristiche}.
Indeed the  solution $ w(\tau, x) $ of \eqref{problemi di cauchy} satisfies 
$$
\partial_\tau \{ w(\tau, \g^\t (y) ) \} = b_{x}(\tau, \g^\t (y)) w(\tau, \g^\t (y)) - \big[ \Pi_S \partial_x (b (\tau ) w(\tau) ) \big] ( \g^\t (y))\, .
$$
Then, by the variation of constant formula, we find
$$
w(\tau, \g^\t (y)) =  e^{\int_0^\tau b_x (s, \g^s (y))\,d s} \Big( u_0(y) - \int_0^\tau  e^{- \int_0^s b_x(\zeta, 
\g^\zeta (y) )\,d\zeta} \big[ \Pi_S \partial_x (b ( s ) w(s) ) \big] ( \g^s (y)) \, d s  \Big) \, . 
$$
Since $ \partial_y \g^\t (y) $ solves the variational equation 
$ \pa_\tau (\partial_y \g^\t (y)) = - b_x (\t, \g^\t (y) ) (\partial_y \g^\t (y))  $
with  $ \partial_y \g^0 (y)  = 1 $ we have that 
\begin{equation}\label{identita equazione variazionale}
e^{\int_0^\tau b_x(s , \g^s (y) )d s} = \big( {\partial_{y} \g^\t (y) } \big)^{-1} = 1 + \tau   \beta_x (x)
 \end{equation}
 by remark \ref{rem:ca}, and so we derive the expression 
$$
w(\tau, x) = (1 + \tau \beta_x (x)) \Big\{ u_0(x +  \tau \beta(x))  - \int_0^\tau 
( 1+ s \b_x (x) )^{-1} 
 \big[ \Pi_S \partial_x (b ( s ) w(s) ) \big] ( \g^s ( x + \t \b (x) )) \,d s \Big\} \, .
$$
Evaluating at $ \t = 1 $, formula  \eqref{differenza} follows. 
Next, we develop (recall $ w(\t) = \Phi^\tau (u_0 ) $) 
\begin{equation}\label{espressione esplicita gj}
 [\Pi_S \partial_x (b(\tau) w(\tau))] (x)  
 =  \sum_{j \in S} \big(u_0, g_j(\tau) \big)_{L^2(\T)} e^{\ii j x} \, , \quad
 g_j(\tau) := - (\Phi^\tau)^T[b(\tau) \partial_x e^{\ii j x}]\,, 
\end{equation}
and  \eqref{differenza} becomes  
\begin{equation}\label{formula utile 4}
{\cal A} u_0 - w(1, \cdot) = - \int_0^1 \sum_{j \in S} \big(u_0\,,\, g_j(\tau) \big)_{L^2(\T)} \chi_j(\tau, \cdot )\,d\tau\,,
\end{equation}
where 
\begin{equation}\label{espressione chi j}
\chi_j(\tau, x) :=  - ( 1 + \b_x (x) ) (1 + \t \b_x (x))^{-1} e^{\ii j \g^\t (x + \b(x) )} \, .  
\end{equation}

By \eqref{differenza voluta}, \eqref{pezzo2}, \eqref{differenza}, \eqref{formula utile 4} we deduce that 
$ \Phi = {\cal A}_\bot + {\cal R}_\Phi $ as in \eqref{forma buona resto cambio di variabile hamiltoniano}.

\smallskip

We now prove the estimates \eqref{stime forma buona resto cambio di variabile hamiltoniano}. 
Each function $ \psi_j  $ in \eqref{pezzo2} satisfies $  \|\psi_j \|_s \leq_s \| \beta \|_{W^{s, \infty}} $, 
see \eqref{cambio di variabile inverso}.
The bound $ \| \chi_j(\tau)\|_s \leq_s  1 + \| \beta \|_{W^{s + 1, \infty}} $ follows by \eqref{espressione chi j}.
The  tame estimates for $ g_j(\tau) $ defined in \eqref{espressione esplicita gj} are more difficult because 
require tame estimates  for the adjoint $(\Phi^\tau)^T$,  $ \forall \tau \in [0, 1] $.
The adjoint of the flow map can be represented 
as  the flow map of the ``adjoint'' PDE 
\begin{equation}\label{equazione aggiunta}
\partial_\tau z =  \Pi_S^\bot \{ b(\tau, x) \partial_x \Pi_S^\bot z \}  =  
b(\tau, x) \partial_x z - \Pi_S (b(\tau, x)  \partial_x  z ) \, , \quad  z \in H_S^\bot \, , 
\end{equation}
where  $ - \Pi_S^\bot b(\tau,x) \pa_x $ is the $ L^2 $-adjoint of the Hamiltonian 
vector field in \eqref{problemi di cauchy}.
We denote by $ \Psi^{\tau_0, \tau} $ 
the flow of  \eqref{equazione aggiunta}, namely $ \Psi^{\tau_0, \tau} (v) $ 
is the solution of 
\eqref{equazione aggiunta} with $ \Psi^{\tau_0, \tau_0} (v)  = v  $. 
Since the derivative $  \partial_\t (\Phi^\t (u_0)  , \Psi^{\tau_0,\t} (v) )_{L^2(\T)}  = 0  $, $ \forall \t  $,  we deduce that 
$  ( \Phi^{\tau_0} (u_0)  , \Psi^{\tau_0,\tau_0} (v) )_{L^2(\T)} =  ( \Phi^0 (u_0)  , \Psi^{\tau_0,0} (v) )_{L^2(\T)} $, namely
(recall that $ \Psi^{\tau_0,\tau_0} (v) = v $) the adjoint 
\be\label{adjoint flow}
(\Phi^{\tau_0})^T = \Psi^{\tau_0, 0}  \,, \quad \forall \tau_0 \in [0,1] \, . 
\ee
Thus it is sufficient to prove tame estimates for the flow $ \Psi^{\t_0, \tau} $. 
We first provide a useful expression for the solution  $ z (\tau, x) := \Psi^{\tau_0, \tau} (v) $ 
of  \eqref{equazione aggiunta}, obtained by the methods of characteristics. 
Let $ \gamma^{\t_0,\tau} (y) $ be the  flow of \eqref{equazione delle caratteristiche}.
Since $ \pa_\tau z (\tau, \g^{\t_0, \t} (y)) = - [\Pi_S (b(\t) \partial_x z (\t) ] ( \gamma^{\t_0,\t} (y))  $ we get 
$$
z(\tau,  \gamma^{\t_0,\tau} (y)) = v(y) + \int_\tau^{\tau_0} 
[\Pi_S (b(s) \partial_x z (s) ] ( \gamma^{\t_0,s} (y)) \,d s\, , \quad \forall \t \in [0,1] \, . 
$$
Denoting by $ y = x + \sigma(\tau, x)$ 
the inverse diffeomorphism of $ x = \gamma^{\tau_0, \tau} (y) = y +  {\tilde  \sigma}(\tau, y) $, we get
\begin{align}
\Psi^{\tau_0, \tau}(v) = z ( \tau, x)  & =  
v(x + \sigma(\tau, x)) + \int_\tau^{\tau_0}  
[\Pi_S (b(s) \partial_x z (s) ] ( \gamma^{\t_0,s} (x + \sigma(\tau, x))) \, d s \nonumber \\
& = v(x + \sigma(\tau, x)) +\int_\tau^{\tau_0} \sum_{j \in S} (z(s), p_j(s)) \kappa_j(s, x)\,d s = v( x +   \sigma(\tau,x) ) +  {\cal R}_\tau v\,,   \label{Psi-expression} 
\end{align}
where 
$  p_j (s) := - \partial_x (b(s) e^{\ii j x}) $, 
$ \kappa_j(s, x) :=  e^{\ii j \gamma^{\tau_0, s} (x + \sigma(\tau, x))} $ and 
$$
({\cal R}_\tau v)(x) :=  \int_\tau^{\tau_0} \sum_{j \in S} (\Psi^{\tau_0,s}(v), p_j(s))_{L^2(\T)} \kappa_j(s, x)\,d s\,.
$$
Since 
$\| \sigma(\tau, \cdot) \|_{W^{s,\infty}} $, $ \| \tilde \sigma(\tau, \cdot)\|_{W^{s,\infty}} 
\leq_s \| \beta \|_{W^{s + 1,\infty}}$ (recall also \eqref{transport-free}), we derive   
$ \| p_j \|_s \leq_s \| \beta \|_{W^{s + 2,\infty}} $,  
$ \| \kappa_j\|_s \leq_s 1 + \| \beta\|_{W^{s + 1,\infty}} $
and 
$ \| v(x +  \sigma(\tau, x)) \|_s \leq_s \| v\|_s + \| \beta\|_{W^{s + 1,\infty}} \| v\|_{s_0} $, $ \forall \tau \in [0,1] $. 
Moreover 
$$
\| {\cal R}_\tau v\|_s \leq_s {\rm sup}_{\tau \in [0, 1]} \| \Psi^{\tau_0,\tau}(v) \|_s 
\| \beta\|_{W^{s_0 + 2,\infty}} +{\rm sup}_{\tau \in [0, 1]} \| \Psi^{\tau_0, \tau}(v) \|_{s_0} \| \beta\|_{W^{s + 2,\infty}}\, .
$$
Therefore, for all $ \tau \in [0, 1] $, 
\be\label{tame aggiunto flusso parziale}
\| \Psi^{\tau_0,\tau} v\|_s \leq_s  \| v\|_s + \| \beta\|_{W^{s + 1,\infty}} 
\| v\|_{s_0} + {\rm sup}_{\tau \in [0, 1]} \big\{ \| \Psi^{\tau_0, \tau} v\|_s  \| \beta\|_{W^{s_0 + 2,\infty}}  +
\| \Psi^{\tau_0, \tau} v\|_{s_0} \| \beta\|_{W^{s + 2,\infty}} \big\}\,. 
\ee
For $s = s_0$ it implies 
$$
{\rm sup}_{\tau \in [0, 1]} \| \Psi^{\tau_0, \tau}(v)\|_{s_0} \leq_{s_0} \| v \|_{s_0}(1  + 
\| \beta \|_{W^{s_0 + 1,\infty}}) + {\rm sup}_{\tau \in [0, 1]} \| \Psi^{\tau_0, \tau}(v)\|_{s_0} \| \beta\|_{W^{s_0 + 2,\infty}} 
$$
and so, for  $\| \beta\|_{W^{s_0 + 2,\infty} } \leq c(s_0) $  small enough, 
\begin{equation}\label{tame norma bassa aggiunto flusso}
{\rm sup}_{\tau \in [0, 1]} \| \Psi^{\tau_0, \tau}(v)\|_{s_0} \leq_{s_0} \| v \|_{s_0} \, .
\end{equation}
Finally  \eqref{tame aggiunto flusso parziale}, \eqref{tame norma bassa aggiunto flusso} imply 
the tame estimate
\begin{equation}\label{tame aggiunto flusso}
{\rm sup}_{\tau \in [0, 1]} \| \Psi^{\tau_0, \tau} (v)\|_s \leq_s \| v \|_s + \| \beta\|_{W^{s + 2,\infty}} \| v \|_{s_0}\,.
\end{equation}
By \eqref{adjoint flow} and \eqref{tame aggiunto flusso} 
we deduce the bound \eqref{stime forma buona resto cambio di variabile hamiltoniano} 
for $ g_j $  defined in \eqref{espressione esplicita gj}.
The tame estimate \eqref{stime Phi Phi -1} for $\Phi $ follows by  that of $ {\cal A} $ and 
\eqref{stime forma buona resto cambio di variabile hamiltoniano}
 (use Lemma \ref{lemma:utile}). 
The estimate for $\Phi^{- 1}$ follows in the same way because 
$\Phi^{-1 } = \Phi^{1,0} $ is the backward flow.
\end{proof}

We  conjugate  $ {\cal L}_\om $  in  \eqref{Lom KdVnew} 
via the symplectic map $ \Phi = {\cal A}_\bot + {\cal R}_\Phi $
of Lemma \ref{modifica simplettica cambio di variabile}. We compute 
 (split $ \Pi_S^{\bot} = I - \Pi_S $)
\begin{equation}\label{LAbot}
{\cal L}_\om \Phi =  \Phi {\cal D}_\omega + 
\Pi_S^\bot {\cal A} \big( 
b_3 \partial_{yyy}  + b_2 \partial_{yy}   + b_1 \partial_{y}  + b_0   \big) \Pi_S^\bot +  {\cal R}_I \,,
\end{equation}
where the coefficients are
\begin{align}
& b_3 (\vphi,y) := {\cal A}^T [ a_1 ( 1 + \b_x)^3  ] \label{step1: b3KdV} 
\qquad \qquad
b_2 (\vphi,y) := {\cal A}^T \big[ 2 (a_1)_x (1 + \beta_x )^2 + 
6 a_1  \beta_{xx} (1 + \beta_x )\big]  \\ 
& b_1 (\vphi,y) := 
 {\cal A}^T \Big[ ({\cal D}_\om \beta) + 3 a_1 \frac{\beta_{xx}^2 }{1 + \beta_x} + 
 4 a_1 \beta_{xxx} + 6 (a_1)_x \beta_{xx}  + 
 (a_1)_{xx} (1 + \beta_x) + a_0 (1 + \b_x) \Big] \label{tilde b1KdV} 
\\ 
& b_0 (\vphi,y) := 
{\cal A}^T \Big[ \frac{({\cal D}_\om \beta_x)}{1 + \b_x} + 
 a_1 \frac{\beta_{xxxx}}{1+ \b_x} + 
 2 ( a_{1})_{x} \frac{\beta_{xxx}}{1+ \b_x} +  ( a_{1})_{xx} \frac{\beta_{xx}}{1+ \b_x} +
a_0  \frac{\beta_{xx}}{1+ \b_x}  + (a_0)_x   \Big] \label{tilde b0KdV}
\end{align}
 and the remainder
\begin{align}
 {\cal R}_I &:= -
\Pi_S^\bot \partial_x (   \e^2 {\cal R}_2 + \mR_{*}    ) {\cal A}_\bot  - \Pi_S^\bot
 \big( a_1 \partial_{xxx}  + 2 
(a_1)_x \partial_{xx}  + ( (a_{1})_{xx} + a_0)\partial_x  
+ (a_0)_x \big)  \Pi_{S} {\cal A}  \Pi_S^\bot \,  \nonumber\\
 & \quad +[{\cal D}_\omega, {\cal R}_\Phi] + ({\cal L}_\omega - {\cal D}_\omega) {\cal R}_\Phi\,. \label{calR1KdV}
\end{align}
The commutator $[{\cal D}_\omega, {\cal R}_\Phi] $ has the form \eqref{forma buona resto cambio di variabile hamiltoniano} with ${\cal D}_\om g_j$ or ${\cal D}_\om \chi_j$, ${\cal D}_\om \psi_j$ instead of $\chi_j$, $g_j$, $\psi_j$ respectively. Also the last term $({\cal L}_\omega - {\cal D}_\omega) {\cal R}_\Phi$ in \eqref{calR1KdV} has the form \eqref{forma buona resto cambio di variabile hamiltoniano} (note that ${\cal L}_\omega - {\cal D}_\omega$ does not contain derivatives with respect to $\vphi$). By \eqref{LAbot}, and decomposing $ I = \Pi_S + \Pi_S^\bot $, we get 
\begin{align} \label{L A bot finaleKdV}
{\cal L}_\om \Phi 
= {} & \Phi  ( {\cal D}_\omega  +  b_3 \partial_{yyy}  + b_2 \partial_{yy}   + b_1 \partial_{y}  + b_0 ) \Pi_S^\bot
+ {\cal R}_ {II} \,,
\\
\label{calR2KdV}
{\cal R}_{II}  
:= {} &  \big\{\Pi_S^\bot ({\cal A} - I) \Pi_{S} - {\cal R}_\Phi  \big\} 
( b_3 \partial_{yyy}  + b_2 \partial_{yy}   + b_1 \partial_{y}  + b_0 ) \Pi_S^\bot   + {\cal R}_I \,. 
\end{align}
Now we choose the function $ \beta = \b (\vphi, x)  $  such that 
\begin{equation}\label{choice beta}
a_1(\vphi, x) (1 + \b_x (\vphi, x))^3 = b_3 (\vphi) 
\end{equation}
so that the coefficient $ b_3 $ in \eqref{step1: b3KdV} depends only on $ \vphi $
(note that $ {\cal A}^T [b_3 (\vphi)]  = b_3 (\vphi) $). 
The only solution of \eqref{choice beta} with zero space average is (see e.g. \cite{BBM}-section 3.1)
\begin{equation}\label{sol beta}
\b  := \partial_x^{-1} \rho_0 , \quad 
\rho_0 := b_3 (\vphi)^{1/3} (a_1 (\vphi, x))^{-1/3} - 1,  \quad
b_3 (\vphi) := \Big( \frac{1}{2 \pi} \int_{\T} (a_1 (\vphi, x))^{-1/3} dx \Big)^{-3}. 
\end{equation}
Applying  the symplectic map  $ \Phi^{-1} $ in \eqref{L A bot finaleKdV} 
we  obtain the Hamiltonian operator (see Definition \ref{operatore Hamiltoniano})
\begin{equation}\label{cal L1 Kdv}
{\cal L}_1 := \Phi^{-1} {\cal L}_\om \Phi
= \Pi_S^\bot \big( \om \cdot \partial_\vphi + b_3(\vphi) \partial_{yyy} 
+ b_1 \partial_y + b_0 \big) \Pi_S^\bot +  {\mathfrak R}_1 
\end{equation}
where $ {\mathfrak R}_1 := \Phi^{-1} {\cal R}_{II} $. We used that, 
by the Hamiltonian nature of $ {\cal L}_1 $, the coefficient 
$ b_2 = 2 (b_3)_y $ (see \cite{BBM}-Remark 3.5) and so, by the choice \eqref{sol beta}, we have
$ b_2  =  2 (b_3)_y = 0 $. 
In the next Lemma we analyse the structure of the remainder ${\mathfrak R}_1$.
\begin{lemma} \label{cal R3}
The operator $ {\mathfrak R}_1 $ has the form \eqref{forma buona con gli integrali}.
\end{lemma}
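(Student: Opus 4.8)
The plan is to read off from \eqref{calR2KdV} and \eqref{calR1KdV} that every operator entering $\mathcal{R}_{II}$ is already of the finite–dimensional type \eqref{forma buona con gli integrali} (or becomes so after noticing a projector $\Pi_S$ which makes it finite rank), and then to check that composing on the left with $\Phi^{-1}$ preserves this type, so that $\mathfrak{R}_1 = \Phi^{-1}\mathcal{R}_{II}$ inherits the structure \eqref{forma buona con gli integrali}. Recall that $\Phi = \mathcal{A}_\bot + \mathcal{R}_\Phi$ with $\mathcal{R}_\Phi$ of the form \eqref{forma buona resto cambio di variabile hamiltoniano} and that $\Phi^{\pm1}$ satisfy the tame bounds \eqref{stime Phi Phi -1} of Lemma \ref{modifica simplettica cambio di variabile}; also that \eqref{forma buona resto} and \eqref{forma buona resto cambio di variabile hamiltoniano} are particular cases of \eqref{forma buona con gli integrali} (taking the integrands independent of $\tau$, respectively merging the two finite sums).

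The key preliminary observation I would isolate is that the class \eqref{forma buona con gli integrali} is a bimodule: it is a vector space, it is stable under left–composition with any operator $T_1$ that is bounded on the relevant Sobolev spaces and sends $C^\infty$ to $C^\infty$, and it is stable under right–composition with any $T_2$ whose transpose $T_2^T$ has this latter property, because
\[
T_1\Big(\sum_{|j|\le C}\int_0^1 (h,g_j(\tau))_{L^2(\T)}\,\chi_j(\tau)\,d\tau\Big)T_2
= \sum_{|j|\le C}\int_0^1 (h,\,T_2^T g_j(\tau))_{L^2(\T)}\,T_1\chi_j(\tau)\,d\tau .
\]
Differential operators in $x$ with smooth coefficients, multiplication operators, the projectors $\Pi_S^\bot$, the maps $\mathcal{A},\mathcal{A}_\bot$ together with their transposes (see \eqref{cambio di variabile inverso}), and $\Phi^{\pm1}$ all have these properties, and each of them costs only a finite loss of derivatives; since $a_0,a_1,b_0,b_1,b_2,b_3,\beta$ and the approximate torus $i_\d$ are all smooth of class $C^q$, the functions produced along the way remain in $H^s$ in the range of $s$ of interest.

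With this in hand the verification is essentially bookkeeping. The factor $\Pi_S^\bot(\mathcal{A}-I)\Pi_S$ is genuinely finite dimensional because $\Pi_S h = (2\pi)^{-1}\sum_{j\in S}(h,e^{\ii jx})_{L^2(\T)}e^{\ii jx}$, hence $\Pi_S^\bot(\mathcal{A}-I)\Pi_S h=(2\pi)^{-1}\sum_{j\in S}(h,e^{\ii jx})_{L^2(\T)}\,\Pi_S^\bot(\mathcal{A}-I)e^{\ii jx}$ has the form \eqref{forma buona resto} with smooth data; subtracting $\mathcal{R}_\Phi$ and composing on the right with $(b_3\partial_{yyy}+b_2\partial_{yy}+b_1\partial_y+b_0)\Pi_S^\bot$ keeps us in the class by the bimodule property, so the first summand of \eqref{calR2KdV} is fine. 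For $\mathcal{R}_I$ in \eqref{calR1KdV}: the term $-\Pi_S^\bot\partial_x(\e^2\mathcal{R}_2+\mathcal{R}_*)\mathcal{A}_\bot$ is of the required form because $\mathcal{R}_2,\mathcal{R}_*$ have the form \eqref{forma buona resto} by Proposition \ref{prop:lin}, and we only left–compose with $\Pi_S^\bot\partial_x$ and right–compose with $\mathcal{A}_\bot$; the term $-\Pi_S^\bot\big(a_1\partial_{xxx}+2(a_1)_x\partial_{xx}+((a_1)_{xx}+a_0)\partial_x+(a_0)_x\big)\Pi_S\mathcal{A}\Pi_S^\bot$ is finite dimensional for exactly the same reason as $\Pi_S^\bot(\mathcal{A}-I)\Pi_S$, the central $\Pi_S$ producing the finite rank and the surrounding differential operator, $\mathcal{A}$ and $\Pi_S^\bot$ being harmless; and the last two terms $[\mathcal{D}_\omega,\mathcal{R}_\Phi]$ and $(\mathcal{L}_\omega-\mathcal{D}_\omega)\mathcal{R}_\Phi$ already have the form \eqref{forma buona resto cambio di variabile hamiltoniano}, as noted right after \eqref{calR1KdV} (here one uses that $\mathcal{L}_\omega-\mathcal{D}_\omega$ carries no $\partial_\vphi$). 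Thus $\mathcal{R}_{II}$ has the form \eqref{forma buona con gli integrali}; since $\mathcal{R}_{II}=\mathcal{L}_\omega\Phi-\Phi(\mathcal{D}_\omega+b_3\partial_{yyy}+\dots)\Pi_S^\bot$ is a difference of operators valued in $H^s_{S^\bot}$ (see \eqref{L A bot finaleKdV}), one may replace each $\chi_j(\tau)$ by $\Pi_S^\bot\chi_j(\tau)\in H^s_{S^\bot}$, and then $\mathfrak{R}_1=\Phi^{-1}\mathcal{R}_{II}$ is again of the form \eqref{forma buona con gli integrali} with $\chi_j(\tau)$ replaced by $\Phi^{-1}\Pi_S^\bot\chi_j(\tau)$.

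There is no conceptual obstacle here; the only point I would flag as requiring genuine care — the ``hard part'' — is tracking the finitely many derivatives lost along the chain (from the third–order operators $b_3\partial_{yyy}+\dots$, from $\partial_x$, and from the transposes $\mathcal{A}^T$, $(\Phi^{\pm1})^T$), so that none of the functions $g_j,\chi_j,\psi_j$ is pushed out of the Sobolev range in which \eqref{forma buona con gli integrali} and the subsequent decay estimate of Lemma \ref{remark : decay forma buona resto} are invoked; this is guaranteed by the smoothness of $f$ and of $i_\d$, but has to be reconciled with the eventual choice of the ``loss of derivatives'' constant.
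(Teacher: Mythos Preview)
Your proof is correct and follows essentially the same route as the paper's: both verify term by term that every summand of $\mathcal{R}_I$ and $\mathcal{R}_{II}$ is of the finite-dimensional type \eqref{forma buona con gli integrali} (the central $\Pi_S$ or the already finite-rank $\mathcal{R}_2,\mathcal{R}_*,\mathcal{R}_\Phi$ furnishing the finite sum), and then observe that left/right composition with $\Pi_S^\bot$, $\mathcal{A}$, differential operators in $x$, and $\Phi^{-1}$ preserves this structure. Your ``bimodule'' formulation is just a compact packaging of the explicit transformation rules the paper lists one by one; the point you flag about projecting $\chi_j(\tau)$ into $H_S^\bot$ before applying $\Phi^{-1}$ is exactly what the paper records at the end of its proof, and your remark on derivative losses, while sound, is not needed for this lemma since only the structural form is claimed here (the estimates are deferred to Lemma~\ref{lemma:stime coeff mL1}).
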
  

\begin{proof}
The remainders $ {\cal R}_I $ and $ {\cal R}_{II} $ have the form \eqref{forma buona con gli integrali}. Indeed
 $ {\cal R}_2,  {\cal R}_* $ in \eqref{calR1KdV} have the form \eqref{forma buona resto}
 (see Proposition \ref{prop:lin}) and the term
$ \Pi_S {\cal A} w = \sum_{j \in S } ( {\cal A}^T e^{\ii j x}, w)_{L^2(\T) } e^{\ii jx} $
has the same form. By \eqref{forma buona resto cambio di variabile hamiltoniano}, the terms of ${\cal R}_I$, ${\cal R}_{II}$ which involves the operator ${\cal R}_\Phi$ have the form \eqref{forma buona con gli integrali}.
All the operations involved preserve this structure: 
if $R_\tau w = \chi(\tau) (w, g(\tau) )_{L^2(\T)} $, $\tau \in [0, 1]$, then 
\begin{alignat*}{3}
R_\tau \Pi_S^\bot w & = \chi(\tau) (\Pi_S^\bot g(\tau) ,  w)_{L^2(\T)}\,, \  & 
R_\tau {\cal A} w & = \chi(\tau) ({\cal A}^T g(\tau) ,  w)_{L^2(\T)} \,, \ &
\partial_x R_\tau w & = \chi_x(\tau) (g(\tau) , w)_{L^2(\T)} \,, 
\\
\Pi_S^\bot R_\tau w & = (\Pi_S^\bot \chi(\tau)) (g(\tau) , w)_{L^2(\T)} \,, \   & 
{\cal A} R_\tau w & = ({\cal A} \chi(\tau)) (g(\tau) ,  w)_{L^2(\T)} \,, \  &
\Phi^{-1} R_\tau w & = (\Phi^{-1} \chi(\tau)) (g(\tau), w)_{L^2(\T)} 
\end{alignat*}
(the last equality holds because $ \Phi^{-1} ( f (\vphi) w ) = f (\vphi) \Phi^{-1} (  w ) $ for all function $f(\ph)$).  
 Hence $ {\mathfrak R}_1 $ has the form \eqref{forma buona con gli integrali} 
  where $ \chi_j(\tau)  \in H_S^\bot $ for all $\tau \in [0, 1]$.
\end{proof}

We now put in evidence the terms of order $ \e, \e^2, \ldots $, in $ b_1 $, $ b_0  $, $ \mathfrak R_1 $, recalling that $ a_1 -1 = O(\e^3 )$ (see \eqref{stima a1}), $ a_0 = O( \e) $ (see \eqref{a1p1p2}-\eqref{p geq 4}),
and  $ \beta  = O( \e^3)  $  (proved below in \eqref{stima beta}).
We expand  $ b_1$ in \eqref{tilde b1KdV} as
\begin{equation}\label{b1 Kdv}
b_1   = - \e p_1 -\e^2 p_2 - q_{>2} +  {\cal D}_\om \beta +  4  \b_{xxx} + (a_1)_{xx} 
  + b_{1, \geq 4}
\end{equation}
where $ b_{1, \geq 4} = O(\e^4) $ is defined by difference (the precise estimate is in Lemma \ref{lemma:stime coeff mL1}). 
 
\begin{remark}\label{media beta}
The function 
$ {\cal D}_\om \beta $ has zero average in $ x $ by \eqref{sol beta} as well as $ (a_1)_{xx}, \b_{xxx} $. 
\end{remark}

Similarly, we expand $ b_0 $ in \eqref{tilde b0KdV} as 
\be \label{b0 Kdv} 
b_0 = - \e (p_1)_x - \e^2 (p_2)_x - (q_{>2})_x  +  {\cal D}_\om \beta_x + \b_{xxxx} + b_{0, \geq 4} 
\ee
where $ b_{0, \geq 4} = O(\e^4) $ is defined by difference.

Using the equalities \eqref{calR2KdV},  \eqref{calR1KdV} and $ \Pi_S {\cal A} \Pi_S^\bot = \Pi_S ({\cal A} - I) \Pi_S^\bot $ we get
\begin{equation}\label{resto1 Kdv}
{\mathfrak R}_1 := \Phi^{-1} {\cal R}_{II} = 
-  \e^2 \Pi_S^\bot \partial_x  {\cal R}_2 + {\cal R}_{*} 
\end{equation}
where $ {\cal R}_2 $ is defined in \eqref{R nullo1R2} and we have renamed $\mR_*$ 
the term of order $ o(\e^2) $ in $\mathfrak{R}_1$. 
The remainder $ {\cal R}_{*} $ in \eqref{resto1 Kdv} has the form \eqref{forma buona con gli integrali}. 

\begin{lemma} \label{lemma:stime coeff mL1}
There is $\s = \s(\t,\nu) > 0$ such that 
\begin{align} \label{stima beta}
\| \b \|_s^\Lipg   \leq_s \e^3 (1 + \| \fracchi_\d \|_{s + 1}^\Lipg ), \qquad 
\| \pa_i \b [\widehat \imath ] \|_s 
& \leq_s \e^3 \big( \| \widehat \imath \|_{s + \s} +  \| {\mathfrak I}_\d \|_{s + \sigma}  \| \widehat \imath  \|_{s_0 + \s}\big)\,, \\
\label{stima b1 b0}
 \| b_3 - 1 \|_s^\Lipg  
 \leq_s \e^4 + \e^{b + 2} \| \fracchi_\d \|_{s + \s}^\Lipg, \qquad  \| \pa_i b_3[\widehat \imath ]\|_s & \leq_s \e^{b + 2} \big( \| \widehat 
 \imath \|_{s + \s} + \| \fracchi_\delta\|_{s + \s} \| \widehat \imath \|_{s_0 + \s}\big)
\\
\label{stima Di b1 b0}
\| b_{1, \geq 4} \|_s^\Lipg + \| b_{0, \geq 4} \|_s^\Lipg & \leq_s \e^4 + \e^{b + 2} \| \fracchi_\delta\|_{s + \s}^\Lipg  \\
\label{stima Di b geq 4}
  \| \pa_i b_{1, \geq 4}[\widehat \imath ] \|_s + \| \pa_i b_{0, \geq 4}[\widehat  \imath ]\|_s  & \leq_s \e^{b + 2}\big( \| \widehat \imath \|_{s + \s} + \| \fracchi_\delta\|_{s + \s} \| \widehat \imath \|_{s_0 + \s}\big) . 
\end{align}
The transformations $\Phi$, $\Phi^{-1}$ satisfy
\begin{align}
\label{stima cal A bot}
\|\Phi^{\pm 1} h \|_s^{\Lipg} & \leq_s \| h \|_{s + 1}^{\Lipg} + \| {\mathfrak I}_\delta \|_{s + \sigma}^{\Lipg} \| h \|_{s_0 + 1}^{\Lipg} \\
\label{stima derivata cal A bot}
\| \partial_i (\Phi^{\pm 1}h) [\widehat \imath] \|_s & \leq_s 
\| h \|_{s + \sigma} \| \widehat \imath \|_{s_0 + \sigma} + 
\| h\|_{s_0 + \sigma} \| \widehat \imath \|_{s + \sigma} + 
\| {\mathfrak I}_\delta\|_{s + \sigma} \| h\|_{s_0 + \sigma} \| \widehat \imath \|_{s_0 + \sigma}\,.
\end{align}
Moreover the remainder ${\cal R}_{*}$ has the form \eqref{forma buona con gli integrali},
where the functions $\chi_j(\tau)$, $g_j(\tau)$ satisfy the estimates 
\eqref{stima cal R*}-\eqref{derivate stima cal R*} uniformly in $\tau \in [0, 1]$.
\end{lemma}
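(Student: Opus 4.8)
The plan is to treat the four groups of estimates in turn, in each case by explicit computation on the formulas of Section \ref{step1} combined with the tame calculus of Section 2 and the bounds of Proposition \ref{prop:lin}, which are already in hand.

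\emph{The function $\beta$ and the coefficient $b_3$.} I would start from the explicit formula \eqref{sol beta}. Since $\|a_1-1\|_s^\Lipg\leq_s\e^3(1+\|\fracchi_\d\|_{s+1}^\Lipg)$ by \eqref{stima a1}, the smoothness of $t\mapsto t^{-1/3}$ near $t=1$ and the composition Lemma \ref{lemma:composition of functions, Moser} give $\|(a_1)^{-1/3}-1\|_s^\Lipg\leq_s\e^3(1+\|\fracchi_\d\|_{s+1}^\Lipg)$; averaging in $x$ and taking the $(-3)$-rd power controls $b_3(\vphi)-1$ by the same quantity, and then $\|\beta\|_s^\Lipg=\|\partial_x^{-1}\rho_0\|_s^\Lipg\leq_s\e^3(1+\|\fracchi_\d\|_{s+1}^\Lipg)$, which is \eqref{stima beta}. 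To get the \emph{sharper} bound \eqref{stima b1 b0} for $b_3-1$ I would expand $(a_1)^{-1/3}=1-\tfrac13(a_1-1)+O((a_1-1)^2)$ and observe that the $O(\e^3)$ part of $a_1-1=-r_1(T_\d)=-\s_1(\Phi_B(T_\d))$ equals $-\e^3(\partial_{u_xu_x}f_5)(v_\d,(v_\d)_x)$, which by the same computation as in Lemma \ref{p3 zero average} (i.e. the structure \eqref{unico pezzo} together with Hypothesis $({\mathtt S}1)$, so that $j_1+j_2+j_3\neq 0$) has \emph{zero space average}; hence $\int_\T(a_1-1)\,dx=O(\e^4)+O(\e^{b+2}\|\fracchi_\d\|)$ and therefore $b_3(\vphi)-1=O(\e^4)+O(\e^{b+2}\|\fracchi_\d\|)$. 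The derivative bounds on $\beta$ and $b_3$ follow by differentiating \eqref{sol beta} in $i$ and using the chain rule with the bound \eqref{stima a1} for $\partial_i a_1$.

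\emph{The remainder coefficients $b_{1,\geq 4}$, $b_{0,\geq 4}$.} These are \emph{defined by difference}: $b_{1,\geq 4}$ is the expression \eqref{tilde b1KdV} (with the outer $\mathcal{A}^T$) minus the extracted main terms $-\e p_1-\e^2 p_2-q_{>2}+\mathcal{D}_\om\beta+4\beta_{xxx}+(a_1)_{xx}$ of \eqref{b1 Kdv}, and similarly for $b_{0,\geq 4}$. Writing $\mathcal{A}^T=I+(\mathcal{A}^T-I)$ and $a_1=1+(a_1-1)$, every surviving summand is a product in which at least one factor is $O(\e^3)$ (namely $a_1-1$, or $\beta$ and its $x$-derivatives, or $\mathcal{A}^T-I$) times a factor controlled by Proposition \ref{prop:lin} (namely $a_0$, $p_{\geq 4}$, $q_{>2}$, $(a_1)_x$, $(a_1)_{xx}$): for instance $(a_1-1)\beta_{xxx}$, $(a_1)_{xx}\beta_x$, $a_0\beta_x$, $-p_{\geq 4}$, $3a_1\beta_{xx}^2(1+\beta_x)^{-1}$, $(\mathcal{A}^T-I)[\,\cdots\,]$. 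I would bound each by the tame product Lemma \ref{lemma:standard Sobolev norms properties}, the change-of-variable Lemma \ref{lemma:utile} for $\mathcal{A}^T$ (with $\|\beta\|_{W^{s,\infty}}\leq_s\|\beta\|_{s+s_0}$), and the bounds already established, obtaining \eqref{stima Di b1 b0}; the estimates \eqref{stima Di b geq 4} come from the Leibniz rule applied to the same products together with the $\partial_i$-bounds of Proposition \ref{prop:lin} and of the previous step.

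\emph{The maps $\Phi^{\pm1}$ and the remainder $\mathcal{R}_*$.} The bounds \eqref{stima cal A bot} follow from \eqref{stime Phi Phi -1} of Lemma \ref{modifica simplettica cambio di variabile} (recalling that $\Phi^{-1}$ is the backward flow) after inserting $\|\beta\|_{W^{s+2,\infty}}\leq_s\|\beta\|_{s+2+s_0}\leq_s\e^3(1+\|\fracchi_\d\|_{s+\s})$; for \eqref{stima derivata cal A bot} one differentiates in $i$ the Cauchy problem \eqref{problemi di cauchy} (or the explicit formulas \eqref{primo cambio di variabile modi normali}, \eqref{espressione esplicita gj}, \eqref{espressione chi j}), reducing to the $\partial_i$-bound for $\beta$. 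For $\mathcal{R}_*$: its \emph{form} \eqref{forma buona con gli integrali} is exactly the content of Lemma \ref{cal R3}, every operation in \eqref{calR1KdV}--\eqref{calR2KdV} followed by $\Phi^{-1}$ preserving this structure. For the \emph{size}, recall from \eqref{resto1 Kdv} that $\mathcal{R}_*$ is $\mathfrak{R}_1=\Phi^{-1}\mathcal{R}_{II}$ minus the term $-\e^2\Pi_S^\bot\partial_x\mathcal{R}_2$; tracing \eqref{calR1KdV}--\eqref{calR2KdV}, every contribution is either (i) the conjugation by the bounded maps $\mathcal{A}_\bot,\Phi^{-1}$ of the remainder of Proposition \ref{prop:lin} (so that \eqref{stima cal R*}--\eqref{derivate stima cal R*} are preserved up to the $\beta$-estimates), or (ii) a finite-rank operator carrying an explicit factor $\mathcal{A}-I=O(\e^3)$ or $\mathcal{R}_\Phi=O(\e^3)$ (from \eqref{stime forma buona resto cambio di variabile hamiltoniano}), possibly composed with the order-$3$ operator $b_3\partial_{yyy}+b_2\partial_{yy}+b_1\partial_y+b_0$ acting on the finite-dimensional range of $\Pi_S$; in each case Lemma \ref{remark : decay forma buona resto} and the tame product estimates give \eqref{stima cal R*}, and differentiation in $i$ gives \eqref{derivate stima cal R*}. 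The main obstacle is precisely this last bookkeeping for $\mathcal{R}_*$ — carrying the finite-dimensional structure \eqref{forma buona con gli integrali} through all the conjugations and, in particular, establishing the high-norm tame estimates for the adjoint flow $(\Phi^{\tau_0})^T=\Psi^{\tau_0,0}$ entering \eqref{espressione esplicita gj} — but this is routine once Lemmata \ref{modifica simplettica cambio di variabile} and \ref{cal R3} are available.
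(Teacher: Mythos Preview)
Your proposal is correct and follows essentially the same approach as the paper: in particular, the key step for the sharp bound \eqref{stima b1 b0} on $b_3-1$ --- namely splitting $r_1(T_\delta)=\e^3(\partial_{u_xu_x}f_5)(v_\d,(v_\d)_x)+r_{1,\geq 4}$ and using Hypothesis $({\mathtt S}1)$ (via the argument of Lemma \ref{p3 zero average}) to get $\int_\T(\partial_{u_xu_x}f_5)(v_\d,(v_\d)_x)\,dx=0$ --- is exactly what the paper does, and the remaining estimates are handled by the same references (Lemma \ref{modifica simplettica cambio di variabile} for $\Phi^{\pm1}$, \eqref{calR1KdV}--\eqref{calR2KdV} and \eqref{stima cal R*}--\eqref{derivate stima cal R*} for $\mathcal{R}_*$). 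Your write-up is in fact more detailed than the paper's, especially for $b_{1,\geq 4}$, $b_{0,\geq 4}$ and the bookkeeping on $\mathcal{R}_*$.
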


\begin{proof}
The estimates \eqref{stima beta} follow by  \eqref{sol beta}, \eqref{stima a1}, 
and the usual interpolation and tame estimates in Lemmata 
\ref{lemma:composition of functions, Moser}-\ref{lemma:utile} (and Lemma \ref{stima Aep}) and \eqref{ansatz delta}.  
For the estimates of $ b_3 $, by \eqref{sol beta} and  \eqref{a1p1p2} we consider 
the function $ r_1 $ defined in \eqref{sigma0sigma1 def}. Recalling also
\eqref{finito finito} and \eqref{T0}, the  function 
$$ 
r_1 (T_\delta ) =  \e^3 (\partial_{u_x u_x} f_5) (v_\d, (v_\d)_x ) + r_{1, \geq 4} \, , \quad 
r_{1, \geq 4} := r_1 (T_\delta ) -  \e^3 (\partial_{u_x u_x} f_5) (v_\d, (v_\d)_x ) \, . 
$$
Hypothesis (${\mathtt S}1$) implies, as in the proof of Lemma \ref{p3 zero average}, that the space average
$ \int_{\T} (\partial_{u_x u_x} f_5) (v_\d, (v_\d)_x ) dx = 0 $. Hence the bound 
\eqref{stima b1 b0} for $ b_3 - 1 $ follows.
For the estimates on $\Phi$, $\Phi^{-1} $ we apply Lemma \ref{modifica simplettica cambio di variabile} and the estimate \eqref{stima beta} for $\beta$ .
We estimate the remainder ${\cal R}_*$ in \eqref{resto1 Kdv}, using \eqref{calR1KdV}, \eqref{calR2KdV} and
 \eqref{stima cal R*}-\eqref{derivate stima cal R*}. 
\end{proof}

\subsection{Reparametrization of time}\label{step2}

The goal of this section is to make constant the coefficient of the highest order 
spatial derivative operator $ \partial_{yyy} $, by a quasi-periodic reparametrization of time. 
We consider the change of variable 
$$
(B w)(\vphi, y) := w(\vphi + \omega \alpha(\vphi), y), 
\qquad  
( B^{-1} h)(\vartheta  , y ) := h(\vartheta + \omega \tilde{\alpha}(\vartheta), y)\,,
$$
where $ \vphi = \vartheta + \omega \tilde{\alpha}(\vartheta )$ 
is the inverse diffeomorphism of $ \vartheta =  \vphi + \omega \alpha(\vphi) $ in $\T^\nu$.
By conjugation, the differential operators become 
\begin{equation}  \label{anche def rho}
B^{-1} \om \cdot \partial_\ph B = \rho(\vartheta)\,   \omega \cdot \partial_{\vartheta} ,
\quad 
B^{-1}  \partial_y B = \partial_y, 
\quad 
\rho := B^{-1} (1 + \omega \cdot \partial_{\varphi} \a).
\end{equation}
By \eqref{cal L1 Kdv}, using also that   $  B $ and $ B^{-1} $ commute with $ \Pi_S^\bot $, we get 
\begin{equation} \label{secondo coniugio siti normali Kdv}
 B^{-1} {\cal L}_1 B 
= \Pi_S^\bot [ \rho  \omega \cdot \partial_{\vartheta} 
 + (B^{-1} b_3)  \partial_{yyy}  
 + ( B^{-1}  b_1 ) \partial_y + ( B^{-1} b_0 ) ] \Pi_S^\bot 
 + B^{-1} {\mathfrak R}_1 B.
\end{equation}
We choose $ \alpha $ such that 
\begin{equation}\label{B3solu}
(B^{-1}b_3 )(\vartheta ) = m_3 \rho (\vartheta) \,, \quad m_3 \in \R \, , \quad \text{namely} \ \ \  
 b_3 (\vphi) = m_3 ( 1 + \om \cdot \partial_\vphi \a (\vphi) ) 
\end{equation}
(recall  \eqref{anche def rho}). 
The unique solution with zero average of \eqref{B3solu} is
\begin{equation}  \label{def alpha m3}
\a (\vphi) := \frac{1}{m_3} ( \om \cdot \partial_\vphi )^{-1} ( b_3 - m_3 ) (\vphi) , 
\qquad 
m_3 := \frac{1}{(2 \pi)^\nu} \int_{\T^\nu} b_3 (\vphi) d \vphi  \,. 
\end{equation}
Hence, by \eqref{secondo coniugio siti normali Kdv}, 
\begin{alignat}{2} \label{L2 Kdv}
& B^{-1} {\cal L}_1 B = \rho {\cal L}_2 \, , \qquad &
& {\cal L}_2 := \Pi_S^\bot ( \omega \cdot \partial_{\vartheta} + m_3 \partial_{yyy} 
+ c_1 \partial_y + c_0 ) \Pi_S^\bot +  {\mathfrak R}_2
\\
\label{tilde c1 Kdv}
& c_1 := \rho^{-1} (B^{-1} b_1 ) \,, \qquad &
& c_0 :=  \rho^{-1} (B^{-1}  b_0 ) \, , \qquad 
{\mathfrak R}_2 := \rho^{-1} B^{-1}{\mathfrak R}_1 B \, .
\end{alignat}
The transformed operator ${\cal L}_2$ in \eqref{L2 Kdv} is still Hamiltonian, since the reparametrization of time preserves the Hamiltonian structure (see Section 2.2 and Remark 3.7 in \cite{BBM}).

We now put in evidence the terms of order $ \e, \e^2, \ldots $ in  $ c_1, c_0 $.
To this aim, we anticipate the following estimates: 
$ \rho (\vartheta) = 1 + O(\e^4) $,  
$ \alpha = O( \e^4 \g^{-1}) $, 
$ m_3 = 1 + O(\e^4) $, $ B^{-1} - I = O( \alpha ) $ (in low norm), 
which are proved in Lemma \ref{lemma:stime coeff mL2} below. 
Then, by \eqref{b1 Kdv}-\eqref{b0 Kdv}, we expand the functions $ c_1, c_0 $ in \eqref{tilde c1 Kdv} as
\begin{equation} \label{tilde c1 KdV}
c_1 =  - \e p_1 - \e^2 p_2 - B^{-1} q_{>2} + \e ( p_1 - B^{-1} p_1)  + \e^2 ( p_2 - B^{-1} p_2 )    + {\cal D}_\om \beta +  4 \b_{xxx} +  (a_1)_{xx}   + c_{1, \geq 4}  \, ,  
\end{equation}
\begin{equation} \label{tilde c0 KdV}
c_0 = - \e (p_1)_x  - \e^2 (p_2)_x - (B^{-1} q_{>2})_x 
+ \e ( p_1 - B^{-1} p_1)_x + \e^2 ( p_2 - B^{-1} p_2 )_x + ({\cal D}_\om \beta)_x +  \b_{xxxx}  + c_{0, \geq 4}\,, 
\end{equation}
where  $ c_{1, \geq 4}, c_{0, \geq 4} = O( \e^4 ) $ are defined by difference.

\begin{remark}
The functions $\e ( p_1 - B^{-1} p_1) = O( \e^5 \g^{-1} )$  and 
$\e^2 ( p_2 - B^{-1} p_2) = O( \e^6 \g^{-1} )$, see \eqref{commu B p1 p2}. 
For the reducibility scheme,  the terms of order $ \partial_x^0 $
with size  $  O(\e^5 \g^{-1}) $ are perturbative, since
$ \e^5 \g^{-2} \ll 1 $.
\end{remark}

The remainder $ {\mathfrak R}_2 $ in \eqref{tilde c1 Kdv} has still the form \eqref{forma buona con gli integrali} and, by \eqref{resto1 Kdv}, 
\begin{equation}\label{mathfrakR2}
{\mathfrak R}_2 :=  - \rho^{-1} B^{-1} {\mathfrak  R}_1 B   = 
-  \e^2 \Pi_S^\bot \partial_x  {\cal R}_2 +   {\cal R}_{*} 
\end{equation}
where $ {\cal R}_2 $ is defined in \eqref{R nullo1R2} and we have renamed $  {\cal R}_{*}  $ the  term of order $ o( \e^2 ) $ in $\mathfrak{R}_2$.

\begin{lemma} \label{lemma:stime coeff mL2}
There is $ \s = \s(\nu,\t) > 0 $ (possibly larger than $ \s $ in Lemma \ref{lemma:stime coeff mL1}) such that 
\begin{align} \label{stima m3}
| m_3 - 1 |^\Lipg    \leq  C \e^4 , \qquad  
| \pa_i m_3 [\widehat \imath ]| & \leq C 
\e^{b+2} \| \widehat \imath  \|_{s_0 + \s}  
\\
\label{stima Di b3} 
\| \a \|_s^\Lipg  \leq_s \e^4 \g^{-1} + \e^{b + 2} \gamma^{- 1}\| \fracchi_\d \|_{s + \s}^\Lipg, \qquad 
 \| \pa_i \a [\widehat \imath] \|_s
& \leq_s \e^{b + 2} \g^{-1} \big( \| \widehat \imath \|_{s + \s} +  \| {\mathfrak I}_\d \|_{s + \sigma}  \| \widehat \imath \|_{s_0 + \s}\big)\,, 
\\
\label{stima c1 c0}
 \| \rho -1 \|_s^\Lipg 
 \leq_s \e^4 + \e^{b + 2} \| \fracchi_\d \|_{s + \s}^\Lipg ,\qquad    \| \pa_i \rho[\widehat \imath ]\|_s &  \leq_s   \e^{b + 2} \big(\| \widehat 
 \imath \|_{s + \s} + \| \fracchi_\delta\|_{s + \s} \| \widehat \imath \|_{s_0 + \s} \big)      \\
\label{commu B p1 p2}
\| p_k - B^{-1} p_k \|_s^\Lipg  
& \leq_s \e^4 \g^{-1}  + \e^{b + 2} \gamma^{- 1} \| \fracchi_\d \|_{s + \s}^\Lipg , \quad k = 1, 2 
\\
\label{commu B p1 p2 der}
\| \partial_i  (p_k - B^{-1} p_k) [\widehat \imath] \|_s 
& \leq_s \e^{b + 2} \g^{-1} \big( \| \widehat \imath \|_{s + \s} +  \| {\mathfrak I}_\d \|_{s + \sigma}  \| \widehat \imath \|_{s_0 + \s}\big)\, 
\\
\label{stima z0 - B-1 z0}
\| B^{-1} q_{>2} \|_s^\Lipg  
& \leq_s  \e^3 + \e^b \| {\mathfrak I}_\delta\|_{s + \sigma}^\Lipg ,
\\
\label{stima derivata z0 - B-1 z0}
\|\partial_i (B^{-1} q_{>2}) [\widehat \imath] \|_s 
& \leq_s  \e^b \big( \| \widehat \imath \|_{s + \s} +  \| {\mathfrak I}_\d \|_{s + \sigma}  \| \widehat \imath \|_{s_0 + \s} \big) \,.
\end{align}
The terms  
$ c_{1, \geq 4},  c_{0, \geq 4} $ satisfy the bounds \eqref{stima Di b1 b0}-\eqref{stima Di b geq 4}. 
The transformations $B$, $B^{-1}$ satisfy the estimates \eqref{stima cal A bot}, \eqref{stima derivata cal A bot}.
The remainder $ {\cal R}_{*} $ has the form \eqref{forma buona con gli integrali}, and the functions $g_j(\tau)$, $\chi_j(\tau)$ satisfy 
the estimates \eqref{stima cal R*}-\eqref{derivate stima cal R*} for all $\tau \in [0,1]$.
\end{lemma}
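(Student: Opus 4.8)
The plan is to derive each estimate in Lemma \ref{lemma:stime coeff mL2} from the corresponding estimate in Lemma \ref{lemma:stime coeff mL1}, the explicit formulas \eqref{anche def rho}, \eqref{def alpha m3}, \eqref{tilde c1 Kdv}, \eqref{tilde c1 KdV}, \eqref{tilde c0 KdV}, \eqref{mathfrakR2}, together with the standard tame/interpolation calculus in Lemmata \ref{lemma:composition of functions, Moser}--\ref{lemma:utile}, the estimate \eqref{Dom inverso} for $ {\cal D}_\om^{-1} $, and the ansatz \eqref{ansatz delta}. First I would treat $ m_3 $: by \eqref{def alpha m3} it is the $ \vphi $-average of $ b_3 $, so $ |m_3 - 1|^\Lipg \leq \| b_3 - 1 \|_{s_0}^\Lipg $, and by \eqref{stima b1 b0} with \eqref{ansatz delta} the right-hand side is $ \leq C \e^4 $; likewise $ |\pa_i m_3[\widehat\imath]| \leq \| \pa_i b_3[\widehat\imath]\|_{s_0} \leq C \e^{b+2} \| \widehat\imath \|_{s_0+\s} $. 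Then $ \a $ from \eqref{def alpha m3} follows by applying $ ( \om\cdot\pa_\vphi )^{-1} $ and using \eqref{Dom inverso}, the bound $ |m_3^{-1}| \leq 2 $, and \eqref{stima b1 b0}: this produces the loss of $ \g^{-1} $ and a larger (but still of the form $ \s(\nu,\t) $) loss of derivatives, giving \eqref{stima Di b3}. Next $ \rho = B^{-1}(1 + \om\cdot\pa_\vphi\a) $ from \eqref{anche def rho}: since $ \om\cdot\pa_\vphi\a = O(\e^4) $ in $ s $-norm by \eqref{stima Di b3} (one derivative absorbed into $ \s $) and $ B^{-1} $ is a change of variables induced by the diffeomorphism $ \vphi\mapsto\vphi+\om\tilde\a(\vphi) $ with $ \| \om\tilde\a \|_{W^{1,\infty}} $ small by \eqref{stima Di b3}, Lemma \ref{lemma:utile} gives \eqref{stima c1 c0}; the derivative bound follows by differentiating $ \rho $ in $ i $ and again using Lemma \ref{lemma:utile} together with \eqref{stima Di b3}, \eqref{stima b1 b0}.

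For the commutator-type quantities $ p_k - B^{-1} p_k $, I would write $ p_k - B^{-1} p_k = p_k(\vphi,\cdot) - p_k(\vphi+\om\tilde\a(\vphi),\cdot) $ and apply the second estimate of Lemma \ref{lemma:utile} (the bound on $ \| u\circ f - u \|_s $) with $ f = \mathrm{id} + \om\tilde\a $; this yields $ \| p_k - B^{-1} p_k \|_s \leq_s \| \om\tilde\a \|_{L^\infty} \| p_k \|_{s+1} + \| \om\tilde\a \|_{W^{s,\infty}} \| p_k \|_2 $, and inserting \eqref{stima Di b3} and the bound \eqref{stime pk} for $ p_k $ gives the $ \e^4\g^{-1} $-type bound \eqref{commu B p1 p2}. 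The derivative-in-$i$ version \eqref{commu B p1 p2 der} follows in the same way after differentiating, using that $ \pa_i\tilde\a $ is controlled by $ \pa_i\a $ via Lemma \ref{lemma:utile} and $ \pa_i p_k $ by \eqref{stime pk}. The bounds \eqref{stima z0 - B-1 z0}--\eqref{stima derivata z0 - B-1 z0} for $ B^{-1} q_{>2} $ follow directly from Lemma \ref{lemma:utile} applied to $ q_{>2} $ (which is controlled by \eqref{stima q>2}), with the composition by $ B^{-1} $ costing only the mild factor from \eqref{stima Di b3}. The claim that $ c_{1,\geq4}, c_{0,\geq4} $ satisfy \eqref{stima Di b1 b0}--\eqref{stima Di b geq 4}: these are defined by difference in \eqref{tilde c1 KdV}--\eqref{tilde c0 KdV}, so one expands $ c_1 = \rho^{-1}(B^{-1}b_1) $ using \eqref{b1 Kdv}, and tracks that every term not already displayed is either $ O(\e^4) $ from $ b_{1,\geq4} $ (bound \eqref{stima Di b1 b0} for $ b_1 $), or a product of one small factor ($ \rho^{-1}-1 = O(\e^4) $, or $ B^{-1}-I $ applied to a coefficient) with a bounded one, hence again $ O(\e^4) $ up to the loss $ \s $; the Lipschitz and $ \pa_i $ bounds propagate through these products by the interpolation inequality in Lemma \ref{lemma:standard Sobolev norms properties}.

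The transformations $ B, B^{-1} $ satisfy \eqref{stima cal A bot}, \eqref{stima derivata cal A bot} directly by Lemma \ref{lemma:utile} applied to the diffeomorphisms $ \vphi\mapsto\vphi+\om\a(\vphi) $ and its inverse, with the relevant $ W^{s,\infty} $-norm of the displacement bounded by \eqref{stima Di b3}. Finally, for the structure of $ {\mathfrak R}_2 $: by \eqref{mathfrakR2} and \eqref{resto1 Kdv}, $ {\mathfrak R}_2 = -\rho^{-1} B^{-1} {\mathfrak R}_1 B $, and since $ {\mathfrak R}_1 = -\e^2\Pi_S^\bot\pa_x{\cal R}_2 + {\cal R}_* $ with $ {\cal R}_2 $ and $ {\cal R}_* $ of the form \eqref{forma buona con gli integrali} (Lemma \ref{cal R3}, Proposition \ref{prop:lin}), one uses that $ B^{\pm1} $ and multiplication by $ \rho^{-1} $ preserve this finite-dimensional form --- exactly as in the list of elementary identities in the proof of Lemma \ref{cal R3} --- so $ {\cal R}_* $ in \eqref{mathfrakR2} again has the form \eqref{forma buona con gli integrali}; its estimates \eqref{stima cal R*}--\eqref{derivate stima cal R*} are inherited because $ B^{\pm1}, \rho^{-1} $ act tamely with the mild loss $ \s $, and the explicit term $ -\e^2\Pi_S^\bot\pa_x{\cal R}_2 $ is unchanged modulo conjugation (note that $ B^{-1}\pa_x B = \pa_x $, and any discrepancy from $ \rho^{-1} $ and from $ B^{-1}({\cal R}_2) B $ being not literally $ {\cal R}_2 $ is of size $ o(\e^2) $ and absorbed into $ {\cal R}_* $). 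The main obstacle, and the only place requiring genuine care rather than bookkeeping, is keeping track of the exact power of $ \e $ and the fact that the $ \g^{-1} $ losses appear precisely where $ ( \om\cdot\pa_\vphi )^{-1} $ is applied (namely in $ \a $, hence in $ \rho-1 $, $ B^{-1}-I $, and the commutators), while the main coefficients $ c_1 + \e p_1 + \e^2 p_2 + B^{-1}q_{>2} $ etc.\ retain the same size as their $ b $-counterparts; once this is organized, each individual estimate is a one-line application of the cited lemmata.
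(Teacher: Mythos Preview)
Your proposal is correct and follows the same route as the paper, which is likewise just bookkeeping from the explicit formulas \eqref{def alpha m3}, \eqref{anche def rho} and the bounds of Lemma \ref{lemma:stime coeff mL1}. One small correction: for \eqref{stima c1 c0} you should not bound $ \om\cdot\pa_\vphi\a $ by differentiating the estimate \eqref{stima Di b3} on $ \a $ (that would cost a $ \g^{-1} $ you cannot afford), but instead use directly that $ \om\cdot\pa_\vphi\a = (b_3 - m_3)/m_3 $ from \eqref{def alpha m3}, equivalently $ \rho - 1 = B^{-1}(b_3 - m_3)/m_3 $, which gives the claimed $ O(\e^4) $ via \eqref{stima b1 b0} and \eqref{stima m3} without any $ \g^{-1} $ loss.
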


\begin{proof}
\eqref{stima m3} follows from \eqref{def alpha m3},\eqref{stima b1 b0}. 
The estimate  $ \| \a \|_s \leq_s \e^4 \g^{-1}  + \e^{b + 2} \gamma^{- 1} \| \fracchi_\d \|_{s + \s} $ and the inequality for 
$ \pa_i \alpha $ in \eqref{stima Di b3} follow 
by \eqref{def alpha m3},\eqref{stima b1 b0},\eqref{stima m3}. 
For the first bound in \eqref{stima Di b3} we also differentiate 
\eqref{def alpha m3} 
with respect to the parameter $ \om $. The estimates for $\rho$ follow from $\rho - 1 = B^{-1}(b_3 - m_3) / m_3$.
\end{proof}

\subsection{Translation of the space variable}\label{step3}

In view of the next linear Birkhoff normal form steps (whose goal is to eliminate the terms of size $ \e $ and $ \e^2 $), 
in the expressions \eqref{tilde c1 KdV}, \eqref{tilde c0 KdV} we split 
$ p_1 = {\bar p}_1 + ( p_1 - {\bar p}_1)$, $p_2 = {\bar p}_2 + ( p_2 - {\bar p}_2)$ (see \eqref{a1p1p2}), where
\begin{equation}\label{def bar pi}
{\bar p}_1 :=  6 {\bar v},  \qquad 
{\bar p}_2 :=   6 \pi_0 [ (\partial_x^{-1} {\bar v})^2  ], \qquad 
{\bar v} (\vphi, x) := {\mathop \sum}_{j \in S} \sqrt{\xi_j} e^{\ii \ell (j) \cdot \vphi } e^{\ii j x}, 
\end{equation}
and $\ell : S \to \Z^\nu$ is the odd injective map (see \eqref{tang sites})
\be\label{del ell}
\ell : S \to \Z^\nu, \quad 
\ell(\bar \jmath_i) := e_i, \quad 
\ell(-\bar \jmath_i) := - \ell(\bar \jmath_i) = - e_i, \quad 
i = 1,\ldots,\nu,
\ee
denoting by $e_i = (0,\ldots,1, \ldots,0)$  the $i$-th vector of the canonical basis of $\R^\nu$.

\begin{remark}\label{remarkp1p2}
All  the functions $ {\bar p}_1  $, $ {\bar p}_2 $,  $ p_1 - {\bar p}_1  $,  $ p_2 - {\bar p}_2 $ 
have zero average in $ x $.
\end{remark}

We write the variable coefficients $c_1, c_0$ of the operator $\mL_2$ in \eqref{L2 Kdv} 
(see \eqref{tilde c1 KdV}, \eqref{tilde c0 KdV}) as
\begin{equation}\label{tilde c1 KdVbis} 
c_1 = - \e {\bar p}_1 - \e^2 {\bar p}_2 +  q_{c_1} + c_{1, \geq 4}\, , 
\qquad 
c_0 = - \e ({\bar p}_1)_x - \e^2 ({\bar p}_2)_x + q_{c_0} + c_{0, \geq 4}\, , 
\end{equation}
where we define 
\begin{gather} \label{def qc1 qc0}
q_{c_1} := q + 4 \b_{xxx} + (a_1)_{xx} \,, \quad 
q_{c_0} := q_x + \b_{xxxx}, 
\\
\label{def q}
q := \e ( p_1 - B^{-1} p_1) + \e ( {\bar p}_1 - p_1)
+ \e^2 ( p_2 - B^{-1} p_2 )  + \e^2 ( {\bar p}_2 - p_2)
- B^{-1} q_{>2} + {\cal D}_\om \beta \,.
\end{gather}

\begin{remark}  \label{rem:qc1 qc0}
The functions $ q_{c_1}, q_{c_0} $ have zero average in $ x $ (see Remarks \ref{remarkp1p2}, \ref{media beta} and Lemma \ref{p3 zero average}). 
\end{remark}

\begin{lemma} \label{lemma:stime scorporo}
 The functions $\bar p_k - p_k$, $k=1,2$ and $q_{c_m}$, 
$m = 0,1, $  satisfy
\begin{alignat}{2} 
\label{stima bar pk - pk}
& \| {\bar p}_k - p_k \|_s^\Lipg 
\leq_s \| \fracchi_\d \|_{s}^\Lipg , 
\quad & 
& \| \partial_i ({\bar p}_k - p_k)[ \widehat \imath] \|_s 
\leq_s \| \widehat \imath \|_{s} + \| \fracchi_\d \|_{s} \| \widehat \imath \|_{s_0}\,, 
\\
\label{stima q}
& \| q_{c_m} \|_s^\Lipg 
\leq_s \e^{5}\gamma^{-1} + \e \| {\mathfrak I}_\delta\|_{s + \s}^\Lipg\,, 
\quad & 
& \| \partial_i q_{c_m}[\widehat \imath ] \|_s^\Lipg
\leq_s \e \big( \| \widehat \imath \|_{s + \s} + \|{\mathfrak I}_\delta \|_{s + \s} \| \widehat \imath \|_{s_0 + \s} \big)\,. 
\end{alignat}
\end{lemma}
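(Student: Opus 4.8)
\textbf{Proof plan for Lemma \ref{lemma:stime scorporo}.}

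The plan is to prove each of the two estimates directly from the explicit definitions \eqref{def bar pi}, \eqref{def qc1 qc0}, \eqref{def q}, using the tame bounds already established earlier in the section. First I would treat $\bar p_k - p_k$. Recall from \eqref{a1p1p2} that $p_1 = 6 v_\d$ and $p_2 = 6 \pi_0[(\partial_x^{-1} v_\d)^2]$, where $v_\d(\psi) = \sum_{j\in S}\sqrt{\xi_j + \e^{2(b-1)}|j|[y_\d(\psi)]_j}\,e^{\ii[\theta_0(\psi)]_j}e^{\ii jx}$ by \eqref{T0}, while $\bar p_1 = 6\bar v$ and $\bar p_2 = 6\pi_0[(\partial_x^{-1}\bar v)^2]$ with $\bar v = \sum_{j\in S}\sqrt{\xi_j}\,e^{\ii \ell(j)\cdot\vphi}e^{\ii jx}$. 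Thus $\bar v$ is precisely the ``leading'' part of $v_\d$ obtained by setting $y_\d = 0$ and replacing $\theta_0(\psi)$ by the linear flow $\ell(j)\cdot\vphi$ (i.e. $\Theta_0 = 0$). So $v_\d - \bar v$ depends linearly (modulo analytic, bounded functions) on $\fracchi_\d = (\Theta_0, y_\d, z_0)$ and vanishes when $\fracchi_\d = 0$; an application of the composition Lemma \ref{lemma:composition of functions, Moser} (as in the estimate \eqref{stima Aep} for $A_\e$, but without the prefactor $\e$ since here we measure the order-one functions $p_k/6$) gives $\|v_\d - \bar v\|_s^\Lipg \leq_s \|\fracchi_\d\|_s^\Lipg$. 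For $p_2 - \bar p_2$ one writes $(\partial_x^{-1}v_\d)^2 - (\partial_x^{-1}\bar v)^2 = (\partial_x^{-1}(v_\d - \bar v))(\partial_x^{-1}(v_\d+\bar v))$ and applies the tame product Lemma \ref{lemma:standard Sobolev norms properties} together with the fact that $v_\d$, $\bar v$ are $O(1)$ in $H^{s_0}$; the derivative estimate $\|\partial_i(\bar p_k - p_k)[\widehat\imath\,]\|_s$ follows by differentiating these expressions in $i$ and using $\partial_i v_\d[\widehat\imath\,] = O(\|\widehat\imath\|_s + \|\fracchi_\d\|_s\|\widehat\imath\|_{s_0})$, cf. \eqref{derivata i delta}.

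Next I would estimate $q$ in \eqref{def q}, which is a sum of six terms; I bound each separately and take the worst. The terms $\e(p_1 - B^{-1}p_1)$ and $\e^2(p_2 - B^{-1}p_2)$ are controlled by \eqref{commu B p1 p2}: each is $\leq_s \e\cdot(\e^4\g^{-1} + \e^{b+2}\g^{-1}\|\fracchi_\d\|_{s+\s}^\Lipg)$, hence of size $\e^5\g^{-1}$ in low norm and $\e^2\|\fracchi_\d\|_{s+\s}^\Lipg$ (absorbed into $\e\|\fracchi_\d\|_{s+\s}^\Lipg$) at high norm; similarly for the $\e^2$ term with an even better prefactor. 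The terms $\e(\bar p_1 - p_1)$ and $\e^2(\bar p_2 - p_2)$ are handled by the bound on $\bar p_k - p_k$ just proved, giving $\leq_s \e\|\fracchi_\d\|_s^\Lipg$, and since $\|\fracchi_\d\|_{s_0+\mu}^\Lipg \leq C\e^{6-2b}\g^{-1}$ by \eqref{ansatz delta}, in low norm this is $O(\e^{7-2b}\g^{-1}) = O(\e^5\g^{-1})$ (using $b < 3/2$), fitting the claimed size. The term $B^{-1}q_{>2}$ is bounded by \eqref{stima z0 - B-1 z0}: $\leq_s \e^3 + \e^b\|\fracchi_\d\|_{s+\s}^\Lipg$; in low norm $\e^b\|\fracchi_\d\|_{s_0+\s}^\Lipg = O(\e^{6-b}\g^{-1})$, and since $6-b \geq 5$ and $\g = \e^{2b}$ one checks $\e^3 + \e^{6-b}\g^{-1} \lesssim \e^5\g^{-1}$ as well (indeed $\e^3 = \e^5\g^{-1}\cdot\e^{2b-2}$ with $2b-2 = a > 0$), so this also fits, the $\e^b\|\fracchi_\d\|_{s+\s}$ part being absorbed in $\e\|\fracchi_\d\|_{s+\s}^\Lipg$. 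Finally ${\cal D}_\om\beta$: by \eqref{stima beta}, $\|\beta\|_s^\Lipg \leq_s \e^3(1 + \|\fracchi_\d\|_{s+1}^\Lipg)$, and applying ${\cal D}_\om$ costs one $\vphi$-derivative, so $\|{\cal D}_\om\beta\|_s^\Lipg \leq_s \e^3(1 + \|\fracchi_\d\|_{s+\s}^\Lipg)$; in low norm $\e^3 + \e^3\cdot\e^{6-2b}\g^{-1}$, and again $\e^3 \lesssim \e^5\g^{-1}$ as noted. Collecting all six bounds yields $\|q\|_s^\Lipg \leq_s \e^5\g^{-1} + \e\|\fracchi_\d\|_{s+\s}^\Lipg$, and then $q_{c_1} = q + 4\beta_{xxx} + (a_1)_{xx}$, $q_{c_0} = q_x + \beta_{xxxx}$: the extra terms $\beta_{xxx}$, $(a_1)_{xx}$, $\beta_{xxxx}$, $q_x$ cost only finitely many space derivatives (absorbed into the loss $\s$) and are of size $\e^3$ (from \eqref{stima beta} and \eqref{stima a1}), again dominated by $\e^5\g^{-1} + \e\|\fracchi_\d\|_{s+\s}^\Lipg$. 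The derivative estimate \eqref{stima q} follows in the same way by differentiating each piece in $i$ and invoking the corresponding $\partial_i$-bounds \eqref{commu B p1 p2 der}, \eqref{stima derivata z0 - B-1 z0}, \eqref{stima beta}, \eqref{stima a1}, together with the derivative estimate for $\bar p_k - p_k$.

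The only mildly delicate point is bookkeeping: one must verify that all the spurious powers of $\e$ that appear (e.g. $\e^3$, $\e^{7-2b}\g^{-1}$, $\e^{6-b}\g^{-1}$) are indeed $\lesssim \e^5\g^{-1}$. This is where the precise relations $\g = \e^{2b}$, $b = 1 + a/2$, $a\in(0,1/6)$ from \eqref{ansatz 0}–\eqref{link gamma b} are used: $\e^5\g^{-1} = \e^{5-2b} = \e^{3-a}$, while $\e^3 = \e^{3-a}\cdot\e^a$ is smaller, $\e^{7-2b}\g^{-1} = \e^{7-4b} = \e^{3-2a}$ is smaller, and $\e^{6-b}\g^{-1} = \e^{6-3b} = \e^{3-3a/2}$ is smaller. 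So the dominant contribution is genuinely $\e^5\g^{-1}$ (equivalently $\e^{3-a}$), matching the statement. No step here is a serious obstacle; the content is entirely the careful combination of the estimates already in hand, and the main ``work'' is checking these exponent inequalities.
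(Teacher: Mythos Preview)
Your approach is correct and is exactly the one the paper uses: it simply cites \eqref{def bar pi}, \eqref{a1p1p2}, \eqref{T0}, \eqref{ansatz delta} for \eqref{stima bar pk - pk}, and then \eqref{commu B p1 p2}--\eqref{stima derivata z0 - B-1 z0}, \eqref{stima beta}, \eqref{stima a1} for \eqref{stima q}, noting that the biggest term is $\e(\bar p_1 - p_1)$.

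One correction to your bookkeeping paragraph: the inequalities $\e^{7-2b}\g^{-1}\lesssim \e^5\g^{-1}$ and $\e^{6-b}\g^{-1}\lesssim \e^5\g^{-1}$ are \emph{false}. With $\g=\e^{2+a}$ and $b=1+a/2$ you get $\e^5\g^{-1}=\e^{3-a}$, $\e^{7-2b}\g^{-1}=\e^{3-2a}$, $\e^{6-b}\g^{-1}=\e^{3-3a/2}$, and since $a>0$ and $\e<1$ the last two are \emph{larger} than $\e^{3-a}$. This is harmless for the proof: those quantities come from $\e\|\fracchi_\d\|_{s}^{\Lipg}$ and $\e^b\|\fracchi_\d\|_{s+\s}^{\Lipg}\le \e\|\fracchi_\d\|_{s+\s}^{\Lipg}$, which fit directly into the second summand $\e\|\fracchi_\d\|_{s+\s}^{\Lipg}$ of the claimed bound, not into $\e^5\g^{-1}$. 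This is precisely why the paper singles out $\e(\bar p_1-p_1)$ as the dominant contribution. So drop the claim that $\e^5\g^{-1}$ is the dominant size; the rest stands.
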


\begin{proof}
The bound \eqref{stima bar pk - pk} follows from 
\eqref{def bar pi}, \eqref{a1p1p2}, \eqref{T0}, \eqref{ansatz delta}. Then use \eqref{stima bar pk - pk},
\eqref{commu B p1 p2}-\eqref{stima derivata z0 - B-1 z0},
\eqref{stima beta},
\eqref{stima a1} to prove \eqref{stima q}. The biggest term comes from $ \e ({\bar p}_1 - p_1 ) $. 
\end{proof}

We now apply the transformation  $\mathcal{T}$ defined in \eqref{gran tau} 
whose goal  is to remove the space average from the coefficient in front of $ \partial_y $. 

Consider the change of the space variable $ z = y + p(\vartheta ) $ which induces on 
$ H^s_{S^\bot} (\T^{\nu+1}) $ the operators
\begin{equation}\label{gran tau}
 ({\cal T} w)(\vartheta, y ) := w(\vartheta , y + p(\vartheta)) \, , \quad 
 ({\cal T}^{-1} h) (\vartheta ,z ) = h(\vartheta, z - p(\vartheta))
 \end{equation}
(which are a particular case of those used in section \ref{step1}). 
The differential operator becomes 
$ {\cal T}^{-1} \omega \cdot \partial_{\vartheta} {\cal T} $ 
$ =  \omega \cdot \partial_{\vartheta} 
+ \{ \omega \cdot \partial_{\vartheta}p (\vartheta) \} \partial_z $, 
$ {\cal T}^{-1} \partial_{y} {\cal T}  =  \partial_{z} $. 
Since $\mT, \mT^{-1}$ commute with $ \Pi_S^\bot $, we get
\begin{align} \label{L3 KdV}
\mL_3 & := {\cal T}^{-1}{\cal L}_2 {\cal T} = \Pi_S^\bot \big(\omega \cdot \partial_{\vartheta} 
 + m_3 \partial_{zzz} + d_1 \partial_z + d_0 \big) \Pi_S^\bot 
 + {\mathfrak R}_3 \,,
\\
\label{d1d0R3}
d_1 & := ({\cal T}^{-1} c_1)   + \omega \cdot \partial_{\vartheta} p  \, , 
\qquad 
d_0 :=  {\cal T}^{-1}  c_0   \, , 
\qquad 
{\mathfrak R}_3 :=   {\cal T}^{-1} {\mathfrak R}_2 {\cal T}.
\end{align}
We choose 
\begin{equation} \label{def m1 p Kdv}
m_1 :=  \frac{1}{(2\pi)^{\nu + 1}} \int_{\T^{\nu + 1}}   c_1 d\vartheta dy \, , \quad 
p := (\omega \cdot \partial_\vartheta)^{-1} 
\Big( m_1 -  \frac{1}{2 \pi} \int_{\T}  c_1  d y \Big) \, ,
\end{equation}
so that $\frac{1}{2 \pi} \int_{\T} d_1 (\vartheta, z) \, dz = m_1$ for all $\vartheta \in \T^\nu$.
Note that, by \eqref{tilde c1 KdVbis}, 
\begin{equation} \label{media migliore}
\int_\T c_1(\th,y) \, dy = \int_\T c_{1, \geq 4}(\th,y) \, dy\,,
\quad 
\om \cdot \pa_{\vartheta} p(\th) = m_1 - \frac{1}{2\p} \int_\T c_{1, \geq 4} (\th,y)\, dy 
\end{equation}
because $\bar p_1, \bar p_2, q_{c_1}$ have all zero space-average.  
Also note that $\mathfrak R_3$ has the form \eqref{forma buona con gli integrali}. 
Since ${\cal T}$ is symplectic, the operator ${\cal L}_3$ in \eqref{L3 KdV} is Hamiltonian.
\begin{remark}
We require  Hypothesis (${\mathtt S}1$) so that the function $ q_{>2}  $ has zero space average (see Lemma \ref{p3 zero average}). 
If  $ q_{>2} $ did not have zero average,  
then $ p $ in \eqref{def m1 p Kdv} would have size $O(\e^3 \g^{-1})$ 
(see \eqref{def p3}) and, since  $\mT^{-1} - I = O(\e^3 \g^{-1}) $, 
the function $ \tilde d_0 $ in \eqref{d0 tilde KdV} would satisfy $  \tilde d_0  =  O(\e^4  \g^{-1}) $.
Therefore it would remain a term of order $ \partial_x^0 $ which is  not 
perturbative for the reducibility scheme of section \ref{subsec:mL0 mL5}.
\end{remark}

We put in evidence the terms of size $ \e, \e^2 $ in $ d_0 $, $ d_1 $,  $ {\mathfrak R}_3 $. 
Recalling \eqref{d1d0R3}, \eqref{tilde c1 KdVbis}, we split 
\begin{equation} \label{d1 d0 KdV}
d_1  = - \e {\bar p}_1 - \e^2 {\bar p}_2 +  \tilde d_{1}  \, , \quad  
d_0  = - \e ({\bar p}_1)_x - \e^2 ({\bar p}_2)_x + \tilde d_0  \, , \quad  
{\mathfrak R}_3 =  
-  \e^2 \Pi_S^\bot \partial_x \bar {\cal R}_2 +  \widetilde \mR_{*}
\end{equation}
where $ \bar {\cal R}_2 $ is obtained replacing 
$ v_\d $ with ${\bar v}$ in $  {\cal R}_2 $  (see \eqref{R nullo1R2}), and 
\begin{align} 
\label{d1 tilde KdV}
\tilde d_1 
& := \e (\bar p_1 - {\cal T}^{-1}{\bar p}_1 ) 
+ \e^2 (\bar p_2 - {\cal T}^{-1}{\bar p}_2 ) 
+ {\cal T}^{-1} ( q_{c_1} + c_{1, \geq 4}) 
+ \omega \cdot \partial_{\vartheta} p , 
\\
\label{d0 tilde KdV}
\tilde d_0 
& := \e ( \bar p_1 - {\cal T}^{-1}{\bar p}_1 )_x 
+ \e^2 ( \bar p_2 - {\cal T}^{-1} {\bar p}_2 )_x
+ {\cal T}^{-1} ( q_{c_0} + c_{0, \geq 4} ),
\\
\label{tilde mR 3}
\widetilde \mR_{*} 
& := \mT^{-1} \mR_{*} \mT 
+ \e^2 \Pi_S^\bot \pa_x (\mR_2 - \mT^{-1} \mR_2 \mT) + \e^2  \Pi_S^\bot \pa_x  (  \bar {\cal R}_2 - {\cal R}_2 ),
\end{align}
and $ \mR_{*} $ is defined in \eqref{mathfrakR2}. 
We have also used that ${\cal T}^{-1}$ commutes with $\partial_x$ and with $\Pi_S^\bot$. 

\begin{remark}\label{d1 media}
The space average 
$ \frac{1}{2 \pi} \int_{\T} \tilde d_1 (\vartheta, z) \, dz =  \frac{1}{2 \pi} \int_{\T} 
d_1 (\vartheta, z) \, dz = m_1$ for all $\vartheta \in \T^\nu$.
\end{remark}

\begin{lemma} \label{lemma:stime coeff mL3}
 There is $ \s := \s(\nu,\t) > 0 $ (possibly larger than in Lemma \ref{lemma:stime coeff mL2}) such that 
\begin{alignat}{2} \label{stima m1}
| m_1 |^\Lipg   & \leq C \e^4 , 
\quad & | \partial_i m_1 [\widehat \imath ]|  & \leq C \e^{b+2}  \| \widehat{\imath} \|_{s_0 + \sigma} 
\\ \label{stima p} 
\| p \|_s^\Lipg   & \leq_s \e^4 \g^{-1} + \e^{b + 2} \gamma^{- 1} \| \fracchi_\d \|_{s + \s}^\Lipg\,, 
\quad & \| \partial_i p [\widehat \imath] \|_s
& \leq_s \e^{b + 2} \g^{-1} \big( \| \widehat \imath \|_{s + \s} +  \| {\mathfrak I}_\d \|_{s + \sigma}  \| \widehat \imath \|_{s_0 + \s}\big)\,, 
\\ \label{tilde d1 d0 KdV}
\| \tilde d_k  \|_s^\Lipg   & \leq_s \e^{5} \gamma^{-1} + \e \| \fracchi_\d \|_{s + \s}^\Lipg \,, 
\quad & \| \partial_i \tilde d_k [\widehat \imath] \|_s 
& \leq_s \e \big(\| \widehat \imath \|_{s + \s} +  \| {\mathfrak I}_\d \|_{s + \sigma} 
\| \widehat \imath \|_{s_0 + \s}  \big)
\end{alignat}
for $k = 0,1$. Moreover the matrix $ s $-decay norm (see \eqref{matrix decay norm}) 
\begin{align}\label{nuove R3}
|\widetilde{\cal R}_{*}|_s^{\Lipg} & \leq_s   \e^{3} +  \e^{2}\| {\mathfrak I}_\delta \|_{s + \sigma}^\Lipg  \, , \quad 
|\partial_i \widetilde{\cal R}_{*} [\widehat \imath ]|_s  \leq_s \e^{2} \| \widehat \imath \|_{s + \sigma} + \e^{2 b - 1} \| {\mathfrak I}_\delta\|_{s + \sigma} \| \widehat \imath \|_{s_0 + \sigma}  \, .
\end{align}
The transformations ${\cal T}$, ${\cal T}^{-1}$ satisfy \eqref{stima cal A bot}, \eqref{stima derivata cal A bot}.
\end{lemma}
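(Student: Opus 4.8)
The plan is to propagate the estimates already established in Lemma \ref{lemma:stime coeff mL2}, together with the tame composition and change-of-variable Lemmata \ref{lemma:composition of functions, Moser} and \ref{lemma:utile}, through the explicit definitions \eqref{def m1 p Kdv}, \eqref{d1 tilde KdV}--\eqref{tilde mR 3}, carefully tracking the powers of $\e$ and the single loss $\g^{-1}$ coming from $(\om\cdot\partial_\vartheta)^{-1}$.

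First I would estimate $m_1$ and $p$. By \eqref{media migliore}, which rests on the fact that $\bar p_1,\bar p_2,q_{c_1}$ have zero space average (Remarks \ref{remarkp1p2}, \ref{media beta}, \ref{rem:qc1 qc0} and Lemma \ref{p3 zero average}), one has $m_1 = (2\pi)^{-(\nu+1)}\int_{\T^{\nu+1}} c_{1,\geq 4}\,d\vartheta\,dy$ and $\om\cdot\partial_\vartheta p = m_1 - (2\pi)^{-1}\int_\T c_{1,\geq 4}\,dy$. Hence \eqref{stima m1} follows by evaluating the $\Lipg$-bound $\|c_{1,\geq 4}\|_s^\Lipg\leq_s \e^4 + \e^{b+2}\|\fracchi_\d\|_{s+\s}^\Lipg$ and the $\partial_i$-bound \eqref{stima Di b geq 4} (both asserted for $c_{1,\geq 4}$ in Lemma \ref{lemma:stime coeff mL2}) at $s=s_0$, using the ansatz \eqref{ansatz delta}. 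Applying $(\om\cdot\partial_\vartheta)^{-1}$ and \eqref{Dom inverso} then gives \eqref{stima p}: this is the only place the factor $\g^{-1}$ enters, and it multiplies $\e^4$ rather than $\e^3$ precisely because of the zero-average cancellation, the point underlined in the Remark after \eqref{media migliore}.

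Next, the bounds \eqref{stima cal A bot}, \eqref{stima derivata cal A bot} for $\mathcal T^{\pm1}$ follow from the change-of-variable Lemma \ref{lemma:utile} applied to the shift $p(\vartheta)$, which is legitimate since $\|p\|_{s_0}^\Lipg = O(\e^4\g^{-1})$ is small by \eqref{stima p} and \eqref{ansatz delta}. With these in hand I would bound $\tilde d_1,\tilde d_0$ term by term in \eqref{d1 tilde KdV}, \eqref{d0 tilde KdV}: the differences $\bar p_k - \mathcal T^{-1}\bar p_k$ are controlled by $\|p\|$ times $\|\bar p_k\|$ via Lemma \ref{lemma:utile} (hence $O(\e^4\g^{-1})$, multiplied by $\e$ or $\e^2$); the term $\mathcal T^{-1}(q_{c_m}+c_{m,\geq 4})$ is estimated by \eqref{stima q} and the $c_{m,\geq 4}$ bounds from Lemma \ref{lemma:stime coeff mL2}; and $\om\cdot\partial_\vartheta p = O(\e^4)$ by the previous step. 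Summing gives $\|\tilde d_k\|_s^\Lipg\leq_s \e^5\g^{-1}+\e\|\fracchi_\d\|_{s+\s}^\Lipg$, the dominant contribution coming from $\e(\bar p_1 - p_1)$ inside $q_{c_1}$ through \eqref{stima q}; the derivative estimates in \eqref{tilde d1 d0 KdV} follow by differentiating the same expressions. For \eqref{nuove R3} I would use \eqref{tilde mR 3}: the conjugate $\mathcal T^{-1}\mR_*\mathcal T$ preserves the finite-dimensional form \eqref{forma buona con gli integrali} (argue as in Lemma \ref{cal R3}), and Lemma \ref{remark : decay forma buona resto} together with \eqref{stima cal R*}--\eqref{derivate stima cal R*} and $b>1$ gives the claimed decay norm; the term $\e^2\Pi_S^\bot\partial_x(\mR_2-\mathcal T^{-1}\mR_2\mathcal T)$ is lower order thanks to an extra $\|p\| = O(\e^4\g^{-1})$ gain on top of $\e^2$ (using \eqref{stime resto 1}); and $\e^2\Pi_S^\bot\partial_x(\bar{\cal R}_2-{\cal R}_2)$ is estimated by replacing $v_\d$ with $\bar v$ in \eqref{R nullo1R2}, the difference being $O(\|\fracchi_\d\|_{s+\s})$ by \eqref{T0}, \eqref{def bar pi} and \eqref{ansatz delta}, which produces the $\e^2\|\fracchi_\d\|_{s+\s}$ term.

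The computation is essentially bookkeeping; the one genuinely delicate point, and the one that must be done correctly, is the accounting for $m_1$, $p$ and $\om\cdot\partial_\vartheta p$: the cancellation \eqref{media migliore} --- which rests ultimately on Hypothesis $({\mathtt S}1)$ through Lemma \ref{p3 zero average}, together with Remarks \ref{remarkp1p2}, \ref{media beta}, \ref{rem:qc1 qc0} --- is exactly what upgrades $p$ from the non-perturbative size $O(\e^3\g^{-1})$ to $O(\e^4\g^{-1})$ and thereby keeps $\tilde d_0$ of the perturbative size $O(\e^5\g^{-1})$ needed for the reducibility scheme of section \ref{subsec:mL0 mL5}.
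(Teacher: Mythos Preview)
Your proof is correct and follows essentially the same approach as the paper's: both derive \eqref{stima m1}--\eqref{stima p} from \eqref{media migliore} and the bounds on $c_{1,\geq 4}$ in Lemma \ref{lemma:stime coeff mL2}, obtain \eqref{tilde d1 d0 KdV} from \eqref{stima q}, \eqref{media migliore}, \eqref{stima p}, and establish \eqref{nuove R3} via Lemma \ref{remark : decay forma buona resto} together with the observation that $|\e^2\Pi_S^\bot\partial_x(\bar{\cal R}_2-{\cal R}_2)|_s^\Lipg \leq_s \e^2\|\fracchi_\delta\|_{s+\sigma}^\Lipg$. Your account is simply more explicit than the paper's terse version, and your emphasis on the role of the zero-average cancellation \eqref{media migliore} (resting on $({\mathtt S}1)$ through Lemma \ref{p3 zero average}) in keeping $p=O(\e^4\g^{-1})$ rather than $O(\e^3\g^{-1})$ is exactly the point the paper makes in the Remark following \eqref{media migliore}.
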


\begin{proof}
The estimates \eqref{stima m1}, \eqref{stima p} follow by \eqref{def m1 p Kdv},\eqref{tilde c1 KdVbis},\eqref{media migliore}, and the bounds for $ c_{1, \geq 4}, c_{0, \geq 4} $ in Lemma \ref{lemma:stime coeff mL2}. 
The estimates \eqref{tilde d1 d0 KdV} follow similarly  by 
\eqref{stima q}, 
\eqref{media migliore}, \eqref{stima p}.
The estimates \eqref{nuove R3} follow because
$ \mT^{-1} \mR_{*} \mT $  satisfies the bounds \eqref{stima cal R*} 
like $ \mR_{*} $ does (use Lemma \ref{remark : decay forma buona resto} and \eqref{stima p}) 
and 
$ |\e^2  \Pi_S^\bot \pa_x  (  \bar {\cal R}_2 - {\cal R}_2 )|_s^\Lipg \leq_s \e^2 \| {\mathfrak I}_\delta \|_{s + \sigma}^\Lipg $. 
\end{proof}
It is sufficient to estimate  $ \widetilde \mR_{*} $ (which has the form \eqref{forma buona con gli integrali})
only in the $ s $-decay norm (see \eqref{nuove R3}) because the next transformations will preserve it. Such norms are used 
in the reducibility scheme of section \ref{subsec:mL0 mL5}. 

\subsection{Linear Birkhoff normal form. Step 1} \label{BNF:step1}

Now we  eliminate the terms of order $ \e$ and $ \e^2  $ of $ {\cal L}_3 $.
This step is different from the 
reducibility steps that we shall perform in section \ref{subsec:mL0 mL5}, 
because the diophantine constant $ \g = o(\e^2 ) $ (see \eqref{omdio}) and so 
terms $ O( \e), O(\e^2) $ are not perturbative. 
This reduction is possible thanks to the special form of the terms 
$ \e {\cal B}_1$, $ \e^2 {\cal B}_2 $ defined in \eqref{def cal B1 B2}: 
the harmonics of $ \e {\cal B}_1$, and $ \e^2 T $ in \eqref{mL4 T R4}, which 
correspond to a possible small divisor
are naught, see  Corollary \ref{B1 zero KdV}, and Lemma \ref{pezzo epsilon 2 A}. 
In this section we eliminate the term  $ \e {\cal B}_1 $. In section \ref{BNF:step2}
we eliminate the terms of order $\e^2$.  

Note that, since the previous transformations $ \Phi $, $ B $, $ {\cal T} $ are 
$ O(\e^4 \g^{-1} ) $-close to the identity, the terms of order $\e $ and $ \e^2 $ in $ {\cal L}_3 $ 
are the same as in the original linearized operator.

\smallskip

We first collect all the terms of order $ \e $ and $ \e^2 $ in the operator $ {\cal L}_3 $ defined in \eqref{L3 KdV}. 
By  \eqref{d1 d0 KdV},  \eqref{R nullo1R2}, \eqref{def bar pi}
we have, renaming $ \vartheta = \vphi $, $ z = x $, 
$$
 \mL_3 = \Pi_S^\bot \big( \om  \cdot \pa_\vphi
 + m_3 \partial_{xxx}+ \e {\cal B}_1 + \e^2 {\cal B}_2 +
 {\tilde d}_1 \partial_x +  {\tilde d}_0 \big) \Pi_S^\bot 
 +  {\widetilde {\cal R}}_{*} 
$$
where $  {\tilde d}_1 $, $  {\tilde d}_0 $, $  {\widetilde {\cal R}}_{*}  $ are defined in \eqref{d1 tilde KdV}-\eqref{tilde mR 3} and 
(recall also \eqref{def pi 0})
\begin{equation} \label{def cal B1 B2}
{\cal B}_1 h := - 6  \partial_x ( {\bar v}  h), 
\quad
{\cal B}_2  h := - 6  \partial_x \{  {\bar v}  
\Pi_S [ (\partial_x^{-1} {\bar v} )\,\partial_x^{-1} h ] 
+ h \pi_0 [ ( \partial_x^{-1} {\bar v} )^2 ] \}  +
 6 \pi_0 \{ (\partial_x^{-1} {\bar v}) \Pi_S [ {\bar v} h ] \}.
\end{equation}
Note that ${\cal B}_1$ and ${\cal B}_2$ are the linear Hamiltonian vector fields of $ H_{S}^\bot $
generated, respectively, by the   Hamiltonian $ z \mapsto 3 \int_\T v z^2 $ in \eqref{H3tilde}, and 
the fourth order Birkhoff Hamiltonian $  {\cal H}_{4,2}$  in \eqref{mH3 mH4} at $ v = \bar v $.

We transform $ {\cal L}_3 $ by a symplectic operator 
$ \Phi_1 : H_{S^\bot}^s(\T^{\nu + 1}) \to H_{S^\bot}^s(\T^{\nu + 1}) $ of the form 
\begin{equation}\label{Phi_1}
\Phi_1 := {\rm exp}(\e A_1) = I_{H_S^\bot} + \e A_1 + \e^2 \frac{A_1^2}{2} +
\e^3 \widehat A_1, \quad 
\widehat A_1 := {\mathop \sum}_{k \geq 3} \frac{\e^{k - 3}}{k !} A_1^k  \, ,
\end{equation}
where $ A_1(\vphi) h = {\mathop \sum}_{j,j' \in S^c} ( A_1)_j^{j'}(\vphi) h_{j'} e^{\ii j x} $
is a Hamiltonian vector field.
The map  $ \Phi_1 $ is symplectic, because it is the time-1 flow of a Hamiltonian vector field. 
Therefore
\begin{align}\label{L3 new KdV}
& {\cal L}_3 \Phi_1 - \Phi_1 \Pi_S^\bot ( {\cal D}_\om + m_3 \partial_{xxx} ) \Pi_S^\bot \\
& = \Pi_S^\bot ( \e \{ {\cal D}_\om A_1 + m_3 [\partial_{xxx}, A_1] +  {\cal B}_1 \} 
+ \e^2 \{ {\cal B}_1 A_1 + {\cal B}_2 + \frac12 m_3 [\partial_{xxx}, A_1^2 ] + 
\frac12 ({\cal D}_\omega A_1^2)\} + {\tilde d}_1 \partial_x + R_3 ) \Pi_S^\bot
\nonumber
\end{align}
where 
\begin{align}\label{def R3}
R_3 & :=   {\tilde d}_1 \partial_x (\Phi_1 - I) \! +\! {\tilde d}_0 \Phi_1\! +\! \widetilde {\cal R}_{*} \Phi_1\! + \! \e^2 {\cal B}_2 (\Phi_1 - I) \! + \!   \e^3 \big\{ {\cal D}_\om \widehat A_1 \! + \!  m_3 [\partial_{xxx}, \widehat A_1]  \! + \! \frac12 {\cal B}_1 A_1^2 \!+ \! \e {\cal B}_1 \widehat A_1 \big\}\, . 
\end{align}

\begin{remark}
$ R_3 $ has no longer the form \eqref{forma buona con gli integrali}. 
However $ R_3 = O( \partial_x^0 ) $ because 
$ A_1 =  O(  \partial_x^{-1} ) $  (see Lemma \ref{lemma:Dx A bounded}), and therefore $\Phi_1 - I_{H_S^\bot} = O(\partial_x^{-1})$. 
Moreover the matrix decay norm of $ R_3 $ is $ o(\e^2) $. 
\end{remark}

In order to eliminate the order $\e$ from \eqref{L3 new KdV}, we choose 
\begin{equation}\label{cal A 1}
( A_1)_j^{j'}(l) := \begin{cases}
- \dfrac{( {\cal B}_1)_{j}^{j'}(l)}{\ii (\omega \cdot l + m_3( j'^3 - j^3))} & \text{if} \  \bar{\omega} \cdot l + j'^3 - j^3 \neq 0 \, ,  \\
0 & \text{otherwise},
\end{cases}
\qquad j, j' \in S^c, \ l \in \Z^\nu.
\end{equation}
This definition is well posed. 
Indeed, by \eqref{def cal B1 B2} and \eqref{def bar pi}, 
\begin{equation}\label{def cal B1 b}
( {\cal B}_1)_{j}^{j'}(l):=
\begin{cases}
-6 \ii j \sqrt{\xi_{j - j'}}   & \text{if} \ j - j' \in S\,, \  \  l = \ell(j - j') \\
0 & \text{otherwise}.
\end{cases}
\end{equation}
In particular $ ( {\cal B}_1)_{j}^{j'}(l) = 0 $ unless $ | l | \leq 1 $. Thus, for 
$ \bar{\omega} \cdot l + j'^3 - j^3 \neq 0 $, 
the denominators in \eqref{cal A 1} satisfy 
\begin{align}
|\omega \cdot l +m_3( j'^3 - j^3)| 
& = | m_3 (\bar \omega \cdot l + j'^3 - j^3) + ( \om - m_3 \bar \om ) \cdot l | 
\nonumber \\ & 
\geq  |m_3| | \bar \omega \cdot l + j'^3 - j^3 |
- | \om - m_3 \bar \om | |l| \geq  1/2  \, , \quad \forall |l| \leq  1 \, ,  
\label{BNFdeno}
\end{align}
for $ \e $ small, because the non zero integer 
$ | \bar{\omega} \cdot l + j'^3 - j^3| \geq 1 $, \eqref{stima m3}, and $ \om = \bar \om + O(\e^2) $. 

$ A_1$ defined in \eqref{cal A 1} is a Hamiltonian vector field as $\mB_1$. 

\begin{remark} 
\label{rem:Ham solving homolog}
This is a general fact: the denominators $ \d_{l,j,k} := \ii (\omega \cdot l + m_3( k^3 - j^3)) $ 
satisfy $ \overline{ \d_{l,j,k} } = \d_{-l,k,j} $ and 
an operator $G(\ph)$ is self-adjoint if and only if its matrix elements satisfy 
$ \overline{ G_j^k(l) } = G_k^j(-l) $, see \cite{BBM}-Remark 4.5. In a more intrinsic way, 
we could solve 
the  homological equation of this Birkhoff step directly for the Hamiltonian function whose flow generates $ \Phi_1 $.
\end{remark}

\begin{lemma} \label{lem:cubetto}  If $j,j' \in S^c$, $j - j' \in S$, $l =  \ell(j - j')$, then  
$ \bar{\omega} \cdot l + j'^3 - j^3  = 3 j j' (j' - j) \neq 0 $.
\end{lemma}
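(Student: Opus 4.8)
The claim is a short computation combining the structure of $\bar v$ with Lemma \ref{lemma:interi} (the elementary identity for $j_1^3+\cdots+j_4^3$ when the indices sum to zero). First I would unwind the hypothesis: by the definition \eqref{del ell} of the map $\ell$, the relation $l = \ell(j-j')$ with $j-j' \in S$ means precisely that $l$ encodes the tangential site $j-j'$, and by \eqref{bar omega}--\eqref{del ell} one has $\bar\omega \cdot \ell(k) = k^3$ for every $k \in S$ (indeed $\ell$ sends $\bar\jmath_i \mapsto e_i$ and $\bar\omega = (\bar\jmath_1^3,\dots,\bar\jmath_\nu^3)$, and both sides are odd in $k$). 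Hence $\bar\omega \cdot l = (j-j')^3$, and the quantity to evaluate becomes
\[
\bar\omega \cdot l + j'^3 - j^3 = (j-j')^3 + j'^3 - j^3.
\]

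Next I would apply the algebraic identity. Set $j_1 := j-j'$, $j_2 := j'$, $j_3 := -j$, so that $j_1 + j_2 + j_3 = 0$. Then Lemma \ref{lemma:interi} (with a fourth index equal to $0$, or equivalently the three-variable identity \eqref{prodottino}) gives $j_1^3 + j_2^3 + j_3^3 = 3 j_1 j_2 j_3 = 3(j-j')\,j'\,(-j) = 3 j j' (j'-j)$. This is exactly the claimed value. Alternatively one can expand $(j-j')^3 + j'^3 - j^3$ directly and factor; either route is a one-line verification.

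Finally I would check non-vanishing. We have $j, j' \in S^c$, so $j \neq 0$ and $j' \neq 0$; and $j - j' \in S$ with $0 \notin S$ forces $j \neq j'$, i.e. $j' - j \neq 0$. Therefore all three factors $j$, $j'$, $j'-j$ are non-zero integers, so $3jj'(j'-j) \neq 0$, completing the proof. There is no real obstacle here — the only point requiring a little care is the identity $\bar\omega \cdot \ell(k) = k^3$ for $k \in S$, which should be stated explicitly (it follows from \eqref{bar omega} and \eqref{del ell} together with the oddness of $\ell$), since it is used repeatedly and also underlies the small-divisor analysis around \eqref{BNFdeno}.
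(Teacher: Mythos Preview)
Your proposal is correct and follows essentially the same route as the paper: establish $\bar\omega \cdot \ell(j-j') = (j-j')^3$ from \eqref{bar omega} and \eqref{del ell}, reduce to the algebraic identity $(j-j')^3 + j'^3 - j^3 = 3jj'(j'-j)$ (which the paper leaves implicit, while you invoke \eqref{prodottino}), and conclude non-vanishing from $j,j' \in S^c$ and $j-j' \in S$. The only difference is that you spell out the factorization step more explicitly.
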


\begin{proof}
We have $ \bar \om \cdot l =  \bar \om \cdot \ell( j - j' ) = (j-j')^3$ because $ j - j' \in S$ (see \eqref{bar omega} and \eqref{del ell}).
Note that $ j, j' \neq 0 $ because $ j, j' \in S^c $, and $ j - j' \neq 0 $ because $ j - j' \in S $.
\end{proof}

\begin{corollary}\label{B1 zero KdV}
Let $j, j' \in S^c$. 
If $\bar{\omega} \cdot l + j'^3 - j^3 = 0$ then $( {\cal B}_1)_{j}^{j'}(l) = 0$. 
\end{corollary}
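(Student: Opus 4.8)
The statement is an immediate consequence of the explicit formula \eqref{def cal B1 b} for the matrix entries of $\mathcal{B}_1$ together with Lemma \ref{lem:cubetto}, so the plan is essentially to unwind the definitions and argue by contradiction. I would argue as follows. Suppose, towards a contradiction, that $j,j'\in S^c$ and $\bar\omega\cdot l+j'^3-j^3=0$ but $(\mathcal{B}_1)_j^{j'}(l)\neq 0$. By \eqref{def cal B1 b}, the matrix entry $(\mathcal{B}_1)_j^{j'}(l)$ is nonzero only in the single case $j-j'\in S$ and $l=\ell(j-j')$; hence both of these conditions must hold.

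Now apply Lemma \ref{lem:cubetto} with exactly these $j,j'\in S^c$ and $l=\ell(j-j')$: since $j-j'\in S$, the lemma gives
\[
\bar\omega\cdot l+j'^3-j^3 = 3\,jj'(j'-j)\neq 0,
\]
which directly contradicts the assumption $\bar\omega\cdot l+j'^3-j^3=0$. Therefore $(\mathcal{B}_1)_j^{j'}(l)=0$, as claimed.

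There is really no obstacle here: the only inputs are the closed form \eqref{def cal B1 b} (which localizes the support of $\mathcal{B}_1$ to $j-j'\in S$, $|l|\le 1$) and the arithmetic identity $\bar\omega\cdot\ell(j-j') = (j-j')^3$ coming from \eqref{bar omega}--\eqref{del ell}, used in Lemma \ref{lem:cubetto}; the factorization $(j-j')^3+j'^3-j^3 = 3jj'(j'-j)$ is the elementary relation \eqref{prodottino} applied to the triple $(j-j',\,j',\,-j)$ whose sum is zero, and the non-vanishing follows since $j,j'\in S^c$ forces $j,j'\neq 0$ while $j-j'\in S$ forces $j-j'\neq 0$. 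The whole argument is a two-line contrapositive, so I would simply present it as above without further elaboration.
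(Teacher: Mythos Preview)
Your proof is correct and follows exactly the same approach as the paper: argue by contrapositive, use \eqref{def cal B1 b} to conclude $j-j'\in S$ and $l=\ell(j-j')$, then invoke Lemma \ref{lem:cubetto} to obtain the contradiction. The paper's proof is just a terser two-line version of what you wrote.
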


\begin{proof}
If $({\cal B}_1)_{j}^{j'}(l) \neq 0$ then $j - j' \! \in \! S, l =  \ell(j - j')$ 
by \eqref{def cal B1 b}. 
Hence $\bar{\omega} \cdot l + j'^3 - j^3 \! \neq 0$ by Lemma \ref{lem:cubetto}.
\end{proof}

By \eqref{cal A 1} and the previous corollary, the term of order $\e$ in \eqref{L3 new KdV} is
\be\label{primo termine BNF1}
\Pi_S^\bot \big( {\cal D}_\om A_1 + m_3 [\partial_{xxx}, A_1]  +  {\cal B}_1 \big) \Pi_S^\bot = 0 \, . 
\ee
We now estimate the transformation $ A_1 $.

\begin{lemma}\label{lem: A decay} 
$(i)$ For all $l \in \Z^\nu$, $j,j' \in S^c$, 
\begin{equation}\label{Acoef}
| (A_1)_{j}^{j'}(l)| \leq C (| j | +  | j'  |)^{-1}\, , \quad 
| (A_1)_{j}^{j'}(l)|^{\rm lip} \leq \e^{-2} (|j| + |j'|)^{-1} \,.
\end{equation}
$(ii)$ $ (A_1)_j^{j'}(l) = 0$ for all $l \in \Z^\nu$, $j,j' \in S^c$ such that $|j - j'| >  C_S $, where $C_S := \max\{ |j| : j \in S \}$. 
\end{lemma}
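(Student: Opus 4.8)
\textbf{Proof plan for Lemma \ref{lem: A decay}.}

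The plan is to read off both assertions directly from the explicit formula \eqref{cal A 1} for $(A_1)_j^{j'}(l)$ together with the explicit formula \eqref{def cal B1 b} for the matrix entries of $\mathcal{B}_1$. The whole point is that $A_1$ has the \emph{same} sparsity structure as $\mathcal{B}_1$ (the homological equation only rescales nonzero entries by a small divisor that is bounded below), so the decay and support properties of $A_1$ are inherited from those of $\mathcal{B}_1$.

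For part $(ii)$, I would argue as follows. By \eqref{cal A 1}, $(A_1)_j^{j'}(l)\neq 0$ forces $(\mathcal{B}_1)_j^{j'}(l)\neq 0$; by \eqref{def cal B1 b} this in turn forces $j-j'\in S$ and $l=\ell(j-j')$. Since $S\subseteq[-C_S,C_S]$ by definition of $C_S:=\max\{|j|:j\in S\}$, we get $|j-j'|\le C_S$. Hence if $|j-j'|>C_S$ then $(A_1)_j^{j'}(l)=0$ for every $l$, which is exactly the claim.

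For part $(i)$, I would first note that the previous reasoning also shows that the only nonzero entries occur for $j-j'\in S$, $l=\ell(j-j')$, in which case $(\mathcal{B}_1)_j^{j'}(l)=-6\ii j\sqrt{\xi_{j-j'}}$ with $\xi_{j-j'}\in[1,2]$; in particular $|(\mathcal{B}_1)_j^{j'}(l)|\le 6|j|$, and since $|j'|=|j-(j-j')|\le |j|+C_S$ we also have $|j|\ge (|j|+|j'|)/2 - C_S/2$, so up to the fixed constant $C_S$ we may bound $|(\mathcal{B}_1)_j^{j'}(l)|$ by a constant times $|j|+|j'|$ — actually I only need the \emph{upper} bound $|(\mathcal B_1)_j^{j'}(l)|\le 6|j|\le 6(|j|+|j'|)$, but the key improvement comes from the denominator. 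For these entries $\bar\omega\cdot l + j'^3 - j^3 = 3jj'(j'-j)$ by Lemma \ref{lem:cubetto}, which is a nonzero integer, so the ``otherwise'' branch never applies and the denominator in \eqref{cal A 1} is the quantity estimated in \eqref{BNFdeno}: for $|l|\le 1$ one has $|\omega\cdot l + m_3(j'^3-j^3)|\ge 1/2$ for $\e$ small, using $|m_3-1|\le C\e^4$ from \eqref{stima m3} and $\omega=\bar\omega+O(\e^2)$. Combining, $|(A_1)_j^{j'}(l)|\le 12\,|j|\cdot\big(\min|3jj'(j'-j)|\big)^{-1}$; since $j-j'\in S$ is bounded and nonzero, $|j'-j|\ge 1$ is bounded above and below, so $|3jj'(j'-j)|\ge c\,|j||j'|$, giving $|(A_1)_j^{j'}(l)|\le C|j'|^{-1}\le C(|j|+|j'|)^{-1}$ after using $|j|\le|j'|+C_S$ once more to symmetrize. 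For the Lipschitz bound in $\omega$, I differentiate \eqref{cal A 1}: the numerator $(\mathcal B_1)_j^{j'}(l)$ does not depend on $\omega$ (only on $\xi$, which is frozen here since we work on the normal subspace with the $\xi$-$\omega$ link implicit), so the $\omega$-dependence enters only through $m_3=m_3(\omega)$ and the factor $\omega\cdot l$ in the denominator; using $|m_3|^{\mathrm{lip}}\le C\e^2$ (from \eqref{stima m3}, dividing by $\gamma=\e^{2+a}$) and $|l|\le 1$, together with the lower bound $1/2$ on the denominator, yields $|(A_1)_j^{j'}(l)|^{\mathrm{lip}}\le C|j|(|j||j'|)^{-1}\le \e^{-2}(|j|+|j'|)^{-1}$ for $\e$ small. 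I expect the only mildly delicate point to be bookkeeping the various $C_S$-dependent constants so as to land exactly on the symmetric bound $(|j|+|j'|)^{-1}$ rather than $|j'|^{-1}$; this is routine since $|j|-|j'|$ is bounded on the support. No genuine obstacle is anticipated: everything is a direct consequence of the closed-form expressions already established.
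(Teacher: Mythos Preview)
Your argument for part $(ii)$ and for the sup estimate in part $(i)$ is essentially the paper's, with one cosmetic difference: you bound the denominator via the explicit factorization $\bar\omega\cdot l + j'^3 - j^3 = 3jj'(j'-j)$ from Lemma~\ref{lem:cubetto}, whereas the paper uses the elementary inequality $|j'^3-j^3|\ge\tfrac12(j^2+j'^2)$ for $j\neq j'$. Both yield the quadratic lower bound $|\omega\cdot l + m_3(j'^3-j^3)|\ge c(|j|+|j'|)^2$. One small presentational point: you cite \eqref{BNFdeno} for the bound $\ge 1/2$, but then immediately divide by $|3jj'(j'-j)|$; you should say explicitly that rerunning the \eqref{BNFdeno} argument with the integer $\bar\omega\cdot l+j'^3-j^3$ kept at its full size (not just $\ge 1$) gives $|\omega\cdot l+m_3(j'^3-j^3)|\ge \tfrac12|3jj'(j'-j)|$ for $\e$ small.

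There is, however, a genuine error in your Lipschitz argument. You assert that the numerator $(\mathcal B_1)_j^{j'}(l)=-6\ii j\sqrt{\xi_{j-j'}}$ is $\omega$-independent, but $\xi$ is tied to $\omega$ by \eqref{linkxiomega}, $\xi=\e^{-2}\mathbb A^{-1}(\omega-\bar\omega)$, so $|\xi|^{\rm lip}\le C\e^{-2}$ and hence $|(\mathcal B_1)_j^{j'}(l)|^{\rm lip}\le C|j|\e^{-2}$. This contribution, divided by the quadratic lower bound on the denominator, is precisely what produces the $\e^{-2}(|j|+|j'|)^{-1}$ in \eqref{Acoef}; it is the dominant term. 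Your intermediate claim $|(A_1)_j^{j'}(l)|^{\rm lip}\le C(|j||j'|)^{-1}$ (without the $\e^{-2}$) is therefore too strong and unjustified; the step where you then weaken it to $\e^{-2}(|j|+|j'|)^{-1}$ ``for $\e$ small'' happens to land on the correct bound, but for the wrong reason. Once you include the $\xi(\omega)$-dependence of the numerator (and use $|j-j'|\le C_S$ to control $|j'^3-j^3|\le C(|j|+|j'|)^2$ in the $m_3$-part of the denominator's variation), the Lipschitz estimate goes through cleanly.
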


\begin{proof}
$(i)$ We already noted that $ (A_1)_j^{j'}(l) = 0$, $ \forall |l| > 1$. 
Since  $ | \om | \leq |\bar \om | + 1 $, one has, for $ |l| \leq 1 $, $ j \neq j' $,
\[
|\om \cdot l + m_3 (j'^3 - j^3)| 
\geq |m_3||j'^3 - j^3| - |\om \cdot l| 
\geq \frac14 (j'^2 + j^2) - |\om| 
\geq \frac18  (j'^2 + j^2) \, ,  \quad \forall (j'^2 + j^2) \geq C, 
\]
for some constant $C > 0$. 
Moreover, recalling that also \eqref{BNFdeno} holds, we deduce that for $  j \neq j' $, 
\begin{equation} \label{lower bound}
 (A_1)_j^{j'}(l) \neq 0 
\quad \Rightarrow \quad
|\om \cdot l + m_3 (j'^3 - j^3)| \geq c ( | j | +  | j' | )^2 \, . 
\end{equation}
On the other hand, if $ j = j' $, $ j \in S^c$,  the matrix $ (A_1)_j^j (l) = 0 $, $ \forall l \in \Z^\nu$,  
because $ ({\cal B}_1)_j^j (l) = 0 $ by \eqref{def cal B1 b} 
(recall that $ 0 \notin S $).  Hence \eqref{lower bound} holds for all $ j, j'  $.
By  \eqref{cal A 1}, \eqref{lower bound},  \eqref{def cal B1 b}  we deduce the first 
bound in \eqref{Acoef}. 
The Lipschitz bound follows similarly (use also $ |j - j'| \leq C_S $).  
$(ii)$ follows by \eqref{cal A 1}-\eqref{def cal B1 b}. 
\end{proof}

The previous lemma  means that $ A = O(| \partial_x|^{-1})$. 
More precisely we deduce that

\begin{lemma} \label{lemma:Dx A bounded}
$ | A_1 \pa_x  |_s^\Lipg + | \pa_x A_1 |_s^\Lipg \leq C(s) $.
\end{lemma}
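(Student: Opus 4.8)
The plan is to exploit the structure of $A_1$ established in Lemma \ref{lem: A decay}: its matrix entries decay like $(|j|+|j'|)^{-1}$ and vanish unless $|j-j'|\le C_S$, so that $A_1$ behaves like the composition of a smoothing of order $-1$ with a band-limited (in the difference $j-j'$) operator. The quantities to estimate are the $s$-decay norms of $A_1\partial_x$ and $\partial_x A_1$, whose entries are, respectively, $\ii j'(A_1)_j^{j'}(l)$ and $\ii j (A_1)_j^{j'}(l)$. First I would note that on the support of $(A_1)_j^{j'}(l)$ one has $|l|\le 1$ and $|j-j'|\le C_S$, hence $|j|$ and $|j'|$ are comparable, $\tfrac12|j|\le |j'|\le 2|j|$ for $|j|$ large, so $|j'|\,(|j|+|j'|)^{-1}$ and $|j|\,(|j|+|j'|)^{-1}$ are both $O(1)$. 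Therefore the entries of $A_1\partial_x$ and $\partial_x A_1$ are bounded uniformly in $l,j,j'$ by a constant, and likewise their Lipschitz seminorms in $\omega$ are bounded by $C\e^{-2}$ (from the second bound in \eqref{Acoef}).

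Next I would translate these pointwise bounds into the decay norm \eqref{decayTop}. For a matrix $B$ of T\"oplitz-in-time type with $B_j^{j'}(l)=0$ unless $|l|\le 1$ and $|j-j'|\le C_S$, the decay norm is
\[
|B|_s^2=\sum_{k\in\Z,\ l\in\Z^\nu}\Big(\sup_{j-j'=k}|B_j^{j'}(l)|\Big)^2\langle l,k\rangle^{2s},
\]
and since the sum over $(l,k)$ is restricted to the finite set $|l|\le 1$, $|k|\le C_S$, with each supremum bounded by a constant, we get $|B|_s\le C(s)$, where $C(s)$ absorbs the finitely many factors $\langle l,k\rangle^{2s}\le \langle 1,C_S\rangle^{2s}$. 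Applying this with $B=A_1\partial_x$ and $B=\partial_x A_1$ gives $|A_1\partial_x|_s\le C(s)$, $|\partial_x A_1|_s\le C(s)$. For the Lipschitz part one repeats the computation with $|B_j^{j'}(l)|^{\rm lip}$ in place of $|B_j^{j'}(l)|$, using the Lipschitz bound in \eqref{Acoef}; the extra factor $\gamma=\e^{2+a}$ in the definition of $|\cdot|_s^\Lipg$ meets the $\e^{-2}$ from \eqref{Acoef}, leaving $\gamma\,\e^{-2}=\e^a\le 1$, so $|A_1\partial_x|_s^\Lipg+|\partial_x A_1|_s^\Lipg\le C(s)$ as claimed.

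There is essentially no serious obstacle here; the lemma is a bookkeeping consequence of Lemma \ref{lem: A decay}. The one point requiring a little care is the comparability $|j|\sim|j'|$ on the support of $A_1$: for small $|j|$ (i.e. $|j|\le 2C_S$, say) one cannot argue by ratios, but there are only finitely many such pairs $(j,j')$ and for them $(|j|+|j'|)^{-1}\le 1$ and the extra derivative $|j'|$ or $|j|$ is bounded by $2C_S$, so those entries are bounded by the constant $2C_S$ anyway; for $|j|>2C_S$ the ratio argument applies. Thus in all cases the entry of $A_1\partial_x$ or $\partial_x A_1$ is $O(1)$, and the decay norm estimate follows. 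I would also remark that the same argument shows $A_1$ itself has finite decay norm, $|A_1|_s^\Lipg\le C(s)$, which is what makes $\Phi_1=\exp(\e A_1)$ in \eqref{Phi_1} a well-defined bounded (indeed decay-norm bounded) operator, a fact used implicitly in the estimates of $R_3$.
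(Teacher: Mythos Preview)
Your proposal is correct and follows essentially the same approach as the paper: both use the support restriction $|l|\le 1$, $|j-j'|\le C_S$ from Lemma \ref{lem: A decay} together with the entry bound $|(A_1)_j^{j'}(l)|\le C(|j|+|j'|)^{-1}$ to reduce the decay norm \eqref{decayTop} to a finite sum with uniformly bounded supremum, and treat the Lipschitz part analogously. Your write-up is somewhat more detailed (the comparability $|j|\sim|j'|$ and the treatment of small $|j|$ are made explicit), but the argument is the same.
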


\begin{proof}
Recalling the definition of the (space-time) matrix norm in \eqref{decayTop},  
since $(A_1)_{j_1}^{j_2}(l) = 0$ outside the set of indices $|l| \leq 1, |j_1 - j_2| \leq C_S$, 
we have
\begin{align*}
| \pa_x A_1 |_s^2 
& = \sum_{|l| \leq 1, \, |j| \leq C_S} 
\Big( \sup_{j_1 - j_2 = j} |j_1| | (A_1)_{j_1}^{j_2}(l)| \Big)^2 
\la l,j \ra^{2s}  
\leq C(s) 
\end{align*}
by Lemma \ref{lem: A decay}. The estimates for $ |A_1 \pa_x |_s $ and the Lipschitz bounds 
follow similarly. 
\end{proof}

It follows that the symplectic map $ \Phi_1 $ in \eqref{Phi_1} is invertible for $ \e $ small, 
with inverse
\begin{equation}\label{A1 check}
 \Phi_1^{-1} = {\rm exp}(-\e A_1) =  I_{H_S^\bot} + \e {\check A}_1 \, , \ 
  {\check A}_1 := {\mathop \sum}_{n \geq 1} \frac{\e^{n-1}}{n !} (-A_1)^n \, , \ 
| {\check A}_1 \pa_x  |_s^\Lipg + | \pa_x {\check A}_1 |_s^\Lipg \leq C(s) \, . 
\end{equation}
Since $ A_1 $ solves the homological equation \eqref{primo termine BNF1}, the $ \e  $-term in
\eqref{L3 new KdV} is zero, and, 
with a straightforward calculation, 
the $ \e^2 $-term simplifies to $ {\cal B}_2  + \frac12 [{\cal B}_1, A_1] $. 
We obtain the Hamiltonian operator
\begin{align} \label{bernardino1}
{\cal L}_4 &  :=    \Phi_1^{-1} {\cal L}_3 \Phi_1 
= \Pi_S^\bot ( {\cal D}_\om + m_3 \partial_{xxx} +  {\tilde d}_1 \pa_x 
+ \e^2  \{  {\cal B}_2  + \tfrac12 [{\cal B}_1, A_1]  \}  +     \tilde{R}_4 ) \Pi_S^\bot 
\\
\label{bernardino 2}
{\tilde R}_4 & := (\Phi_1^{-1} - I) \Pi_S^\bot [ \e^2  ( {\cal B}_2 + \tfrac12 [{\cal B}_1, A_1] ) + {\tilde d}_1 \partial_x ]  +  \Phi_1^{-1} \Pi_S^\bot R_3 \, .
\end{align}
We split $ A_1 $ defined in  \eqref{cal A 1}, \eqref{def cal B1 b} into $ A_1 = \bar{A}_1 + \widetilde{A}_1$ where, for all $ j, j' \in S^c $, $l \in \Z^\nu$, 
\begin{equation}\label{bar A1} 
( {\bar A}_1)_j^{j'}(l) := 
\dfrac{6 j \sqrt{\xi_{j - j'}}}{ \bar\omega \cdot l + j'^3 - j^3} 
\qquad \text{if } \  \bar{\omega} \cdot l + j'^3 - j^3 \neq 0, \ \  j-j' \in S , \ \   
l = \ell(j - j'), 
\end{equation}
and $( {\bar A}_1)_j^{j'}(l) := 0$ otherwise. 
By Lemma \ref{lem:cubetto}, for all $ j , j' \in S^c $, $l \in \Z^\nu$,  
$ ( {\bar A}_1)_j^{j'}(l) = 
\frac{2  \sqrt{\xi_{j - j'}}}{ j' ( j' - j )} 
$ if $  j-j' \in S $, $  l = \ell(j - j') $, and    
$ ( {\bar A}_1)_j^{j'}(l) = 0 $  otherwise, 
namely (recall the definition of $ \bar v $ in \eqref{def bar pi}) 
\begin{equation}\label{bar A1 esplicita}
{\bar A}_1h = 2 \Pi_S^\bot [( \partial_x^{-1} {\bar v}  )(\partial_x^{-1} h)]   \, , 
\quad \forall h \in H_{S^\bot}^s(\T^{\nu+1}) \,.
\end{equation}
The difference is
\begin{equation}\label{widetilde A1}
(\widetilde{A}_1)_j^{j'}(l) = (A_1 - \bar{A}_1)_j^{j'}(l) = - 
\frac{6 j \sqrt{\xi_{j - j'}}\big\{ ( \om - \bar \om) \cdot l + (m_3 - 1)(j'^3 - j^3) \big\}}{\big( \omega \cdot l + m_3(j'^3 - j^3) \big) \big(\bar\omega \cdot l + j'^3 - j^3 \big)} 
\end{equation}
for $ j, j' \in S^c $, $  j - j' \in S $,  $ l = \ell(j - j') $, 
and $ (\widetilde{A}_1)_j^{j'}(l) = 0 $ otherwise. 
Then, by \eqref{bernardino1}, 
\begin{equation}\label{mL4 T R4}
{\cal L}_4 = \Pi_S^\bot \big( {\cal D}_\om + m_3 \partial_{xxx} 
+  {\tilde d}_1 \pa_x 
+ \e^2 T +  R_4 \big) \Pi_S^\bot \,, 
\end{equation}
where
\be\label{T hamiltoniano}
T := {\cal B}_2 +  \frac12 [{\cal B}_1, {\bar A}_1]   \,, \qquad R_4 := \frac{\e^2}{2} [{\cal B}_1, \widetilde A_1] + \tilde R_4\,.
\ee
The operator $ T $ is Hamiltonian as $ {\cal B}_2 $,  $ {\cal B}_1 $, $ {\bar A}_1 $ (the commutator of two Hamiltonian vector fields is Hamiltonian). 

\begin{lemma} \label{lemma:R4}
 There is $ \s = \s(\nu,\t) > 0 $ (possibly larger than in Lemma \ref{lemma:stime coeff mL3}) such that 
\begin{align} \label{stima Lip R4} 
| R_4 |_s^{{\rm Lip}(\gamma)} 
& \leq_s \e^{5} \gamma^{-1} + \e \| {\mathfrak I}_\d \|_{s + \s}^{{\rm Lip}(\gamma)}  \,, \quad 
| \partial_i R_4 [\widehat \imath] |_s 
\leq_s \e \big( \| \widehat \imath \|_{s + \s} + \| {\mathfrak I}_\d \|_{s + \s} 
 \| \widehat \imath \|_{ s_0 + \s} \big)  \, . 
 \end{align}
\end{lemma}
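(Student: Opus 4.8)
The plan is to estimate $R_4$ by going through its two summands in \eqref{T hamiltoniano}, namely $\frac{\e^2}{2}[{\cal B}_1, \widetilde A_1]$ and $\tilde R_4$, and for $\tilde R_4$ (defined in \eqref{bernardino 2}) to unravel the chain of contributions coming from $R_3$ in \eqref{def R3}. First I would collect the basic ingredients already available: Lemma \ref{lemma:Dx A bounded} and \eqref{A1 check} give $|A_1 \pa_x|_s^\Lipg + |\pa_x A_1|_s^\Lipg \leq C(s)$ and the same for $\check A_1$, hence $|\Phi_1^{\pm 1} - I|_s^\Lipg \leq_s \e$ in a form that loses one $x$-derivative; the algebra and interpolation inequalities \eqref{algebra Lip}, \eqref{interpm Lip} for the $s$-decay norm; the estimate \eqref{nuove R3} for $\widetilde{\cal R}_{*}$; the bounds \eqref{tilde d1 d0 KdV} for $\tilde d_1$; the explicit forms \eqref{def cal B1 B2} of ${\cal B}_1, {\cal B}_2$, which show $|{\cal B}_1|_s^\Lipg, |{\cal B}_2|_s^\Lipg \leq_s 1 + \|{\mathfrak I}_\d\|_{s+\s}^\Lipg$ (they are Fourier-supported where $|l|\leq 1$, $|j-j'|$ bounded, with coefficients $\sim\sqrt{\xi_{j-j'}}$, hence finite decay norm); and the formula \eqref{widetilde A1} for $\widetilde A_1$.

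The key step is the estimate of $\frac{\e^2}{2}[{\cal B}_1, \widetilde A_1]$. The point is that $\widetilde A_1 = A_1 - \bar A_1$ gains a factor $(\om - \bar\om)\cdot l + (m_3 - 1)(j'^3 - j^3)$ in the numerator of \eqref{widetilde A1}; using $\om - \bar\om = O(\e^2)$, $m_3 - 1 = O(\e^4)$ (from \eqref{stima m3}), and the lower bound $|\om\cdot l + m_3(j'^3-j^3)| \geq c(|j|+|j'|)^2$ from \eqref{lower bound}, together with $|\bar\om\cdot l + j'^3 - j^3| = |3jj'(j'-j)|$ from Lemma \ref{lem:cubetto}, one finds that $\widetilde A_1$ is $O(\e^2)$ as an operator of order $|\pa_x|^{-2}$ (roughly: the numerator $|j|\cdot\e^2(|j|+|j'|)$ over the denominator $\sim (|j|+|j'|)^2\cdot|jj'(j-j')|$, with $|j-j'|\leq C_S$), so $|\widetilde A_1 \pa_x|_s^\Lipg \leq_s \e^2$. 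Since ${\cal B}_1 = O(\pa_x)$, the commutator $[{\cal B}_1, \widetilde A_1]$ is then $O(\pa_x^0)$ with $|[{\cal B}_1, \widetilde A_1]|_s^\Lipg \leq_s \e^2(1 + \|{\mathfrak I}_\d\|_{s+\s}^\Lipg)$, whence $\frac{\e^2}{2}|[{\cal B}_1,\widetilde A_1]|_s^\Lipg \leq_s \e^4 + \e^2\|{\mathfrak I}_\d\|_{s+\s}^\Lipg$, which is absorbed in the claimed bound \eqref{stima Lip R4}.

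For $\tilde R_4$ I would bound term by term in \eqref{bernardino 2} and \eqref{def R3}. The term $(\Phi_1^{-1}-I)\Pi_S^\bot[\e^2({\cal B}_2 + \tfrac12[{\cal B}_1,A_1]) + \tilde d_1\pa_x]$: each bracketed factor is $O(\pa_x)$ (for $\tilde d_1 \pa_x$) or $O(\pa_x^0)$, $\Phi_1^{-1}-I = \e\check A_1$ with $\check A_1 = O(|\pa_x|^{-1})$, so the product has bounded decay norm with a prefactor $\e\cdot\e^2 = \e^3$ from the ${\cal B}$-terms and $\e\cdot(\e^5\g^{-1} + \e\|{\mathfrak I}_\d\|)$ from $\tilde d_1\pa_x$, using \eqref{tilde d1 d0 KdV} and Lemma \ref{lemma:Dx A bounded}; all these are $\leq_s \e^5\g^{-1} + \e\|{\mathfrak I}_\d\|_{s+\s}^\Lipg$. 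For $\Phi_1^{-1}\Pi_S^\bot R_3$ with $R_3$ from \eqref{def R3}: the summand $\tilde d_1\pa_x(\Phi_1 - I)$ is again $\tilde d_1$ times a bounded operator, size $\e^5\g^{-1} + \e\|{\mathfrak I}_\d\|$; $\tilde d_0\Phi_1$ uses \eqref{tilde d1 d0 KdV} for $\tilde d_0$, giving the same bound; $\widetilde{\cal R}_{*}\Phi_1$ uses \eqref{nuove R3}, giving $\e^3 + \e^2\|{\mathfrak I}_\d\|_{s+\s}^\Lipg$, again within the stated bound; $\e^2{\cal B}_2(\Phi_1 - I) = \e^2{\cal B}_2\cdot\e\check A_1 = O(\e^3)$ in bounded decay norm; and the last group $\e^3\{{\cal D}_\om\widehat A_1 + m_3[\pa_{xxx},\widehat A_1] + \tfrac12{\cal B}_1 A_1^2 + \e{\cal B}_1\widehat A_1\}$ is $O(\e^3)$ once one checks $\widehat A_1$ and its commutator with $\pa_{xxx}$ and with ${\cal D}_\om$ have bounded decay norm — which follows from Lemma \ref{lemma:Dx A bounded} and the fact that $\widehat A_1$ is a convergent series in $A_1 = O(|\pa_x|^{-1})$ with the homological-equation structure controlling ${\cal D}_\om A_1$ and $[\pa_{xxx},A_1]$ (both equal, up to $-{\cal B}_1$ and $m_3^{-1}$, to bounded operators by \eqref{primo termine BNF1}). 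Throughout, the $\pa_i$-derivative estimates are obtained by the same manipulations, differentiating the explicit formulae and using the already-established $\pa_i$-bounds for $\tilde d_1$, $\tilde d_0$, $\widetilde{\cal R}_{*}$, $m_3$, $\bar v$, and the product rule with \eqref{interpm Lip}. The main obstacle I anticipate is the bookkeeping in the last group of $R_3$: one must verify that the "residual" operators $\widehat A_1 = \sum_{k\geq 3}\frac{\e^{k-3}}{k!}A_1^k$ and the commutators ${\cal D}_\om\widehat A_1$, $[\pa_{xxx},\widehat A_1]$ genuinely have finite $s$-decay norm uniformly in the small parameter, which requires a Neumann-type argument exploiting that each single factor $A_1\pa_x$ is bounded and that the homological equation controls the potentially unbounded commutators, rather than naive term-by-term differentiation which would lose too many derivatives.
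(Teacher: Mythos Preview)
Your proposal is correct and follows the paper's approach: estimate $\e^2[{\cal B}_1,\widetilde A_1]$ via \eqref{widetilde A1}, $|\om-\bar\om|=O(\e^2)$ and \eqref{stima m3} to get $|\widetilde A_1\pa_x|_s^\Lipg+|\pa_x\widetilde A_1|_s^\Lipg\leq_s\e^2$ (the paper makes the boundedness of the commutator transparent by writing it as $({\cal B}_1\pa_x^{-1})(\pa_x\widetilde A_1)-(\widetilde A_1\pa_x)(\pa_x^{-1}{\cal B}_1)$), then bounds $\tilde R_4$ term by term citing \eqref{bernardino 2}, \eqref{A1 check}, \eqref{def R3}, \eqref{tilde d1 d0 KdV}, \eqref{nuove R3} and \eqref{interpm Lip}. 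Your anticipated obstacle---controlling ${\cal D}_\om\widehat A_1+m_3[\pa_{xxx},\widehat A_1]$ through the homological identity \eqref{primo termine BNF1}---is exactly the mechanism the paper implicitly relies on.
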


\begin{proof} 
We first estimate $ [{\cal B}_1, \widetilde A_1] = 
({\cal B}_1 \partial_x^{-1}) (\pa_x {\widetilde A}_1)- ({\widetilde A}_1 \partial_x)( \partial_x^{-1} {\cal B}_1) $.
By  \eqref{widetilde A1}, $ | \om - \bar \om | \leq C \e^2 $ 
(as $ \om \in \Omega_\e $ in \eqref{Omega epsilon}) and \eqref{stima m3}, 
arguing as in Lemmata \ref{lem: A decay}, \ref{lemma:Dx A bounded}, we deduce that 
$ | {\widetilde A}_1 \pa_x  |_s^\Lipg +  $ $ | \pa_x {\widetilde A}_1 |_s^\Lipg \leq_s \e^2 $.
By  \eqref{def cal B1 B2} 
the norm $ |{\cal B}_1  \partial_x^{-1}|_s^\Lipg + |\partial_x^{-1} {\cal B}_1|^\Lipg \leq  C(s) $.  
Hence $ \e^2 |[{\cal B}_1,  {\widetilde A}_1]|_s^\Lipg \leq_s \e^4 $. 
Finally \eqref{T hamiltoniano}, 
\eqref{bernardino 2}, \eqref{A1 check}, \eqref{def R3}, \eqref{tilde d1 d0 KdV}, \eqref{nuove R3}, 
 and the interpolation estimate \eqref{interpm Lip} imply \eqref{stima Lip R4}.
\end{proof}

\subsection{Linear Birkhoff normal form. Step 2}\label{BNF:step2}

The goal of this section is to remove the term $ \e^2 T$ 
from the operator $\mL_4$ defined in   \eqref{mL4 T R4}.
We conjugate the Hamiltonian operator $\mL_4$  via a symplectic map
\be\label{def Phi2}
\Phi_2 := {\rm exp}(\e^2 A_2) = I_{H_S^\bot} + \e^2 A_2 + \e^4 \widehat A_2\,,\quad 
\widehat A_2 := {\mathop \sum}_{k \geq 2} \frac{\e^{2(k - 2)}}{k !} A_2^k
\ee
where $A_2(\ph) = {\mathop \sum}_{j,j' \in S^c} ( A_2)_j^{j'}(\vphi) h_{j'} e^{\ii j x} $ 
is a Hamiltonian vector field. We compute 
\begin{gather} \label{L4 diff}
{\cal L}_4 \Phi_2 - \Phi_2 \Pi_S^\bot \big( {\cal D}_\om + m_3 \partial_{xxx} \big) \Pi_S^\bot 
= \, \Pi_S^\bot ( \e^2 \{ {\cal D}_\om  A_2 + m_3 [\partial_{xxx}, A_2 ] + T \}  
+ \tilde{d}_1 \pa_x + \tilde R_5 ) \Pi_S^\bot \,,
\\
\label{def tilde R5}
\tilde R_5  := \Pi_S^\bot \{ 
\e^4 ( ({\cal D}_\omega \widehat A_2) + m_3 [\pa_{xxx}, \widehat A_2] )
+ (\tilde{d}_1 \partial_x  + \e^2 T)(\Phi_2 - I) + R_4 \Phi_2 \} \Pi_S^\bot \, . 
\end{gather}
We define
\begin{equation}\label{A2}
( A_2)_j^{j'}(l) := 
- \dfrac{T_j^{j'}(l)}{ \ii (\omega \cdot l + m_3 (j'^3 - j^3) )} 
\quad \text{if } \  \bar\omega \cdot l + j'^3 - j^3 \neq 0; 
\qquad 
( A_2)_j^{j'}(l) := 0 \quad \text{otherwise.}
\end{equation}
This definition is well posed. 
Indeed, by \eqref{T hamiltoniano}, \eqref{def cal B1 b}, \eqref{bar A1}, \eqref{def cal B1 B2}, 
the matrix entries $ T_j^{j'} (l) = 0 $ for all $ | j - j' | > 2 C_S $, $l \in \Z^\nu $, 
where $C_S := \max \{ |j| \, , j \in S \} $. 
Also $ T_j^{j'} (l) = 0 $ for all $ j, j'  \in S^c $, $ | l | > 2 $ (see also 
\eqref{forma funzionale B1 A1}, \eqref{B1B2 Fourier}, \eqref{B3 Fourier} below).
Thus, arguing as in \eqref{BNFdeno}, if $ \bar\omega \cdot l + j'^3 - j^3 \neq 0 $, then 
$ |\omega \cdot l + m_3 (j'^3 - j^3)| \geq 1 / 2 $.
The operator $ A_2 $ is a Hamiltonian vector field because $ T $ is Hamiltonian and by Remark \ref{rem:Ham solving homolog}.

Now we prove that the Birkhoff map 
$\Phi_2$  removes completely the term $\e^2 T$. 

\begin{lemma} \label{pezzo epsilon 2 A}
 Let $j, j' \in S^c$. If $\bar \omega \cdot l + j'^3 - j^3 = 0$, then $T_j^{j'}(l) = 0$.
\end{lemma}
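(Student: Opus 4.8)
The plan is to prove Lemma \ref{pezzo epsilon 2 A} by arguing that whenever $\bar\omega \cdot l + j'^3 - j^3 = 0$ for $j, j' \in S^c$, the corresponding matrix entry $T_j^{j'}(l)$ vanishes, where $T = {\cal B}_2 + \tfrac12 [{\cal B}_1, \bar A_1]$ (see \eqref{T hamiltoniano}). The strategy mirrors Corollary \ref{B1 zero KdV}: write out each of the two pieces of $T$ explicitly in Fourier coordinates, identify the finite set of triples $(l, j, j')$ on which the entry can be nonzero, and check that on each such triple the "small divisor" expression $\bar\omega \cdot l + j'^3 - j^3$ cannot actually vanish — it equals a product of nonzero integers, exactly as in Lemma \ref{lem:cubetto}.

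First I would compute the Fourier representation of $\bar A_1$ from \eqref{bar A1 esplicita}: since $\bar A_1 h = 2 \Pi_S^\bot[(\partial_x^{-1}\bar v)(\partial_x^{-1}h)]$ with $\bar v = \sum_{k \in S}\sqrt{\xi_k}\,e^{\ii\ell(k)\cdot\vphi}e^{\ii k x}$, its nonzero entries $(\bar A_1)_j^{j'}(l)$ occur only when $j - j' \in S$ and $l = \ell(j-j')$, by the definition \eqref{del ell}. Similarly ${\cal B}_1 h = -6\partial_x(\bar v h)$ has entries $({\cal B}_1)_j^{j'}(l) \neq 0$ only when $j - j' \in S$, $l = \ell(j-j')$ (this is \eqref{def cal B1 b}), and ${\cal B}_2$ from \eqref{def cal B1 B2} has entries supported on $|l| \leq 2$ with the "frequency label" $l$ always of the form $\ell(k_1) \pm \ell(k_2)$ for $k_1, k_2 \in S$ (each factor $\partial_x^{-1}\bar v$ or $\bar v$ contributes one $\ell(k)$, and momentum conservation forces the remaining index shifts). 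Composing, the commutator $[{\cal B}_1, \bar A_1]$ has entries $([{\cal B}_1,\bar A_1])_j^{j'}(l) \neq 0$ only for $l = \ell(k_1) + \ell(k_2)$ with $k_1, k_2 \in S$ and $j - j' = k_1 + k_2$ (summing the two index shifts through the intermediate index). So in all cases a nonzero entry $T_j^{j'}(l)$ forces $l = \ell(k_1) + \ell(k_2)$ and $j - j' = k_1 + k_2$ for some $k_1, k_2 \in S$ (with the convention that for ${\cal B}_2$'s "Hamiltonian part" $6\pi_0\{(\partial_x^{-1}\bar v)\Pi_S[\bar v h]\}$ one has $l = \ell(k_1) - \ell(k_2)$ and correspondingly $j - j' = k_1 - k_2$; these are handled symmetrically).

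Next, on such a triple I would compute $\bar\omega \cdot l + j'^3 - j^3$. Since $\bar\omega\cdot\ell(k) = k^3$ for $k \in S$ (by \eqref{bar omega}, \eqref{del ell}) and $\ell$ is additive on $S$-combinations that stay "formal", we get $\bar\omega\cdot l = k_1^3 + k_2^3$ when $l = \ell(k_1)+\ell(k_2)$. Hence with $j = j' + k_1 + k_2$, setting $n := k_1 + k_2$, the divisor becomes $k_1^3 + k_2^3 + j'^3 - (j'+n)^3 = k_1^3 + k_2^3 - (k_1+k_2)^3 + 3 j' (j'+n)(-n)\cdot(\text{sign bookkeeping})$; more cleanly, apply Lemma \ref{lemma:interi} to the four integers $k_1, k_2, j', -j$ which sum to zero: $k_1^3 + k_2^3 + j'^3 - j^3 = -3(k_1+k_2)(k_1+j')(k_2+j')$. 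Now each of the three factors is a nonzero integer: $k_1 + k_2 = j - j' \neq 0$ because $j, j' \in S^c$ and $j-j'$ is a nonzero element of $S$ (or more generally $T_j^{j'}(l) = 0$ when $j = j'$, since all the building blocks ${\cal B}_1, {\cal B}_2, \bar A_1$ kill the diagonal as $0 \notin S$ — this needs a one-line check); and $k_1 + j' \neq 0$, $k_2 + j' \neq 0$ because otherwise $j' = -k_i \in S$, contradicting $j' \in S^c$. Therefore $\bar\omega \cdot l + j'^3 - j^3 \neq 0$ whenever $T_j^{j'}(l) \neq 0$, which is the contrapositive of the claim. The same argument with the sign-reversed label handles the $\ell(k_1) - \ell(k_2)$ case via the four integers $k_1, -k_2, j', -j$.

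The main obstacle is purely bookkeeping: one must track carefully, through the composition defining $[{\cal B}_1, \bar A_1]$ and through the several summands of ${\cal B}_2$ in \eqref{def cal B1 B2} (which include both "$\partial_x$ of a product" pieces and the "$\pi_0\{\ldots\Pi_S[\ldots]\}$" piece), exactly which pairs $(k_1, k_2) \in S \times S$ and which signs $(\pm\ell(k_1), \pm\ell(k_2))$ can produce a given nonzero entry, and verify that in every case the index identity $j - j' = \pm k_1 \pm k_2$ and the frequency identity $l = \pm\ell(k_1)\pm\ell(k_2)$ are linked so that Lemma \ref{lemma:interi} applies with four integers summing to zero, two of which lie in $S$ and two of which ($\pm j, \pm j'$) lie in $S^c$. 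Once that dictionary is set up, the non-vanishing of the divisor is immediate from $S \cap S^c = \emptyset$ and $0 \notin S$. No hypothesis beyond these structural facts about $S$ is needed (in particular $({\mathtt S}1)$, $({\mathtt S}2)$ are not used here; they are used elsewhere).
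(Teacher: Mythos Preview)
Your off-diagonal argument is essentially correct and matches the paper: when $j \neq j'$ and $T_j^{j'}(l) \neq 0$, one has $l = \ell(k_1) + \ell(k_2)$, $j - j' = k_1 + k_2$ for some $k_1, k_2 \in S$, and Lemma~\ref{lemma:interi} gives $\bar\omega\cdot l + j'^3 - j^3 = -3(k_1+k_2)(k_1+j')(k_2+j') \neq 0$ since $k_i + j' = 0$ would put $j' \in S$. But the parenthetical claim that $T_j^j(l)$ vanishes ``since all the building blocks ${\cal B}_1, {\cal B}_2, \bar A_1$ kill the diagonal as $0 \notin S$'' is wrong, and this is exactly where the real content of the lemma lies. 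It is true that ${\cal B}_1$ and $\bar A_1$ individually have no diagonal entries (their index shift is a single element of $S$, and $0\notin S$); but their \emph{commutator} does, because the composition ${\cal B}_1 \bar A_1$ shifts by $k_1 + k_2$ with $k_2 = -k_1 \in S$ allowed. Likewise ${\cal B}_2$ is quadratic in $\bar v$ and has genuine diagonal entries: e.g.\ the piece $-6\partial_x\{\bar v\,\Pi_S[(\partial_x^{-1}\bar v)(\partial_x^{-1}h)]\}$ contributes at $j=j'$, $l=0$ whenever $k_1 = -k_2 \in S$ and $k_2 + j \in S$.

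What actually happens at $(l,j,j') = (0,j,j)$ is a nontrivial cancellation. The paper computes each diagonal piece explicitly: $\tfrac12([{\cal B}_1,\bar A_1])_j^j(0) = 12\ii\sum_{k\in S,\, k+j\in S^c}\xi_k/k$, while the two surviving pieces of ${\cal B}_2$ give $(B_1)_j^j(0) + (B_3)_j^j(0) = 12\ii\sum_{k\in S,\, k+j\in S}\xi_k/k$. Their sum is $12\ii\sum_{k\in S}\xi_k/k$, which vanishes only because $S$ is symmetric and $\xi_{-k}=\xi_k$, so $k\mapsto \xi_k/k$ is odd. This cancellation is the algebraic shadow of the complete integrability of the fourth-order KdV Birkhoff normal form; it is not a ``one-line check'' and cannot be read off from $0\notin S$ alone. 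Your sketch needs to carry out this diagonal computation.
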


\begin{proof}
By  \eqref{def cal B1 B2}, \eqref{bar A1 esplicita} we get
$ {\cal B}_1 {\bar A}_1  h 
= - 12 \partial_x \{ \bar v \Pi_S^\bot[( \partial_x^{-1} \bar v )(\partial_x^{-1} h)] \}
$, ${\bar A}_1{\cal B}_1 h = $ $ - 12 \Pi_S^\bot [ (\partial_x^{-1} \bar v) \Pi_S^\bot (\bar v h) ] $ for all $h \in H_{S^\bot}^s $, 
whence, recalling \eqref{def bar pi}, for all $ j, j' \in S^c $, $ l \in \Z^\nu $, 
\begin{equation}\label{forma funzionale B1 A1}
( [{\cal B}_1,  {\bar A}_1])_{j}^{j'}(l) = 12 \ii  \!\!\! \sum_{\begin{subarray}{c}
j_1, j_2 \in S, \, j_1 + j_2 = j - j' \\
j' + j_2 \in S^c, \, \ell(j_1) + \ell(j_2) = l 
\end{subarray}}   \!\!\!  \frac{ j j_1 -j' j_2 }{j' j_1 j_2} \sqrt{\xi_{j_1} \xi_{j_2}}\,, 
\end{equation}
If $([{\cal B}_1,  {\bar A}_1])_j^{j'}(l) \neq 0 $
there are $ j_1, j_2 \in S $ such that $ j_1 + j_2 = j - j' $, $j' + j_2 \in S^c$, $ \ell(j_1) + \ell(j_2) = l  $.
Then 
\begin{equation} \label{con lo zero}
\bar \omega \cdot l + j'^3 - j^3 
 = \bar \omega \cdot \ell(j_1 ) + \bar \om \cdot \ell(j_2) + j'^3 - j^3 
  \stackrel{\eqref{del ell}} 
 = j_1^3 + j_2^3 + j'^3 - j^3\, .
\end{equation}
Thus, if $\bar \omega \cdot l + j'^3 - j^3 = 0$, 
Lemma \ref{lemma:interi} implies $ (j_1 + j_2 )( j_1 + j') ( j_2 + j' ) = 0 $. Now $ j_1 + j' $, $  j_2 + j'  \neq 0 $
because $ j_1, j_2 \in S $, $ j' \in S^c $ and $ S $ is symmetric. 
Hence $ j_1 + j_2 = 0 $, which implies $ j = j' $ and $ l = 0 $ (the map $ \ell $ in \eqref{del ell} is odd).
In conclusion, if  $\bar \omega \cdot l + j'^3 - j^3 = 0$, the only nonzero matrix entry 
$ ([{\cal B}_1, {\bar A}_1])_j^{j'}(l) $   is 
\begin{equation}\label{parte diagonale zero}
([{\cal B}_1, \bar A_1])_j^j(0) 
\stackrel{\eqref{forma funzionale B1 A1}} = 24 \ii \sum_{j_2 \in S, \, j_2 + j \in S^c } 
\xi_{j_2} {j_2^{-1}}.
\end{equation}
Now we consider $ {\cal B}_2 $ in \eqref{def cal B1 B2}. Split $ {\cal B}_2 =  B_1 + B_2 + B_3 $, 
where 
$B_1 h := - 6 \partial_x \{ {\bar v} \Pi_S [ (\partial_x^{-1} {\bar v}) \partial_{x}^{-1} h ] \}$,  
$B_2 h := - 6 \partial_x \{ h \pi_0 [(\partial_x^{-1} {\bar v} )^2] \}$,  
$B_3 h := 6 \pi_0 \{ \Pi_S ({\bar v} h) \partial_x^{-1} {\bar v} \}$.
Their Fourier matrix representation is
\begin{gather}\label{B1B2 Fourier}
(B_1)_j^{j'}(l) =  6 \ii j \!\!\!\!\!\!\!\!\!\!\!\!\! \sum_{\begin{subarray}{c}
j_1, j_2 \in S, \, j_1 + j' \in S \\
j_1 + j_2 = j - j', \, \ell(j_1) + \ell(j_2) = l 
\end{subarray}} \!\!\!\! \!\!\!\!\!\!\! \frac{\sqrt{\xi_{j_1} \xi_{j_2}}}{ j_1 j'} \, ,  
\qquad 
(B_2)_{j}^{j'}(l) = 6 \ii j  \!\!\!\!\!\!\!\!\!\!\!\!\! \sum_{\begin{subarray}{c}
j_1 , j_2 \in S, \, j_1 + j_2 \neq 0 \\
j_1 + j_2 = j - j', \, \ell(j_1) + \ell(j_2) = l
\end{subarray}}   \!\!\!\!\!\!\!\!\!\!\! \frac{\sqrt{\xi_{j_1} \xi_{j_2}}}{j_1 j_2} \,, 
\\
\label{B3 Fourier}
(B_3)_j^{j'}(l) =  6 \!\!\!\!\!\!\!\!\!\!\!\!\! \sum_{\begin{subarray}{c}
j_1 , j_2 \in S,  \, j_1 + j' \in S \\
j_1 + j_2 = j - j', \,  \ell(j_1) + \ell(j_2) = l
\end{subarray}} \!\!\!\!\!\!\!\!\!\!\!\!\! \frac{\sqrt{\xi_{j_1} \xi_{j_2}}}{\ii j_2}\,,
\qquad j,j' \in S^c, \ l \in \Z^\nu.
\end{gather}
We study the terms $ B_1 $, $ B_2 $, $ B_3$ separately.
If $(B_1)_j^{j'}(l) \neq 0$, there are $j_1 , j_2 \in S$ such that 
$ j_1 + j_2 = j - j' $, $j_1 + j' \in S$, $ l = \ell(j_1) + \ell(j_2) $ and
\eqref{con lo zero} holds. Thus, if $\bar \omega \cdot l + j'^3 - j^3 = 0$,  
Lemma \ref{lemma:interi} implies 
$ (j_1 + j_2) (j_1 + j')(j_2 + j') = 0 $, 
and, since $j' \in S^c $ and $ S $ is symmetric, the only possibility is $j_1 + j_2 = 0$. Hence $j = j'$, $l = 0$.
In conclusion, if  $\bar \omega \cdot l + j'^3 - j^3 = 0$, the only nonzero matrix element 
$ ( B_1 )_j^{j'}(l) $   is 
\begin{equation} \label{diag R0}
(B_1)_j^j(0) = 6 \ii \sum_{ j_1 \in S, \, j_1 + j \in S } \xi_{j_1} j_1^{-1} \,.
\end{equation}
By the same arguments, if  $ (B_2)_j^{j'} (l) \neq 0 $ and $\bar \omega \cdot l + j'^3 - j^3 = 0$ we find $ (j_1 + j_2) (j_1 + j')(j_2 + j') = 0 $, 
which is impossible because also $ j_1 + j_2 \neq 0$.
Finally, arguing as for $ B_1 $, if  $\bar \omega \cdot l + j'^3 - j^3 = 0$, then the only nonzero matrix element $ ( B_3 )_j^{j'}(l) $ is
\begin{equation} \label{diag R1 R2}
(B_3)_j^j(0) = 6 \ii \sum_{j_1 \in S, \, j_1 + j \in S } \xi_{j_1}  j_1^{-1} \,.
\end{equation}
From \eqref{parte diagonale zero}, \eqref{diag R0}, \eqref{diag R1 R2} we deduce that, if 
$ \bar \omega \cdot l + j'^3 - j^3 = 0 $, then the only non zero elements  $ (\frac12 [\mB_1, \bar A_1] + B_1 + B_3)_j^{j'} (l) $ must be for $ (l, j, j') = (0, j, j) $. 
In this case, we get
\begin{equation}\label{Tjj0}
\frac12( [\mB_1, \bar A_1])_j^j(0) + (B_1)_j^j(0) + (B_3)_j^j(0) 
= 12 \ii \sum_{\begin{subarray}{c} j_1 \in S \\ j_1 + j \in S^c \end{subarray}} \frac{\xi_{j_1}}{ j_1}
+ 12 \ii \sum_{\begin{subarray}{c} j_1 \in S \\ j_1 + j \in S \end{subarray}} \frac{\xi_{j_1}}{ j_1}
= 12 \ii \sum_{j_1 \in S} \frac{\xi_{j_1}}{ j_1} 
= 0
\end{equation}
because 
the case $j_1 + j = 0$ is impossible ($j_1 \in S$, $j' \in S^c$ and $S$ is symmetric), 
and the function $S \ni j_1 \to \xi_{j_1} / j_1 \in \R$ is odd. 
The lemma follows by \eqref{T hamiltoniano}, \eqref{Tjj0}.
\end{proof}

The choice of $ A_2 $ in \eqref{A2} and Lemma \ref{pezzo epsilon 2 A} imply that 
\begin{equation}\label{pezzo zero}
 \Pi_S^\bot \big( {\cal D}_\om A_2  + m_3 [\partial_{xxx}, A_2] + T \big) \Pi_S^\bot = 0 \, .
\end{equation}

\begin{lemma}\label{A2 decay}
$ |\partial_x A_2|_s^\Lipg + |  A_2 \partial_x|_s^\Lipg \leq C(s) $. 
\end{lemma}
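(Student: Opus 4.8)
The plan is to show that $A_2$ is a regularizing operator of order $-1$, $A_2 = O(|\partial_x|^{-1})$, mimicking the proof of Lemmata \ref{lem: A decay} and \ref{lemma:Dx A bounded} for $A_1$. Three structural facts about the numerator $T$ and the denominator in \eqref{A2} will be used, all essentially contained in (the proof of) Lemma \ref{pezzo epsilon 2 A}. First, by the Fourier representations \eqref{forma funzionale B1 A1}, \eqref{B1B2 Fourier}, \eqref{B3 Fourier} — whose sums run over $j_1, j_2 \in S$ with $l = \ell(j_1) + \ell(j_2)$ — one reads off that $T_j^{j'}(l) = 0$ unless $|l| \leq 2$ and $|j - j'| \leq 2 C_S$. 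Second, by the same formulas $|T_j^{j'}(l)| \leq C(|j| + |j'|)$: the explicit factor $6\ii j$ in $B_1, B_2$ is linear in $j$, while $B_3$ and, for $j \neq j'$ (equivalently $1 \leq |j-j'| \leq 2C_S$, so $|j| \sim |j'|$ for large indices), the coefficient $(j j_1 - j' j_2)/(j' j_1 j_2)$ in $[\mathcal{B}_1, \bar A_1]$ are $O(1)$ on the support. Third, $A_2$ has vanishing diagonal: $(A_2)_j^j(l) = 0$ for all $l$, since one checks as in the proof of Lemma \ref{pezzo epsilon 2 A} that $T_j^j(l)$ can be nonzero only for $l = 0$ (the relevant sums force $j_1 + j_2 = 0$, hence $l = \ell(j_1) + \ell(j_2) = 0$ by oddness of $\ell$), and for $l = 0$ the ``otherwise'' branch of \eqref{A2} is selected because $\bar\om \cdot 0 + j^3 - j^3 = 0$.

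Next I would bound the denominator from below on the support of $A_2$. For $j \neq j'$ and $|l| \leq 2$, using $|\om| \leq |\bar\om| + 1$ and $|m_3| \geq 1/2$ (by \eqref{stima m3}),
\[
|\om \cdot l + m_3 (j'^3 - j^3)| \geq \tfrac12 |j'^3 - j^3| - C \geq \tfrac18 (j'^2 + j^2)
\]
whenever $j'^2 + j^2$ is large enough, because $|j'^3 - j^3| = |j - j'|\,|j^2 + j j' + j'^2| \geq \tfrac12 (j^2 + j'^2)$ for $j \neq j'$. For the remaining finitely many pairs $(j,j')$ with $j \neq j'$ in the support, the bound $|\om \cdot l + m_3 (j'^3 - j^3)| \geq 1/2$ of \eqref{BNFdeno} applies. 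Combining the two cases, and using that $A_2$ has zero diagonal,
\[
(A_2)_j^{j'}(l) \neq 0 \ \Longrightarrow \ |\om \cdot l + m_3 (j'^3 - j^3)| \geq c\,(|j| + |j'|)^2 .
\]
Together with the numerator estimate this gives $|(A_2)_j^{j'}(l)| \leq C(|j| + |j'|)^{-1}$, hence $|j|\,|(A_2)_j^{j'}(l)| + |j'|\,|(A_2)_j^{j'}(l)| \leq C$ on the support (finite in $l$ and in $j - j'$).

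Finally, recalling the definition \eqref{decayTop} of the space-time decay norm and that $(A_2)_{j_1}^{j_2}(l) = 0$ outside $|l| \leq 2$, $|j_1 - j_2| \leq 2 C_S$,
\[
|\partial_x A_2|_s^2 = \sum_{|l| \leq 2,\ |j| \leq 2 C_S} \Big( \sup_{j_1 - j_2 = j} |j_1|\, |(A_2)_{j_1}^{j_2}(l)| \Big)^2 \langle l, j \rangle^{2s} \leq C(s),
\]
and identically for $|A_2 \partial_x|_s$, using $|j_2|\,|(A_2)_{j_1}^{j_2}(l)| \leq C$. The Lipschitz part $\gamma\,|\,\cdot\,|_s^{\mathrm{lip}}$ is treated exactly as in the Lipschitz estimate of Lemma \ref{lem: A decay}: differentiating \eqref{A2} in $\om$ (recall $\xi = \xi(\om)$ as in \eqref{linkxiomega}, and $|m_3|^{\mathrm{lip}}$ is controlled by \eqref{stima m3}) produces at most an extra $\gamma^{-1}$, which is absorbed by the $\gamma$ in the $\Lipg$-norm, so $\gamma|\partial_x A_2|_s^{\mathrm{lip}} + \gamma|A_2 \partial_x|_s^{\mathrm{lip}} \leq C(s)$. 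The only genuinely delicate point is the entrywise size bound $|T_j^{j'}(l)| \leq C(|j| + |j'|)$ and the vanishing of the resonant diagonal of $A_2$; once the Fourier formulas \eqref{forma funzionale B1 A1}--\eqref{B3 Fourier} and the computation in the proof of Lemma \ref{pezzo epsilon 2 A} are at hand, the rest is the same finite bookkeeping as for $A_1$.
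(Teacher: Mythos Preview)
Your proof is correct and follows essentially the same approach as the paper: establish the vanishing of the diagonal $(A_2)_j^j(l)$, then for the off-diagonal entries combine the finite support $|l|\leq 2$, $|j-j'|\leq 2C_S$ with the quadratic lower bound on the denominator, exactly as in Lemmata \ref{lem: A decay} and \ref{lemma:Dx A bounded}. The only minor variation is in the diagonal argument: the paper observes that $T_j^j(l)=0$ for all $l$ (using $T_j^j(0)=0$ from Lemma \ref{pezzo epsilon 2 A}), whereas you note directly that the ``otherwise'' branch of \eqref{A2} fires at $l=0$; both are valid and equivalent for the purpose of this lemma.
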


\begin{proof}
First we prove that the diagonal elements $ T_j^j (l) = 0 $ for all $ l \in \Z^\nu $. 
For $l = 0$, we have already proved that $T_j^j(0) = 0$ (apply Lemma \ref{pezzo epsilon 2 A} with $j = j'$, $l=0$).
Moreover, in each term $[\mB_1, \bar A_1]$, $B_1$, $B_2$, $B_3$ (see \eqref{forma funzionale B1 A1}, \eqref{B1B2 Fourier}, \eqref{B3 Fourier}) 
the sum is over $j_1 + j_2 = j - j'$, $l = \ell(j_1) + \ell(j_2)$. 
If $j = j'$, then $j_1 + j_2 = 0$, and $l = 0$. 
Thus $ T_j^j (l) =  T_j^j (0)  = 0 $. 
For the off-diagonal terms $ j \neq j' $ we argue as in Lemmata \ref{lem: A decay},  \ref{lemma:Dx A bounded},
using that all the denominators $ |\om \cdot l + m_3 ( j'^3 - j^3 ) | \geq c ( |j| + |j'|)^2 $.  
\end{proof}

For $ \e $ small, the map $ \Phi_2 $ in \eqref{def Phi2} is invertible and $ \Phi_2 = \exp(-\e^2 A_2 ) $. 
Therefore \eqref{L4 diff}, \eqref{pezzo zero} imply 
\begin{align} \label{L5 KdV}
{\cal L}_5 
& := \Phi_2^{-1} {\cal L}_4 \Phi_2 
= \Pi_S^\bot ( {\cal D}_\om + m_3 \partial_{xxx} +  \tilde{d}_1 \pa_x  + R_5 ) \Pi_S^\bot \,,
\\
\label{R5}
R_5 
& := ( \Phi_2^{-1} - I) \Pi_S^\bot \tilde{d}_1 \partial_x 
+ \Phi_2^{-1}  \Pi_S^\bot {\tilde R}_5 \,. 
\end{align}
Since $ A_2 $ is a Hamiltonian vector field, the map $\Phi_2$ is symplectic and so  ${\cal L}_5$ is Hamiltonian.
\begin{lemma} \label{lemma:R5}
$R_5$ satisfies the same estimates \eqref{stima Lip R4} as $R_4$ (with a possibly larger $\s$).
\end{lemma}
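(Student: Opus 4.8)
The plan is to verify that the operator $ R_5 $ defined in \eqref{R5} satisfies bounds of exactly the same shape as \eqref{stima Lip R4}, namely
\[
| R_5 |_s^{{\rm Lip}(\gamma)} \leq_s \e^{5} \gamma^{-1} + \e \| {\mathfrak I}_\d \|_{s + \s}^{{\rm Lip}(\gamma)}\,, \qquad
| \partial_i R_5 [\widehat \imath] |_s \leq_s \e \big( \| \widehat \imath \|_{s + \s} + \| {\mathfrak I}_\d \|_{s + \s} \| \widehat \imath \|_{ s_0 + \s} \big)\,,
\]
possibly with a larger loss $ \s $. I would treat the two summands of \eqref{R5} separately. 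For the first term $ (\Phi_2^{-1} - I) \Pi_S^\bot \tilde d_1 \pa_x $, I use that $ \Phi_2^{-1} - I = -\e^2 A_2 + O(\e^4) $ with $ A_2 $ satisfying $ |A_2 \pa_x|_s^\Lipg \leq C(s) $ by Lemma \ref{A2 decay}; since $ A_2 \pa_x $ has bounded decay norm and $ \tilde d_1 \pa_x = \pa_x^{-1}(\tilde d_1 \pa_x) \pa_x $ can be handled via the decay-norm algebra \eqref{algebra Lip}–\eqref{interpm Lip} together with the multiplication bound \eqref{multiplication Lip}, this term is $ \e^2 $ times something controlled by $ \| \tilde d_1 \|_{s+\s}^\Lipg $, and then \eqref{tilde d1 d0 KdV} gives a bound $ \leq_s \e^2 (\e^5 \g^{-1} + \e \| {\mathfrak I}_\d \|_{s+\s}^\Lipg) $, which is much smaller than required.

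For the second term $ \Phi_2^{-1} \Pi_S^\bot \tilde R_5 $, I would first note that $ |\Phi_2^{\pm 1} h|_s^\Lipg \leq_s |h|_s^\Lipg + \ldots $ (bounded perturbation of the identity in decay norm), so it suffices to estimate $ \tilde R_5 $ from \eqref{def tilde R5}. That operator is a sum of four pieces: $ \e^4(({\cal D}_\om \widehat A_2) + m_3[\pa_{xxx},\widehat A_2]) $, $ \tilde d_1 \pa_x (\Phi_2 - I) $, $ \e^2 T (\Phi_2 - I) $, and $ R_4 \Phi_2 $. The first piece: $ \widehat A_2 = \sum_{k\geq 2} \e^{2(k-2)} A_2^k / k! $, and since $ A_2 \pa_x $, $ \pa_x A_2 $ have bounded decay norm (Lemma \ref{A2 decay}), the commutator $ [\pa_{xxx}, \widehat A_2] = \pa_x^2(\pa_x \widehat A_2) - (\widehat A_2 \pa_x)\pa_x^2 + \ldots $ can be rearranged so that $ [\pa_{xxx}, A_2] $ has decay norm controlled by $ |A_2 \pa_x|_s $ (as in Lemma \ref{lemma:Dx A bounded}'s philosophy, using that $ A_2 $ is Fourier-supported on $ |j-j'| \leq 2 C_S $), hence $ \e^4 $ times $ C(s) $. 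The pieces $ \tilde d_1 \pa_x (\Phi_2 - I) $ and $ \e^2 T(\Phi_2 - I) $ use $ \Phi_2 - I = O(\e^2) $ in bounded decay norm together with $ |T|_s^\Lipg \leq C(s) $ (from the explicit formulas \eqref{forma funzionale B1 A1}, \eqref{B1B2 Fourier}, \eqref{B3 Fourier}, which show $ T $ is Fourier-localized with coefficients $ O(1) $ in $ \e $), giving $ \e^2 $ times a bounded quantity plus lower-order terms in $ \| {\mathfrak I}_\d\| $. The dominant piece is $ R_4 \Phi_2 $, which by Lemma \ref{lemma:R4} already satisfies the target estimate \eqref{stima Lip R4}, and composition with $ \Phi_2 = I + O(\e^2) $ in decay norm preserves it via \eqref{interpm Lip}.

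For the $ \pa_i $-derivative estimates I would proceed in parallel, differentiating each factor: $ \pa_i A_2 $, $ \pa_i T $, $ \pa_i \tilde d_1 $, $ \pa_i R_4 $, $ \pa_i \Phi_2 $. The bound on $ \pa_i R_4 $ is in Lemma \ref{lemma:R4}; the bounds on $ \pa_i \tilde d_1 $ in \eqref{tilde d1 d0 KdV}; the quantities $ A_2, T, \Phi_2 $ depend on $ i $ only through $ m_3 $ and $ \xi = \a^{-1}(\om) $ (which is $ i $-independent) and through $ \bar v $ (also $ i $-independent), so in fact $ \pa_i A_2 $ and $ \pa_i T $ come only from $ \pa_i m_3 $, estimated in \eqref{stima m3} by $ \e^{b+2} \| \widehat \imath\|_{s_0+\s} $; this is a genuinely small and favorable contribution. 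Assembling all pieces via the interpolation inequality \eqref{interpm Lip} and absorbing the various $ \s $ into a single larger one yields the claim. I expect no serious obstacle: the main point is just the bookkeeping of which factors carry powers of $ \e $ versus powers of $ \gamma^{-1} $, and the observation (already used repeatedly in Lemmata \ref{lemma:Dx A bounded}, \ref{A2 decay}) that $ A_2 $ being Fourier-supported in a band $ |j - j'| \leq 2 C_S $ lets us trade one derivative for a bounded decay norm. The only mildly delicate point is ensuring that $ \e^4 |[\pa_{xxx}, \widehat A_2]|_s^\Lipg $ does not blow up: this follows since $ \widehat A_2 $ is again supported in a fixed Fourier band and $ |A_2 \pa_x|_s, |\pa_x A_2|_s $ are bounded, so $ |[\pa_{xxx}, \widehat A_2]|_s \leq_s C(s) $ by the same computation as Lemma \ref{lemma:Dx A bounded}.
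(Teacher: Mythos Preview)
Your approach is essentially the same as the paper's (very terse) proof: decompose $R_5$ via \eqref{R5} and \eqref{def tilde R5} and bound each piece using Lemma~\ref{A2 decay}, \eqref{tilde d1 d0 KdV}, \eqref{stima Lip R4}, and the decay-norm algebra \eqref{multiplication Lip}, \eqref{interpm Lip}.

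One inaccuracy worth flagging: your claim $|T|_s^\Lipg \leq C(s)$ is false. The component $B_2 h = -6\,\pa_x\{h\,\pi_0[(\pa_x^{-1}\bar v)^2]\}$ of $T$ (see \eqref{B1B2 Fourier}) is a first-order operator, with matrix entries $(B_2)_j^{j'}(l)$ growing like $|j|$. What is true is $|T\pa_x^{-1}|_s^\Lipg + |\pa_x^{-1}T|_s^\Lipg \leq C(s)$, and this is enough: writing $\e^2 T(\Phi_2 - I) = \e^2 (T\pa_x^{-1})\,(\pa_x(\Phi_2 - I))$ and using $|\pa_x(\Phi_2 - I)|_s^\Lipg \leq C\e^2$ from Lemma~\ref{A2 decay} still yields $|\e^2 T(\Phi_2 - I)|_s^\Lipg \leq_s \e^4$. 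The same device handles $\tilde d_1 \pa_x (\Phi_2 - I)$: factor it as (multiplication by $\tilde d_1$)$\,\circ\,(\pa_x(\Phi_2 - I))$ rather than (unbounded $\tilde d_1\pa_x$)$\,\circ\,(\Phi_2 - I)$. With this correction your argument goes through.
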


\begin{proof} 
Use \eqref{R5}, Lemma \ref{A2 decay}, \eqref{tilde d1 d0 KdV}, \eqref{def tilde R5}, 
\eqref{stima Lip R4} and the interpolation inequalities \eqref{multiplication Lip}, \eqref{interpm Lip}. 
\end{proof}

\subsection{Descent method}\label{step5}

The goal of this section is to transform $ {\cal L}_5 $ in \eqref{L5 KdV} so that the coefficient of $ \pa_x $ becomes constant.
We conjugate $ {\cal L}_5 $ via  a symplectic map of the form 
\begin{equation}\label{def descent}
{\cal S} := {\rm exp}(\Pi_S^\bot (w \partial_x^{-1}))\Pi_S^\bot = \Pi_S^\bot \big( I  + w   \partial_x^{-1} \big) \Pi_S^\bot + \widehat {\cal S}\,,\quad \widehat {\cal S} := {\mathop \sum}_{k \geq 2} 
\frac{1}{k!} [\Pi_S^\bot (w \partial_x^{-1})]^k \Pi_S^\bot\,,
\end{equation}
where $ w : \T^{\nu+1} \to \R $ is a function. 
Note that $\Pi_S^\bot (w \partial_x^{-1}) \Pi_S^\bot$ is the  Hamiltonian vector field  generated by 
 $ - \frac12 \int_\T w (\partial_x^{-1} h)^2\,dx$, $h \in H_S^\bot$.
Recalling \eqref{def pi 0}, we calculate
\begin{align} \label{L5 diff}
& {\cal L}_5 {\cal S} - {\cal S} \Pi_S^\bot
( {\cal D}_\om + m_3 \partial_{xxx}  + m_1 \partial_x ) \Pi_S^\bot 
= \Pi_S^\bot ( 3 m_3 w_x + \tilde{d}_1 - m_1 ) \partial_x \Pi_S^\bot + \tilde R_6 \,,
\\
& \tilde R_6
:= \Pi_S^\bot \{ ( 3 m_3 w_{xx} + \tilde d_1 \Pi_S^\bot w - m_1 w ) \pi_0 
+ ( ({\cal D}_\om w) + m_3 w_{xxx} + \tilde d_1 \Pi_S^\bot w_x ) \pa_x^{-1}
+ ({\cal D}_\omega \widehat{\cal S}) 
\notag \\ & \qquad \ \ 
+ m_3 [\partial_{xxx}, \widehat{\cal S}] 
+ \tilde{d}_1 \partial_x \widehat{\cal S} 
- m_1 \widehat{\cal S} \pa_x 
+ R_5 {\cal S} \} \Pi_S^\bot
\notag 
\end{align}
where $\tilde R_6$ collects all the terms of order at most $\pa_x^0$. 
By Remark \ref{d1 media}, we solve $ 3 m_3 w_x + \tilde{d}_1 - m_1 = 0 $  
by choosing $w := - (3 m_3)^{-1} \partial_x^{-1} ( \tilde{d}_1 - m_1 )$.
For $ \e $ small, the operator $ {\cal S} $ is invertible and, by \eqref{L5 diff}, 
\begin{equation}\label{def L6}
\mL_6 := \mS^{-1} \mL_5 \mS 
= \Pi_S^\bot ( {\cal D}_\om + m_3 \partial_{xxx} + m_1 \partial_x ) \Pi_S^\bot + R_6 \,, 
\qquad R_6 :=  {\cal S}^{-1} \tilde R_6 \, .
\end{equation}
Since $ {\cal S} $ is symplectic, ${\cal L}_6$ is Hamiltonian (recall Definition \ref{operatore Hamiltoniano}).

\begin{lemma}\label{lemma L6}
 There is $ \s = \s(\nu,\t) > 0 $ (possibly larger than in Lemma \ref{lemma:R5}) such that 
$$
|{\cal S}^{\pm 1} - I|_s^\Lipg 
\leq_s \e^{5} \gamma^{-1} + \e \| {\mathfrak I}_\delta\|_{s + \sigma}^\Lipg\,, \quad 
|\partial_i {\cal S}^{\pm 1} [\widehat \imath ]|_s  
\leq_s \e ( \| \widehat \imath\|_{s + \sigma} + \|{\mathfrak I}_\delta \|_{s + \sigma} \| \widehat \imath \|_{s_0 + \sigma} ).
$$
The remainder $R_6$ satisfies the same estimates \eqref{stima Lip R4} as $R_4$. 
\end{lemma}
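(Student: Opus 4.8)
\textbf{Proof plan for Lemma \ref{lemma L6}.}

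The plan is to argue exactly as in the previous steps (Lemmata \ref{lemma:stime coeff mL3}, \ref{lemma:R5}), keeping track of the sizes that have been accumulated. First I would establish the size of the function $ w := - (3 m_3)^{-1} \partial_x^{-1} ( \tilde{d}_1 - m_1 ) $. Since $ m_3 = 1 + O(\e^4) $ by \eqref{stima m3} and $ \tilde d_1 $ satisfies \eqref{tilde d1 d0 KdV}, while $ m_1 = O(\e^4) $ by \eqref{stima m1}, the tame estimate for $\partial_x^{-1}$ together with the algebra/interpolation inequalities of Lemma \ref{lemma:standard Sobolev norms properties} gives
$ \| w \|_s^\Lipg \leq_s \e^{5} \gamma^{-1} + \e \| {\mathfrak I}_\delta \|_{s + \s}^\Lipg $, and similarly $ \| \partial_i w [\widehat \imath] \|_s \leq_s \e ( \| \widehat \imath \|_{s + \s} + \| {\mathfrak I}_\delta \|_{s + \s} \| \widehat \imath \|_{s_0 + \s} ) $ (using that $ \partial_i m_1, \partial_i \tilde d_1, \partial_i m_3 $ obey the corresponding bounds). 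From here the estimates for $ {\cal S}^{\pm 1} - I $ follow: by \eqref{def descent}, $ {\cal S} - I = \Pi_S^\bot (w \partial_x^{-1}) \Pi_S^\bot + \widehat{\cal S} $, and since $ w \partial_x^{-1} $ has $s$-decay norm controlled by $ \| w \|_s $ (multiplication operator composed with the smoothing $\partial_x^{-1}$, cf. \eqref{multiplication Lip} and Lemma \ref{remark : decay forma buona resto}), the exponential series converges for $\e$ small by the algebra property \eqref{algebra Lip}, giving the claimed bounds; the inverse $ {\cal S}^{-1} = \exp(-\Pi_S^\bot(w\partial_x^{-1})) $ is handled identically, and the derivative $\partial_i {\cal S}^{\pm1}$ is obtained by differentiating the series and using the bound on $\partial_i w$.

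Next I would estimate $ R_6 = {\cal S}^{-1} \tilde R_6 $, where $ \tilde R_6 $ is displayed in \eqref{L5 diff} and collects only terms of order $ \le \partial_x^0 $. Each summand in $ \tilde R_6 $ is of the form (function)$\cdot \pi_0$, (function)$\cdot\partial_x^{-1}$, a commutator $[\partial_{xxx},\widehat{\cal S}]$ (which is order $\partial_x^0$ because $\widehat{\cal S}$ is order $\partial_x^{-1}$), $ \tilde d_1 \partial_x \widehat{\cal S} $ (order $\partial_x^0$), $ m_1 \widehat{\cal S}\partial_x $ (order $\partial_x^0$), or $ R_5 {\cal S} $. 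The coefficients $ 3 m_3 w_{xx} + \tilde d_1 \Pi_S^\bot w - m_1 w $ and $ ({\cal D}_\om w) + m_3 w_{xxx} + \tilde d_1 \Pi_S^\bot w_x $ are products/derivatives of $w$, $\tilde d_1$, $m_1$, $m_3$, hence by the bounds just obtained and by $ {\cal D}_\om w $ having the same size as $ w $ up to shifting $s$ (one derivative in $\vphi$), they satisfy $ \leq_s \e^5\g^{-1} + \e\|{\mathfrak I}_\delta\|_{s+\s}^\Lipg $; passing to $s$-decay norms via Lemma \ref{remark : decay forma buona resto} and \eqref{multiplication Lip}. The term $ R_5 {\cal S} $ is estimated by Lemma \ref{lemma:R5} (which says $R_5$ obeys the same bound \eqref{stima Lip R4} as $R_4$) combined with the estimate on $ {\cal S} $ and the algebra/interpolation inequalities \eqref{algebra Lip}, \eqref{interpm Lip}. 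Finally multiplying by $ {\cal S}^{-1} $ (again \eqref{algebra Lip}, \eqref{interpm Lip}) preserves the bound. Assembling, $ |R_6|_s^\Lipg \leq_s \e^5\g^{-1} + \e\|{\mathfrak I}_\delta\|_{s+\s}^\Lipg $, and the $\partial_i$-estimate follows by the product rule applied to all the above, using that each factor's $\partial_i$-variation obeys the corresponding bound (and absorbing lower-order terms using \eqref{ansatz delta}). This is exactly the form \eqref{stima Lip R4}, proving the claim.

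The only mildly delicate point — which I do not expect to be a real obstacle here, since the groundwork is laid in sections \ref{step1}–\ref{BNF:step2} — is bookkeeping of the loss-of-derivatives constant $ \s $: each conjugation enlarges $ \s $, and one has to check that $ R_5 {\cal S} $, $ [\partial_{xxx}, \widehat{\cal S}] $ and the coefficient functions in $ \tilde R_6 $ are all controlled in $s$-decay norm rather than merely in operator norm, so that the subsequent KAM reducibility scheme of section \ref{subsec:mL0 mL5} applies. This is handled by systematically invoking Lemma \ref{remark : decay forma buona resto} for the ``finite-dimensional'' pieces and the algebra inequality \eqref{algebra Lip} for the smoothing pieces, exactly as in the proof of Lemma \ref{lemma:R4}. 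The size $ \e^5 \g^{-1} $ is the genuinely important output: it is what guarantees the smallness condition $ O(\e^{7-2b}\g^{-2}) \ll 1 $ needed in section \ref{subsec:mL0 mL5}, and it comes precisely from the fact that the $ O(\e) $ and $ O(\e^2) $ terms were already removed by the linear Birkhoff steps, so that $ \tilde d_1 - m_1 $ (hence $w$) is $ O(\e^5\g^{-1}) $ in low norm.
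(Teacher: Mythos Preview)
Your approach is essentially identical to the paper's: estimate $w$ from \eqref{tilde d1 d0 KdV}, \eqref{stima m1}, \eqref{stima m3}, deduce the bounds on $\mathcal S^{\pm1}-I$ from the exponential series \eqref{def descent}, and then control each summand of $\tilde R_6$ in $s$-decay norm. One small correction: $\widehat{\cal S}$ is $O(\partial_x^{-2})$, not $O(\partial_x^{-1})$, because the sum in \eqref{def descent} starts at $k=2$; this is exactly what makes $[\partial_{xxx},\widehat{\cal S}] = O(\partial_x^0)$ --- with only one power of $\partial_x^{-1}$ the commutator would be first order and the argument would fail.
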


\begin{proof}
By \eqref{tilde d1 d0 KdV},\eqref{stima m1},\eqref{stima m3}, $\| w \|_s^\Lipg \leq_s  \e^{5} \gamma^{-1} + \e \| {\mathfrak I}_\delta\|_{s + \sigma}^\Lipg $, and the lemma follows by \eqref{def descent}. Since $ \widehat { \cal S } = O(\pa_x^{-2})$ the commutator 
$ [ \pa_{xxx}, \widehat { \cal S }] = O(\pa_x^0 )$ and 
$ | [ \pa_{xxx}, \widehat { \cal S }] |_s^\Lipg \leq_s \| w \|_{s_0+3}^\Lipg \| w \|_{s+3}^\Lipg $. 
\end{proof}

\subsection{KAM reducibility and inversion of $ {\cal L}_{\om} $} \label{subsec:mL0 mL5}

The coefficients $ m_3, m_1 $  of the operator $ {\cal L}_6 $ in \eqref{def L6} are constants, and the remainder $ R_6 $ is a bounded operator of order $ \partial_x^0 $ with  small matrix decay 
norm, see \eqref{decay R6}.
Then we  can diagonalize $ {\cal L}_6 $ 
by applying the iterative KAM reducibility Theorem 4.2  in \cite{BBM}  along the 
sequence of scales  
\begin{equation}\label{defN}
N_n := N_{0}^{\chi^n}, \quad n = 0,1,2,\ldots,  
\quad  \chi := 3/2, \quad N_0  > 0 \, .
\end{equation}
In section \ref{sec:NM}, the initial $ N_0 $ will (slightly) increase to infinity 
as $ \e \to 0 $, see \eqref{nash moser smallness condition}. 
The required smallness condition (see (4.14) in \cite{BBM}) is (written in the present notations)
\be\label{R6resto}
N_0^{C_0} | R_6 |_{s_0 + \beta}^{\Lipg} \g^{-1} \leq 1 
\ee
where $ \b := 7 \tau + 6  $ (see (4.1) in \cite{BBM}), 
$ \tau $ is the diophantine exponent in \eqref{omdio} and \eqref{Omegainfty}, 
and the constant $ C_0 := C_0 (\t, \nu ) > 0 $  is fixed in
Theorem 4.2 in \cite{BBM}.
By Lemma \ref{lemma L6}, the remainder $ R_6 $ satisfies the bound \eqref{stima Lip R4}, 
and using \eqref{ansatz delta} we get (recall \eqref{link gamma b})
\be \label{decay R6}
| R_6|_{s_0 + \beta}^{\Lipg} \leq C \e^{7 - 2 b} \gamma^{-1} = C \e^{3 - 2 a}, \qquad
| R_6 |_{s_0 + \beta}^{\Lipg} \g^{-1} \leq C \e^{1 - 3 a}  \, .
\ee
We use that  $ \mu $ in \eqref{ansatz delta}  is assumed to satisfy $ \mu  \geq \s + \b $ where $ \s := \s (\tau, \nu ) $ 
is given in Lemma \ref{lemma L6}.  

\begin{theorem} \label{teoremadiriducibilita} 
{\bf (Reducibility)}
Assume that  $\omega \mapsto i_\d (\omega)
$ is a Lipschitz function defined on some subset $\Omega_o \subset \Omega_\e $ (recall  \eqref{Omega epsilon}), satisfying 
\eqref{ansatz delta} with  $ \mu \geq \s + \b $ where $ \s := \s (\tau, \nu) $ is given in Lemma \ref{lemma L6} and $ \b  := 7 \tau + 6 $. 
Then there exists $ \delta_{0} \in (0,1) $  such that, if
\begin{equation}\label{condizione-kam}
N_0^{C_0}  \e^{7 - 2 b} \gamma^{-2} = N_0^{C_0}  \e^{1 - 3 a}
\leq \delta_{0} \, , \quad \g := \e^{2 + a} \, , \quad a \in (0,1/6) \, , 
\end{equation}
then:

$(i)$ {\bf (Eigenvalues)}.
For all $ \omega \in \Omega_\e $ there exists a sequence 
\begin{equation} \label{espressione autovalori}
\mu_j^\infty(\omega) := \mu_j^\infty(\omega, i_\d (\om)) 
:=  \ii \big( - {\tilde m}_3 (\omega) j^3 +  {\tilde m}_1(\omega)  j \big) 
+ r_j^\infty(\omega), \quad j \in  S^c \, ,  
\end{equation}
where $ {\tilde m}_3, {\tilde m}_1$  coincide with the coefficients $m_3, m_1$ of $ {\cal L}_6 $ in \eqref{def L6} for all $ \omega \in \Omega_o $,
and 
\begin{align}
\label{autofinali}
| {\tilde m}_3 - 1 |^{{\rm Lip}(\gamma)} + | {\tilde m}_1 |^{{\rm Lip}(\gamma)} \leq C \e^4 \, , \quad 
| r^{\infty}_j |^{{\rm Lip}(\gamma)} & \leq C \e^{3 - 2 a} \, , 
\quad \ \forall j \in  S^c \,  , 
\end{align}
for some  $ C > 0 $. 
All the eigenvalues $\mu_j^{\infty}$ are purely imaginary. We define, for convenience,  $  \mu_0^\infty (\om)  := 0 $. 

\smallskip

$(ii)$ {\bf (Conjugacy)}.
For all $\omega$ in the set
\begin{equation}  \label{Omegainfty}
\Omega_\infty^{2\g} := \Omega_\infty^{2\g} (i_\d) 
:= \Big\{ \omega \in \Omega_o : \,
| \ii\omega \cdot l + \mu^{\infty}_j (\omega) - \mu^{\infty}_{k} (\omega) |
\geq \frac{2 \gamma | j^{3} - k^{3} |}{ \langle l \rangle^{\tau}}, 
\, \forall l \in \Z^{\nu}, \, j ,k \in S^c \cup \{0\}\Big\} 
\end{equation}
there is a real, bounded, invertible linear operator $\Phi_\infty(\omega) : H^s_{S^\bot} (\T^{\nu+1}) \to 
H^s_{S^\bot} (\T^{\nu+1}) $, with bounded inverse 
$\Phi_\infty^{-1}(\omega)$, that conjugates $\mL_6$ in \eqref{def L6} to constant coefficients, namely
\begin{equation}\label{Lfinale}
{\cal L}_{\infty}(\omega)
:= \Phi_{\infty}\inv(\omega) \circ \mL_6(\omega) \circ  \Phi_{\infty}(\omega)
=  \om \cdot \partial_{\vphi} + {\cal D}_{\infty}(\omega),  \quad
{\cal D}_{\infty}(\omega)
:= {\rm diag}_{j \in S^c} \{ \mu^{\infty}_{j}(\omega) \} \, .
\end{equation}
The transformations $\Phi_\infty, \Phi_\infty\inv$ are close to the identity in matrix decay norm,
with
\begin{equation} \label{stima Phi infty}
| \Phi_{\infty} - I |_{s,\Omega_\infty^{2\g}}^{{\rm Lip}(\gamma)}
+ | \Phi_{\infty}^{- 1} - I |_{s,\Omega_\infty^{2\g}}^\Lipg
\leq_s \e^{5} \gamma^{-2} + \e \gamma^{-1} \| {\mathfrak I}_\delta \|_{s + \sigma}^\Lipg .
\end{equation}
Moreover $\Phi_{\infty}, \Phi_{\infty}^{-1}$  are symplectic, and 
$\mL_\infty $ is a Hamiltonian operator.
\end{theorem}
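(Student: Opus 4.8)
The plan is to obtain Theorem \ref{teoremadiriducibilita} as a direct application of the abstract KAM reducibility theorem (Theorem 4.2 of \cite{BBM}) to the Hamiltonian operator $\mathcal{L}_6$ in \eqref{def L6}. First I would check that $\mathcal{L}_6$ fits the abstract framework: its unperturbed part $\Pi_S^\bot(\om\cdot\partial_\vphi + m_3\partial_{xxx} + m_1\partial_x)\Pi_S^\bot$ is diagonal in the basis $\{e^{\ii jx}\}_{j\in S^c}$ with purely imaginary eigenvalues $\mu_j^{(0)} := \ii(-m_3 j^3 + m_1 j)$, while $R_6$ is a bounded operator of order $\partial_x^0$ with small matrix $s$-decay norm. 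Since $m_3,m_1$ are real and $\mathcal{L}_6$ is a Hamiltonian operator in the sense of Definition \ref{operatore Hamiltoniano} --- being the image of $\mathcal{L}_\om$ under a finite chain of symplectic conjugations and one time reparametrization, each of which preserves the Hamiltonian structure --- the remainder $R_6$ has the self-adjointness-type symmetry that the abstract theorem needs to keep the perturbed eigenvalues on the imaginary axis throughout the iteration.

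The second step is to verify the quantitative hypotheses. The decisive smallness condition \eqref{R6resto}, i.e.\ $N_0^{C_0}|R_6|_{s_0+\beta}^{\Lipg}\gamma^{-1}\leq 1$ with $\beta := 7\tau+6$, follows from \eqref{decay R6}: by Lemma \ref{lemma L6} together with the ansatz \eqref{ansatz delta} one has $|R_6|_{s_0+\beta}^{\Lipg}\leq C\e^{7-2b}\gamma^{-1}=C\e^{3-2a}$, so that \eqref{R6resto} amounts to $N_0^{C_0}\e^{7-2b}\gamma^{-2}=N_0^{C_0}\e^{1-3a}\leq\delta_0$, which is \eqref{condizione-kam} and holds for $\e$ small since $a\in(0,1/6)$. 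I would also record that $\mathcal{L}_6$, hence $R_6$ and the constants $m_3,m_1$, depend on $\om$ both explicitly and through the approximate torus $i_\d(\om)$ in a Lipschitz way, with the $\partial_i$-bounds carried along from Proposition \ref{prop:lin} through the lemmata on the regularization steps, so that the abstract theorem outputs eigenvalues and a conjugation map that are Lipschitz in $\om$; this forces $\mu\geq\sigma+\beta$ in \eqref{ansatz delta}.

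Granting the hypotheses, the abstract iteration proceeds along the scales $N_n=N_0^{\chi^n}$, $\chi=3/2$, of \eqref{defN}. At step $n$ one conjugates $\mathcal{L}_n=\om\cdot\partial_\vphi+\mathcal{D}_n+R_n$ by a symplectic map $\Phi_n=\exp(\Psi_n)$, where $\Psi_n$ solves the homological equation that cancels the Fourier modes $|l|\leq N_n$ of the off-diagonal part of $R_n$; its matrix entries are $-(\Pi_{N_n}R_n)_j^k(l)/(\ii\,\om\cdot l+\mu_k^{(n)}-\mu_j^{(n)})$, which are well defined precisely when $\om$ lies in the Cantor set \eqref{Omegainfty}. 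The point is that the leading divisor $\ii m_3(j^3-k^3)$, combined with $|j^3-k^3|\geq\frac12(j^2+k^2)$ for $j\neq k$, makes $\Psi_n=O(\partial_x^{-2})$, so conjugation does not raise the order and the new remainder $R_{n+1}$ is of size $N_n^{-\alpha}|R_n|_{s+\beta}+|R_n|_{s_0}|R_n|_s$, i.e.\ quadratically small; hence $|R_n|_{s_0+\beta}\to 0$ super-exponentially and $\Phi_\infty:=\lim_n\Phi_0\circ\cdots\circ\Phi_n$ converges in $s$-decay norm to a symplectic isomorphism satisfying \eqref{stima Phi infty}. The diagonal corrections obey $|\mu_j^{(n+1)}-\mu_j^{(n)}|\lesssim|R_n|_{s_0}$, so they sum to $r_j^\infty$ with the asymptotics \eqref{espressione autovalori}--\eqref{autofinali}; the Hamiltonian symmetry keeps every $\mu_j^{(n)}$, hence $\mu_j^\infty$, purely imaginary, and a Kirszbraun--Whitney extension defines $\tilde m_3,\tilde m_1,r_j^\infty$ on all of $\Omega_\e$ with the same Lipschitz bounds. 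This yields \eqref{Lfinale}.

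The conceptual content --- solvability of the homological equations, quadratic convergence, preservation of the imaginary spectrum --- is already packaged in Theorem 4.2 of \cite{BBM}, so I expect the only real work here to be bookkeeping: checking the smallness condition from \eqref{decay R6}, and propagating the two-parameter ($\om$ and $i_\d$) Lipschitz estimates so that the final bounds \eqref{autofinali}, \eqref{stima Phi infty} come out in exactly the form needed by the Nash--Moser scheme of section \ref{sec:NM}. The one point that genuinely has to be right, rather than routine, is that $\mathcal{L}_6$ is Hamiltonian: without that, the eigenvalues could drift off the imaginary axis and the second order Melnikov conditions defining \eqref{Omegainfty} --- together with their measure estimates in the following section --- would be meaningless.
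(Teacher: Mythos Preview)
Your proposal is correct and follows essentially the same route as the paper: both reduce the theorem to a direct application of the abstract KAM reducibility Theorem~4.2 of \cite{BBM} to $\mathcal{L}_6$, after checking the smallness condition \eqref{R6resto} via the estimate \eqref{decay R6} on $R_6$, invoking the Hamiltonian structure to keep the spectrum imaginary, and using a Kirszbraun-type Lipschitz extension to define the eigenvalues on all of $\Omega_\e$. The paper's proof is terser---it simply cites Theorem~4.1, Corollaries~4.1--4.2 and Lemmata~4.1--4.2 of \cite{BBM} and flags the two adaptations needed (the parameter $\omega$ is now $\nu$-dimensional rather than scalar, and the sites lie in $S^c$ rather than all of $\Z$, whence the convention $\mu_0^\infty:=0$)---whereas you unpack more of the iteration's internals, but the substance is the same.
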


\begin{proof}
The proof is the same as the one of Theorem 4.1 in \cite{BBM}, 
which is based on Theorem 4.2, Corollaries 4.1, 4.2 and Lemmata 4.1, 4.2 of \cite{BBM}.
A difference is that here $\om \in \R^\nu$, while in \cite{BBM} the parameter $\lm \in \R$ is one-dimensional. 
The proof is the same because Kirszbraun's Theorem on Lipschitz extension of functions also holds in $\R^\nu$ 
(see, e.g., Lemma A.2 in \cite{Po2}). 
The bound \eqref{stima Phi infty} follows by Corollary 4.1 of \cite{BBM} and the estimate of 
$ R_6 $ in Lemma \ref{lemma L6}.
We also use the estimates \eqref{stima m3}, \eqref{stima m1} for $ \pa_i m_3 $, 
$ \pa_i m_1 $ which correspond to (3.64) in \cite{BBM}.  
Another difference is that here the sites $ j \in S^c \subset \Z \setminus \{0\} $ unlike in \cite{BBM} where $ j \in \Z $. 
We have defined $ \mu_0^\infty := 0 $ 
so that also the first  Melnikov conditions \eqref{prime di melnikov} are included  in the definition of $ \Om^{2 \g}_{\infty} $.
\end{proof}

\begin{remark}
Theorem 4.2 in \cite{BBM} 
also provides the Lipschitz dependence of the (approximate) 
eigenvalues $ \mu_j^n $ with respect to the unknown $ i_0 (\vphi) $,  
which is used for the measure estimate Lemma \ref{matteo 10}.
 \end{remark}

All the parameters $ \omega \in \Omega_\infty^{2 \gamma} $ satisfy  (specialize \eqref{Omegainfty} for $ k = 0 $) 
\begin{equation}\label{prime di melnikov}
|\ii \omega \cdot l + \mu_j^\infty(\omega)| \geq 
2 \g | j |^3 \langle l \rangle^{-\tau} \, , \quad \forall l \in \Z^\nu , \ j \in S^c, 
\end{equation}
and the diagonal operator $ {\cal L}_\infty $ is invertible.  

In the following theorem we finally verify the inversion assumption \eqref{tame inverse} for ${\cal L}_\om $. 

\begin{theorem}\label{inversione linearized normale} {\bf (Inversion of $ {\cal L}_\om $)}
Assume the hypotheses of Theorem \ref{teoremadiriducibilita} and \eqref{condizione-kam}. 
Then there exists $ \s_1 := \s_1 ( \t, \nu ) >  0 $ such that, 
$ \forall \omega \in \Omega^{2 \gamma}_\infty(i_\d )$ (see \eqref{Omegainfty}),
for any function $ g \in H^{s+\s_1}_{S^\bot} (\T^{\nu+1})  $ 
the equation  ${\cal L}_\omega h = g$ 
has a solution $h = {\cal L}_\omega^{-1} g \in H^s_{S^\bot} (\T^{\nu+1})$, satisfying
\begin{align}\label{stima inverso linearizzato normale}
\| {\cal L}_\omega^{-1} g \|_s^{{\rm Lip}(\gamma)} 
& \leq_s \gamma^{-1} \big( \| g \|_{s +\sigma_1}^{{\rm Lip}(\gamma)} 
+ \e \gamma^{-1} \| {\mathfrak I}_\delta\|_{s + \sigma_1}^\Lipg
\| g \|_{s_0}^{{\rm Lip}(\gamma)} \big) 
\\
& \leq_s \gamma^{-1} \big( \| g \|_{s +\sigma_1}^{{\rm Lip}(\gamma)} 
+ \e \gamma^{-1} \big\{  \| \fracchi_0 \|_{s + \s_1 + \s}^\Lipg + \g^{-1} 
 \| \fracchi_0 \|_{s_0 +  \s }^\Lipg
\| Z \|_{s + \s_1 + \s}^\Lipg \big\}
\| g \|_{s_0}^{{\rm Lip}(\gamma)} \big)\, . \nonumber 
\end{align}
\end{theorem}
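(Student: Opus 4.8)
The strategy is to combine the chain of symplectic conjugations performed in Sections \ref{step1}--\ref{step5} with the KAM reducibility of Theorem \ref{teoremadiriducibilita}, and then to ``undo'' all the transformations, keeping track of the losses of derivatives. First I would record the composite conjugation: writing $ \mathcal{W} := \Phi \circ B \circ {\cal T} \circ \Phi_1 \circ \Phi_2 \circ {\cal S} \circ \Phi_\infty $ (with the time reparametrization $ B $ contributing the scalar factor $ \rho $ as in \eqref{L2 Kdv}), the operator $ {\cal L}_\om $ is conjugated, for $ \om \in \Omega_\infty^{2\g}(i_\d) $, to the diagonal operator $ {\cal L}_\infty = \om \cdot \pa_\vphi + {\cal D}_\infty $ of \eqref{Lfinale}, up to the bounded invertible factor coming from Section \ref{step2}. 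Concretely, $ {\cal L}_\om = \rho \, \mathcal{W} \, {\cal L}_\infty \, \mathcal{W}^{-1} $ on $ H_S^\bot $, so that $ {\cal L}_\om^{-1} = \mathcal{W} \, {\cal L}_\infty^{-1} \, \mathcal{W}^{-1} \rho^{-1} $.

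Next I would invert $ {\cal L}_\infty $. Since $ {\cal D}_\infty = \mathrm{diag}_{j \in S^c}\{\mu_j^\infty\} $ with $ \mu_j^\infty $ purely imaginary and the first Melnikov conditions \eqref{prime di melnikov} hold on $ \Omega_\infty^{2\g} $, the operator $ {\cal L}_\infty $ acting on functions of $ (\vphi,x) $ is diagonal in the exponential basis $ e^{\ii(l\cdot\vphi + jx)} $ with eigenvalues $ \ii \om\cdot l + \mu_j^\infty $; by \eqref{prime di melnikov} these are bounded below by $ 2\g \langle l\rangle^{-\tau}|j|^3 \geq 2\g\langle l\rangle^{-\tau} $, whence $ \| {\cal L}_\infty^{-1} g\|_s^\Lipg \leq_s \g^{-1}\|g\|_{s+2\tau+1}^\Lipg $ by the standard small-divisor estimate (as in \eqref{Dom inverso}, with the extra $ \tau $ and the $ \g $ absorbed in the Lipschitz norm because differentiating the eigenvalues in $ \om $ produces a further $ \g^{-1}\langle l\rangle^{-\tau} $ à la \eqref{omdio}; here one also uses the Lipschitz bounds \eqref{autofinali} on $ {\tilde m}_3, {\tilde m}_1, r_j^\infty $). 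Then I would assemble the estimate for $ {\cal L}_\om^{-1} g $ by composing the tame bounds: \eqref{stima cal A bot}--\eqref{stima derivata cal A bot} for $ \Phi^{\pm1} $, the analogous bounds in Lemmata \ref{lemma:stime coeff mL2}, \ref{lemma:stime coeff mL3} for $ B^{\pm 1}, {\cal T}^{\pm1} $, the bounds $ |\Phi_1^{\pm1}-I|, |\Phi_2^{\pm 1}-I| = O(\e) $ following from Lemmata \ref{lemma:Dx A bounded}, \ref{A2 decay}, the bound of Lemma \ref{lemma L6} for $ {\cal S}^{\pm1} $, and \eqref{stima Phi infty} for $ \Phi_\infty^{\pm1} $, together with \eqref{stima c1 c0} for $ \rho^{-1} $ and the interpolation inequality \eqref{interpolazione norme miste}. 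Each factor is of the form $ I + (\text{small}) $ with the small part controlled by $ \e^5\g^{-1} + \e\|\fracchi_\d\|_{s+\s}^\Lipg $ (or better), so the composition contributes, via the usual ``first term large, rest small'' splitting, exactly the structure $ \|g\|_{s+\s_1}^\Lipg + \e\g^{-1}\|\fracchi_\d\|_{s+\s_1}^\Lipg\|g\|_{s_0}^\Lipg $ after multiplying by the overall $ \g^{-1} $ from $ {\cal L}_\infty^{-1} $; this gives the first line of \eqref{stima inverso linearizzato normale}, with $ \s_1 $ the accumulated loss of derivatives (which includes $ 2\tau+1 $ from $ {\cal L}_\infty^{-1} $, the $ \s $'s of the various Lemmata, and $ \beta = 7\tau+6 $ from the reducibility).

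Finally, the second line of \eqref{stima inverso linearizzato normale} follows by passing from $ \fracchi_\d $ to $ \fracchi_0 $: by Lemma \ref{toro isotropico modificato} (estimate \eqref{stima y - y delta} and the fact that $ i_\d $ differs from $ i_0 $ only in the $ y $-component) one has $ \|\fracchi_\d\|_{s+\mu}^\Lipg \leq_s \|\fracchi_0\|_{s+\mu}^\Lipg + \g^{-1}(\|Z\|_{s+\mu+\s}^\Lipg\|\fracchi_0\|_{s_0+\s}^\Lipg + \ldots) $ with $ Z = {\cal F}(i_0,\zeta_0) $, and substituting this (together with \eqref{ansatz delta}) into the first line yields the claimed bound. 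I expect the main obstacle to be purely bookkeeping: verifying that the loss-of-derivatives constant $ \s_1 $ is finite and that at every composition step the low-norm smallness hypotheses (of the type $ \|\fracchi_\d\|_{s_0+\mu}^\Lipg \leq C\e^{6-2b}\g^{-1} $, needed e.g. for Neumann-series invertibility of each factor and for the tame product estimates) are met under \eqref{ansatz delta} and \eqref{condizione-kam}; there is no genuinely new analytic difficulty, since all the hard work — the regularization in Sections \ref{step1}--\ref{step5} and the quadratic KAM scheme — has already been carried out.
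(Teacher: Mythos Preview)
Your proposal is correct and follows essentially the same approach as the paper: write the semi-conjugation $\mathcal{L}_\omega = \mathcal{M}_1 \mathcal{L}_\infty \mathcal{M}_2^{-1}$, invert $\mathcal{L}_\infty$ via the first Melnikov conditions \eqref{prime di melnikov}, and compose the tame estimates for all the transformations, then pass from $\mathfrak{I}_\delta$ to $\mathfrak{I}_0$ via Lemma \ref{toro isotropico modificato}. One small correction: the scalar $\rho$ is not an overall prefactor but sits between $B$ and $\mathcal{T}$ in the left factor only (so $\mathcal{M}_1 = \Phi B \rho \mathcal{T} \Phi_1 \Phi_2 \mathcal{S} \Phi_\infty$, $\mathcal{M}_2 = \Phi B \mathcal{T} \Phi_1 \Phi_2 \mathcal{S} \Phi_\infty$), though this does not affect the tame estimate you wrote down.
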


\begin{proof}
Collecting Theorem \ref{teoremadiriducibilita} with the results of 
sections \ref{step1}-\ref{step5}, we have obtained the (semi)-conjugation of the operator 
$ \mL_\om $ (defined in \eqref{Lom KdVnew}) 
to $\mL_\infty $ (defined in \eqref{Lfinale}), namely  
\begin{equation} \label{def cal M 12}
\mL_\om = {\cal M}_1 \mL_\infty {\cal M}_2\inv, \qquad 
{\cal M}_1 := \Phi B \rho {\cal T}  \Phi_1 \Phi_2  {\cal S} \Phi_{\infty}, \quad 
{\cal M}_2 := \Phi B {\cal T}  \Phi_1 \Phi_2 {\cal S} \Phi_{\infty} \,,
\end{equation}
where $ \rho $ means the multiplication operator by the function $ \rho $ defined in \eqref{anche def rho}. 
By \eqref{prime di melnikov} and Lemma 4.2 of \cite{BBM} we deduce that 
$ \| {\cal L}_\infty^{-1} g \|_s^\Lipg \leq_s \g^{-1} \| g \|_{s+ 2 \tau + 1}^\Lipg $.
In order to estimate $\mM_2, \mM_1^{-1}$, we recall that the composition of tame maps is tame, see Lemma 6.5 in \cite{BBM}. 
Now,  
$ \Phi ,  \Phi^{-1}$ are estimated in Lemma \ref{lemma:stime coeff mL1}, 
$ B, B^{-1} $ and $ \rho $ in Lemma \ref{lemma:stime coeff mL2}, 
$ {\cal T}, {\cal T}^{-1} $ in Lemma \ref{lemma:stime coeff mL3}. 
The decay norms $ |\Phi_1|_s^\Lipg$, $ |\Phi_1^{-1}|_s^\Lipg $, $ |\Phi_2|_s^\Lipg $, $ |\Phi_2^{-1}|_s^\Lipg \leq C(s) $ by  Lemmata \ref{lemma:Dx A bounded}, \ref{A2 decay}. 
The decay norm of $ {\cal S}, {\cal S}^{-1} $ is estimated in Lemma \ref{lemma L6}, 
and $ \Phi_\infty, \Phi_\infty^{-1} $ in \eqref{stima Phi infty}. 
The decay norm controls the Sobolev norm by \eqref{interpolazione norme miste}. 
Thus, by \eqref{def cal M 12}, 
\[
\| \mM_2 h \|_s^\Lipg + \| \mM_1^{-1} h \|_s^\Lipg 
\leq_s \| h \|_{s+3}^\Lipg + \e \g^{-1} \| \fracchi_\d \|_{s+\s+3}^\Lipg \| h \|_{s_0}^\Lipg \,,
\]
and  \eqref{stima inverso linearizzato normale}  follows. 
The last inequality in \eqref{stima inverso linearizzato normale} follows by
\eqref{stima y - y delta} and \eqref{ansatz 0}.
\end{proof}

\section{The Nash-Moser nonlinear iteration}\label{sec:NM}
 
In this section we prove Theorem \ref{main theorem}. It will be a consequence of the Nash-Moser Theorem \ref{iterazione-non-lineare} below.

Consider the finite-dimensional subspaces
\[
E_n := \big\{ \fracchi (\vphi) = ( \Theta, y, z )(\vphi)  : \, \Theta = \Pi_n \Theta, \ y = \Pi_n y, \ z = \Pi_n z \big\}
\]
where $ N_n := N_0^{\chi^n} $ are introduced  in \eqref{defN}, and $ \Pi_n $ are the projectors 
(which, with a small abuse of notation, we denote with the same symbol)
\begin{align}
\Pi_n \Theta (\ph) :=  \sum_{|l| < N_n} \Theta_l e^{\ii l \cdot \ph}, \quad 
\Pi_n y (\ph) :=  \sum_{|l| < N_n} y_l e^{\ii l \cdot \ph}, \quad 
& \text{where} \ \Theta (\ph) = \sum_{l \in \Z^\nu} \Theta_l e^{\ii l \cdot \ph}, \quad 
y(\ph) = \sum_{l \in \Z^\nu} y_l e^{\ii l \cdot \ph}, \nonumber
\\
\Pi_n z(\ph,x) := \sum_{|(l,j)| < N_n} z_{lj} e^{\ii (l \cdot \ph + jx)}, \quad  
& \text{where} \ z(\ph,x) = \sum_{l \in \Z^\nu,  j \in S^c} z_{lj} e^{\ii (l \cdot \ph + jx)}. \label{Pin def}
\end{align}
We define $ \Pi_n^\bot := I - \Pi_n $.  
The classical smoothing properties hold: for all $\alpha , s \geq 0$, 
\begin{equation}\label{smoothing-u1}
\|\Pi_{n} \fracchi \|_{s + \alpha}^\Lipg 
\leq N_{n}^{\alpha} \| \fracchi \|_{s}^\Lipg \, , 
\  \forall \fracchi (\om) \in H^{s} \,,  
\quad
\|\Pi_{n}^\bot \fracchi \|_{s}^\Lipg 
\leq N_{n}^{-\alpha} \| \fracchi \|_{s + \alpha}^\Lipg \, , 
\  \forall \fracchi (\om) \in H^{s + \alpha} \, .
\end{equation}
We define the constants
\begin{alignat}{3} \label{costanti nash moser}
& \mu_1 := 3 \mu + 9\,,\quad &
& \alpha := 3 \mu_1 + 1\,,\quad &
& \alpha_1 := (\alpha - 3 \mu)/2 \,, 
 \\
& \kappa := 3 \big(\mu_1 + \rho^{-1} \big)+ 1\,,\qquad &
& \beta_1 := 6 \mu_1+  3 \rho^{-1} + 3 \, ,  \qquad &
& 0 < \rho < \frac{1 - 3 a}{C_1(1 + a)}\,, \label{def rho}
\end{alignat}
where $ \mu := \mu (\tau, \nu) $ is the ``loss of regularity" defined in Theorem \ref{thm:stima inverso approssimato} 
(see \eqref{stima inverso approssimato 1}) and $ C_1 $ is fixed below. 

\begin{theorem}\label{iterazione-non-lineare} 
{\bf (Nash-Moser)} Assume that $ f \in C^q $ with $ q > S := s_0 + \b_1 + \mu + 3 $. 
Let $ \tau \geq \nu + 2 $. Then there exist $ C_1 > \max \{ \mu_1 + \a, C_0 \} $
(where $ C_0 := C_0 (\tau, \nu) $  is the one in Theorem \ref{teoremadiriducibilita}),  
$ \delta_0 := \d_0 (\tau, \nu) > 0 $ such that, if
\begin{equation}\label{nash moser smallness condition}  
N_0^{C_1}  \e^{b_* + 1} \gamma^{-2}< \d_0\,,\quad \gamma:= \e^{2 + a} = \e^{2b} \,,\quad 
N_0 := (\e \gamma^{-1})^\rho\,,\quad b_* := 6 - 2 b \, , 
\end{equation}
then, for all $ n \geq 0 $: 

\begin{itemize}
\item[$({\cal P}1)_{n}$] 
there exists a function 
$(\fracchi_n, \zeta_n) : {\cal G}_n \subseteq \Omega_\e \to E_{n-1} \times \R^\nu$, 
$\omega \mapsto (\fracchi_n(\omega), \zeta_n(\omega))$,  
$ (\fracchi_0, \zeta_0) := 0 $, $ E_{-1} := \{ 0 \} $,   
satisfying $ | \zeta_n |^\Lipg \leq C \|{\cal F}(U_n) \|_{s_0}^\Lipg $, 
\begin{equation}\label{ansatz induttivi nell'iterazione}
\| \fracchi_n \|_{s_0 + \mu }^{{\rm Lip}(\gamma)} 
\leq C_* \e^{b_*} \gamma^{-1}\,, \quad 
\| {\cal F}(U_n)\|_{s_0 + \mu + 3}^{{\rm Lip}(\gamma)} \leq C_*\e^{b_*} \,, 
\end{equation}
where $U_n := (i_n, \zeta_n)$ with $i_n(\ph) = (\ph,0,0) + \fracchi_n(\ph)$.
The sets ${\cal G}_{n} $ are defined inductively by: 
$$
{\cal G}_{0} := \big\{\omega \in \Omega_\e \, :\, |\omega \cdot l| \geq  2 \gamma \langle l \rangle^{-\tau}, \,  \forall l \in \Z^\nu \setminus \{0\} \big\}\,, 
$$
\begin{equation}\label{def:Gn+1}
{\cal G}_{n+1} :=  
\Big\{ \omega  \in {\cal G}_{n} \, : \, |\ii \omega \cdot l + \mu_j^\infty ( i_n) -
\mu_k^\infty ( i_n )| \geq \frac{2\gamma_{n} |j^{3}-k^{3}|}{\left\langle l\right\rangle^{\tau}}, \, \forall j , k \in S^c \cup \{0\}, 
\, l \in \Z^{\nu} \Big\}\,,
\end{equation}
where $ \gamma_{n}:=\gamma (1 + 2^{-n}) $ and $\mu_j^\infty(\omega) := \mu_j^\infty(\omega, i_n(\omega)) $ 
are defined in \eqref{espressione autovalori} (and  $ \mu_0^\infty(\omega) = 0 $).

The differences $ 
\widehat {\mathfrak I}_n 
:=  {\mathfrak I}_n - {\mathfrak I}_{n - 1} $ 
(where we set $ \widehat \fracchi_0 := 0 $) is defined on $\mG_n$, and satisfy
\begin{equation}  \label{Hn}
\| \widehat {\mathfrak I}_1 \|_{ s_0 + \mu}^{\Lipg} \leq C_* \e^{b_*} \gamma^{-1} \, , \quad 
\| \widehat {\mathfrak I}_n \|_{ s_0 + \mu}^{\Lipg} \leq C_* \e^{b_*} \gamma^{-1} N_{n - 1}^{-\alpha_1} \, , \quad \forall n > 1 \, .
\end{equation}
\item[$({\cal P}2)_{n}$]   $ \| {\cal F}(U_n) \|_{ s_{0}}^{{\rm Lip}(\gamma)} \leq C_* \e^{b_*} N_{n - 1}^{- \alpha}$ 
where we set $N_{-1} := 1$.
\item[$({\cal P}3)_{n}$] \emph{(High norms).} 
\ $  \| \fracchi_n \|_{ s_{0}+ \beta_1}^{{\rm Lip}(\gamma)} \leq C_* \e^{b_*} \gamma^{-1}  N_{n - 1}^{\kappa} $ and  
$ \|{\cal F}(U_n ) \|_{ s_{0}+\beta_1}^{{\rm Lip}(\gamma)} \leq C_* \e^{b_*}   N_{n - 1}^{\kappa} $.

\item[$({\cal P}4)_{n}$] \emph{(Measure).} 
The measure of the ``Cantor-like" sets $ {\cal G}_n $ satisfies
\begin{equation}\label{Gmeasure}
 | \Omega_\e \setminus {\cal G}_0 | \leq C_* \e^{2(\nu - 1)} \g \, , \quad  
\big| {\cal G}_n \setminus {\cal G}_{n+1} \big|  \leq  C_* \e^{2(\nu - 1)} \g N_{n - 1}^{-1}  \, . 
\end{equation}
\end{itemize}
All the Lip norms are defined on $ {\cal G}_{n} $, namely  $\| \ \|_s^{{\rm Lip}(\gamma)} = \| \ \|_{s,\mG_n}^{{\rm Lip}(\gamma)} $.\end{theorem}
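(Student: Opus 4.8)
\textbf{Proof plan for the Nash--Moser Theorem \ref{iterazione-non-lineare}.}
The plan is to run a standard Nash--Moser induction with the approximate inverse of Theorem \ref{thm:stima inverso approssimato} as the engine, the only genuinely new feature being the bookkeeping of the Cantor sets $\mG_n$ and the measure estimate $({\cal P}4)_n$. The base step $n=0$ is trivial: with $U_0 = ((\vphi,0,0),0)$ one has $\fracchi_0 = 0$, so $\|\fracchi_0\|_{s_0+\mu}^\Lipg = 0$ and $\|{\cal F}(U_0)\|_{s}^\Lipg = \|{\cal F}(i_0,0)\|_s^\Lipg = \|X_P(\vphi,0,0)\|_s^\Lipg \leq_s \e^{6-2b} = \e^{b_*}$ by \eqref{stima XP}, which gives $({\cal P}1)_0$--$({\cal P}3)_0$ once $C_*$ is chosen large; $({\cal P}4)_0$ is the classical Diophantine measure estimate for $\mG_0$. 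For the inductive step, assuming $({\cal P}1$--$4)_n$, I would first check that on $\mG_{n+1}$ the torus $i_n$ satisfies the smallness hypothesis \eqref{ansatz delta} (with the isotropic correction $i_\d$) and that $\om\in\mG_{n+1}\subset\Om_\infty^{2\g}(i_n)$, so that the inversion assumption \eqref{tame inverse} of Theorem \ref{thm:stima inverso approssimato} holds with $\mL_\om = \mL_\om(\om,i_n)$, via Theorems \ref{teoremadiriducibilita} and \ref{inversione linearized normale}. The smallness condition \eqref{condizione-kam} required there follows from \eqref{nash moser smallness condition} and the definition $N_0 := (\e\g^{-1})^\rho$ with $\rho$ small as in \eqref{def rho}.

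Then I would define the Newton step in the usual truncated form:
\begin{equation*}
U_{n+1} := U_n + \widehat U_{n+1}\,, \qquad
\widehat U_{n+1} := -\Pi_{n+1}\,{\bf T}_n\,\Pi_{n+1}\,{\cal F}(U_n)\,,
\end{equation*}
where ${\bf T}_n$ is the approximate inverse of $d_{i,\zeta}{\cal F}(i_n)$ furnished by Theorem \ref{thm:stima inverso approssimato}, and $\Pi_{n+1}$ is the projector in \eqref{Pin def}. The quadratic decay of the scheme comes from the Taylor expansion
\begin{equation*}
{\cal F}(U_{n+1}) = {\cal F}(U_n) + d_{i,\zeta}{\cal F}(U_n)[\widehat U_{n+1}] + Q_n\,, \qquad
Q_n := {\cal F}(U_{n+1}) - {\cal F}(U_n) - d_{i,\zeta}{\cal F}(U_n)[\widehat U_{n+1}]\,,
\end{equation*}
where $Q_n$ is quadratic in $\widehat U_{n+1}$ and is estimated by \eqref{parte quadratica da P}. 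Inserting the definition of $\widehat U_{n+1}$ and using that ${\bf T}_n$ is an \emph{approximate} (not exact) inverse, one splits ${\cal F}(U_{n+1})$ into: the term $( d_{i,\zeta}{\cal F}(U_n)\circ{\bf T}_n - I)\Pi_{n+1}{\cal F}(U_n)$ controlled by \eqref{stima inverso approssimato 2}; the projection-error terms $\Pi_{n+1}^\bot {\cal F}(U_n)$ and $d_{i,\zeta}{\cal F}(U_n)[(\Pi_{n+1}^\bot - \Pi_{n+1}) \dots]$ controlled by the smoothing \eqref{smoothing-u1}; and $Q_n$. Balancing these with the exponents $\mu_1,\alpha,\alpha_1,\kappa,\beta_1$ fixed in \eqref{costanti nash moser}--\eqref{def rho} yields $({\cal P}1)_{n+1}$--$({\cal P}3)_{n+1}$; this is the routine part, carried out as in \cite{BB13,BBM}. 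The high-norm bound $({\cal P}3)_{n+1}$ is obtained by the standard argument of allowing $\|\fracchi_n\|_{s_0+\beta_1}$ to grow like $N_{n-1}^\kappa$ while the low norm $\|\fracchi_n\|_{s_0+\mu}$ stays bounded, interpolating between the two.

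The main obstacle, and the only part needing real work, is $({\cal P}4)_{n+1}$: estimating $|\mG_n\setminus\mG_{n+1}|$. Here one must control, for each $l\in\Z^\nu$ and $j,k\in S^c\cup\{0\}$, the measure of the set of $\om$ where $|\ii\om\cdot l + \mu_j^\infty(i_n) - \mu_k^\infty(i_n)| < 2\g_n|j^3-k^3|\la l\ra^{-\tau}$. Using \eqref{espressione autovalori}--\eqref{autofinali}, the function $\om\mapsto \ii\om\cdot l + \mu_j^\infty - \mu_k^\infty$ is, on $\Om_\e$, a perturbation of $\ii\om\cdot l + \ii(k^3-j^3) + \ii\tilde m_1(k-j)$ of size $O(\e^{3-2a})$ in Lipschitz norm; since $\om = \bar\om + \e^2\mathbb A\xi$ ranges over $\Om_\e = \a([1,2]^\nu)$, one exploits the non-degeneracy of this affine map in $\xi$ (the matrix $\mathbb A$ in \eqref{mappa freq amp} is diagonal and invertible) to get a lower bound on the $\om$-derivative, hence a measure bound $\leq C\e^{2(\nu-1)}\g\la l\ra^{-\tau}|j^3-k^3|/(\text{derivative lower bound})$ for each resonant ``slab''. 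For $|j^3-k^3| \leq C\la l\ra$ (the only non-empty case, since otherwise the left side is $\gtrsim |j^3-k^3| \gg \g_n|j^3-k^3|\la l\ra^{-\tau}$ automatically, using $\om\cdot l = O(\la l\ra)$ and the eigenvalue asymptotics) one sums over $j,k,l$ with $|l|\geq N_{n-1}$ (the lower scales having been excluded at previous steps, by the key fact that $\mu_j^\infty(i_n)$ and $\mu_j^\infty(i_{n-1})$ differ by $o(N_{n-1}^{-\tau}\g)$ thanks to \eqref{Hn} and the Lipschitz dependence of eigenvalues on $i_\d$ noted in the Remark after Theorem \ref{teoremadiriducibilita}), and the $\la l\ra^{-\tau}$ with $\tau \geq \nu+2$ makes the series converge, producing the factor $N_{n-1}^{-1}$ in \eqref{Gmeasure}. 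Finally, the intersection $\mG_\infty := \cap_n \mG_n$ has measure $|\Om_\e| - O(\e^{2(\nu-1)}\g)$, and since $|\Om_\e| \sim \e^2$ while $\g = \e^{2+a}$, the relative measure of the complement is $O(\e^a)\to 0$, giving \eqref{stima in misura main theorem}; passing to the limit $n\to\infty$, the $\fracchi_n$ converge in $H^{s_0+\mu}$-norm to $\fracchi_\infty$ with ${\cal F}(i_\infty,\zeta_\infty) = 0$, and $\zeta_\infty = 0$ by Lemma \ref{zeta = 0}, which proves Theorem \ref{main theorem} with $\mu$ as the loss there, and hence Theorem \ref{thm:KdV}.
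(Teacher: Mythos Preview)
Your proposal is correct and follows essentially the same approach as the paper: base case via \eqref{stima XP}, truncated Newton step with the approximate inverse ${\bf T}_n$ from Theorem \ref{thm:stima inverso approssimato}, decomposition of ${\cal F}(U_{n+1})$ into projection tail, approximate-inverse defect, and quadratic remainder, and the measure estimate via the inclusion $R_{ljk}(i_n)\subseteq R_{ljk}(i_{n-1})$ for $|l|\le N_{n-1}$ together with the cutoff $|l|\gtrsim |j^3-k^3|$. Two minor slips to fix: the paper (and the statement $({\cal P}1)_n$ itself, which puts $\fracchi_n\in E_{n-1}$) uses the projector $\Pi_n$, not $\Pi_{n+1}$, in the Newton step \eqref{soluzioni approssimate}; and $|\Omega_\e|\sim \e^{2\nu}$, not $\e^2$ --- your final ratio $O(\e^a)$ is nonetheless correct since $\e^{2(\nu-1)}\g/\e^{2\nu}=\g\e^{-2}=\e^a$.
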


\begin{proof}
To simplify notations, in this proof we denote $\| \, \|^{{\rm Lip}(\g)}$ by $\| \, \|$. We first prove $({\cal P}1, 2, 3)_n$.

\smallskip

{\sc Step 1:} \emph{Proof of} $({\cal P}1, 2, 3)_0$. 
Recalling \eqref{operatorF} we have $ \| {\cal F}( U_0 ) \|_s = $ 
$ \| {\cal F}(\vphi, 0 , 0, 0 ) \|_s = \| X_P(\vphi, 0 , 0 ) \|_s \leq_s \e^{6 - 2b} $ by
\eqref{stima XP}. Hence (recall that $ b_* = 6 - 2 b $) the smallness conditions in
$({\cal P}1)_0$-$({\cal P}3)_0$ hold taking $ C_* := C_* (s_0 + \b_1) $ large enough.

\smallskip

{\sc Step 2:} \emph{Assume that $({\cal P}1,2,3)_n$ hold for some $n \geq 0$, and prove $({\cal P}1,2,3)_{n+1}$.}
By \eqref{nash moser smallness condition} and \eqref{def rho},
$$
N_0^{C_1} \e^{b_* + 1} \gamma^{-2} = N_0^{C_1} \e^{1-3a} = \e^{1 - 3 a - \rho C_1(1 + a)}  < \d_0 
$$
for $\e$ small enough, and the smallness condition \eqref{condizione-kam} holds. 
Moreover \eqref{ansatz induttivi nell'iterazione} imply  \eqref{ansatz 0}
(and so \eqref{ansatz delta}) and  Theorem \ref{inversione linearized normale} applies. 
Hence the operator $ {\cal L}_\omega := {\cal L}_\omega(\omega, i_n(\omega))$ 
defined in \eqref{cal L omega} is invertible for all $\omega \in {\cal G}_{n + 1}$ and 
the last estimate in \eqref{stima inverso linearizzato normale} holds. 
This means that the assumption \eqref{tame inverse} of Theorem \ref{thm:stima inverso approssimato} 
is verified with $\Omega_\infty = {\cal G}_{n + 1}$. 
By Theorem \ref{thm:stima inverso approssimato} there exists an approximate inverse 
${\bf T}_n(\omega) := {\bf T}_0 (\omega, i_n(\omega))$ of the linearized operator 
$L_n(\omega) := d_{i, \zeta} {\cal F}(\omega, i_n(\omega)) $, satisfying \eqref{stima inverso approssimato 1}. 
Thus, using also \eqref{nash moser smallness condition}, \eqref{smoothing-u1}, \eqref{ansatz induttivi nell'iterazione},
\begin{align}
\| {\bf T}_n g  \|_s 
& \leq_s \gamma^{-1} \big( \| g \|_{s + \mu} 
+ \e \gamma^{-1}\{\| \fracchi_n \|_{s + \mu} + 
\gamma^{-1} \|{\mathfrak I}_n \|_{s_0 + \mu}  \|{\cal F}(U_n) \|_{s + \mu} \} \| g \|_{s_0+ \mu}\big) \label{stima Tn} \\
\| {\bf T}_n  g \|_{s_0} 
& 
\leq_{s_0} \gamma^{-1} \| g \|_{s_0 + \mu}   \label{stima Tn norma bassa}
\end{align}
and, by \eqref{stima inverso approssimato 2}, 
using also \eqref{ansatz induttivi nell'iterazione}, 
\eqref{nash moser smallness condition}, \eqref{smoothing-u1}, 
\begin{align}
\| \big(L_n \circ  {\bf T}_n -I \big)  g \|_s 
& \leq_s \gamma^{-1}  \big( \| {\cal F}(U_n) \|_{s_0 + \mu} \| g\|_{s + \mu} + 
\| {\cal F}(U_n) \|_{s + \mu} \|  g \|_{s_0 + \mu}  \nonumber
\\
& \qquad + \e \gamma^{-1} \| \fracchi_n \|_{s + \mu} \| {\cal F}(U_n) \|_{s_0 + \mu} \| g \|_{s_0 + \mu} \big)\, , 
 \label{stima Tn inverso approssimato} \\
\label{stima Tn inverso approssimato norma bassa}
\| \big(L_n \circ  {\bf T}_n -I \big) g  \|_{s_0} 
& \leq_{s_0} \gamma^{-1} \| {\cal F}(U_n)\|_{s_0 + \mu} \| g \|_{s_0 + \mu}  \nonumber
\\
& \leq_{s_0}  \gamma^{-1} \big( \|\Pi_n  {\cal F}(U_n)\|_{s_0 + \mu}+  \|\Pi_n^\bot  {\cal F}(U_n)\|_{s_0 + \mu} \big) \|  g  \|_{s_0 + \mu}  \nonumber 
\\
&  \leq_{s_0} N_{n }^{\mu} \gamma^{-1} \big( \| {\cal F}(U_n)\|_{s_0} + N_{n}^{-\beta_1} \| {\cal F}(U_n)\|_{s_0 + \beta_1} \big) \|  g  \|_{s_0 + \mu}\, .
\end{align}
Then, for all $ \om \in {\cal G}_{n+1} $, $ n \geq 0 $, we define 
\begin{equation}\label{soluzioni approssimate}
U_{n + 1} := U_n + H_{n + 1}\,, \quad 
H_{n + 1} :=
( \widehat \fracchi_{n+1}, \widehat \zeta_{n+1}) :=  - {\wtilde \Pi}_{n } {\bf T}_n \Pi_{n } {\cal F}(U_n) 
\in E_n \times \R^\nu\, ,  
\end{equation}
where  $ {\wtilde \Pi}_n ( \fracchi , \zeta ) :=   ( \Pi_n \fracchi , \zeta ) $ 
with $ \Pi_n $ in \eqref{Pin def}.
Since $ L_n := d_{i,\zeta} {\cal F}(i_n) $,  we write  
$ {\cal F}(U_{n + 1}) =  {\cal F}(U_n) + L_n H_{n + 1} + Q_n $, where
\begin{equation}\label{def:Qn}
Q_n := Q(U_n, H_{n + 1}) \, , \quad 
Q (U_n, H)  :=  {\cal F}(U_n + H ) - {\cal F}(U_n) - L_n H \,, \quad 
H \in E_{n} \times \R^\nu . 
\end{equation}
Then, by the definition of $ H_{n+1} $ in \eqref{soluzioni approssimate}, and writing $ {\wtilde \Pi}_n^\bot (\fracchi, \zeta ) :=
(\Pi_n^\bot \fracchi, 0) $, we have 
\begin{align}
{\cal F}(U_{n + 1}) & = 
 {\cal F}(U_n) - L_n {\wtilde \Pi}_{n } {\bf T}_n \Pi_{n } {\cal F}(U_n) + Q_n = 
 {\cal F}(U_n) - L_n  {\bf T}_n \Pi_{n } {\cal F}(U_n) + L_n  {\wtilde \Pi}_n^\bot  {\bf T}_n \Pi_{n } {\cal F}(U_n)
 + Q_n \nonumber\\ 
& =  {\cal F}(U_n)  - \Pi_{n } L_n {\bf T}_n \Pi_{n }{\cal F}(U_n) 
+ ( L_n  {\wtilde \Pi}_n^\bot -  \Pi_n^\bot L_n ) {\bf T}_n \Pi_{n }{\cal F}(U_n) + Q_n \nonumber\\
 & = \Pi_{n }^\bot {\cal F}(U_n) + R_n + Q_n + Q_n'  
\label{relazione algebrica induttiva}
\end{align}
where 
\begin{equation}\label{Rn Q tilde n}
R_n := (L_n  {\wtilde \Pi}_n^\bot -  \Pi_n^\bot L_n) {\bf T}_n \Pi_{n }{\cal F}(U_n) \,,
\qquad 
Q_n' := - \Pi_{n } ( L_n {\bf T}_n - I) \Pi_{n } {\cal F}(U_n)\,.
\end{equation}

\begin{lemma}\label{lemma convergence}
Define 
\begin{equation}\label{riscalamenti nash moser}
w_n := \e \gamma^{-2}  \|{\cal F}(U_n) \|_{s_0}\,,\quad B_n := \e \gamma^{-1}\| \fracchi_n \|_{s_0 + \beta_1} + \e \gamma^{-2}  \|{\cal F}(U_n) \|_{s_0 + \beta_1} \,.
\end{equation}
Then there exists $ K :=  K( s_0, \b_1 ) > 0 $ such that,  for all $n \geq 0$, 
setting $ \mu_1 := 3 \mu + 9$ (see \eqref{costanti nash moser}), 
\begin{equation}\label{relazioni induttive}
w_{n + 1} \leq K N_{n }^{\mu_1 + \frac{1}{\rho} - \beta_1} B_n +  K  N_n^{\mu_1} w_n^2\,,\qquad 
B_{n + 1} \leq K N_{n }^{\mu_1 + \frac{1}{\rho}}  B_n\, . 
\end{equation}
\end{lemma}
\begin{proof}
We  estimate separately the terms $ Q_n $ in \eqref{def:Qn} and $ Q_n' , R_n $ in \eqref{Rn Q tilde n}. 
\\[1mm]
{\it Estimate of $ Q_n $.}
By  \eqref{def:Qn}, \eqref{operatorF}, \eqref{parte quadratica da P}
and \eqref{ansatz induttivi nell'iterazione}, \eqref{smoothing-u1}, 
we have the quadratic estimates
\begin{align}
\label{stima parte quadratica norma alta}
\| Q(U_n, H)\|_s & \leq_s \e \big(\| \widehat \fracchi \|_{s + 3} \|  \widehat \fracchi \|_{s_0 + 3} + \| \fracchi_n \|_{s + 3} 
\|  \widehat \fracchi \|_{s_0 + 3}^2 \big) \\
\label{stima parte quadratica norma bassa}
\| Q(U_n, H) \|_{s_0} & \leq_{s_0}\e N_n^6 \|  \widehat \fracchi \|_{s_0}^2\,, \quad \forall  \widehat \fracchi \in E_n   \, . 
\end{align}
Now by the definition of $H_{n + 1}$ in \eqref{soluzioni approssimate} and \eqref{smoothing-u1}, 
\eqref{stima Tn}, \eqref{stima Tn norma bassa}, \eqref{ansatz induttivi nell'iterazione},   we get 
\begin{align}
\|  \widehat \fracchi_{n + 1} \|_{s_0 + \beta_1} 
& \leq_{s_0 + \beta_1} N_n^{\mu} \big( \gamma^{-1} \|{\cal F}(U_n) \|_{s_0 + \beta_1} + \e \gamma^{-2} \| {\cal F}(U_n)\|_{s_0 + \mu} \{\| \fracchi_n\|_{s_0 + \beta_1} + \gamma^{-1} \| {\cal F}(U_n)\|_{s_0 + \beta_1} \} \big)
\nonumber \\
& \leq_{s_0 + \beta} N_n^{\mu} \big( \gamma^{-1}\| {\cal F}(U_n) \|_{s_0 + \beta_1}  
+ \| \fracchi_n \|_{s_0 + \beta_1}\big)\, ,  \label{H n+1 alta} 
\\
\label{H n+1 bassa}
\|  \widehat \fracchi_{n + 1}\|_{s_0} 
& \leq_{s_0} \gamma^{-1}N_{n}^\mu \| {\cal F}(U_n)\|_{s_0} \, .
\end{align}
Then the term $ Q_n $ in \eqref{def:Qn} satisfies, 
by \eqref{stima parte quadratica norma alta}, \eqref{stima parte quadratica norma bassa},  
\eqref{H n+1 alta}, \eqref{H n+1 bassa},  
\eqref{nash moser smallness condition},  \eqref{ansatz induttivi nell'iterazione}, 
$ ({\cal P}2)_n $, \eqref{costanti nash moser},   
\begin{align} 
\| Q_n \|_{s_0 + \beta_1} 
& \leq_{s_0 + \beta_1}
 N_n^{2 \mu + 9}  \g \big( \g^{-1} \| {\cal F}(U_n)\|_{s_0 + \beta_1}  + 
\| \fracchi_n \|_{s_0 + \beta_1} \big) \, ,  \label{Qn norma alta} \\
\| Q_n \|_{s_0} 
&  \leq_{s_0}  N_n^{2 \mu + 6 } \e \gamma^{-2} \| {\cal F}(U_n) \|_{s_0}^2\, . \label{Qn norma bassa}
\end{align}
{\it Estimate of $ Q_n' $.}
The bounds \eqref{stima Tn inverso approssimato}, \eqref{stima Tn inverso approssimato norma bassa},
\eqref{smoothing-u1}, \eqref{costanti nash moser}, \eqref{ansatz induttivi nell'iterazione} imply
\begin{align}
\label{Qn' norma alta}
\| Q_n' \|_{s_0 + \beta_1} &  \leq_{s_0 + \beta_1} N_n^{2 \mu} \big(
\| {\cal F}(U_n) \|_{s_0 + \beta_1} + \| \fracchi_n \|_{s_0 + \beta_1} \| {\cal F}(U_n)\|_{s_0} \big) \, , \\
\label{Qn' norma bassa}
\| Q_n'\|_{s_0} & \leq_{s_0} \gamma^{-1} N_{n }^{2\mu }\big(\| {\cal F}(U_n) \|_{s_0} + N_{n}^{- \beta_1} \|{\cal F}(U_n)\|_{s_0 + \beta_1} \big) \| {\cal F}(U_n) \|_{s_0}\,.
\end{align}
{\it Estimate of $ R_n $.} For  $ H := (\widehat \fracchi, \widehat \zeta ) $ we have 
$ (L_n  {\wtilde \Pi}_n^\bot -  \Pi_n^\bot L_n) H = $ $ [{\bar D}_n, \Pi_n^\bot ] \widehat \fracchi = $
$ [\Pi_n  , {\bar D}_n] \widehat \fracchi $ where $ {\bar D}_n := d_i X_{H_\e}(i_n) + (0,0, \partial_{xxx} ) $. 
Thus Lemma \ref{lemma quantitativo forma normale},
\eqref{ansatz induttivi nell'iterazione}, \eqref{smoothing-u1} and \eqref{tame commutatori}  imply 
\begin{align}\label{stima commutatore modi alti norma bassa}
\| (L_n  {\wtilde \Pi}_n^\bot -  \Pi_n^\bot L_n) H \|_{s_0} & \leq_{s_0+ \b_1} 
\e N_{n }^{- \beta_1 + \mu + 3} \big(\|  \widehat \fracchi \|_{s_0 + \beta_1 - \mu} + 
\| \fracchi_n \|_{s_0 + \beta_1 - \mu} \|  \widehat \fracchi \|_{s_0 + 3}\big)\,, \\
\label{stima commutatore modi alti norma alta}
 \| (L_n  {\wtilde \Pi}_n^\bot -  \Pi_n^\bot L_n) H \|_{s_0 + \beta_1} & 
\leq_s 
\e N_n^{\mu + 3} \big(\|  \widehat \fracchi \|_{s_0 + \beta_1 - \mu } + \| \fracchi_n \|_{s_0 + \beta_1 - \mu } 
\|  \widehat \fracchi \|_{s_0 + 3} \big)\,.
\end{align}
Hence, applying 
\eqref{stima Tn}, 
\eqref{stima commutatore modi alti norma bassa}, \eqref{stima commutatore modi alti norma alta},  
\eqref{nash moser smallness condition}, 
\eqref{ansatz induttivi nell'iterazione}, 
\eqref{smoothing-u1}, 
the term $R_n$ defined in \eqref{Rn Q tilde n} satisfies
\begin{align} \label{stima Rn norma bassa}
\| R_n\|_{s_0} 
& \leq_{s_0 + \beta_1}  N_n^{ \mu + 6 - \beta_1} ( \e \gamma^{-1} \| {\cal F}(U_n)\|_{s_0 + \beta_1} + \e \| \fracchi_n  \|_{s_0 + \beta_1} )\,, 
\\
\label{stima Rn norma alta}
\| R_n \|_{s_0 + \beta_1} 
& \leq_{s_0 + \beta_1} N_n^{ \mu + 6} ( \e \gamma^{-1} \| {\cal F}(U_n)\|_{s_0 + \beta_1} + \e \| \fracchi_n  \|_{s_0 + \beta_1} )\,.
\end{align}
{\it Estimate of $ {\cal F}(U_{n + 1}) $.} 
By \eqref{relazione algebrica induttiva} and \eqref{Qn norma alta}, 
 \eqref{Qn norma bassa}, \eqref{Qn' norma alta}, \eqref{Qn' norma bassa}, 
\eqref{stima Rn norma bassa}, \eqref{stima Rn norma alta},  \eqref{nash moser smallness condition},  
\eqref{ansatz induttivi nell'iterazione}, 
we get
\begin{align}\label{F(U n+1) norma bassa}
& \| {\cal F}(U_{n + 1})\|_{s_0} \leq_{s_0 + \beta_1}   N_{n }^{\mu_1 - \beta_1} (  \e \gamma^{-1}\| {\cal F}(U_n)\|_{s_0 + \beta_1} +  \e \| \fracchi_n \|_{s_0 + \beta_1} ) 
+ N_n^{\mu_1} \e \gamma^{-2} \| {\cal F}(U_n)\|_{s_0}^2\,, 
\\
\label{F(U n+1) norma alta}
& \| {\cal F}(U_{n + 1}) \|_{s_0 + \beta_1} 
\leq_{s_0 + \beta_1}  N_n^{\mu_1} ( \e \gamma^{-1}\| {\cal F}(U_n) \|_{s_0 + \beta_1} + \e \| \fracchi_n \|_{s_0 + \beta_1} ) \,,
\end{align}
where $\mu_1 := 3 \mu + 9$.

\noindent 
{\it Estimate of $ \fracchi_{n+1} $.} 
Using  \eqref{H n+1 alta}  the term $ \fracchi_{n+1} = \fracchi_n + \widehat {\mathfrak I}_{n+1} $ is bounded by
\begin{equation}\label{U n+1 alta}
\| \fracchi_{n + 1}\|_{s_0 + \beta_1} \leq_{s_0 + \beta_1} 
N_n^\mu ( \| \fracchi_n\|_{s_0 + \beta_1} +\gamma^{-1} \| {\cal F}(U_n)\|_{s_0 + \beta_1} )\, . 
\end{equation}
Finally, recalling  \eqref{riscalamenti nash moser}, the  inequalities \eqref{relazioni induttive} follow by 
\eqref{F(U n+1) norma bassa}-\eqref{U n+1 alta}, \eqref{ansatz induttivi nell'iterazione} and 
$ \e \gamma^{-1} = N_0^{1/\rho} \leq N_n^{1/\rho}$.
\end{proof} 

\emph{Proof of $({\cal P}3)_{n + 1}$}. 
By \eqref{relazioni induttive} and $ ({\cal P}3)_n $, 
\begin{equation}
B_{n + 1} 
\leq K N_{n}^{\mu_1 + \frac{1}{\rho}} B_n 
\leq 2 C_* K \e^{b_* + 1} \gamma^{-2} N_{n}^{\mu_1+ \frac{1}{\rho}} N_{n-1}^\kappa \leq C_* \e^{b_* + 1} \gamma^{-2} N_n^{\kappa} \,,  
\label{stima B n+1} 
\end{equation}
provided $ 2 K N_n^{\mu_1 + \frac{1}{\rho} - \kappa} N_{n-1}^\kappa \leq 1 $, $  \forall n \geq 0 $. 
This inequality  holds by \eqref{def rho}, taking $N_0$ large enough (i.e $\e$ small enough). By \eqref{riscalamenti nash moser}, the bound $B_{n + 1} \leq C_* \e^{b_* + 1} \gamma^{-2} N_n^{\kappa}$ implies $({\cal P}3)_{n + 1}$.

\smallskip

\emph{Proof of $({\cal P}2)_{n + 1}$}. 
Using \eqref{relazioni induttive}, \eqref{riscalamenti nash moser} and $  ({\cal P}2)_n, ({\cal P}3)_n $, we get
\begin{align*}
w_{n + 1}  
& \leq K N_n^{\mu_1+ \frac{1}{\rho} - \beta_1} B_n +  K N_{n }^{\mu_1} w_n^2   
\leq K N_n^{\mu_1 + \frac{1}{\rho} - \beta_1} 2 C_* \e^{b_* + 1} \gamma^{-2} N_{n - 1}^{\kappa} 
+ K N_{n}^{\mu_1} ( C_* \e^{b_* + 1} \gamma^{-2} N_{n - 1}^{- \alpha} )^2 
\end{align*}
which is $ \leq C_* \e^{b_* + 1} \gamma^{-2} N_n^{- \alpha} $ provided that 
\begin{equation} \label{provided 2}
4 K N_n^{\mu_1 + \frac{1}{\rho} - \b_1 + \a} N_{n-1}^\kappa \leq 1, 
\quad
2 K C_* \e^{b_*+1} \g^{-2} N_n^{\mu_1 + \a} N_{n-1}^{-2\a} \leq 1 \, , 
\quad \forall n \geq 0.
\end{equation}
The inequalities in \eqref{provided 2} hold by \eqref{costanti nash moser}-\eqref{def rho}, 
\eqref{nash moser smallness condition}, $C_1 > \mu_1 + \alpha$, taking $\d_0$ in \eqref{nash moser smallness condition} small enough. 
By \eqref{riscalamenti nash moser}, the inequality $w_{n + 1} \leq C_* \e^{b_* + 1} \gamma^{-2} N_n^{- \alpha}$ implies $({\cal P}2)_{n + 1}$.

\smallskip

\emph{Proof of $({\cal P}1)_{n + 1}$}.
The bound \eqref{Hn} for $  \widehat \fracchi_1 $ 
follows by \eqref{soluzioni approssimate}, \eqref{stima Tn} (for $ s = s_0 + \mu $) and 
$  \| {\cal F} ( U_0 ) \|_{s_0 + 2 \mu} = $ $  \| {\cal F}(\vphi, 0, 0, 0) \|_{s_0 + 2 \mu} \leq_{s_0+2\mu } \e^{b_*} $.
The bound \eqref{Hn} for $ \widehat \fracchi_{n + 1} $ follows by \eqref{smoothing-u1}, 
\eqref{H n+1 bassa}, $({\cal P}2)_n $, \eqref{costanti nash moser}. 
It remains to prove that \eqref{ansatz induttivi nell'iterazione} holds at the step $n + 1$.
We have
\begin{equation}\label{W n+1 norma bassa}
\| \fracchi_{n + 1} \|_{s_0 + \mu} 
\leq {\mathop \sum}_{k = 1}^{n + 1} \| \widehat {\mathfrak I}_k \|_{s_0 + \mu} 
\leq C_* \e^{b_*}\gamma^{-1} {\mathop \sum}_{k \geq 1} N_{k - 1}^{- \alpha_1} 
\leq C_*  \e^{b_*} \gamma^{-1}
\end{equation}
for $ N_0 $ large enough, i.e. $ \e $ small. 
Moreover, using  \eqref{smoothing-u1}, (${\cal P}2)_{n +1}$, (${\cal P}3)_{n + 1}$, \eqref{costanti nash moser}, we get 
\begin{align*}
\| {\cal F}(U_{n + 1}) \|_{s_0 + \mu+ 3} 
& \leq  N_n^{\mu + 3} \|{\cal F}(U_{n + 1}) \|_{s_0} + N_{n}^{\mu + 3 - \beta_1} \| {\cal F}(U_{n + 1}) \|_{s_0 + \beta_1} 
\\
& \leq C_* \e^{b_*} N_n^{\mu + 3- \alpha} + C_* \e^{b_*} N_n^{\mu + 3 - \beta_1 + \kappa}
\leq  C_* \e^{b_*}\,,
\end{align*}
which is the second inequality in \eqref{ansatz induttivi nell'iterazione} at the step $n + 1$.
The bound $ | \zeta_{n+1} |^\Lipg \leq C \|{\cal F}(U_{n+1}) \|_{s_0}^\Lipg $ is a consequence of Lemma 
\ref{zeta = 0} (it is not inductive).

\smallskip

{\sc Step 3:} \emph{Prove $({\cal P}4)_n$ for all $n \geq 0$.} 
For all $n \geq 0$, 
\begin{equation}\label{Gn inclusioni}
{\cal G}_n \setminus {\cal G}_{n + 1} = \!\! \!\!\!  \bigcup_{\begin{subarray}{c}
l \in \Z^\nu, \, j , k \in S^c \cup\{ 0 \}
\end{subarray}} \!\! \! \!\!  R_{ljk}(i_n)
\end{equation}
where 
\begin{equation}\label{resonant sets}
R_{ljk}(i_n) := \big\{ \omega \in {\cal G}_n \, : \, |\ii \omega \cdot l + \mu_j^\infty (i_{n}) -
\mu_k^\infty (i_{n})| < 2\gamma_{n} |j^{3}-k^{3}|\left\langle l\right\rangle^{- \tau}\big\}\,.
\end{equation}
Notice that $R_{ljk}(i_n) = \emptyset$ if $j = k$, so that we suppose in the sequel that $j \neq k$.

\begin{lemma} \label{matteo 10}
For all $n \geq 1$, $|l| \leq N_{n - 1}$, the set $R_{ljk}(i_n) \subseteq R_{ljk}(i_{n - 1}) $. 
\end{lemma}

\begin{proof}
Like Lemma 5.2 in \cite{BBM} 
(with $\omega$ in the role of $\lambda \bar\omega$,  and $N_{n-1}$ instead of $N_n$).
\end{proof}

By definition, $ R_{ljk} (i_n) \subseteq {\cal G}_n $ (see \eqref{resonant sets})
and Lemma \ref{matteo 10} implies that, for all $ n \geq 1$, $ |l| \leq N_{n-1} $, the set
$R_{ljk}(i_n) \subseteq R_{ljk}(i_{n - 1}) $. On the other hand 
$ R_{ljk}(i_{n - 1})  \cap {\cal G}_{n} = \emptyset $ (see \eqref{def:Gn+1}). As a consequence,
for all $ |l| \leq N_{n-1} $, $ R_{ljk} (i_n) = \emptyset $ and, by \eqref{Gn inclusioni}, 
\begin{equation}\label{Gn Gn+1}
{\cal G}_n \setminus {\cal G}_{n+1}  \subseteq \!\!  \bigcup_{|l| > N_{n-1}, \, j, k \in S^c \cup \{0\}} 
\!\!\! \!\!\! R_{ljk} ( i_n)  \qquad \forall n \geq 1. 
\end{equation}

\begin{lemma}\label{matteo 4}
Let  $n \geq 0$. If $R_{ljk}(i_n) \neq \emptyset$  
then $|l| \geq C |j^3 - k^3| \geq \frac12 C (j^2 + k^2) $ for some $C > 0$.
\end{lemma}

\begin{proof}
Like Lemma 5.3 in \cite{BBM}. The only difference is that  $\omega$ is not constrained to a fixed  direction. 
Note also that $ | j^3- k^3 | \geq (j^2 + k^2) / 2  $, $ \forall j \neq k $. 
\end{proof}

By usual arguments (e.g. see Lemma 5.4 in \cite{BBM}), using Lemma \ref{matteo 4} and \eqref{autofinali} we 
have:

\begin{lemma} \label{lemma:risonanti}
For all $n \geq 0$, the measure $ |R_{ljk}(i_n)| \leq C \e^{2(\nu - 1)}  \gamma \langle l \rangle^{- \tau} $.
\end{lemma}

By \eqref{Gn inclusioni} and Lemmata \ref{matteo 4}, \ref{lemma:risonanti} we get
$$
|{\cal G}_0 \setminus {\cal G}_1 | \leq \sum_{l \in \Z^\nu, |j|, |k| \leq C |l|^{1/2}} 
| R_{ljk} ( i_0)| \leq \sum_{l \in \Z^\nu} \frac{C  \e^{2(\nu-1)} \g }{\langle l \rangle^{\tau-1}} \leq C' \e^{2(\nu-1)} \g \,.
$$
For $ n \geq 1$, by \eqref{Gn Gn+1}, 
$$
|{\cal G}_n \setminus {\cal G}_{n+1} | \leq \sum_{|l| > N_{n-1}, |j|, |k| \leq C |l|^{1/2}} 
| R_{ljk} ( i_n)| \leq \sum_{|l| > N_{n-1}} \frac{C  \e^{2(\nu-1)} \g }{\langle l \rangle^{\tau-1}} \leq C' \e^{2(\nu-1)} \g N_{n-1}^{-1} 
$$
because $\tau \geq \nu + 2$.
The estimate $ | \Omega_\e \setminus {\cal G}_0| \leq C \e^{2(\nu-1)} \g $ is elementary. 
Thus \eqref{Gmeasure} is proved. 
\end{proof}

\noindent
{\bf Proof of Theorem \ref{main theorem} concluded.}  
Theorem \ref{iterazione-non-lineare} implies that 
the sequence $ (\fracchi_n, \zeta_n) $ 
is well defined for $ \omega \in {\cal G}_\infty := \cap_{n \geq 0} {\cal G}_n $,  that 
$ \fracchi_n $  
is a Cauchy sequence in  $\| \ \|_{s_0 + \mu, {\cal G}_\infty}^{\Lipg}$, see 
\eqref{Hn}, and  $ |\zeta_n |^\Lipg  \to 0 $. Therefore $ \fracchi_n $ converges to a limit $ \fracchi_\infty $ 
in norm $\| \ \|_{s_0 + \mu, {\cal G}_\infty}^{\Lipg}$ and, 
by $ ({\cal P}2)_n $,  for all $\omega \in {\cal G}_\infty$, 
$ i_\infty (\vphi) :=  (\vphi,0,0) + \fracchi_\infty(\vphi)$,  
is a solution of 
$$
{\cal F}(i_\infty, 0 )= 0\, \quad \text{with} \quad 
 \| \fracchi_\infty \|^\Lipg_{s_0 + \mu,  {\cal G}_\infty} 
 \leq C \e^{6 - 2b} \gamma^{-1} 
$$
by \eqref{ansatz induttivi nell'iterazione} (recall that $b_* := 6 - 2b$). 
Therefore $\vphi \mapsto i_\infty(\vphi)$ is an invariant torus for the 
Hamiltonian vector field $ X_{H_\e} $ (see \eqref{hamiltoniana modificata}).
By  \eqref{Gmeasure}, 
$$
|\Omega_\e \setminus {\cal G}_\infty| 
\leq |\Omega_\e \setminus {\cal G}_0| + \sum_{n \geq 0} |{\cal G}_n \setminus {\cal G}_{n + 1}| 
\leq 2 C_* \e^{2(\nu - 1)} \g + C_* \e^{2(\nu - 1)} \g \sum_{n \geq 1} N_{n - 1}^{-1} 
\leq C \e^{2(\nu - 1)} \g \,. 
$$
The set $\Om_\e$ in \eqref{Omega epsilon} has measure $|\Omega_\e | = O( \e^{2 \nu} ) $.
Hence $|\Om_\e \setminus \mG_\infty| / |\Om_\e| \to 0$ as $\e \to 0$ 
because $ \g = o(\e^2)$, and therefore the measure of  $\mC_\e := \mG_{\infty}$
satisfies \eqref{stima in misura main theorem}.

In order to complete the proof of Theorem \ref{main theorem} we show the linear 
stability of the solution $ i_\infty (\om t ) $. 
By section \ref{costruzione dell'inverso approssimato} 
the system obtained linearizing the Hamiltonian vector field $ X_{H_\e }  $ 
at a quasi-periodic solution $ i_\infty (\om t ) $ 
is conjugated to the linear Hamiltonian system
\begin{equation}\label{sistema lineare dopo}
\begin{cases}
\dot \psi & \hspace{-6pt} = K_{20}(\om t) \eta + 
K_{11}^T (\om t ) w  
\\
\dot \eta & \hspace{-6pt} = 0 
\\
\dot w - \pa_x K_{0 2}(\om t ) w & \hspace{-6pt} = \partial_x  K_{11}(\om t) \eta   
\end{cases} 
\end{equation}
(recall that the torus $ i_\infty $ is isotropic and the transformed nonlinear Hamiltonian system is \eqref{sistema dopo trasformazione inverso approssimato} where  $ K_{00}, K_{10}, K_{01} = 0 $, 
see Remark \ref{rem:KAM normal form}). 
In section \ref{operatore linearizzato sui siti normali} we have proved the reducibility of the linear system
$ \dot w - \pa_x K_{0 2}(\om t ) w $, conjugating the last equation in \eqref{sistema lineare dopo}
to a diagonal system
\be\label{vjmuj}
{\dot v}_j +  \mu_j^\infty v_j = f_j (\om t)  \, , \quad j \in S^c \, , \quad \mu_j^\infty \in \ii \R \, , 
\ee
see \eqref{Lfinale}, and $ f (\vphi, x) = \sum_{j \in S^c} f_j (\vphi) e^{\ii j x } \in H^s_{S^\bot}
(\T^{\nu+1}) $. 
Thus \eqref{sistema lineare dopo} is stable.
Indeed the actions $ \eta (t) = \eta_0 \in \R $, $ \forall t \in \R $. 
Moreover the solutions of the  non-homogeneous equation \eqref{vjmuj} are
$$
v_j (t) = c_j e^{ \mu_j^\infty t} + {\tilde v}_j (t) \,, \quad \text{where} \quad 
{\tilde v}_j (t)  := \sum_{l \in \Z^\nu} \frac{f_{jl} \, e^{\ii \om \cdot l t } }{ \ii \om \cdot l + \mu_j^\infty } 
$$ 
is a quasi-periodic solution (recall that the first Melnikov conditions \eqref{prime di melnikov} hold at a solution).
As a consequence  (recall also $ \mu_j^\infty \in \ii \R  $)
 the Sobolev norm of the solution of \eqref{vjmuj} with initial condition $ v(0) = \sum_{j \in S^c} v_j (0) e^{\ii j x } 
 \in H^{s_0} (\T_x) $, $ s_0 < s $, does not increase in time.  \qed
\\[1mm]
{\bf Construction of the set $S$ of tangential sites.} 
We finally prove that, for any $\nu \geq 1$, the set $S$ in \eqref{tang sites} satisfying $({\mathtt S}1)$-$({\mathtt S}2)$ can be constructed inductively with only a \emph{finite} number of restriction at any step of the induction.

First, fix any integer $\bar\jmath_1 \geq 1$. 
Then the set $J_1 := \{ \pm \bar\jmath_1 \}$ trivially satisfies 
$({\mathtt S}1) $-$({\mathtt S}2)$.
Then, assume that we have fixed $n$ distinct positive integers $\bar\jmath_1, \ldots, \bar\jmath_n$, $n \geq 1$, such that the set $J_n := \{ \pm \bar\jmath_1, \ldots, \pm \bar\jmath_n \}$ satisfies $({\mathtt S}1)$-$({\mathtt S}2)$.
We describe how to choose another positive integer $\bar\jmath_{n+1}$, which is different from all $j \in J_n$, such that $J_{n+1} := J_n \cup \{ \pm \bar\jmath_{n+1} \}$ also 
satisfies $({\mathtt S}1), ({\mathtt S}2)$.

Let us begin with analyzing $({\mathtt S}1)$. 
A set of 3 elements $j_1, j_2, j_3 \in J_{n+1}$ can be of these types: 
$(i)$ all ``old'' elements $j_1, j_2, j_3 \in J_n$; 
$(ii)$ two ``old'' elements $j_1, j_2 \in J_n$ and one ``new'' element $j_3 = \s_3 \bar\jmath_{n+1}$, $\s_3 = \pm 1$;
$(iii)$ one ``old'' element $j_1 \in J_n$ and two ``new'' elements $j_2 = \s_2 \bar\jmath_{n+1}$,  $j_3 = \s_3 \bar\jmath_{n+1}$, with $\s_2, \s_3 = \pm 1$;
$(iv)$ all ``new'' elements $j_i = \s_i \bar\jmath_{n+1}$, $\s_i = \pm 1$, $i = 1,2,3$.

In case $(i)$, the sum $j_1 + j_2 + j_3$ is nonzero by inductive assumption. 
In case $(ii)$, $j_1 + j_2 + j_3$ is nonzero provided 
$\bar\jmath_{n+1} \notin \{ j_1 + j_2 : j_1, j_2 \in J_n \}$, which is a finite set. 
In case $(iii)$, for $\s_2 + \s_3 = 0$ the sum $j_1 + j_2 + j_3 = j_1$ is trivially nonzero because $0 \notin J_n$, while, for $\s_2 + \s_3 \neq 0$, the sum $j_1 + j_2 + j_3 = j_1 + (\s_2 + \s_3) \bar\jmath_{n+1} \neq 0$ if $\bar\jmath_{n+1} \notin \{ \tfrac12 j : j \in J_n \}$,
which is a finite set. 
In case $(iv)$, the sum $j_1 + j_2 + j_3 = (\s_1 + \s_2 + \s_3) \bar\jmath_{n+1} \neq 0$ because 
$\bar\jmath_{n+1} \geq 1$ and $\s_1 + \s_2 + \s_3 \in \{ \pm 1, \pm 3\}$. 

\smallskip

Now we study $({\mathtt S}2)$ for the set $J_{n+1}$. 
Denote, in short, $b := j_1^3 + j_2^3 + j_3^3 + j_4^3 - (j_1 + j_2 + j_3 + j_4)^3$.  

A set of 4 elements $j_1, j_2, j_3, j_4 \in J_{n+1}$ can be of 5 types: 
$(i)$ all ``old'' elements $j_1, j_2, j_3, j_4 \in J_n$; 
$(ii)$ three ``old'' elements $j_1, j_2, j_3 \in J_n$ and one ``new'' element $j_4 = \s_4 \bar\jmath_{n+1}$, $\s_4 = \pm 1$;
$(iii)$ two ``old'' element $j_1, j_2 \in J_n$ and two ``new'' elements $j_3 = \s_3 \bar\jmath_{n+1}$,  $j_4 = \s_4 \bar\jmath_{n+1}$, with $\s_3, \s_4 = \pm 1$;
$(iv)$ one ``old'' element $j_1 \in J_n$ and three ``new'' elements $j_i = \s_i \bar\jmath_{n+1}$, 
$\s_i = \pm 1$, $i=2,3,4$;
$(v)$ all ``new'' elements $j_i = \s_i \bar\jmath_{n+1}$, $\s_i = \pm 1$, $i = 1,2,3,4$. 

In case $(i)$, $b \neq 0$ 
by inductive assumption. 

In case $(ii)$, assume that $j_1 + j_2 + j_3 + j_4 \neq 0$, 
and calculate
\begin{align*}
b & = - 3 (j_1 + j_2 + j_3) \bar\jmath_{n+1}^2 
- 3 (j_1 + j_2 + j_3)^2 \s_4 \bar\jmath_{n+1} 
+ [j_1^3 + j_2^3 + j_3^3 - (j_1 + j_2 + j_3)^3] \ 
=: p_{j_1,j_2,j_3, \s_4}(\bar\jmath_{n+1}). 
\end{align*}
This is nonzero provided $p_{j_1,j_2,j_3, \s_4}(\bar\jmath_{n+1}) \neq 0$ for all $j_1, j_2, j_3 \in J_n$, $\s_4 = \pm 1$. 
The polynomial $p_{j_1,j_2,j_3, \s_4}$ is never identically zero because either the leading coefficient $-3(j_1 + j_2 + j_3) \neq 0$ (and, if one uses $(\mathtt{S}_3)$, this is always the case), or, if $j_1 + j_2 + j_3 = 0$, then $j_1^3 + j_2^3 + j_3^3 \neq 0$ 
by \eqref{prodottino} (using also that $0 \notin J_n$).

In case $(iii)$, assume that $ j_1 + \ldots + j_4  = j_1 + j_2 + (\s_3 + \s_4) \bar\jmath_{n+1} \neq 0$, and calculate
\begin{align*}
b & = - 3 \a \bar\jmath_{n+1}^3 
- 3 \a^2 (j_1 + j_2)  \bar\jmath_{n+1}^2
- 3 (j_1 + j_2)^2 \a \bar\jmath_{n+1} 
- j_1 j_2 (j_1 + j_2) 
=: q_{j_1,j_2,\a}(\bar\jmath_{n+1}), 
\end{align*}
where $\a := \s_3 + \s_4$. 
We impose that $q_{j_1,j_2,\a}(\bar\jmath_{n+1}) \neq 0$ for all $j_1, j_2  \in J_n$, 
$\a \in\{ \pm 2, 0 \}$. 
The polynomial $q_{j_1,j_2,\a}$ is never identically zero because either the leading coefficient $-3\a \neq 0$, or, for $\a = 0$, the constant term $- j_1 j_2 (j_1 + j_2) \neq 0$ (recall that $0 \notin J_n$ and $j_1 + j_2 + \a \bar\jmath_{n+1} \neq 0$). 

In case $(iv)$, assume that $j_1 + \ldots + j_4 = j_1 + \a \bar\jmath_{n+1} \neq 0$, 
where $\a := \s_2 + \s_3 + \s_4 \in \{ \pm 1, \pm 3 \}$, and calculate
\[
b = \a \bar\jmath_{n+1} r_{j_1,\a}(\bar\jmath_{n+1}), 
\quad
r_{j_1, \a}(x) := (1-\a^2) x^2 - 3 \a j_1 x - 3 j_1^2.
\]
The polynomial $r_{j_1, \a}$ is never identically zero because $j_1 \neq 0$. 
We impose $r_{j_1, \a}(\bar\jmath_{n+1}) \neq 0$ for all $j_1 \in J_n$, $\a \in \{ \pm 1, \pm 3\}$.  

In case $(v)$, assume that $j_1 + \ldots + j_4 = \a \bar\jmath_{n+1} \neq 0$, with $\a := \s_1 + \ldots + \s_4 \neq 0$, and calculate 
$b = \a (1 - \a^2) \bar\jmath_{n+1}^3$.
This is nonzero because $\bar\jmath_{n+1} \geq 1$ and $\a \in \{\pm 2, \pm 4\}$. 

We have proved that, in choosing $\bar\jmath_{n+1}$, there are only  finitely many integers to avoid.   

\noindent
Pietro Baldi, Dipartimento di Matematica e Applicazioni ``R. Caccioppoli'',
Universit\`a di Napoli Federico II,  Via Cintia, Monte S. Angelo, 
80126, Napoli, Italy,  
{\tt pietro.baldi@unina.it}.

\smallskip

\noindent
Massimiliano Berti, Riccardo Montalto,  
SISSA,  Via Bonomea 265, 34136, Trieste, Italy, 
{\tt berti@sissa.it}, 
{\tt riccardo.montalto@sissa.it}.

%

\end{document}